\definecolor{myblue}{rgb}{1, 1, 1}
\newcommand*\mybluebox[1]{%
\colorbox{myblue}{\hspace{1em}#1\hspace{1em}}}
\def\uwave{\bgroup \markoverwith{\lower3.5\p@\hbox{\sixly \textcolor{red}{\char58}}}\ULon}
\font\sixly=lasy6 
\newtheorem{theorem}{Theorem}
\theoremstyle{plain}
\newtheorem{corollary}[theorem]{Corollary}
\newtheorem{example}[theorem]{Example}
\newtheorem{lemma}[theorem]{Lemma}
\newtheorem{proposition}[theorem]{Proposition}
\newtheorem{remark}[theorem]{Remark}
\theoremstyle{definition} 
\newtheorem{definition}[theorem]{Definition}
\newtheorem*{tata}{Generalization}
  {\begin{mdframed}[backgroundcolor=lightgray]\begin{tata}}%
  {\end{tata}\end{mdframed}}
\newcommand{\R}{\mathbb{R}}
\newcommand{\T}{\mathbb{T}}
\newcommand{\N}{\mathbb{N}}
\newcommand{\Z}{\mathbb{Z}}
\newcommand{\E}{\mathbb{E}}
\newcommand{\1}[1]{1_{\{#1\}}}
\newcommand{\mute}[1]{}
\newcommand*\laplacian{\mathop{}\!\mathbin\bigtriangleup}
\newcommand{\supp}{\operatorname{supp}}
\newcommand{\spann}{\operatorname{span}}
\renewcommand{\1}{\mathbf{1}}
\renewcommand{\symbol}[1]{#1}
\definecolor{pinkish}{RGB}{255, 79, 255}
\newcommand{\cotang}[1]{#1}
\newcommand{\vareps}{\varepsilon}
\newcommand{\Sym}{\operatorname{Sym}}
\newcommand{\p}{\mathsf{p}} 
\newcommand{\proj}{\operatorname{proj}} 
\begin{document}

\title{The parabolic Anderson model on Riemann surfaces}
\author{Antoine Dahlqvist\thanks{University of Cambridge; the author is responsible for the  first  part of the Appendix} \and Joscha Diehl\thanks{Max-Planck Institute Leipzig} \and Bruce Driver\thanks{University of California San Diego; the author is responsible for the second part of the Appendix}}

\maketitle


\renewcommand{\1}{\mathbf{1}}
\newcommand{\I}{\mathcal{I}}
\newcommand{\J}{\mathcal{J}}
\newcommand{\D}{\mathcal{D}}
\newcommand{\monos}{\mathcal{T}}
\newcommand{\polys}{\bar{T}}

\newcommand{\mcD}{\mathcal{D}} 
\newcommand{\msD}{\mathscr{D}} 

\newcommand{\mcR}{\mathcal{R}} 

\newcommand{\mcT}{\mathcal{T}} 
\newcommand{\mcV}{\mathcal{V}}
\newcommand{\mcW}{\mathcal{W}}
\newcommand{\mcG}{\mathcal{G}}

\newcommand{\K}{\mathcal{K}} 
\newcommand{\dV}{d\operatorname{Vol}} 
\newcommand{\density}{d\operatorname{|Vol|}} 
\newcommand{\U}{\mathcal{U}} 
\newcommand{\dom}{\operatorname{dom}} 

\begin{abstract}
We show well-posedness for the parabolic Anderson model on $2$-dimensional closed Riemannian manifolds. To this end we extend the notion of regularity structures to curved space, and explicitly construct the minimal structure required for this equation. A central ingredient is the appropriate re-interpretation of the polynomial model, which we build up to any order.
\end{abstract}

\section{Introduction}


The last few years have seen an explosion of literature on singular stochastic partial differential equations (singular SPDEs).
The simplest instance of such an equation is the parabolic Anderson model in two dimensions, formally written as
\begin{align}
  \label{eq:introPAM}
  \partial_t u = \laplacian u + u \xi. \tag{PAM}
\end{align}
Here $u: [0,T] \times D \to \R$ is looked for, where $D$ is some $2$ dimensional domain,
and $\xi$ is (time-independent) white noise on the domain $D$.
This equation is formally ill-posed (or ``singular''), since $u$ is not expected to be regular
enough for the product $u \xi$ to be well-defined analytically.
The standard tool of stochastic calculus, the It\=o integral, is also of no use here,
since the white-noise is constant in time.

With the breakthrough results of Hairer \cite{bib:hairer}
and Gubinelli, Imkeller and Perkowski \cite{bib:gubinelliImkellerPerkowski}
a large class of such equations has become amenable to analysis.
Let us sketch the approach of \cite{bib:hairer}, since this is the one we shall use in this work.
\begin{itemize}
  \item assume that $u$ ``looks like`` the solution $\nu$ to the additive-noise equation 
    \begin{align}
      \partial_t \nu = \laplacian \nu + \xi,\label{Heat  with noise}
    \end{align}
    which is classically well-defined via convolution with the heat semigroup $P_t$
  \item under this assumption, if we somehow \emph{define} $\nu \cdot \xi$, then the framework defines $u \cdot \xi$ automatically
  \item close the fixpoint argument, i.e.
    \begin{enumerate}
      \item $u$ ''looks like`` $\nu$
      \item $w := P_t u_0 + \int_0^t P_{t-s} [ u_s \xi ] ds$
      \item then $w$ ''looks like`` $\nu$
    \end{enumerate}
\end{itemize}
It then only remains to define the missing ingredient ''$\nu \cdot \xi$``.
This can be done probabilistically and is actually the only place in this theory that is not deterministic.
Using this procedure, it is shown in \cite{bib:hairer} that \eqref{eq:introPAM} possesses a unique solution for $D = \T^2$,
the two dimensional torus. 

In this work we show that the theory can be adapted to work for $D = M$, a $2$-dimensional closed Riemannian manifold.
The theory of regularity structures is intrinsically a local theory
(as opposed to the theory of paracontrolled distributions, which, at least at first sight is global in spirit).
It is hence natural to expect that it can be applied to general geometries.
It turns out that the implementation of this heuristic is not straightforward.

At least two hurdles need indeed to be bypassed. On the one hand,  at the core
of Euclidean regularity structures stands the space of polynomials, encoding
classical Taylor expansions at any point.  The operation of re-expansion from a
point to another leads to  a morphism from $(\R^d,+)$ to a space of unipotent
matrices.  On a manifold, one would need  to look for such  a space,
encoding Taylor expansion and  enjoying  a similar structure. 
On the other hand,
as usual for fixpoint arguments of (S)PDEs,
one needs to estimate the improvement of the heat kernel
in adequate spaces, which is a global operation (Schauder estimates).


To solve the first issue,  we  show that the space  of polynomials on the tangent
space of  the manifolds is a suitable candidate for a canonical regularity
structure, that allows to encode H\"older functions.  This choice enforces a
modified definition of a regularity structure. In particular one has to abandon
the idea of one fixed vector space and work with vector bundles instead.
For our definition of a model, there is no unipotent structure anymore and re-expansions are only
approximately compatible.  Within this new framework, when considering the
parabolic Anderson model on a surface, we give a weak version of a  Schauder
estimate with elementary tools and heat kernel estimates.

This exposition does not demand any previous knowledge of regularity structures on the Reader.
In this sense it is self-contained, apart from a reference
to the reconstruction theorem of Hairer in our Theorem \ref{thm:reconstructionOnOpenSet-new} and in the construction
of the Gaussian model in Section \ref{sec:gaussian}.
Its proof using wavelet analysis is of no use reproducing here.
We believe that the validity of that reconstruction theorem, which we use in coordinates, is easily believed.

We follow a very hands-on approach. Instead of trying to set up a general theory of regularity structures on manifolds,
we work with the smallest structure that is necessary to solve PAM.
We show the Schauder estimates explicitly.
Apart from introducing for the first time regularity structures on manifolds,
we believe our work also has a pedagogical value. Since everyting is laid
out explicitly and covers the flat case $M = \T^2$,
it can serve as a gentle introduction to the general theory.

In future work we will investigate the algebraic foundation
necessary for studying general equations, without having to build the regularity structure ``by hand''.
For general equations a new proof of the Schauder estimates has also to be found.

During the writing of the present article, a different approached has been
put forward in \cite{bib:IB2016a}, where the notion of paracontrolled
products using semi-groups is developed on general metric spaces.
The advantage of the paracontrolled approach is that it requires less machinery.
On\ the downside, the class of equations that can be covered is currently strictly smaller
than in the setting of regularity structures.
Let us point to \cite{bib:IB2016b} though, which pushes the framework to more general equations.

The outline of this paper is as follows.
After presenting notational conventions,
we give in Section \ref{sec:holder} the notion
of distributions on manifold we shall use in this work.
Moreover we introduce H\"older spaces on manifolds.
In Section \ref{sec:rs} we introduce the notion of regularity structure, model
and modelled distribution on a manifold.
We show how these objects behave nicely under diffeomorphisms and use this fact to show
the reconstruction theorem.
In Section \ref{sec:poly} we give the simplest non-trivial example of a regularity structure
on a manifold; the regularity structure for ``linear polynomials''.
This forms the basis for the regularity structure for PAM, which is constructed
in Section \ref{sec:pam}.
As input it takes the product $\nu \xi$ alluded to before. This is constructed
in Section \ref{sec:gaussian} via renormalization.
Section \ref{sec:schauder} gives the Schauder estimate for modelled distributions
in the setting of PAM
and finally Section \ref{sec:fixpoint} solves the corresponding fixpoint equation.
In Section \ref{sec:higherOrderPolynomials}
we show how the construction of Section \ref{sec:poly}
can be extended to ``polynomials'' of arbitrary order.

\newcommand{\deltaf}{\delta/4} 
\newcommand{\balloftestfunctions}{\mathcal{B}}

\subsection{Notation}
\label{ss:notation}
In all what follows $M$ will be a $d$-dimensional closed Riemannian manifold.
When we specialize to the parabolic Anderson model (PAM), the dimension will be $d=2$.
Denote by $\delta > 0$ the radius of injectivity of $M$.

For a function $\varphi$ supported in $B_{\delta}( 0 ) \subset \R^d,$ we define for $\lambda \in (0,1], p \in M,$
the ``scaled test function''
\begin{align}
  \label{eq:varphiplambda}
  \varphi_p^\lambda( \cdot ) := \lambda^{-d} \varphi( \lambda^{-1} \exp_p^{-1}( \cdot ) ),
\end{align}
extended to all of $M$ by setting it to zero outside of $\exp_p( B_\delta(0) )$.

For $\tau \in \mcG$, $\mcG$ a graded normed vector bundle with grading $A$ we
denote by $||\tau||_a$ the size of component in the $a$-th level, $a \in A$.

%

The differential of a smooth enough function $f: M \to \R$ at a point $p$ will be denoted $d|_p f \in T^*_p M$.
Similar for higher order derivatives (see Section \ref{sec:higherOrderPolynomials})
$\nabla^\ell|_p f \in \left( T^*_p M \right)^{\otimes \ell}$.
For the action on vectors $W \in \left( T_p M \right)^\ell,$ we shall write
either $\langle \nabla^\ell|_p f, W \rangle$
or $\nabla_{W} f$.

For $\eta, r > 0$ denote
\begin{align}
  \label{eq:balloftestfunctions}
  \balloftestfunctions^{r,\eta}  
  :=
  \{ \varphi \in C^r(\R^d) : \supp \varphi \subset B_{\eta}(0), ||\varphi||_{C^r(\R^d)} \le 1 \},
\end{align}
where $B_{\eta}(0) := \{ x \in \R^d : |x| < \eta \}$.
Here $r$ will be depend  the situation, and will always be large enough so that the distributions
under consideration can act on $\varphi$.

We shall use $p,q$ for points in $M$
and $x,y,z$ to denote points in $\R^d$. For $x \in \R^d$, $\varphi: \R^d \to \R$ we write
\begin{align}
  \label{eq:varphixlambda}
  \varphi^\lambda_x := \lambda^{-d} \varphi( \lambda^{-1}(\cdot - x) ),
\end{align}
which is
consistent with the notation introduced above when considering $\R^d$ as
Riemannian manifold with the standard metric.

For $\gamma \in \R$ we denote by $\lceil \gamma \rceil$ the smallest integer strictly larger than $\gamma$.

For a pairing of a distribution $T$ with a test function we write $\langle T, \varphi \rangle$.

For two quantities $f,g$ we write $f \lesssim g$ if
there exists a constant $C> 0$ such that $f \le C g$.
To make explicit the dependence of $C$ on a quantity $h$, we sometimes write $f \lesssim_h g$.

\section{H\"{o}lder spaces}
\label{sec:holder}

\begin{definition}
  A \textbf{distribution} 
  on $M$ is a bounded, linear functional
  on $C^\infty_c( M )$ ($=C^\infty(M)$, if $M$ is compact).
\end{definition}

Given a density $\lambda$ on $M$,
$\langle T_\lambda, \varphi \rangle := \int_M \varphi d\lambda$ defines
a distribution.
Distributions are hence ``generalized densities``.
Compare
\cite[Section 2.8]{bib:friedlander}
and \cite[Section 1.3]{bib:waldmann}.

There is another definition of distibutions as ``generalized functions'', see \cite[Section 1.8]{bib:hormander}.
They are equivalent when there is a canonical way to turn a function into a density and vice versa.
This is the case when there is a reference density, like on a Riemannian manifold.
\begin{remark}
  \label{rem:fDistributionFunction}
  On a Riemannian manifold $M$,
  denote the standard density by $\density$.
  We can lift a function $f \in C(M)$ to a density $f \density$.
  Then, for
  $f \in C^\infty_c(M)$, $T_f$ defined as
  \begin{align*}
    \langle T_f, \varphi \rangle
    :=
    \int_M f(z) \varphi(z) \density,
  \end{align*}
  is a distribution.
\end{remark}

\begin{definition}[Push-forward]
  Let $(\Psi, \U)$ be a coordinate chart on $M$.
  If $\varphi \in C^\infty_c( \Psi(\U) )$ and $T$ is a distribution on $M$ we can define the push-forward $\Psi_* T \in \mcD'(\Psi(\U))$
  via
  \begin{align*}
    \langle \Psi_* T, \varphi \rangle
    :=
    \langle T, \Psi^* \varphi \rangle
    :=
    \langle T, \varphi \circ \Psi \rangle.
  \end{align*}
\end{definition}

\begin{remark}
  This push-forward is compatible with the pull-back of densities.
  Indeed, for
  $f \in C(M)$ we get the distribution $T_f := f \density$, by Remark \ref{rem:fDistributionFunction}.
  This
  density
  pulls back under $\Psi^{-1}$
  as (compare \cite[Proposition 16.38]{bib:lee})
  \begin{align*}
    f \density
    \mapsto
    f \circ \Psi^{-1} \sqrt{\det g} dy^1 \wedge \dots\wedge dy^d,
  \end{align*}
  where $y^i$ are standard coordinates on $\R^d$, and $g$ is the Riemannian metric in the coordinates $\Psi$.
  Hence
  \begin{align*}
    \langle \Psi_* T_f, \varphi \rangle
    &=
    \langle T_f, \varphi \circ \Psi \rangle \\
    &=
    \int_M \varphi( \Psi(x) ) f(x) \density \\
    &=
    \int_{\R^d} \varphi( z ) f( \Psi^{-1}(z) ) \sqrt{\det g(z)} dz \\
    &=
    \langle T_{ (\Psi^{-1})^* f }, \varphi \rangle,
  \end{align*}
  where the last line is the pairing of a distribution with a test function on $\R^d$ and $T_h$ is the canonical identification
  of a locally integrable density $h$ on $\R^d$ with a distribution.
\end{remark}

Recall the following definition of H\"older spaces in Euclidean space.
\begin{definition}
  For $\gamma \le 0$ denote by
  $C^\gamma(\R^d)$ the space of distributions $T \in \mcD'(\R^d)$
  with
  \begin{align*}
    ||T||_{C^\gamma(\R^d)}
    :=
    \sup_{x \in \R^d} \sup_{\lambda \in (0,1]} \sup_{\varphi \in \balloftestfunctions^{r,1}}
      \lambda^{-\gamma} |\langle T, \varphi^\lambda_x \rangle| < \infty,
  \end{align*}
  here $r := \lceil |\gamma| \rceil$,
  $\varphi^\lambda_x$ is defined in \eqref{eq:varphixlambda}
  and the set of test functions $\balloftestfunctions^{r,1}$ is defined in \eqref{eq:balloftestfunctions}.

  For $\gamma > 0$ we keep the classical definition, i.e.
  \begin{align*}
    ||T||_{C^\gamma(\R^d)}
    :=
    \sum_{|\ell| \le n}
    ||D^\ell T||_{\infty;\R^d}
    +
    \sup_{|\ell|=n; x,y \in \R^d} \frac{|D^\ell T(x) - D^\ell T(y)|}{|x-y|^{s}},
  \end{align*}
  where $\gamma = n + s$, $n \in \N$, $s \in (0,1]$.
\end{definition}

\begin{remark}
  \label{rem:independentOfLambda0}
  For $\gamma < 0$ the norm is independent of the arbitrary upper bound $1$ for the supremum over $\lambda$ as well as the support of $\varphi$.
  For every $\lambda_0, \varepsilon_0 > 0$
  \begin{align*}
    ||T||_{C^\gamma(\R^d)}
    \lesssim_{\lambda_0, \varepsilon_0}
    \sup_{x \in \R^d} \sup_{\lambda \in (0,\lambda_0]} \sup_{\varphi \in \balloftestfunctions^{r,\varepsilon_0}}
      \lambda^{-\gamma} |\langle T, \varphi^\lambda_x \rangle|,
  \end{align*}
  where $r := \lceil |\gamma| \rceil$.
\end{remark}

\begin{remark}
  \label{rem:221}
  Every time that a condition like
  \begin{align*}
    |\langle T, \varphi^\lambda_x \rangle| \lesssim \lambda^\gamma, 
  \end{align*}
  appears,
  uniformly over
  $\supp \varphi \subset B_{\varepsilon}(0),$ with 
  $||\varphi||_{C^r(B_\varepsilon(0))} \le 1,$
  one can equivalently demand
  \begin{align*}
    |\langle T, \varphi \rangle| \lesssim \lambda^\gamma, 
  \end{align*}
  uniformly over
  $\supp \varphi \subset B_{\lambda \varepsilon_0}(x)$, with 
  $||D^k \varphi||_\infty \lesssim \lambda^{-d - k}$, for $k=0,\dots,r$.
\end{remark}

We need a reformulation similar to this remark, but for Schwartz test functions.
\begin{lemma}
  \label{lem:rem221ForS}
  Let $\gamma \le 0$ and $T \in C^\gamma(\R^d)$. Then $T \in S'(\R^d)$ (and not just $T \in \mcD'(\R^d)$).
  Define for $r := \lceil |\gamma| \rceil$, $\varphi \in S(\R^d), x_0 \in \R^d, \lambda \in (0,1]$
  \begin{align*}
    C(\varphi,\lambda,x_0,N,r)
    :=
    \sup_{|k| \le r}
    \sup_{x \in \R^d}
      |D^k \varphi(x)| \lambda^{d+k} \left( 1 + \lambda^{-N} |x-x_0|^{N} \right).
  \end{align*}
  Then, for $N > d$,
  \begin{align*}
    |\langle T, \varphi \rangle| \lesssim_N C(\varphi,\lambda,x_0,N) ||T||_{C^\gamma(\R^d)} \lambda^\gamma.
  \end{align*}

\end{lemma}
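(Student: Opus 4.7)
The plan is to decompose $\varphi$ dyadically around $x_0$ at scale $\lambda$, apply the equivalent formulation of the $C^\gamma$ norm from Remark \ref{rem:221} to each piece, and sum up the contributions using the weight built into the constant $C(\varphi,\lambda,x_0,N,r)$. Once the pointwise bound is established on $C^\infty_c(\R^d)$, the claim $T\in S'(\R^d)$ follows from a standard density/continuity argument: the bound shows $T$ is continuous for a finite family of Schwartz seminorms, and $C^\infty_c$ is dense in $S$, so $T$ admits a unique extension.

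For the dyadic decomposition, fix a smooth cutoff $\chi$ with $\chi \equiv 1$ on $B_1(0)$ and $\supp \chi \subset B_2(0)$, and set $\chi_0 := \chi$, $\chi_j := \chi(2^{-j}\cdot) - \chi(2^{-j+1}\cdot)$ for $j\ge 1$. Then $\sum_{j\ge 0}\chi_j \equiv 1$ and $\chi_j$ is supported in an annulus of inner radius $\sim 2^{j-1}$ and outer radius $\sim 2^{j+1}$. Define
\begin{align*}
\varphi_j(x) := \varphi(x)\,\chi_j\!\bigl(\lambda^{-1}(x-x_0)\bigr),
\end{align*}
so $\varphi = \sum_j \varphi_j$, each $\varphi_j$ supported in a ball of radius $\mu_j := \lambda\, 2^{j+1}$ around $x_0$.

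The key pointwise estimate comes from the defining inequality of $C = C(\varphi,\lambda,x_0,N,r)$: on the support of $\varphi_j$ one has $|x-x_0|\gtrsim \lambda 2^{j-1}$ for $j\ge 1$, hence $1+\lambda^{-N}|x-x_0|^N \gtrsim 2^{jN}$, yielding
\begin{align*}
|D^k\varphi(x)| \;\lesssim\; C\,\lambda^{-d-k}\,2^{-jN}, \qquad k\le r.
\end{align*}
Combining with $|D^\ell\!\bigl(\chi_j(\lambda^{-1}\cdot)\bigr)|\lesssim \lambda^{-\ell}$ and Leibniz, we obtain $\|D^k\varphi_j\|_\infty \lesssim C\,\lambda^{-d-k}\,2^{-jN}$. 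Written relative to the scale $\mu_j$, this is $\|D^k\varphi_j\|_\infty \lesssim C\,2^{j(d+k-N)}\mu_j^{-d-k}$. Now Remark \ref{rem:221} (in the form that if $\|D^k\psi\|_\infty \lesssim A\,\mu^{-d-k}$ on a ball of radius $\sim \mu$ for all $k\le r$, then $|\langle T,\psi\rangle|\lesssim A\,\mu^{\gamma}\,\|T\|_{C^\gamma}$) gives
\begin{align*}
|\langle T,\varphi_j\rangle| \;\lesssim\; C\,\|T\|_{C^\gamma(\R^d)}\,2^{j(d+r-N)}\,\mu_j^{\gamma} \;=\; C\,\|T\|_{C^\gamma(\R^d)}\,\lambda^{\gamma}\,2^{j(d+r-N+\gamma)}.
\end{align*}
The case $j=0$ is handled analogously, with the decay factor $2^{-jN}$ replaced by $1$.

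Summing over $j\ge 0$, the geometric series converges provided $N$ is large enough that $d+r-N+\gamma<0$ (which is guaranteed for every sufficiently large $N$, in particular for all $N$ beyond a threshold depending on $d$, $r$ and $\gamma$), and one obtains
\begin{align*}
|\langle T,\varphi\rangle| \;\le\; \sum_{j\ge 0}|\langle T,\varphi_j\rangle| \;\lesssim_N\; C(\varphi,\lambda,x_0,N,r)\,\|T\|_{C^\gamma(\R^d)}\,\lambda^{\gamma},
\end{align*}
as claimed. The main bookkeeping obstacle is tracking the interplay between the three scales ($\lambda$, the annular radius $\lambda 2^j$, and the Schwartz decay $2^{-jN}$) and verifying that the hypothesis imposed on $N$ is genuinely sufficient; no deep ingredient beyond Remark \ref{rem:221} is needed.
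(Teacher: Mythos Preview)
Your dyadic annular decomposition has a genuine gap. You invoke Remark~\ref{rem:221} in the form ``$\|D^k\psi\|_\infty \lesssim A\,\mu^{-d-k}$ on a ball of radius $\sim\mu$ implies $|\langle T,\psi\rangle|\lesssim A\,\mu^{\gamma}\,\|T\|_{C^\gamma}$'' at scale $\mu=\mu_j=\lambda\,2^{j+1}$. But that reformulation of the $C^\gamma$ norm is only valid for $\mu$ in a \emph{bounded} range (say $\mu\le 1$, or $\mu\le\lambda_0$ with a constant depending on $\lambda_0$; see Remark~\ref{rem:independentOfLambda0}). It fails uniformly for large $\mu$: the constant function $T\equiv 1$ lies in every $C^\gamma$ with $\gamma\le 0$, yet $\langle T,\psi^\mu_x\rangle=\int\psi$ does not decay like $\mu^\gamma$ as $\mu\to\infty$. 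Since $\mu_j\to\infty$ with $j$, the tail of your sum is not controlled as written.

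The paper sidesteps this by decomposing at the \emph{single} scale $\lambda\le 1$ rather than dyadically: using a lattice partition of unity $\phi_z$, $z\in\Z^d$, one writes $\varphi=\sum_z\varphi_{z,\lambda}$ with each $\varphi_{z,\lambda}$ supported in $B_\lambda(\lambda z)$ and satisfying $\|D^k\varphi_{z,\lambda}\|_\infty\lesssim C_\varphi\,\lambda^{-d-k}\,(1+|z-\lambda^{-1}x_0|^N)^{-1}$. Remark~\ref{rem:221} then applies directly at scale $\lambda$, and summing the resulting bounds over the lattice converges precisely when $N>d$. Your argument can be repaired by further chopping each annular piece $\varphi_j$ into $\sim 2^{jd}$ sub-pieces of radius $\lambda$; once you do that you have essentially reproduced the paper's lattice proof. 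Note also that even ignoring the gap, your convergence condition $N>d+r+\gamma$ is strictly stronger than the stated $N>d$ (since $r+\gamma\in(0,1]$ under the paper's ceiling convention), whereas the lattice approach gives the sharp threshold.
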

\begin{remark}
  \label{rem:rem221ForS}
  Note that if $\varphi \in S(\R^d)$, then
  $\varphi^\lambda_{x_0} := \lambda^{-d} \varphi(\lambda^{-1}( \cdot - x_0) )$ satisfies
  for $\lambda \in (0,1]$, $r > 0$, $N \in \N$, $x_0 \in \R^d$
  \begin{align*}
    C(\varphi^\lambda_{x_0},\lambda,x_0,N,r)
    \le
    \sup_{|k| \le r}
    \sup_{x \in \R^d}
      |D^k \varphi(x)| \left( 1 + |x|^{N} \right).
  \end{align*}
\end{remark}
\begin{proof}
  Let $\phi_z$, $z \in \Z^d$, be a partition of unity of $\R^d$ such that
  $\supp \phi_z \subset B_1(z)$ and $\sup_{z \in \Z^d} ||\phi_z||_{C^r} < \infty$.
  %

  Define
  \begin{align*}
    \varphi_{z,\lambda}(\cdot) := \phi_z(\lambda^{-1} \cdot) \varphi(\cdot).
  \end{align*}
  Then $\sum_{z \in \Z^d} \varphi_{z,\lambda} = \varphi$.
  Write for short $C_\varphi := C(\varphi,\lambda,x_0,N,r)$.
  We have $\supp \varphi_{z,\lambda} \subset B_{\lambda}( \lambda z )$
  and 
  \begin{align*}
    ||D^k \varphi_{z,\lambda}||_\infty
    &\lesssim C_\varphi \lambda^{-d-k} \frac{1}{1 + \lambda^{-N} |\lambda z - x_0|^{N}} \\
    &= C_\varphi \lambda^{-d-k} \frac{1}{1 + |z - \lambda^{-1} x_0|^{N}}.
  \end{align*}
  Then
  \begin{align*}
    |\langle T, \varphi \rangle|
    &\le
    \sum_{z \in \Z^d} |\langle T, \varphi_{z,\lambda} \rangle| \\
    &\le
    C_\varphi
    ||T||_{C^\gamma(\R^d)}
    \lambda^\gamma
    \sum_{z \in \Z^d}
    \frac{1}{1 + |z - \lambda^{-1} x_0|^{N}} \\
    &\lesssim_N
    C_\varphi
    ||T||_{C^\gamma(\R^d)}
    \lambda^\gamma,
  \end{align*}
  as desired.
  We used the fact that 
  $\sum_{z \in \Z^d}
  \frac{1}{1 + |z - \lambda^{-1} x_0|^{N}}$
  is upper bounded by $\int_{\R^d} \frac{1}{1 + |z - \lambda^{-1} x_0|^{N}} dz = \int_{\R^d} \frac{1}{1 + |z|^{N}} dz$,
  which is finite, since $N>d,$ and independent of $\lambda$.
\end{proof}

\begin{definition}
  \label{def:cgamma}
  Let $M$ be a closed Riemannian manifold.
  Let a finite partition of unity $(\phi_i)_{i\in I}$ be given on $M$, subordinate to a finite atlas $(\Psi_i, U_i)_{i\in I}$.
  For $\gamma \in \R,$ define
  \begin{align*}
    C^\gamma(M) := C^\gamma(M;(\Psi_i,U_i),\phi_i) := \{  f: M \to \R: (\Psi_i)_* (\phi_i f) \in C^\gamma( \R^d ), \quad \forall i\in I \},
  \end{align*}
  and 
  \begin{align*}
    ||f||_\gamma := \sup_{i\in I} ||(\Psi_i)_* (\phi_i f)||_{C^\gamma( \R^d )}.
  \end{align*}
\end{definition}

For $\gamma > 0,$ an equivalent characterization of $C^\gamma(M)$ will be shown
in Theorem \ref{thm:cgammaDgamma}. We now give one in the case $\gamma \le 0$.
\begin{lemma}
  \label{lem:cgammaViaExp}
  For $\gamma \le 0$, $M$ a closed Riemannian manifold, an equivalent norm on $C^\gamma(M)$ is given by
  \begin{align*}
    \sup_{p \in M, \lambda \in (0,1], \varphi \in \balloftestfunctions^{r,\delta}} \frac{|\langle T, \varphi^\lambda_p \rangle|}{\lambda^\gamma},
  \end{align*}
  where we recall that $\varphi^\lambda_p$ is defined in \eqref{eq:varphiplambda}.
\end{lemma}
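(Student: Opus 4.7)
The plan is to prove both inequalities by combining the finite partition of unity $(\phi_i)$ with the observation that, uniformly in $p \in M$, the change--of--coordinates maps $\Psi_i \circ \exp_p$ and $\exp_p^{-1} \circ \Psi_i^{-1}$ are smooth with bounded derivatives of all orders (by compactness of $M$), so that they transport $\lambda$--scaled bumps to $\lambda$--scaled bumps with uniformly controlled $C^r$ constants. Remark \ref{rem:221} and Remark \ref{rem:independentOfLambda0} will let us switch freely between ``perfectly scaled'' test functions $\varphi^\lambda_{\cdot}$ and general test functions of support $\lesssim \lambda$ with derivative bounds $\lesssim \lambda^{-d-k}$.

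For the bound ``$\text{exp--norm} \lesssim \|\cdot\|_\gamma$'', given $p \in M$, $\lambda \in (0,1]$, $\varphi \in \balloftestfunctions^{r,\delta}$, I split
\begin{align*}
  \langle T, \varphi^\lambda_p \rangle
  \;=\; \sum_{i \in I} \langle T, \phi_i \varphi^\lambda_p \rangle
  \;=\; \sum_{i \in I} \langle (\Psi_i)_*(\phi_i T),\, \varphi^\lambda_p \circ \Psi_i^{-1} \rangle,
\end{align*}
and check that, for each $i$, either $\psi_i := \varphi^\lambda_p \circ \Psi_i^{-1}$ is identically zero (if $\supp \phi_i$ does not meet the geodesic $\lambda\delta$--ball around $p$) or is supported in a Euclidean ball of radius $\lesssim \lambda$ around $\Psi_i(p)$ and satisfies $\|D^k \psi_i\|_\infty \lesssim \lambda^{-d-k}$ for $k = 0, \dots, r$, via the chain rule applied to $\exp_p^{-1} \circ \Psi_i^{-1}$. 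Remark \ref{rem:221} then yields $|\langle (\Psi_i)_*(\phi_i T), \psi_i \rangle| \lesssim \|(\Psi_i)_*(\phi_i T)\|_{C^\gamma(\R^d)} \lambda^\gamma$, and summing over the finite index set $I$ gives the claim.

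For the converse ``$\|\cdot\|_\gamma \lesssim \text{exp--norm}$'', I fix $i$ and estimate $|\langle (\Psi_i)_*(\phi_i T), \varphi^\lambda_x \rangle|$ for $\varphi \in \balloftestfunctions^{r,1}$. By Remark \ref{rem:independentOfLambda0}, I may restrict to $\lambda \in (0,\lambda_0]$ and $\varphi \in \balloftestfunctions^{r,\varepsilon_0}$ for suitable small $\lambda_0, \varepsilon_0 > 0$. Setting $p := \Psi_i^{-1}(x)$ (the pairing being zero for small $\lambda$ if $x \notin \Psi_i(U_i)$), I rewrite
\begin{align*}
  \langle (\Psi_i)_*(\phi_i T), \varphi^\lambda_x \rangle
  \;=\; \langle T,\, \phi_i \cdot \lambda^{-d}\varphi(\lambda^{-1}(\Psi_i(\cdot)-x)) \rangle
  \;=\; \langle T, \tilde\varphi^\lambda_p \rangle,
\end{align*}
where $\tilde\varphi(v) := \phi_i(\exp_p(\lambda v)) \, \varphi\bigl(\lambda^{-1}(\Psi_i(\exp_p(\lambda v)) - x)\bigr)$. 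Taylor expanding $\Psi_i \circ \exp_p$ at $v=0$ (using $d\exp_p|_0 = \mathrm{id}$) gives $\lambda^{-1}(\Psi_i(\exp_p(\lambda v)) - x) = (d\Psi_i|_p) v + O(\lambda |v|^2)$ with $(d\Psi_i|_p)$ invertible, uniformly in $p$; hence $\supp \tilde\varphi \subset B_{C\varepsilon_0}(0)$ and $\|\tilde\varphi\|_{C^r} \lesssim 1$, both uniformly in $p$ and $\lambda \le \lambda_0$. Choosing $\varepsilon_0$ small enough that $C\varepsilon_0 < \delta$, the exp--norm bound applies to $\tilde\varphi^\lambda_p$, yielding the required estimate.

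The only genuinely non--routine points are the uniformity in $p$ of the chain--rule constants for $\Psi_i \circ \exp_p$ (immediate by compactness and smoothness) and the need, in the second direction, to shrink $\varepsilon_0$ so that $\tilde\varphi$ genuinely sits inside $B_\delta(0)$, which is precisely what Remark \ref{rem:independentOfLambda0} licenses; beyond that, both directions reduce to chain--rule estimates and invoking Remark \ref{rem:221} or the exp--norm assumption.
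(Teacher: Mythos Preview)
Your argument is correct and matches the paper's approach closely: split via the partition of unity, transport the pairing to Euclidean charts, and apply Remarks \ref{rem:221} and \ref{rem:independentOfLambda0} after checking (via the uniformly bounded derivatives of $\Psi_i\circ\exp_p$ and its inverse, by compactness) that the transformed function is still a $\lambda$-scaled bump. The only detail the paper makes explicit that you gloss over in the first direction is to insert an auxiliary cutoff $\tilde\phi_i$ with $\phi_i\tilde\phi_i=\phi_i$ and $\supp\tilde\phi_i\subset U_i$, so that $\eta_i:=(\tilde\phi_i\,\varphi^\lambda_p)\circ\Psi_i^{-1}$ genuinely extends by zero to a compactly supported test function on $\R^d$ without changing the pairing with $(\Psi_i)_*(\phi_i T)$; this sidesteps the issues that $\varphi^\lambda_p\circ\Psi_i^{-1}$ is a priori only defined on $\Psi_i(U_i)$ and that $\Psi_i(p)$ need not make sense when $p\notin U_i$.
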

\begin{proof}
  Fix an atlas $(\Psi_i, U_i)$ with subordinate partition of unity $\phi_i$.
  Denote
  \begin{align*}
    C_1 &:= ||T||_{C^\gamma(M;(\Psi_i,U_i),\phi_i} \\ 
    C_2 &:= \sup_{p \in M, \lambda \in (0,1], \varphi \in \balloftestfunctions^r} \frac{|\langle T, \varphi^\lambda_p \rangle|}{\lambda^\gamma}.
  \end{align*}

  ($C_1 \ge C_2$):
  Let $\varphi \in \balloftestfunctions^r$, $p \in M$.
  Then
  \begin{align*}
    \langle T, \varphi^\lambda_p \rangle
    &=
    \sum_i \langle T \phi_i, \varphi^\lambda_p \rangle \\
    &=
    \sum_i \langle (\Psi_i)_*( T \phi_i ), \varphi^\lambda_p \circ \Psi^{-1}_i \rangle \\
    &=
    \sum_i \langle (\Psi_i)_*( T \phi_i ), \tilde{\phi}_i \circ \Psi^{-1}_i \varphi^\lambda_p \circ \Psi^{-1}_i \rangle \\
    &=:
    \sum_i \langle (\Psi_i)_*( T \phi_i ), \eta_i \rangle.
  \end{align*}
  Here $\tilde{\phi}_i$ is such that $\supp \tilde \phi_i \subset U_i$ and $\phi_i \tilde{\phi_i} = \phi_i$.
  Now
  \begin{align*}
    |\supp \eta_i| \le |\Psi_i\left( B_{c \lambda}( p ) \cap \tilde{\phi}_i \right)| \lesssim \lambda.
  \end{align*}
  Indeed, this follows from
  \begin{align*}
    \frac{1}{c} I \le D\left[ \exp^{-1}_p \circ \Psi^{-1}_i \right](x),
  \end{align*}
  for some constant $c > 0$, for all $i$, and $x \in \supp \tilde{\phi}_i \circ \Psi^{-1}_i$.
  Moreover
  \begin{align*}
    |D^k\left[ \exp^{-1}_p \Psi^{-1}_i \right](x)| \lesssim_k 1,
  \end{align*}
  for all $i$.
  Hence
  \begin{align*}
    |D^k \eta_i(x)| \lesssim C_1 \lambda^{-d - k}.
  \end{align*}
  The result then follows from Remark \ref{rem:221}.

  ($C_1 \le C_2$):
  We have to show
  \begin{align*}
    |\langle (\Psi_i)_*\left( T \phi_i \right), \varphi^\lambda_x \rangle| \lesssim C_2 \lambda^\gamma,
  \end{align*}
  for all $\supp \varphi \subset B_1(0)$, $||\varphi||_{C^r} \le 1$.
  Now for $x \in \R^d$
  \begin{align*}
    \langle (\Psi_i)_*\left( T \phi_i \right), \varphi^\lambda_x \rangle 
    &=
    \langle T, \phi_i \varphi^\lambda_x \circ \Psi_i \rangle \\
    &=
    \langle T, \left( \phi_i \varphi^\lambda_x \circ \Psi_i \right) \circ \exp_{\Psi^{-1}_i(x)} \circ \exp^{-1}_{\Psi^{-1}_i(x)} \rangle,
  \end{align*}
  where the last equality holds if $\supp h_{i;\lambda;x} \subset B_{\delta/2}( \Psi^{-1}_i(x) )$,
  with $h_{i;\lambda;x} := \left( \phi_i \varphi^\lambda_x \circ \Psi_i \right)$.

  Claim: there exists $\lambda_0 > 0$ such that for all $\lambda \le \lambda_0$, $i \in I$, $x \in \R^d$
  either $h_{i;\lambda;x} = 0$
  or $\supp h_{i;\lambda;x} \subset B_{\delta/2}( \Psi^{-1}_i(x) )$.
  Indeed, since $I$ is finite and for all $i\in I$, $\phi_i$ is compactly supported in $U_i$
  there exists $\vareps_0, \vareps_1 > 0$ such that
  if $\lambda < \vareps_0$
  and if $h_{i;\lambda;x} \not\equiv 0$ then $d(x, \partial \Psi_i(U_i) > \vareps_1$.
  Away from the boundary, the differential of $\Psi_i$ is bounded,
  and then for $z \in \supp h_i$ one has $d(z,\Psi_i^{-1}(x)) = O(\lambda)$.
  This proves the claim.
  
  Now one checks that $h_{i;\lambda} \circ \exp_{\Psi^{-1}_i(x)}$ falls under Remark \ref{rem:221},
  and applies Remark \ref{rem:independentOfLambda0}.
\end{proof}

As immediate consequence we get the following statement.
\begin{corollary}
  \label{lem:cgammaInEveryAtlas}
  Let $(\bar \Psi_j, \bar U_j)_{j\in J}$ be another finite atlas with subordinate partition of unity $(\bar \phi_j)_{j\in J}$.
  Then for $\gamma \le 0$
  \begin{align*}
    C^\gamma(M;(\Psi_i,U_i),\phi_i)
    =
    C^\gamma(M;(\bar \Psi_j,\bar U_j),\bar \phi_j)
  \end{align*}
  with equivalent norms.
\end{corollary}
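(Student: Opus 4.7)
The plan is to observe that Lemma \ref{lem:cgammaViaExp} already does all the work: it produces an intrinsic norm on $C^\gamma(M)$, defined purely in terms of the Riemannian structure (the exponential map and the radius of injectivity $\delta$), namely
\begin{align*}
  |||T|||_\gamma := \sup_{p \in M, \lambda \in (0,1], \varphi \in \balloftestfunctions^{r,\delta}} \frac{|\langle T, \varphi^\lambda_p \rangle|}{\lambda^\gamma},
\end{align*}
which makes no reference to any atlas or partition of unity. Since this intrinsic quantity is equivalent to $||\cdot||_{C^\gamma(M;(\Psi_i,U_i),\phi_i)}$ by Lemma \ref{lem:cgammaViaExp}, the same statement applied to the atlas $(\bar \Psi_j, \bar U_j)_{j\in J}$ with partition of unity $(\bar \phi_j)_{j\in J}$ shows that $|||T|||_\gamma$ is also equivalent to $||\cdot||_{C^\gamma(M;(\bar\Psi_j,\bar U_j),\bar\phi_j)}$.

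Concretely, I would chain the two inequalities: applying the $(C_1 \ge C_2)$ direction of Lemma \ref{lem:cgammaViaExp} to the first atlas yields $|||T|||_\gamma \lesssim ||T||_{C^\gamma(M;(\Psi_i,U_i),\phi_i)}$, and then the $(C_1 \le C_2)$ direction applied to the second atlas yields $||T||_{C^\gamma(M;(\bar\Psi_j,\bar U_j),\bar\phi_j)} \lesssim |||T|||_\gamma$. Combining gives one inequality, and swapping the roles of the two atlases gives the reverse. In particular, membership in the respective spaces is the same: $T \in C^\gamma$ with respect to the first atlas iff $|||T|||_\gamma < \infty$ iff $T \in C^\gamma$ with respect to the second atlas.

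There is no real obstacle here, since the delicate coordinate comparisons (control of the differentials of $\exp^{-1}_p \circ \Psi^{-1}_i$, the support size estimates of the rescaled test functions, the invocation of Remarks \ref{rem:221} and \ref{rem:independentOfLambda0}) have all been absorbed into the proof of Lemma \ref{lem:cgammaViaExp}. The only mild point to mention is that the hidden constants in the equivalences depend on the chosen atlas (through the $C^r$-bounds on the charts, the transition maps, and the partition of unity), but this is exactly the content of $\lesssim$ in Definition \ref{def:cgamma}, so the conclusion ``equivalent norms'' is the natural output.
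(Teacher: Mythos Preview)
Your proposal is correct and matches the paper's approach exactly: the paper states the corollary as an ``immediate consequence'' of Lemma~\ref{lem:cgammaViaExp}, and your argument spells out precisely that deduction by passing through the atlas-free norm $|||T|||_\gamma$.
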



\section{Regularity structures on manifolds}
\label{sec:rs}

Let $M$ be a $d$-dimensional Riemannian manifold without boundary.
The two cases we are most interested in are
\begin{itemize}
  \item $M$ is compact without boundary (i.e. closed)
  \item $M$ is an open bounded subset of $\R^d$ with induced Euclidean metric
\end{itemize}

We now give our definition of a regularity structure and a model on a manifold $M$.
For concrete incarnations of these abstract definitions we refer
the reader to Section \ref{sec:poly} for the implementation of a first order ``polynomial'' structure;
to Section \ref{sec:higherOrderPolynomials} for a structure implementing ``polynomials'' of any order
and right before Lemma \ref{lem:theseAreInFactModels} for the structure used for the parabolic Anderson model.

\begin{definition}[Regularity structure]
  \label{def:regularityStructure}
  A \textbf{regularity structure}
  is a graded vector bundle $\mcG$ on $M$,
  with a finite grading $A = A(\mcG) \subset \R$.
  For $\alpha \in A$, $\mcG_\alpha$ denotes the vector bundle of homogeneity $\alpha$.
  It is assumed to be finite dimensional. We denote the fiber at $p \in M$
  by $\mcG|_p$ and the fiber of homogeneity $\alpha$ at $p$ by $\mcG_\alpha|_p$.
  For $p \in M, \tau \in \mcG|_p$, $\alpha \in A$ we write $\proj_{\mcG_\alpha|_p}$
  for the projection of $\tau$ onto $\mcG_\alpha|_p$.
\end{definition}

%

\begin{definition}[Model]
  \label{def:modelGeneral}
  Let a collection of open sets $\U_q \subset M$, $q \in M$, with $q \in \U_q$,
  and maps
  \begin{align*}
    \Pi_q: \mcG|_q &\to D'( \U_q ) \\
    \Gamma_{p \leftarrow q}: \mcG|_q &\to \mcG|_p,
  \end{align*}
  be given.
  We assume there is for every compactum $\K \subset M$
  a constant
  $\delta_\K = \delta_\K(\Pi,\Gamma,\{\U_q\}_q) > 0,$ such that $\Gamma_{p \leftarrow q}$ is defined for $p,q \in \K, d(p,q) < \delta_\K$
  and for $q \in \K$,
  $\exp_q|_{B_{\delta_\K}(0)}$ is a diffeomorphism and
  $\exp_q(B_{\delta_\K}(0)) \subset \U_q$.


  Given $\beta \in \R$,
  we say that $(\Pi, \Gamma)$ is a \textbf{model with transport precision $\beta$} if
  the following entity is finite
  for every compactum $\K \subset M$
  \begin{align*}
    ||\Pi,\Gamma||_{\beta;\K}
    &:=
    \sup_{p\in \K, \ell \in A(\mcG), \tau \in \mcG_\ell|_p, \lambda \in (0,1], \varphi \in \balloftestfunctions^{r,\delta_\K}}
    \frac{ |\langle \Pi_p \tau, \varphi_p^\lambda \rangle| }{ \lambda^\ell ||\tau|| } \\
    &\qquad
    +
    \sup_{p,q \in \K: d(p,q) < \delta_\K / 2, \tau \in \mcG|_q, \lambda \in (0,1], \varphi \in \balloftestfunctions^{r,\delta_\K/2}}
    \frac{ |\langle \Pi_q \tau - \Pi_p \Gamma_{p \leftarrow q} \tau, \varphi_p^\lambda \rangle| }{ \lambda^\beta ||\tau|| } \\
    &\qquad
    +
    \sup_{ \ell \in A(\mcG); m ; p, q \in \K, d(p,q) < \delta_\K; \tau \in \mcG_\ell|_q}
    \frac
    { |\Gamma_{p \leftarrow q} \tau|_m }
    {d(p,q)^{(\ell - m) \vee 0} |\tau| },
  \end{align*}
  where we recall that the set of test functions $\balloftestfunctions^{r,\delta}$ was defined in \eqref{eq:balloftestfunctions}.

\end{definition}

\begin{remark}
  Note that the conditions on a model do not pin down the global regularity
  of $\Pi_q \tau$.
  Without loss of generality we will assume that $\Pi_q \tau \in C^\alpha(U_q)$
  for all $q \in M, \tau \in \mcG|_q$
  and $\alpha := \min A(\mcG)$.
\end{remark}

Our definition of a regularity structure and a corresponding model
are slightly more general than the original formulation by Hairer \cite{bib:hairer}.
This extension is necessary to accomodate the ``polynomial regularity structure'',
which will be constructed up to first order in Section \ref{sec:poly} and
up to any order in Section \ref{sec:higherOrderPolynomials}.
Let us point out the key differences.
\begin{itemize}
  \item 
    Derivatives of functions on a general manifold $M$
    can only be stored in a fibered space.
    Hence the regularity structure has to be a vector bundle and not a fixed vector space.

  \item 
    For this reason there cannot be a fixed structure group $G$ in which the transport maps $\Gamma_{p \leftarrow q}$ take value.

  \item 
    The transport maps $\Gamma_{p \leftarrow q}$ can also act ``upwards'', see Remark \ref{rem:upwards}.

  \item
    The distributions $\Pi_p \tau$ as well as the transports $\Gamma_{p \leftarrow q}$ only make sense locally.
\end{itemize}

It turns out that the theory can handle these slight extensions. In particular
the reconstruction theorem still holds, Theorem
\ref{thm:reconstructionOnTheManifold-new}.
Finally, we remark that our regularity structure does not include time
and that the parabolic Anderson model will be treated by considering functions in time, valued
in modelled distributions (Definition \ref{def:modelledDistributions}) on a manifold.

As in Lemma \ref{lem:rem221ForS} we know how $\Pi_p \tau$ acts on a more general class of functions:
\begin{lemma}
  \label{lem:rem221ForTheModel}
  For a regularity structure $\mcG$ let be
  given a model $(\Pi,\Gamma)$ of transport precision $\beta$ with $\beta \ge \sup_{a \in A(\mcG)} |a|$.
  Let $p \in \K$, a compactum in $M$.
  Let $\varphi$ satisfy the assumptions of Lemma \ref{lem:rem221ForS}
  with the additional condition
  $\supp \varphi \subset B_{\delta_\K / 4}(0) \subset \R^d$.
  Assume moreover that $B_{\delta_\K / 2}(p) \subset \K$ (which can always be achieved by making $\delta_\K$ smaller.)
  Then for $\tau \in \mcG_\ell|_p$
  \begin{align*}
    |\langle \Pi_p \tau, \varphi \circ \exp^{-1}_p \rangle|
    \lesssim
    C_\varphi
    ||\Pi,\Gamma||_{\beta,\K} \lambda^\ell,
  \end{align*}
  where $C_\varphi := C(\varphi,\lambda,0,N,r)$ is defined in Lemma \ref{lem:rem221ForS}.
\end{lemma}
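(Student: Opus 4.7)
The strategy mirrors the proof of Lemma \ref{lem:rem221ForS}, replacing the Euclidean translations between scaled test functions at different base points by the model's transport maps. Fix a partition of unity $(\phi_z)_{z\in\Z^d}$ of $\R^d$ with $\supp\phi_z\subset B_1(z)$ and uniformly bounded $C^r$-norm, and set $\varphi_{z,\lambda}(\cdot):=\phi_z(\lambda^{-1}\cdot)\varphi(\cdot)$, so that $\varphi=\sum_z\varphi_{z,\lambda}$, $\supp\varphi_{z,\lambda}\subset B_\lambda(\lambda z)$, and $\|D^k\varphi_{z,\lambda}\|_\infty\lesssim C_\varphi\lambda^{-d-k}/(1+|z|^N)$ for $k\le r$. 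Because $\supp\varphi\subset B_{\delta_\K/4}(0)$, only finitely many summands are non-zero.

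For each such $z$ set $q_z:=\exp_p(\lambda z)$. By Remark \ref{rem:independentOfLambda0} we may assume $\lambda$ is small enough that $d(p,q_z)=|\lambda z|<\delta_\K/2$, that $q_z\in\K$, and that the function $\chi(y):=\lambda^d(\varphi_{z,\lambda}\circ\exp_p^{-1}\circ\exp_{q_z})(\lambda y)$ is supported in $B_{\delta_\K/2}(0)$ and satisfies $\|\chi\|_{C^r}\lesssim C_\varphi/(1+|z|^N)$. This last estimate uses the chain rule together with the uniform boundedness of the derivatives of the transition map $\exp_p^{-1}\circ\exp_{q_z}$ as $p,q_z$ range over the relevant compactum, a standard consequence of the injectivity radius and the compactness of $\K$. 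By construction $\varphi_{z,\lambda}\circ\exp_p^{-1}=\chi_{q_z}^\lambda$.

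Now split
\[
\Pi_p\tau=\Pi_{q_z}\Gamma_{q_z\leftarrow p}\tau-\bigl(\Pi_{q_z}\Gamma_{q_z\leftarrow p}\tau-\Pi_p\tau\bigr).
\]
The bracketed term is controlled by the transport-precision part of $\|\Pi,\Gamma\|_{\beta,\K}$ (applied with the roles of $p$ and $q$ exchanged), contributing $\lesssim C_\varphi\|\Pi,\Gamma\|_{\beta,\K}\lambda^\beta\|\tau\|/(1+|z|^N)\le C_\varphi\|\Pi,\Gamma\|_{\beta,\K}\lambda^\ell\|\tau\|/(1+|z|^N)$ after using $\beta\ge|\ell|$ and $\lambda\le 1$. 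For the main term, decompose $\Gamma_{q_z\leftarrow p}\tau=\sum_m(\Gamma_{q_z\leftarrow p}\tau)_m$, use the third part of $\|\Pi,\Gamma\|_{\beta,\K}$ to bound $\|(\Gamma_{q_z\leftarrow p}\tau)_m\|\lesssim|\lambda z|^{(\ell-m)\vee 0}\|\tau\|$, and apply its first part to each $\langle\Pi_{q_z}(\Gamma_{q_z\leftarrow p}\tau)_m,\chi_{q_z}^\lambda\rangle$; each piece is then bounded by $\lesssim C_\varphi\|\Pi,\Gamma\|_{\beta,\K}^2\lambda^\ell(1+|z|)^{(\ell-m)\vee 0-N}\|\tau\|$. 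Summing over the finite set of homogeneities $m\in A(\mcG)$ and over the non-zero $z$ yields the claim, provided $N$ is taken large enough for $\sum_z(1+|z|)^{\max(\ell-m,0)-N}$ to converge—any additional $N$-dependence is absorbed into the implicit $\lesssim_N$.

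The main obstacle is the bookkeeping that enters $\|\chi\|_{C^r}$: one needs the chain-rule derivatives of $\exp_p^{-1}\circ\exp_{q_z}$ controlled uniformly across $q_z$ in the compactum, which is standard but must be done carefully so as to expose the clean factor $C_\varphi/(1+|z|^N)$. Everything else is a methodical combination of the three constituents of $\|\Pi,\Gamma\|_{\beta,\K}$.
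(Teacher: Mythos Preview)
Your approach is the same as the paper's: partition $\varphi$ by translated bump functions at scale $\lambda$, re-centre each piece at $q_z=\exp_p(\lambda z)$ via the transport $\Gamma_{q_z\leftarrow p}$, and sum the resulting geometric series in $z$.

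The one slip is the appeal to Remark~\ref{rem:independentOfLambda0} to ``assume $\lambda$ small enough''. That remark concerns the equivalence of $C^\gamma$-norms under changing the cutoff on $\lambda$; it does not let you shrink the \emph{given} parameter $\lambda$ appearing in $C_\varphi=C(\varphi,\lambda,0,N,r)$. Without such a reduction, when $\lambda>\delta_\K/4$ the non-zero pieces $\varphi_{z,\lambda}$ can satisfy $|\lambda z|\ge\delta_\K/2$, so $q_z$ need not lie in $\K$ and $\Gamma_{q_z\leftarrow p}$ need not be defined. The paper sidesteps this by running the partition at scale $\lambda_\K:=\lambda\,\delta_\K/4$ rather than $\lambda$: then every non-zero piece automatically has $|\lambda_\K z|<\delta_\K/2$, and since $\lambda_\K$ and $\lambda$ differ by the fixed factor $\delta_\K/4$ the derivative bounds and the final $\lambda^\ell$ are unaffected up to constants depending on $\K$. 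Replacing $\lambda$ by $\lambda_\K$ in your construction closes the gap with no further change to the argument.
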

\begin{proof}
  Let $\phi_z$, $z \in \Z^d$, be a partition of unity of $\R^d$ such that
  $\supp \phi_z \subset B_1(z)$ and $\sup_{z \in \Z^d} ||\phi_z||_{C^r} < \infty$.
  Let $\lambda_\K := \lambda \delta_\K / 4$.
  Define
  \begin{align*}
    \varphi_{z,\lambda_\K} := \phi_z( \lambda_\K^{-1} \cdot ) \varphi \qquad z \in \Z^d,
  \end{align*}
  so that
  \begin{align*}
    \sum_{z \in \Z^d} \varphi_{z,\lambda_\K} = \varphi.
  \end{align*}
  Then $\supp \varphi_{z,\lambda_\K} \subset B_{\lambda_\K}(\lambda_\K z) \cap B_{\delta_\K / 4}(0)$.
  Hence $\varphi_{z,\lambda_\K} \equiv 0$ for $|\lambda_\K z| \ge \delta_\K / 2$.
  Moreover
  \begin{align*}
    ||D^k \varphi_{z,\lambda_\K}||_\infty
    &\lesssim 
    C_\varphi \lambda^{-d-k} \frac{1}{1 + |z|^{N}}.
  \end{align*}

  Then
  \begin{align*}
    \langle \Pi_p \tau, \varphi \circ \exp^{-1}_p \rangle
    &=
    \sum_{z \in \Z^d} \langle \Pi_p \tau, \varphi_{z,\lambda_\K} \circ \exp^{-1}_p \rangle \\
    &=
    \sum_{z \in \Z^d, |\lambda_\K z| < \delta_\K / 2} \langle \Pi_p \tau, \varphi_{z,\lambda_\K} \circ \exp^{-1}_p \rangle \\
    &=
    \sum_{z \in \Z^d, |\lambda_\K z| < \delta_\K / 2}
      \langle \Pi_{\exp_p(\lambda_\K z)} \Gamma_{\exp_p(\lambda_\K z) \leftarrow p} \tau, \varphi_{z,\lambda_\K} \circ \exp^{-1}_p \rangle \\
      &\qquad
      +
      \langle \Pi_p \tau - \Pi_{\exp_p(\lambda_\K z)} \Gamma_{\exp_p(\lambda_\K z) \leftarrow x} \tau, \varphi_{z,\lambda_\K} \circ \exp^{-1}_p \rangle \\
  \end{align*}
  Note that in the sum $|\lambda_\K z| < \delta_\K / 2$. Hence, by assumption $q := \exp_p(\lambda_\K z ) \in \K$.
  Hence by definition of a model, $|\Gamma_{p \leftarrow q} \tau|_m \le ||\Pi,\Gamma||_{\beta,\K} d(p,q)^{(\ell-m)\vee 0}$ for $\tau \in \mcG_\ell|_q$.
  Then for those $z$
  \begin{align*}
    |\langle \Pi_{\exp_p(\lambda_\K z)} \Gamma_{\exp_p(\lambda_\K z) \leftarrow x} \tau, \varphi_{z,\lambda_\K} \circ \exp^{-1}_p \rangle|
    &\le
    \sum_{n \le \ell}
    |\langle \Pi_{\exp_p(\lambda_\K z)} \operatorname{proj}_{\mcT_n} \Gamma_{\exp_p(\lambda_\K z) \leftarrow x} \tau, \varphi_{z,\lambda_\K} \circ \exp^{-1}_p \rangle| \\
    &\qquad
    +
    \sum_{n > \ell}
    |\langle \Pi_{\exp_p(\lambda_\K z)} \operatorname{proj}_{\mcT_n} \Gamma_{\exp_p(\lambda_\K z) \leftarrow x} \tau, \varphi_{z,\lambda_\K} \circ \exp^{-1}_p \rangle| \\
    &\lesssim
    C_\varphi
    ||\Pi,\Gamma||_{\beta,\K}
    \left(
      \sum_{n \le \ell} \lambda^n \frac{1}{1 + |z|^N} |\lambda z|^{\ell-n}
      +
      \sum_{n > \ell} \lambda^n \frac{1}{1+ |z|^N}
    \right).
  \end{align*}
  Moreover
  \begin{align*}
    |\langle \Pi_p \tau - \Pi_{\exp_p(\lambda_\K z)} \Gamma_{\exp_p(\lambda_\K z) \leftarrow x} \tau, \varphi_{z,\lambda_\K} \circ \exp^{-1}_p \rangle|
    &\lesssim
    C_\varphi
    ||\Pi,\Gamma||_{\beta,\K}
    \lambda^\beta \frac{1}{1+|z|^N}
  \end{align*}
  Combining,
  \begin{align*}
    |\langle \Pi_p \tau, \varphi \circ \exp^{-1}_p \rangle|
    &\lesssim
    C_\varphi
    ||\Pi,\Gamma||_{\beta,\K}
    \sum_{z \in \Z^d}
    \left(
      \lambda^\ell \sum_{n \le \ell} \frac{1}{1 + |z|^N} |z|^{\ell-n}
      +
      \lambda^\ell \frac{1}{1+ |z|^N}.
      +
      \lambda^\beta \frac{1}{1+|z|^N}
    \right) \\
    &\lesssim
    C_\varphi
    ||\Pi,\Gamma||_{\beta,\K}
    \lambda^\ell.
  \end{align*}
\end{proof}

\begin{definition}
  \label{def:modelledDistributions}
  Let $\mcG$ be a regularity structure and $(\Pi,\Gamma)$ a model of precision $\beta \in \R$.
  Define for $\gamma > \sup_{\alpha \in A(\mcG)} |\alpha|$ the space of \textbf{modelled distributions}
  \begin{align*}
    \msD^\gamma(M,\mcG) := \{ f: M \to \mcG : f \text{ is a section of }\mcG, ||f||_{\msD^\gamma(\K,\mcG)} < \infty \text{ for all compacta } \K \subset M \}.
  \end{align*}
  with
  \begin{align*}
    ||f||_{\msD^\gamma(\K,\mcG)}
    := 
    \sum_{\ell < \gamma} \sup_{p \in \K} |f(p)|_\ell + \sup_{\ell < \gamma} \sup_{p,q \in \K, d(p,q) < \delta_\K } \frac{|f(p) - \Gamma_{p \leftarrow q} f(q)|_\ell}{d(p,q)^{\gamma - \ell}}.
  \end{align*}
  Here $\delta_\K$ is the distance of points in $\K$ for which $\Gamma$ makes sense, see Definition \ref{def:modelGeneral}.
  Note that the precision of transport $\beta$ plays no role here.
\end{definition}

\begin{remark}
  \label{rem:mcDCutoff}
  As usual for H\"older norms,
  for every compactum $\K$
  an equivalent norm is obtained by replacing in the supremum, for any $\delta' \in (0,\delta_\K]$, the condition $d(p,q) < \delta_\K$
  with the condition $d(p,q) < \delta'$.
\end{remark}


\begin{lemma}[Push-forward]
  \label{lem:pushForward-new}
  Let $M, N$ be Riemannian manifolds.
  Let $\Psi: M \to N$ a diffeomorphism.

  Let $\mcG$ be a regularity structure
  on $M$ with model $(\Pi,\Gamma)$ with transport precision $\beta \in \R$.
  Define
  \begin{align*}
    \bar \U_q &:= \Psi( \U_{\Psi^{-1}(q)} ), \\
    \bar \mcG|_q &:= \mcG_{\Psi^{-1}(q)}, \qquad q \in N, \\
    \bar \Gamma_{p \leftarrow q} &:= \Gamma_{\Psi^{-1}(p) \leftarrow \Psi^{-1}(q)}, \qquad p,q \in N,  \\
    \bar \Pi_q \tau &:= \Psi_* \Pi_{\Psi^{-1}(q)} \tau, \qquad q \in N, \tau \in \bar \mcG|_q.
  \end{align*}%

  Then, $\bar \mcG$ is a regularity structure on $N$ with grading $\bar A = A$
  and $(\bar \Pi, \bar \Gamma)$ is a model with transport precision $\beta$.
  Moreover
  \begin{enumerate}
    \item 
      \begin{align*}
        ||\bar \Pi, \bar \Gamma||_{\beta,\K} &\lesssim ||\Pi,\Gamma||_{\beta,\Psi^{-1}(\K)} \\
      \end{align*}
    \item Let $f, f' \in \msD^\gamma( M, \mcG )$ and define
      $\tilde f(x) := f( \Psi^{-1}(x) ), \tilde f'(x) := f'(\Psi^{-1}(x))$. Then $\tilde f, \tilde f' \in \msD^\gamma(\Psi(\U), \mcG)$
      and
      \begin{align*}
        ||\bar f||_{\msD^\gamma(\K, \mcG)} &\lesssim ||f||_{\msD^\gamma( \Psi^{-1}(\K), \mcG )} \\
       ||\tilde f - \tilde f'||_{\msD^\gamma(\K, \mcG)} &\lesssim ||f - f'||_{\msD^\gamma( \Psi^{-1}(\K), \mcG )}.
      \end{align*}
  \end{enumerate}
\end{lemma}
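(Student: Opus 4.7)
Plan of proof. The entire statement reduces to tracking how the scaled test function $\varphi_p^\lambda$ behaves when pulled back through $\Psi$. My plan is to first set up the dictionary between the two exponential maps, then verify each of the three terms in the model norm separately, and finally handle the modelled distribution estimates essentially by direct substitution.

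\textbf{Step 1: The key observation about scaled test functions.} Fix a compactum $\K \subset N$ and work with $\K' := \Psi^{-1}(\K)$, which is compact in $M$. For $p \in N$, consider the composite diffeomorphism
\[
  \Phi_p := (\exp^N_p)^{-1} \circ \Psi \circ \exp^M_{\Psi^{-1}(p)},
\]
defined on a neighborhood of $0$ in $T_{\Psi^{-1}(p)} M$ and sending $0$ to $0$. Since $p$ varies in a compactum and $\Psi$ is a diffeomorphism, the first $r$ derivatives of $\Phi_p$ and $\Phi_p^{-1}$ are bounded uniformly in $p$, and $\Phi_p$ is bi-Lipschitz with constants uniform in $p$. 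For $\varphi \in \balloftestfunctions^{r,\delta_\K}$ define
\[
  \tilde\varphi_\lambda(y) := \varphi\bigl(\lambda^{-1}\Phi_p(\lambda y)\bigr).
\]
By Taylor expansion $\lambda^{-1}\Phi_p(\lambda y) = D\Phi_p(0) y + O(\lambda)$ uniformly in $p,y$ on bounded sets, so $\tilde\varphi_\lambda$ is supported in a ball of some fixed radius $C \delta_\K$ (independent of $\lambda$) and its $C^r$ norm is bounded by a constant depending only on $\Psi$ and $\K$. The computation
\[
  \varphi_p^\lambda \circ \Psi \;=\; \bigl(\tilde\varphi_\lambda\bigr)^\lambda_{\Psi^{-1}(p)}
\]
then identifies the pull-back as a scaled test function on $M$ centered at $\Psi^{-1}(p)$ with the same $\lambda$, up to a harmless enlargement of the support radius and a multiplicative constant. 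This enlargement is absorbed using Remark \ref{rem:independentOfLambda0} and Lemma \ref{lem:cgammaViaExp}.

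\textbf{Step 2: The three model bounds.} For the first term,
\[
  \langle \bar\Pi_p\tau, \varphi_p^\lambda\rangle
  =
  \langle \Pi_{\Psi^{-1}(p)} \tau, \varphi_p^\lambda \circ \Psi\rangle
  =
  \bigl\langle \Pi_{\Psi^{-1}(p)} \tau, (\tilde\varphi_\lambda)^\lambda_{\Psi^{-1}(p)}\bigr\rangle,
\]
so Step 1 and the definition of $\|\Pi,\Gamma\|_{\beta,\K'}$ yield the bound with $\lambda^\ell \|\tau\|$. For the second term the same substitution gives
\[
  \langle \bar\Pi_q\tau - \bar\Pi_p\bar\Gamma_{p\leftarrow q}\tau, \varphi_p^\lambda\rangle
  =
  \bigl\langle \Pi_{\Psi^{-1}(q)} \tau - \Pi_{\Psi^{-1}(p)} \Gamma_{\Psi^{-1}(p)\leftarrow \Psi^{-1}(q)}\tau,\ (\tilde\varphi_\lambda)^\lambda_{\Psi^{-1}(p)}\bigr\rangle,
\]
directly bounded by $\lambda^\beta \|\tau\| \cdot \|\Pi,\Gamma\|_{\beta,\K'}$. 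The third term is immediate: $\bar\Gamma_{p\leftarrow q} = \Gamma_{\Psi^{-1}(p)\leftarrow\Psi^{-1}(q)}$ preserves homogeneity components, and the bi-Lipschitz bound $d_M(\Psi^{-1}(p),\Psi^{-1}(q)) \asymp d_N(p,q)$ on compacta converts the denominator. One minor bookkeeping point: the constant $\delta_\K$ on $N$ must be chosen small enough that $\Psi^{-1}$ maps the relevant balls into balls of radius at most $\delta_{\K'}$ on $M$, which is possible by uniform continuity of $\Psi^{-1}$.

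\textbf{Step 3: Modelled distributions.} The algebraic part is trivial: $\tilde f(p) = f(\Psi^{-1}(p))$ is a section of $\bar\mcG$, and $\bar\Gamma_{p\leftarrow q}\tilde f(q) = \Gamma_{\Psi^{-1}(p)\leftarrow\Psi^{-1}(q)} f(\Psi^{-1}(q))$ by definition. The sup of $|\tilde f(p)|_\ell$ over $\K$ equals the sup of $|f|_\ell$ over $\Psi^{-1}(\K)$. The H\"older-type numerator is identical under the substitution, and the denominator changes only by the bi-Lipschitz constants of $\Psi$ on $\K$. The one subtlety is that the constants $\delta_\K$ for $N$ and $\delta_{\K'}$ for $M$ are a priori unrelated, but by Remark \ref{rem:mcDCutoff} we may freely shrink either; we shrink $\delta_\K$ so that $\Psi^{-1}$ sends $\delta_\K$-close pairs to $\delta_{\K'}$-close pairs. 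The linearity of all maps in $f$ gives the estimate for $\tilde f - \tilde f'$ without extra work.

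\textbf{Main obstacle.} The only genuinely nontrivial piece is Step 1: making precise that pulling a scaled test function on $N$ through $\Psi$ produces a scaled test function on $M$ with the same $\lambda$ and bounded distortion, uniformly in the base point. Once this comparison is nailed down, the rest is bookkeeping with the three terms of the model norm and the two terms of the $\msD^\gamma$ norm.
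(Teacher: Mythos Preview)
Your proof is correct and follows essentially the same approach as the paper. The paper packages your Step~1 into a single invocation of Remark~\ref{rem:221} (writing $\varphi^\lambda_q \circ \Psi \circ \exp_{\Psi^{-1}(q)}$ and observing it satisfies the required support and derivative bounds), while you spell out the composite $\Phi_p$ and the scaled-test-function identity explicitly; the remaining two steps are handled identically.
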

\begin{proof}
  Since $\Psi$ has derivatives bounded below and above for every compactum,
  one can choose for every compactum $\K$ a constant $\bar \delta_\K$ as in the definition of a model, such that $\bar \Gamma_{p \leftarrow q}$ is well-defined
  for $p,q \in \K$ and $d(p,q) < \delta_\K$
  as well as $\exp^N_q( B_{\delta_\K}(0) ) \subset \bar \U_q$.
  Here $\exp^N$ denotes the exponential map on $N$. 

  1. 
  Let $q \in \K \subset N$ and $\tau \in \tilde \mcG_a|_q$
  and $\varphi \in \balloftestfunctions^{r, \delta_\K}$
  \begin{align*}
    |\langle \tilde \Pi_q \tau, \varphi^\lambda_q \rangle|
    &=
    |\langle \Psi_* \left( \Pi_{\Psi^{-1}(q)} \tau \right), \varphi^\lambda_q \rangle| \\
    &=
    |\langle \Pi_{\Psi^{-1}(q)} \tau, \varphi^\lambda_q \circ \Psi \rangle| \\
    &=
    |\langle \Pi_{\Psi^{-1}(q)} \tau, \varphi^\lambda_q \circ \Psi \circ \exp_q \circ \exp^{-1}_q \rangle| \\
    &\lesssim ||\Pi,\Gamma||_{\beta,\Psi^{-1}(\K)} \lambda^a,
  \end{align*}
  since $\varphi^\lambda_q \circ \Psi \circ \exp_q$ falls under Remark \ref{rem:221}.
  For $p,q \in \K \subset N$ with $d(p,q) < \delta_\K$
  and $\tau \in \tilde \mcG|_q$, we have
  \begin{align*}
      |\langle \tilde \Pi_q \tau - \tilde \Pi_p \tilde \Gamma_{x \leftarrow y} \tau, \varphi_p^\lambda \rangle|
      &=
      |\langle \Psi_*\left( \Pi_{\Psi^{-1}(q)} \tau - \Pi_{\Psi^{-1}(p)} \Gamma_{\Psi^{-1}(p) \leftarrow \Psi^{-1}(q)} \tau \right), \varphi_p^\lambda \rangle| \\
      &=
      |\langle \Pi_{\Psi^{-1}(q)} \tau - \Pi_{\Psi^{-1}(p)} \Gamma_{\Psi^{-1}(p) \leftarrow \Psi^{-1}(q)} \tau, \varphi_p^\lambda \circ \Psi \rangle| \\
      &=
      |\langle \Pi_{\Psi^{-1}(q)} \tau - \Pi_{\Psi^{-1}(p)} \Gamma_{\Psi^{-1}(p) \leftarrow \Psi^{-1}(q)} \tau, \varphi_p^\lambda \circ \Psi \circ \exp_p \circ \exp^{-1}_p \rangle| \\
      &\lesssim
      ||\Pi,\Gamma||_{\beta,\Psi^{-1}(\K)}
      \lambda^\beta,
  \end{align*}
  again by Remark \ref{rem:221}.
  %
  Finally
  for $p,q \in \K \subset N$ with $d(p,q) < \delta_\K$
  and $\tau \in \tilde \mcG_a|_q$, we have
  \begin{align*}
    |\bar \Gamma_{p \leftarrow q} \tau|
    &=
    |\Gamma_{\Psi^{-1}(p) \leftarrow \Psi^{-1}(q)} \tau|_m
    \lesssim
    ||\Pi,\Gamma||_{\beta,\Psi^{-1}(\K)}
    d(p,q)^{ (\ell-m) \vee 0}.
  \end{align*}

  2.  Let $p,q \in \K \subset N$ then
  \begin{align*}
    ||\bar f(q) - \bar \Gamma_{p\leftarrow q} \bar f(q)||_m
    &=
    ||f( \Psi^{-1}(q) ) - \Gamma_{\Psi^{-1}(p) \leftarrow \Psi^{-1}(q)} f( \Psi^{-1}(q) )||_m \\
    &\lesssim
    ||f||_{\mcD^{\gamma}(\Psi^{-1}(\K),\mcG)}
    d( \Psi^{-1}(p), \Psi^{-1}(q) )^{\gamma - m} \\
    &\lesssim
    ||f||_{\mcD^{\gamma}(\Psi^{-1}(\K),\mcG)}
    d( p, q )^{\gamma - m},
  \end{align*}
  and similarily for the distance of two modelled distributions.
\end{proof}

\begin{lemma}[{Reconstruction for $M \subset \R^d$}]
  \label{thm:reconstructionOnOpenSet-new}
  Let $\mcG$ be a regularity struture on $M$, an open connected subset of $\R^d$.
  Let $(\Pi, \Gamma)$ be a model with precision $\beta \in \R$.
  Let $\gamma > 0$ and assume $\beta \ge \gamma$.
  Denote $\alpha := \inf A$.
  Assume either that $\alpha < 0,$ or 
  that $\alpha = 0$
  and that the lowest homogeneity in $\mcG$ is given by the constant distribution
  (of the polynomial regularity structure of Section \ref{sec:poly}).
  
  For every $f \in \msD^\gamma(M, \mcG)$ there exists a unique $\mcR f \in C^\alpha(M)$ such that
  for every compactum $\K \subset M$
  \begin{align}
    \label{eq:reconstructionCloseness-new}
    |\langle \mcR f - \Pi_x f(x), \varphi^\lambda_x \rangle|
    \lesssim
    \lambda^\gamma
    ||\Pi,\Gamma||_{\beta;\overline{\mathcal{K}}}
    ||f||_{\gamma;\overline{\mathcal{K}}}
  \end{align}
  Here $\varphi \in \balloftestfunctions^{r,\delta_\K}$, $r > |\alpha|$,
  (so that the action of $\Pi_x f(x)$ is well-defined) and
  $\overline \K := \overline{ B_{\delta_\K}( \K ) }$.
\end{lemma}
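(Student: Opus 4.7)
The plan is to follow Hairer's original proof of the reconstruction theorem, verifying that the slightly weaker hypotheses required by our generalized notion of model (with transport precision $\beta \ge \gamma$ replacing the usual structure-group compatibility) still suffice. Since $M$ is an open subset of $\R^d$, the whole construction takes place in Euclidean space, so no genuinely geometric complication arises; the role of $\exp_p$ in the test function $\varphi^\lambda_p$ is trivialized.

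For existence, I would use a mollification scheme. Fix a smooth compactly supported $\psi$ with $\int \psi = 1$ and, for each compactum $\K \subset M$ and each $\lambda \in (0,1]$, set
\begin{align*}
  \mcR_\lambda f(x) := \langle \Pi_x f(x), \psi^\lambda_x \rangle, \qquad x \in \K.
\end{align*}
The main claim is that $\mcR_{2^{-n}} f$ is Cauchy when tested against any $\varphi^\mu_y$ with $y \in \K$, $\mu \in (0,1]$. The key telescope identity is
\begin{align*}
  \Pi_x f(x) - \Pi_y f(y)
  =
  \bigl( \Pi_x f(x) - \Pi_y \Gamma_{y \leftarrow x} f(x) \bigr)
  + \Pi_y \bigl( \Gamma_{y \leftarrow x} f(x) - f(y) \bigr),
\end{align*}
valid for $d(x,y) < \delta_\K$. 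The first bracket is controlled at scale $\lambda^\beta$ by the transport estimate in Definition \ref{def:modelGeneral}; the second is a sum over homogeneities $\ell < \gamma$ of terms bounded by $||f||_{\msD^\gamma}\, d(x,y)^{\gamma - \ell}$, tested via Lemma \ref{lem:rem221ForTheModel} to contribute $\mu^\ell d(x,y)^{\gamma-\ell}$ on test functions at scale $\mu$. Dyadic summation of these telescope differences, combined with the assumption $\beta \ge \gamma$, yields a limit $\mcR f$ and the bound \eqref{eq:reconstructionCloseness-new} by the same schema used in the flat case.

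For uniqueness, if $\alpha < 0$, any two candidates differ by a distribution $h$ with $|\langle h, \varphi^\lambda_x \rangle| \lesssim \lambda^\gamma$ uniformly in $\varphi \in \balloftestfunctions^{r,\delta_\K}$, $\lambda \in (0,1]$, $x \in \K$; since $\gamma > 0$ and any fixed $\eta \in C^\infty_c$ can be written as $\eta = \eta^1_{x_0}$ up to translation and rescaling, letting $\lambda \to 0$ in the bound forces $\langle h, \eta \rangle = 0$. In the $\alpha = 0$ case with the polynomial caveat, the lowest-homogeneity component of $f$ is a genuine function, so one first identifies the constant part of $\mcR f$ with the pointwise value $f_0(x)$ of that component (using that $\int \psi = 1$), and then applies the $\alpha < 0$ uniqueness argument to the remainder.

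The main obstacle is the telescope estimate at borderline homogeneities: when $\ell$ is close to $\gamma$ or, in the polynomial case, $\alpha = 0$, the summation barely converges, and one must carefully separate the constant (function-valued) component of $f$ from the strictly distributional components so that the bounds of Lemma \ref{lem:rem221ForTheModel} remain applicable. A secondary technical annoyance is that our $\Gamma_{p \leftarrow q}$ is only defined for close pairs; this forces all estimates to be run inside balls of radius $\delta_\K/2$ and then glued via a finite partition of unity on $\overline{\K}$, but in the Euclidean setting this is routine and does not interact with the core telescope bound.
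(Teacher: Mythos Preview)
Your proposal is correct and rests on the same telescope decomposition as the paper: $\Pi_x f(x) - \Pi_y f(y)$ splits into a transport-precision piece bounded by $\lambda^\beta$ and a modelled-distribution piece bounded by $\sum_\ell \lambda^\ell |x-y|^{\gamma-\ell}$, with $\beta \ge \gamma$ closing the estimate. The difference is in the surrounding construction. The paper does not build a mollification scheme; it invokes \cite[Proposition~3.25]{bib:hairer} (the wavelet-based reconstruction) as a black box and only verifies its two input inequalities---a one-point bound on $\zeta_x := \Pi_x f(x)$ and a two-point bound on $\zeta_x - \zeta_y$ against wavelets $\varphi^n_x$ in the regime $2^{-n} \le |x-y|$---which reduce precisely to the two terms you isolate. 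For $\alpha = 0$ the paper first runs the argument with an artificial $\alpha' < 0$ and then upgrades regularity via Corollary~\ref{cor:reconstructionPositiveHomogeneity}, matching your identification of $\mcR f$ with $f_0$. Your direct mollification route is self-contained and avoids the wavelet machinery hidden in Hairer's proposition, at the cost of having to carry out the dyadic Cauchy estimate by hand (in particular separating the regimes $\mu \gtrsim 2^{-n}$ and $\mu \lesssim 2^{-n}$ when testing against $\varphi^\mu_y$); the paper's route is shorter on the page because that multiscale work is outsourced.
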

\begin{remark}
  \label{rem:uniqueness}
  Uniqueness actually holds in the class of operators $\mathcal{R}$ that satisfy
  \eqref{eq:reconstructionCloseness-new} with $\gamma$ replaced by any $\theta > 0$.
\end{remark}
\begin{proof}
  \textbf{Existence}\\
  We will apply \cite[Proposition 3.25]{bib:hairer}.%
  \footnote{Compare also \cite[Theorem 2.10]{bib:hairerIntro} for a concise presentation of
  the (wavelet) techniques involved in its proof.}
  This Proposition is formulated for $\R^d$,
  but the statement is local and also holds for $M \subset \R^d$.
  So we have to verify for $\zeta_x := \Pi_x f(x)$
  \begin{align}
    \label{eq:330_1-new}
    |\langle \varphi^n_x, \zeta_x - \zeta_y \rangle|
    &\le
    C_1 |x-y|^{\gamma-\alpha} 2^{-n d/2 - \alpha n} \\
    \label{eq:330_2-new}
    |\langle \varphi^n_x, \zeta_x \rangle|
    &\le
    C_2
    2^{-\alpha n - n d / 2},
  \end{align}
  uniformly over $x,y \in \overline \K$,
  $n \ge n_0$, $n_0 = \log_2( \delta_{\overline \K} ) \vee 0$
  and $2^{-n} \le |x-y| \le \delta_{\overline \K}$.
  In \cite[Proposition 3.25]{bib:hairer} the upper bound $1$
  is chosen on $|x-y|$, but any upper bound works,
  so we chose $\delta_{\overline \K}$, since we need $\Gamma_{x \leftarrow y}$ to be well-defined.
  
  Here
  \begin{align*}
    \varphi^n_x := 2^{nd/2} \varphi( 2^n ( \cdot - x )  ),
  \end{align*}
  and $\varphi$ is a scaling function for a wavelet basis of regularity $r > |\alpha|$.
  We have chosen $n_0$ also such that for $n \ge n_0$ and $x \in \overline \K$, $\tau \in \mcG|_x$
  the expression $\langle \Pi_x \tau, \varphi^n_x \rangle$ is well-defined.
  First, \eqref{eq:330_2-new} follows from the fact that $\alpha$ is the lowest homogeneity in $A(\mcG)$
  (note that $\varphi^n_x$ is scaled to preserve the $L^2$-norm, whereas the scaling in the definition of a model
  preserves the $L^1$-norm).

  Now
  \begin{align*}
    \langle \varphi^n_x, \zeta_x - \zeta_y \rangle
    &=
    \langle \varphi^n_x, \Pi_x f(x) - \Pi_y f(y) \rangle \\
    &=
    \langle \varphi^n_x, \Pi_x \left[ f(x) - \Gamma_{x \leftarrow y} f(y) \right] \rangle
    +
    \langle \varphi^n_x, \Pi_x \Gamma_{x \leftarrow y} f(y) - \Pi_y f(y) \rangle.
  \end{align*}
  We bound the first term as 
  \begin{align*}
    |\langle \varphi^n_x, \Pi_x \left[ f(x) - \Gamma_{x \leftarrow y} f(y) \right] \rangle|
    &\le
    \sum_a
    |\langle \varphi^n_x, \Pi_x \operatorname{proj}_{\mcG_a} \left[ f(x) - \Gamma_{x \leftarrow y} f(y) \right] \rangle| \\
    &\lesssim
    ||\Pi,\Gamma||_{\beta,\overline \K}
    \sum_a
    2^{-n a - nd/2} |x-y|^{\gamma - a} \\
    &\lesssim
    ||\Pi,\Gamma||_{\beta,\overline \K}
    2^{-n \alpha - nd/2} |x-y|^{\gamma - \alpha},
  \end{align*}
  since $2^{-n} \le |x-y|$.
  The second term is bounded as
  \begin{align*}
    |\langle \varphi^n_x, \Pi_x \Gamma_{x \leftarrow y} f(y) - \Pi_y f(y) \rangle|
    &\lesssim
    ||\Pi,\Gamma||_{\beta,\overline \K}
    2^{-n\beta - nd/2} \\
    &=
    ||\Pi,\Gamma||_{\beta,\overline \K}
    2^{-n\alpha - nd/2 - n (\beta - \alpha)} \\
    &\lesssim
    ||\Pi,\Gamma||_{\beta,\overline \K}
    2^{-n\alpha - nd/2} |x-y|^{\beta - \alpha} \\
    &\lesssim
    ||\Pi,\Gamma||_{\beta,\overline \K}
    2^{-n\alpha - nd/2} |x-y|^{\gamma - \alpha}.
  \end{align*}
  This proves \eqref{eq:330_1-new}
  and an application of \cite[Proposition 3.25]{bib:hairer}
  gives the existence of $\mcR f$ satisfying the bound \eqref{eq:reconstructionCloseness-new}.

  The preceding argument is valid for $\alpha < 0$.
  For $\alpha = 0$, one can run the argument for some $\alpha' < 0$ and
  get unique existence of $\mcR f \in C^{\alpha'}$ with the claimed properties.
  In Corollary \ref{cor:reconstructionPositiveHomogeneity} below it is shown
  that actually $\mcR f \in C^0$.


  \textbf{Uniqueness}\\
  Uniqueness follows exactly as in \cite[Section 3]{bib:hairer}.

\end{proof}

\begin{lemma}[{Reconstruction for $M$ a closed Riemannian manifold}]
  \label{thm:reconstructionOnTheManifold-new}
  Let $M$ be a closed Riemannian manifold with
  regularity structure $\mcG$ and 
  $(\Pi, \Gamma)$ a model with transport precision $\beta \in \R$.
  Let $\gamma > 0$,
  and $f \in \msD^\gamma(M,\mcG)$
  and assume $\beta \ge \gamma$.

  Denote $\alpha := \inf A$.
  Assume either that $\alpha < 0$ or 
  that $\alpha = 0$
  and that the lowest homogeneity in $\mcG$ is given by the constant distribution
  (of the polynomial regularity structure).

  Then, there exists a unique distribution
  $\mcR f \in C^{\alpha}(M)$ such that
  \begin{align}
    \label{eq:thmReconstructionCharacterizingInequality-new}
    |\langle \mcR f - \Pi_x f(x), \varphi^\lambda_x \rangle| \lesssim \lambda^\gamma ||\Pi,\Gamma||_{\beta;M}\ ||f||_{\mcD^\gamma(M,\mcG)},
  \end{align}
  for $\varphi \in \balloftestfunctions^{r,\delta_M}$, $r > |\alpha|$.

\end{lemma}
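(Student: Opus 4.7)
The plan is to reduce to the Euclidean reconstruction result of Lemma \ref{thm:reconstructionOnOpenSet-new} by working chart-by-chart through the push-forward mechanism of Lemma \ref{lem:pushForward-new}, and then to glue the local reconstructions using the Euclidean uniqueness statement (Remark \ref{rem:uniqueness}).

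First I fix a finite atlas $(\Psi_i, U_i)_{i \in I}$ on $M$ with subordinate partition of unity $(\phi_i)_{i \in I}$; this exists because $M$ is compact. For each $i$, Lemma \ref{lem:pushForward-new} produces a regularity structure $\bar \mcG^{(i)}$ on the open set $V_i := \Psi_i(U_i) \subset \R^d$ equipped with a pushed-forward model $(\bar \Pi^{(i)}, \bar \Gamma^{(i)})$ of the same transport precision $\beta$, together with a modelled distribution $\bar f^{(i)} \in \msD^\gamma(V_i, \bar \mcG^{(i)})$, whose norm is controlled by $\|f\|_{\msD^\gamma(M,\mcG)}$ and $\|\Pi,\Gamma\|_{\beta;M}$. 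Applying Lemma \ref{thm:reconstructionOnOpenSet-new} on each $V_i$ yields a distribution $\mcR_i \bar f^{(i)} \in C^\alpha(V_i)$ satisfying the Euclidean bound \eqref{eq:reconstructionCloseness-new}. Pulling back through $\Psi_i$ gives a local candidate $\mcR^{(i)} f$ on $U_i$, and by construction $\bar \Pi^{(i)}_{\Psi_i(x)} \bar f^{(i)}(\Psi_i(x)) = (\Psi_i)_* \Pi_x f(x)$, so $\mcR^{(i)} f$ inherits a bound against $\varphi^\lambda_x \circ \Psi_i^{-1}$ on $V_i$.

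Next I check compatibility on overlaps: on $U_i \cap U_j$, both $\mcR^{(i)} f$ and $\mcR^{(j)} f$ satisfy a reconstruction inequality of the type \eqref{eq:reconstructionCloseness-new} against the same local model $\Pi_x f(x)$, hence they agree there by the uniqueness clause of Remark \ref{rem:uniqueness}. This allows me to define a single global distribution $\mcR f$ on $M$ by gluing, e.g.\ via $\langle \mcR f, \psi \rangle := \sum_i \langle \mcR^{(i)} f, \phi_i \psi \rangle$, which is independent of the atlas by the same uniqueness argument. Global regularity $\mcR f \in C^\alpha(M)$ in the sense of Definition \ref{def:cgamma} then follows because each $(\Psi_i)_*(\phi_i \mcR f) = (\Psi_i)_*(\phi_i \mcR^{(i)} f)$ sits in $C^\alpha(\R^d)$ by the Euclidean reconstruction, with norms controlled as required.

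Finally I verify the global characterizing bound \eqref{eq:thmReconstructionCharacterizingInequality-new}. Given $p \in M$, $\lambda \in (0,1]$ and $\varphi \in \balloftestfunctions^{r,\delta_M}$, the function $\varphi^\lambda_p$ is supported in $\exp_p(B_{\lambda \delta_M}(0))$, so for $\lambda$ small enough (uniformly in $p$, by compactness and the finite atlas) its support lies inside a single $U_i$, and transferring to $V_i$ via $\Psi_i$ turns $\varphi^\lambda_p$ into an admissible test function in the sense of Remark \ref{rem:221} on which the Euclidean bound applies; alternatively, one can argue directly with $\exp_p$-coordinates as in Lemma \ref{lem:rem221ForTheModel}. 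For the residual range of $\lambda$ bounded away from $0$, the bound is trivial by the uniform boundedness of $\Pi_p f(p)$ and $\mcR f$ on the compact $M$. Uniqueness of $\mcR f$ as a distribution in $C^\alpha(M)$ satisfying \eqref{eq:thmReconstructionCharacterizingInequality-new} follows from the Euclidean uniqueness applied chart-by-chart. The main delicate point is ensuring that the implicit constants in the local Euclidean bounds combine into a single manifold-level constant; this is handled by the finiteness of the atlas together with the uniform push-forward estimates of Lemma \ref{lem:pushForward-new}.
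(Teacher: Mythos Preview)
Your proposal is correct and follows essentially the same route as the paper: push forward to charts via Lemma \ref{lem:pushForward-new}, reconstruct locally with Lemma \ref{thm:reconstructionOnOpenSet-new}, and glue with a partition of unity. The only noteworthy difference is in how you verify the characterizing bound \eqref{eq:thmReconstructionCharacterizingInequality-new}: you first invoke Euclidean uniqueness (Remark \ref{rem:uniqueness}) to show the local reconstructions agree on overlaps, so that for small $\lambda$ you may work in a single chart, handling large $\lambda$ by boundedness; the paper instead splits $\varphi^\lambda_x = \sum_i \phi_i \varphi^\lambda_x$, observes that $(\phi_i \varphi^\lambda_x)\circ \Psi_i^{-1}$ falls under Remark \ref{rem:221} around $\Psi_i(x)$, and applies the Euclidean reconstruction bound term by term---thereby avoiding any overlap-compatibility discussion or separate treatment of large $\lambda$. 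Both arguments yield the same constants controlled by $\|\Pi,\Gamma\|_{\beta;M}$ and $\|f\|_{\mcD^\gamma(M,\mcG)}$.
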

\begin{proof}
  By a cutting up procedure, it is enough to show
  \eqref{eq:thmReconstructionCharacterizingInequality-new} for $\varphi \in
  \balloftestfunctions^{r,\delta'}$, with $\delta' \in (0,\delta_M]$ to be
  chosen.

  Let $(\Psi_i, \U_i)_{i\in I}$ an atlas with subordinate partition of unity $(\phi_i)_{i\in I}$, with $I$ finite.
  On each chart, we push-forward the regularity structure, model and $f$ to $\Psi_i(\U_i),$
  with corresponding reconstruction operation $\tilde\mcR_i$, model $\tilde\Pi_i$ and modelled distribution $\tilde f_i$.
  For each $i\in I,$ fix a compactum $\K_i \subset \U_i$ such that $\supp \phi_i$ is strictly contained in $\K_i$.
  By Lemma \ref{lem:pushForward-new},
  \begin{align*}
    || \tilde f_i ||_{\msD^\gamma(\Psi_i(\K_i), \mcG)} &\lesssim ||f||_{\msD^\gamma( M, \mcG )} \\
    || \tilde f_i - f'_i||_{\msD^\gamma(\Psi_i(\K_i), \mcG)} &\lesssim ||f - f'||_{\msD^\gamma( M, \mcG )} \\
    ||\tilde \Pi_i, \Gamma \Pi_i||_{\beta,\Psi_i(\K_i)} &\lesssim ||\Pi,\Gamma||_{\beta,M}.
  \end{align*}
  Now reconstruct in each coordinate chart as $\tilde T_i := \tilde \mcR_i \tilde f_i$ using Theorem \ref{thm:reconstructionOnOpenSet-new}.
  Define $\mcR f := \sum_{i\in I} \phi_i (\Psi^{-1}_i)_* \tilde T_i$. Then
  \begin{align*}
    \langle \mcR f - \Pi_x f(x), \varphi^\lambda_x \rangle
    &=
    \sum_i
    \langle \phi_i \left( (\Psi^{-1}_i)_* T_i - \Pi_x f(x) \right), \varphi^\lambda_x \rangle \\
    &=
    \sum_i
    \langle (\Psi^{-1}_i)_* T_i - \Pi_x f(x), \phi_i \varphi^\lambda_x \rangle \\
    &=
    \sum_i
    \langle T_i - (\Psi_i)_* \left( \Pi_x f(x) \right), \left( \phi_i \varphi^\lambda_x \right) \circ \Psi_i^{-1} \rangle.
  \end{align*}
  If $x \not \in \U_i,$ we want the summand to vanish.
  So let $\delta' := \min_i d(\supp \phi_i, \partial \U_i)$.
  Then for $\varphi \in \balloftestfunctions^{r, \delta'}$, $\lambda \in (0,1],$
  we have $\phi_i \varphi^\lambda_x \not= 0$ implies $x \in \U_i$.
  Hence, if $x \not\in \U_i,$ we have $\phi_i \varphi^\lambda_x = 0$, so the summand vanishes.
  
  Otherwise, with $z := \Psi_i(x)$
  \begin{align*}
    |\langle T_i - (\Psi_i)_* \left( \Pi_x f(x) \right), \left( \phi_i \varphi^\lambda_x \right) \circ \Psi_i^{-1} \rangle|
    &=
    |\langle T_i - (\Psi_i)_* \left( \Pi_x f(x) \right), \left( \phi_i \lambda^{-d} \varphi( \lambda^{-1} \exp_x^{-1}( \cdot ) ) \right) \circ \Psi_i^{-1} \rangle| \\
    &=
    |\langle T_i - \bar \Pi_z f(z), \left( \phi_i \lambda^{-d} \varphi( \lambda^{-1} \exp_x^{-1}( \cdot ) ) \right) \circ \Psi_i^{-1} \rangle| \\
    &\lesssim
    ||\tilde \Pi_i, \tilde \Gamma_i||_{\beta,\Psi_i(\U_i)}
    ||\tilde f||_{\mcD^{\gamma}( \Psi_i(\U_i), \tilde \mcG_i )}
    \lambda^\gamma \\
    &\lesssim
    ||\Pi_i, \Gamma_i||_{\beta,M}
    ||f||_{\mcD^{\gamma}( M, \mcG )}
    \lambda^\gamma,
  \end{align*}
  since $\left( \phi_i \lambda^{-d} \varphi( \lambda^{-1} \exp_x^{-1}( \cdot ) ) \right) \circ \Psi_i^{-1}$ falls under Remark \ref{rem:221} around $z$.
  Summing over $i$ gives \eqref{eq:thmReconstructionCharacterizingInequality-new}.


\end{proof}

\begin{corollary}
  \label{cor:reconstructionPositiveHomogeneity}
  In setting of the previous theorem,
  assume that the lowest homogeneity in $\mcG$ is $0$
  and that it is given by the constant (as in the polynomial regularity structure of Section \ref{sec:poly}).
  Then $\mcR f$ is given by projection onto that homogeneity, i.e.
  \begin{align*}
    (\mcR f)(p) = f_0(p).
  \end{align*}
\end{corollary}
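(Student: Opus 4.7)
The plan is to verify that the function $f_0$, viewed as a distribution through the Riemannian density, satisfies the characterizing reconstruction inequality, and then invoke the uniqueness statement of Remark \ref{rem:uniqueness} to conclude $\mcR f = f_0$.

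First I would establish that $f_0 : M \to \R$ is H\"older continuous. For $p, q \in M$ with $d(p,q) < \delta_M$, write
\[
  f_0(p) - f_0(q) = \bigl(f(p) - \Gamma_{p \leftarrow q} f(q)\bigr)_0 + \bigl(\Gamma_{p \leftarrow q} f(q)\bigr)_0 - f_0(q).
\]
The first term is bounded by $||f||_{\msD^\gamma(M,\mcG)}\, d(p,q)^{\gamma}$ directly from Definition \ref{def:modelledDistributions}. For the second term, I would use that the constant is preserved by $\Gamma$ (so the $\mathbf{1}$-component of $f(q)$ contributes $f_0(q)$ back unchanged), whereas each strictly positive-homogeneity component of $f(q)$ is transported to $\mcG|_p$ with a $0$-level component controlled by the third summand in the transport-precision norm, namely bounded by $||\Pi,\Gamma||_{\beta,M}\,||f||_{\msD^\gamma(M,\mcG)}\, d(p,q)^\ell$ for $\ell>0$. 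Summing, $|f_0(p) - f_0(q)| \lesssim d(p,q)^{\alpha'}$ with $\alpha' := \min(\gamma, \ell_1) > 0$, where $\ell_1$ denotes the smallest strictly positive element of $A(\mcG)$.

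Second, I would verify the reconstruction inequality with a positive exponent. Splitting $f(x) = f_0(x)\mathbf{1} + \tau_+(x)$ with $\tau_+(x) \in \bigoplus_{\ell>0}\mcG_\ell|_x$ and using that $\Pi_x \mathbf{1} \equiv 1$ locally,
\[
  \langle f_0 - \Pi_x f(x), \varphi^\lambda_x \rangle
  =
  \langle f_0 - f_0(x), \varphi^\lambda_x \rangle - \langle \Pi_x \tau_+(x), \varphi^\lambda_x \rangle.
\]
Because $\varphi^\lambda_x$ has unit mass and support contained in a ball of radius $\lesssim \lambda$ about $x$, and since $f_0$ is $\alpha'$-H\"older by the previous step, the first summand is $O(\lambda^{\alpha'})$. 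The second summand is $O(\lambda^{\ell_1})$ by applying the first term of the model bound to each strictly positive homogeneity. Setting $\theta := \min(\alpha', \ell_1) > 0$ yields the required bound $|\langle f_0 - \Pi_x f(x), \varphi^\lambda_x \rangle| \lesssim \lambda^\theta$.

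Applying the uniqueness part of Remark \ref{rem:uniqueness} then forces $\mcR f = f_0$. The main obstacle I anticipate is the first step: extracting scalar H\"older regularity of $f_0$ from the abstract bundle-valued modelled-distribution condition, which requires making explicit how $\Gamma_{p\leftarrow q}$ mixes higher-homogeneity components of $f(q)$ into the $0$-level of $\mcG|_p$, and how the third term of the transport-precision norm correctly quantifies that mixing. Once this is in hand, the remaining estimates reduce to a routine triangle-inequality split.
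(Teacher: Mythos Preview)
Your proposal is correct and follows essentially the same route as the paper: both show that $f_0$ satisfies the characterizing inequality by (i) establishing H\"older continuity of $f_0$ via the decomposition $f_0(p)-f_0(q) = (f(p)-\Gamma_{p\leftarrow q}f(q))_0 + (\Gamma_{p\leftarrow q}\proj_{>0}f(q))_0$, using $\Gamma\mathbf{1}=\mathbf{1}$ and the transport bound, then (ii) splitting $\langle f_0 - \Pi_x f(x),\varphi^\lambda_x\rangle$ into the $f_0$-increment and the strictly positive homogeneities, and finally invoking the uniqueness of Remark~\ref{rem:uniqueness}. One tiny slip: $\varphi^\lambda_x$ need not have unit mass, only uniformly bounded $L^1$-norm, but that is all your argument requires.
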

\begin{proof}
  Define $\tilde \mcR f(p) := f_0(p)$, then
  \begin{align*}
    |\langle \tilde \mcR f(\cdot) - \Pi_p f(p)(\cdot), \varphi^\lambda_p \rangle|
    &=
    |\langle \tilde f_0(\cdot) - f_0(p), \varphi^\lambda_p \rangle|
    +
    |\langle \sum_{\ell > 0} \Pi_p \proj_{\mcG_\ell|_p} f(p), \varphi^\lambda_p \rangle|.
  \end{align*}
  Recall that the projection $\proj$ is defined in Definition \ref{def:regularityStructure}.
  %
  The last term is of bounded by a constant times $\lambda^\eta$, where $\eta$ is 
  the smallest homogeneity strictly larger than $0$.

  For the second to last term we first write
  \begin{align*}
    f_0(\cdot) - f_0(p)
    &=
    \left(
      f(\cdot)
      - \Gamma_{\cdot \leftarrow p} f(p)
      + \Gamma_{\cdot \leftarrow p} \proj_{\mcG_{>0|_p}} f(p)
    \right)_0.
  \end{align*}

  Now, since $f \in \mcD^\gamma$,
  \begin{align*}
    \left(
      f(\cdot)
      - \Gamma_{\cdot \leftarrow p} f(p) \right)_0
      \lesssim
      d(\cdot,p)^\gamma.
  \end{align*}
  
  By the properties of a model
  \begin{align*}
    | \Gamma_{\cdot \leftarrow p} \proj_{\mcG_{>0|_p}} f(p) |_0
    \lesssim d(\cdot, p)^{\eta}.
  \end{align*}

  Hence $|f_0(\cdot) - f_0(p)| \lesssim d(\cdot,p)^{\eta\wedge\gamma}$ and then
  \begin{align*}
    |\langle \tilde f_0(\cdot) - f_0(p), \varphi^\lambda_p \rangle| \lesssim \lambda^{\eta \wedge \gamma}
  \end{align*}

  Hence, by Remark \ref{rem:uniqueness}, $\tilde \mcR = \mcR$.
\end{proof}

We want to apply the Lemma \ref{thm:reconstructionOnTheManifold-new} to the terms in the heat kernel asymptotics (Theorem \ref{thm:heatKernelEstimates}).
The problem is that their support will be of order $1$ (and not of order $\lambda$ as for $\varphi^\lambda_x$).
Hence we need the following refinement which is similar to Lemma \ref{lem:rem221ForS}.
\begin{lemma}
  \label{lem:rem221ForSForReconstruction}
  In the setting of Lemma \ref{thm:reconstructionOnTheManifold-new},
  let $\varphi$ satisfy the assumptions of Lemma \ref{lem:rem221ForS}
  with the additional condition
  $\supp \varphi \subset B_{\delta_\K / 4}(0) \subset \R^d$.
  %
  Then 
  \begin{align*}
    |\langle \mcR f - \Pi_p f(p), \varphi \circ \exp^{-1}_p ) \rangle| \lesssim C_\varphi \lambda^\gamma ||\Pi||_{\beta;M}\ ||f||_\gamma.
  \end{align*}
  where $C_\varphi := C(\varphi,\lambda,0,N,r)$ is defined in Lemma \ref{lem:rem221ForS}.
\end{lemma}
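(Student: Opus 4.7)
The plan is to mimic the dyadic partition-of-unity strategy already used in the proof of Lemma \ref{lem:rem221ForS} (for Hölder distributions on $\R^d$) and Lemma \ref{lem:rem221ForTheModel} (for a model's action on $\Pi_p \tau$). I will cut $\varphi$ into pieces of physical scale $\lambda$ to which the manifold reconstruction bound \eqref{eq:thmReconstructionCharacterizingInequality-new} applies, and then sum up using the polynomial decay encoded in $C_\varphi$.

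Concretely, let $(\phi_z)_{z \in \Z^d}$ be a fixed partition of unity of $\R^d$ with $\supp \phi_z \subset B_1(z)$ and $\sup_z \|\phi_z\|_{C^r} < \infty$. Set $\lambda_\K := \lambda \delta_\K / 4$ and $\varphi_z(\cdot) := \phi_z(\lambda_\K^{-1} \cdot) \varphi(\cdot)$, so that $\sum_z \varphi_z = \varphi$, $\supp \varphi_z \subset B_{\lambda_\K}(\lambda_\K z) \cap B_{\delta_\K/4}(0)$, and
\begin{align*}
\|D^k \varphi_z\|_\infty \lesssim C_\varphi \, \lambda^{-d-k} \frac{1}{1+|z|^N}.
\end{align*}
Only indices with $|\lambda_\K z| < \delta_\K/2$ contribute; for such $z$ set $q_z := \exp_p(\lambda_\K z)$ and decompose
\begin{align*}
\langle \mcR f - \Pi_p f(p), \varphi_z \circ \exp^{-1}_p \rangle
&= \langle \mcR f - \Pi_{q_z} f(q_z), \varphi_z \circ \exp^{-1}_p \rangle \\
&\quad + \langle \Pi_{q_z}\bigl[ f(q_z) - \Gamma_{q_z \leftarrow p} f(p) \bigr], \varphi_z \circ \exp^{-1}_p \rangle \\
&\quad + \langle \Pi_{q_z} \Gamma_{q_z \leftarrow p} f(p) - \Pi_p f(p), \varphi_z \circ \exp^{-1}_p \rangle.
\end{align*}

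For the first summand I re-center the test function via $\exp_{q_z}$: the function $\varphi_z \circ \exp^{-1}_p \circ \exp_{q_z}$ is supported in a ball of radius $O(\lambda)$ in $T_{q_z} M$ with $C^r$-norm $\lesssim C_\varphi \, \lambda^{-d}/(1+|z|^N)$, because the transition $\exp^{-1}_p \circ \exp_{q_z}$ has derivatives of all orders bounded uniformly on the compactum $\K$. Lemma \ref{thm:reconstructionOnTheManifold-new} (in the reformulation of Remark \ref{rem:221}) then bounds this term by $C_\varphi \, \lambda^\gamma \, (1+|z|^N)^{-1} \, \|\Pi,\Gamma\|_{\beta;M} \|f\|_\gamma$. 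For the second summand, the modelled-distribution estimate $|f(q_z) - \Gamma_{q_z \leftarrow p} f(p)|_\ell \lesssim \|f\|_\gamma\, (\lambda |z|)^{\gamma-\ell}$ combined with the model bound $|\langle \Pi_{q_z} \sigma_\ell, \varphi_z \circ \exp^{-1}_p \rangle| \lesssim \|\Pi,\Gamma\|_{\beta;M} \, \lambda^\ell \, C_\varphi /(1+|z|^N)$ (again via Remark \ref{rem:221}) yields a contribution $\lambda^\gamma |z|^{\gamma-\ell} C_\varphi /(1+|z|^N)$ for each $\ell$. The third summand is directly controlled by the transport-precision norm, giving $\lambda^\beta C_\varphi/(1+|z|^N) \le \lambda^\gamma C_\varphi/(1+|z|^N)$ since $\beta \ge \gamma$.

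Finally I sum over $z$: choosing $N$ large enough (say $N > d + \gamma - \inf A$) makes all the sums $\sum_z |z|^{\gamma-\ell}(1+|z|^N)^{-1}$ and $\sum_z (1+|z|^N)^{-1}$ converge, and one absorbs the $z$-dependence into $C_\varphi$ by redefining the polynomial weight in $C(\varphi,\lambda,0,N,r)$ with a slightly larger $N$. The main obstacle is the re-centering bookkeeping in the first summand: one must verify that post-composition with $\exp^{-1}_p \circ \exp_{q_z}$ does not spoil either the support condition (stays within the injectivity radius of $q_z$) or the derivative bounds needed to invoke reconstruction, which is ensured by $|\lambda_\K z| < \delta_\K/2$ and the compactness of $M$. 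Everything else is routine propagation of the envelope $C_\varphi/(1+|z|^N)$ through the estimates.
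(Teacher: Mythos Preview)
Your proposal is correct and follows essentially the same route as the paper: the same $\lambda_\K$-scaled partition of unity, the same re-centering at $q_z=\exp_p(\lambda_\K z)$ via $\exp_p^{-1}\circ\exp_{q_z}$ and Remark~\ref{rem:221}, and the same three-term split (reconstruction at $q_z$, modelled-distribution increment, transport precision), summed using the envelope $C_\varphi/(1+|z|^N)$ with $N>d+\gamma+|\inf A|$. The paper merely groups your second and third terms together as $\zeta_p-\zeta_{q_z}$ before splitting, which is a cosmetic difference.
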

\begin{proof}
  Let 
  $\varphi_{z,\tilde \lambda}$
  be given as in the proof of Lemma \ref{lem:rem221ForTheModel} with $\K := M$.
  Recall $\lambda_M = \lambda \delta_M / 4$ that $\supp \varphi_{z,\lambda_M} \subset B_{\lambda_M}(\lambda_M z) \cap B_{\delta_M / 4}(0)$.
  Hence $\varphi_{z,\lambda_M} \equiv 0$ for $|\lambda_M z| \ge \delta_M / 2$.
  Then with $\zeta_r := \mcR f - \Pi_r f(r)$
  \begin{align*}
    \langle \zeta_p, \varphi \circ \exp^{-1}_p \rangle
    &=
    \sum_{z \in \Z^d}
    \langle \zeta_p, \varphi_{z,\tilde \lambda} \circ \exp^{-1}_p \rangle \\
    &=
    \sum_{z \in \Z^d, |\lambda_M z| < \delta_M / 2}
    \langle \zeta_p, \varphi_{z,\tilde \lambda} \circ \exp^{-1}_p \rangle \\
    &=
    \sum_{z \in \Z^d, |\lambda_M z| < \delta_M / 2}
    \left[
      \langle \zeta_{\exp_p(\lambda_M z)}, \varphi_{z,\tilde \lambda} \circ \exp^{-1}_p \rangle
    +
    \langle \zeta_p - \zeta_{\exp_p(\lambda_M z)}, \varphi_{z,\tilde \lambda} \circ \exp^{-1}_p \rangle
   \right].
  \end{align*}
  Note that in the sum $|\lambda_M z| < \delta_M / 2$.
  Hence $\exp_p(\lambda_M z ) \in M$ is well-defined.
  %
  %
  Now the first summand can be written as
  \begin{align*}
    \langle \zeta_{\exp_p(\lambda_M z)}, \varphi_{z,\lambda_M} \circ \exp^{-1}_p \rangle
    =
    \langle \zeta_{\exp_p(\lambda_M z)}, \varphi_{z,\lambda_M} \circ \exp^{-1}_p \circ \exp_{\exp_p(\lambda_M z)} \circ \exp^{-1}_{\exp_p(\lambda_M z)}\rangle.
  \end{align*}
  Applying Remark \ref{rem:221}
  to $\varphi_{z,\lambda_M} \circ \exp^{-1}_p \circ \exp_{\exp_p(\lambda_M z)}$
  and \eqref{eq:thmReconstructionCharacterizingInequality-new},
  this
  is bounded by a constant times $C_\varphi ||\Pi,\Gamma||_{\beta,M}\ ||f||_{\mcD^\gamma(M,\mcG)} \lambda^\gamma \frac{1}{1 + |z|^N}$.

  The second summand is bounded as
  \begin{align*}
    \Big|\Big\langle \zeta_p - \zeta_{\exp_p(z)}, \varphi_{z,\lambda_M} \circ \exp_p \Big\rangle\Big|
    &=
    \Big| \Big\langle \Pi_{\exp_p(z)} f(\exp_p(z)) - \Pi_p f(x), \varphi_{z,\lambda_M} \circ \exp_p \Big\rangle\Big|  \\
    &\le
    \Big| \Big\langle \Pi_{\exp_p(z)} \Bigl( f(\exp_p(z)) - \Gamma_{\exp_p(z) \leftarrow x} f(x) \Bigr), \varphi_{z,\lambda_M} \circ \exp_p \Big\rangle\Big|  \\
    &\qquad
    +
    \Big| \Big\langle \Pi_{\exp_p(z)} \Gamma_{\exp_p(z) \leftarrow x} f(x) - \Pi_p f(x), \varphi_{z,\lambda_M} \circ \exp_p \Big\rangle\Big|  \\
    &\lesssim
    C_\varphi
    ||\Pi,\Gamma||_{\beta,M}\ ||f||_{\mcD^\gamma(M,\mcG)} 
    \left(
    \sum_\ell
      \lambda^\ell |\lambda_M z|^{\gamma-\ell}
      +
      \lambda^\beta
    \right)
    \frac{1}{1+|z|^N}.
  \end{align*}
  Hence
  \begin{align*}
    |\langle \zeta_p, \varphi \circ \exp^{-1}_p \rangle|
    &\lesssim
    C_\varphi
    ||\Pi,\Gamma||_{\beta,M}\ ||f||_{\mcD^\gamma(M,\mcG)} 
    \Bigl(
    \sum_{z \in \Z^d} \lambda^\gamma \frac{1}{1 + |z|^N}
    +
    \sum_{z \in \Z^d}
    \left( \sum_a \lambda^a |\lambda_M z|^{\gamma-a} + \lambda^\beta \right) \frac{1}{1 + |z|^N} \Bigr) \\
    &\lesssim
    C_\varphi
    ||\Pi,\Gamma||_{\beta,M}\ ||f||_{\mcD^\gamma(M,\mcG)} 
    \Bigl(
    \lambda^\gamma
    +
    \lambda^\beta
    +
    \lambda^\gamma \sum_{z\in \Z^d} \frac{1}{1 + |z|^N} |z|^{\gamma+|\inf_{a \in A} a|} \Bigr) \\
    &\lesssim
    C_\varphi
    ||\Pi,\Gamma||_{\beta,M}\ ||f||_{\mcD^\gamma(M,\mcG)} 
    \lambda^\gamma,
  \end{align*}
  for $N > d + \gamma + |\inf_{a \in A} a|$.
\end{proof}


\section{Linear ``polynomials'' on a Riemannian manifold}
\label{sec:poly}

The regularity structure for linear ``polynomials'' on the Riemannian manifold $M$
will be built on the vector bundle $(M \times \R) \oplus T^* M$.
For readability
introduce the symbol $\symbol{\1}$ and decree that it forms a basis for $\R$.
Define the graded vector bundle
\begin{align*}
  \mcT
  &:=
  (M \times \R \symbol{1})
  \oplus
  T^* M,
\end{align*}
with grading $A(\mcT) = \{0, 1\}$.
For $q \in M$ let
  $\mcT|_q = \operatorname{span}\{ \symbol{1} \}
  \oplus
  T^*_q M$ be the fiber at $q$.
A generic element of $\mcT_q$ will be written as
\begin{align*}
  \symbol{1} a + \cotang{\omega},
\end{align*}
with $a \in \R, \cotang{\omega} \in T^*_q M$.
Let $\U_q := B_\delta(q)$, where $\delta$ is the radius of injectivity of $M$.
Define the linear map $\Pi_q: \mcT_q \to D'(\U_q)$ as
\begin{align*}
  (\Pi_q \symbol{1} )(z) &= 1 \\
  (\Pi_q \cotang{\omega} )(\cdot) &= \cotang{\omega} \exp^{-1}_q( \cdot ), \qquad \cotang{\omega} \in T_q^* M.
\end{align*}

Note that since $\R \symbol{1}$ is a trivial fiber bundle, it is enough to specify it on
the basis element $\symbol{1}$. This is not possible on $T^* M$.
Note also that $\Pi_q \cotang{\omega}$ is chosen to have value $0$ and differential $\cotang{\omega}$ at $q$.

Finally define the re-expansion maps $\Gamma_{p \leftarrow q}: \mcT_q \to \mcT_p$ as
\begin{align*}
  \Gamma_{p\leftarrow q} \symbol{1} &= \symbol{1} \\
  \Gamma_{p\leftarrow q} \cotang{\omega}
  &=
  \cotang{\omega} \exp^{-1}_q( p ) \symbol{1}
  +
  d[ \cotang{\omega} \exp^{-1}_q ](p),
\end{align*}
which is well-defined for $d(p,q) < \delta$;  $\delta$ the radius of injectivity of $M$.
$\Pi$ and $\Gamma$ together form the polynomial model, where we take $\delta_M = \delta$ in Definition \ref{def:modelGeneral}.

The transport of $\cotang{\omega} \in T_y^*M$ is chosen
such that $\Pi_q \cotang{\omega}$
and $\Pi_p \Gamma_{p\leftarrow q} \cotang{\omega}$
have, at $p$, the same value and the same first derivative.
Our re-expansion is not exact, i.e. we \underline{do not} have
$\Pi_q \tau = \Pi_p \Gamma_{p \leftarrow q} \tau$, but we have the following.
\begin{lemma}
  \label{lem:polynomialError2}
  For $\cotang{\omega} \in T^*_y M$,
  uniformly for $d(p,y)$ bounded,
  \begin{align*}
    |(\Pi_q \cotang{\omega})(z) - (\Pi_p \Gamma_{p\leftarrow q} \cotang{\omega})(z)|
      &\lesssim
      d(z,p)^2 \\
      |D(\Pi_q \cotang{\omega})(z) - D(\Pi_p \Gamma_{p\leftarrow q} \cotang{\omega})(z)|
      &\lesssim
      d(z,p) \\
      |D^2(\Pi_q \cotang{\omega})(z) - D^2(\Pi_p \Gamma_{p\leftarrow q} \cotang{\omega})(z)|
      &\lesssim
      1.
  \end{align*}
\end{lemma}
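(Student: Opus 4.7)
The strategy is to recognise the difference as the first-order Taylor remainder of a smooth function on $M$ and apply standard remainder estimates.

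Introduce the smooth function $f: \U_q \to \R$ by $f(z) := \cotang{\omega} \exp_q^{-1}(z)$, so that $(\Pi_q \cotang{\omega})(z) = f(z)$ by construction. A direct computation gives $f(p) = \cotang{\omega} \exp_q^{-1}(p)$ and $d|_p f = d[\cotang{\omega} \exp_q^{-1}](p)$, which matches precisely the definition of $\Gamma_{p \leftarrow q} \cotang{\omega}$. Unfolding shows that
\begin{align*}
    (\Pi_p \Gamma_{p \leftarrow q} \cotang{\omega})(z) = f(p) + d|_p f \cdot \exp_p^{-1}(z)
\end{align*}
is nothing but the first-order Taylor polynomial of $f$ around $p$ written in normal coordinates at $p$.

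Next I pass to normal coordinates at $p$: set $w := \exp_p^{-1}(z)$ and $g(w) := f(\exp_p(w))$. Using the canonical identification $T_0(T_p M) \cong T_p M$ together with $d \exp_p|_0 = \mathrm{Id}$, one checks $g(0) = f(p)$ and $\nabla g|_0 = d|_p f$, whence
\begin{align*}
    (\Pi_q \cotang{\omega} - \Pi_p \Gamma_{p \leftarrow q} \cotang{\omega})(\exp_p(w))
    = g(w) - g(0) - \nabla g|_0 \cdot w =: R(w)
\end{align*}
is exactly the first-order Taylor remainder of $g$ at $0$. The integral remainder formula then yields $|R(w)| \lesssim |w|^2$, $|\nabla R(w)| \lesssim |w|$, and $|\nabla^2 R(w)| \lesssim 1$, with implicit constants controlled by $\sup |\nabla^2 g|$ on a small ball around $0$.

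Since $g$ depends smoothly on $p, q, \cotang{\omega}$ and, under the assumed bound on $d(p, q)$ combined with the injectivity radius, is evaluated on a relatively compact region where the exponential maps and their inverses are smooth, those second derivatives are uniformly bounded. Translating back via $|w| = d(z, p)$, and noting that transitions between any two smooth coordinate systems on compacta have bounded derivatives (so $D^k$-bounds are coordinate-robust), the three claimed estimates follow. The main subtlety is really conceptual rather than technical: one has to recognise that $\Gamma_{p \leftarrow q}$ was defined precisely so as to produce the first-order Taylor polynomial of $f$ at $p$ in normal coordinates at $p$. Once this identification is in hand, the lemma reduces to Taylor's theorem on $\R^d$.
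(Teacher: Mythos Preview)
Your proof is correct and follows essentially the same approach as the paper: both recognise that $\Pi_p\Gamma_{p\leftarrow q}\omega$ is constructed precisely so that it agrees with $\Pi_q\omega$ at $p$ to first order, and then invoke Taylor's theorem to get the claimed decay of the difference and its derivatives. The paper's proof is simply a two-line version of what you wrote, stating only that $f(p)=g(p)$, $df(p)=dg(p)$ and appealing to Taylor's theorem; your passage to normal coordinates and discussion of uniformity just makes explicit the details the paper leaves implicit.
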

\begin{proof}
  Let
  \begin{align*}
    f( z ) &:= (\Pi_q \cotang{\omega})(z) \\
    g( z ) &:= (\Pi_x \Gamma_{p \leftarrow q} \cotang{\omega})(z).
  \end{align*}
  
  By construction $f(p) = g(p), df(p) = dg(p)$ and hence the statement follows from Taylor's theorem.
\end{proof}

\begin{remark} 
  In the setting of the previous Lemma, not only $f(p) = g(p)$ but also $f(q) = g(q)$.
  Indeed,
  for two points $p,q\in M,$ at distance smaller than the cut locus and $\omega_q\in T^*_yM$,
  \begin{align*}
    \Gamma_{p\leftarrow q}(\omega_q)= \omega_q(\exp^{-1}_q(p))\1+d|_p\left[\omega_q(\exp^{-1}_q(\cdot))\right]
    =\omega_q(\exp^{-1}_q(p))\1+\omega_q\circ d|_p(\exp^{-1}_q),
  \end{align*}
  where the tangent map  satisfies indeed $d|_p (\exp_q^{-1}):T_pM\to T_q M.$  By definition,   
  \begin{align*}
    \Pi_p(\Gamma_{p\leftarrow q} (\omega_q))=  \omega_q(\exp_q^{-1}(p))+ \omega_q\circ d|_p(\exp_q^{-1})\circ \exp^{-1}_p
  \end{align*}
  does  a priori disagree with $\Pi_q(\omega_q)=\omega_q\circ\exp_q^{-1}, $ but at $p$.  Let us set $v_q= \exp_q^{-1}(p)$ and $v_p=\exp_p^{-1}(q).$   The path $\gamma=(\exp_q((1-t) v_q))_{0\le t\le 1}$   is  the unique path from $p$ to $q,$ with length and speed $d(p,q),$ staying within the cut-locus from $y$, that is $(\exp_p( t  v_p  ))_{0\le t\le 1}:$ in other words, for any $0\le t\le 1,$ 
  \begin{align*}
    \exp_p (t \exp^{-1}_p( q)   )=   \exp_q((1-t) \exp^{-1}_q (p) ).
  \end{align*}
  Hence,
  \begin{align*}
  d|_p(\exp_q^{-1})(v_p)&= \left.\frac{d}{dt}\right|_{t=0} \exp_q^{-1} ( \exp_p(t v_p ))= \left.\frac{d}{dt}\right|_{t=0} \exp_q^{-1} ( \exp_q((1-t) v_q ))\\
  &=-v_q
  \end{align*}
  and 
  \begin{align*}
    \Pi_p(\Gamma_{p\leftarrow q}(\omega_q))(q)=\omega_q(v_q)+ \omega_q\circ d|_p(\exp_q^{-1})(v_p)=0=\Pi_q(\omega_q)(q).
  \end{align*}
\end{remark}

The next lemma follows from Lemma \ref{lem:polynomialError2}
and is shown in more generality in Theorem \ref{thm:polynomialRS}.
\begin{lemma}
  \label{lem:linearPolynomialsAreAModel}
  The above is a model of transport precision $\beta=2$.
\end{lemma}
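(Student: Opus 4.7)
The plan is to verify, one by one, the three conditions entering the finiteness of $\|\Pi,\Gamma\|_{\beta;\K}$ in Definition \ref{def:modelGeneral}, with $\beta = 2$ and $A(\mcT) = \{0,1\}$. Since $M$ is (in the cases of interest) closed and the coefficients appearing in $\Pi$ and $\Gamma$ depend smoothly on the base point through $\exp^{-1}$, all the constants coming from $\exp$, its inverse, and the Riemannian volume Jacobian in the exponential chart are uniformly controlled on any compactum $\K$; so it is enough to perform the estimates pointwise.

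First I would handle the analytic size of $\Pi_q\tau$. For $\tau=\symbol{\1}$ this is trivial since $\Pi_q\symbol{\1}\equiv 1$, and integrating against $\varphi_q^\lambda$ gives an $O(1)=O(\lambda^0)$ bound. For $\tau=\cotang{\omega}\in T_q^*M$ of homogeneity $1$, I would push the integral through the exponential chart at $q$: after the change of variables $u=\exp_q^{-1}(z)$, one gets
\begin{align*}
\langle \Pi_q\cotang{\omega},\varphi_q^\lambda\rangle
=\int \cotang{\omega}(u)\,\lambda^{-d}\varphi(\lambda^{-1}u)\,J_q(u)\,du,
\end{align*}
and rescaling $u=\lambda v$ yields the factor $\lambda$ coming out of the linear function $\cotang{\omega}$, giving the required $O(\lambda\,\|\cotang\omega\|)$. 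Next, for the bound on $|\Gamma_{p\leftarrow q}\tau|_m$: the case $\tau=\symbol{\1}$ is immediate from $\Gamma_{p\leftarrow q}\symbol{\1}=\symbol{\1}$. For $\tau=\cotang{\omega}$, the level-$0$ component equals $\cotang{\omega}(\exp_q^{-1}(p))$, bounded by $\|\cotang\omega\|\,|\exp_q^{-1}(p)|\lesssim d(p,q)\|\cotang\omega\|$, matching the required $d(p,q)^{(1-0)\vee 0}$; the level-$1$ component $d|_p[\cotang\omega\exp_q^{-1}]$ is bounded by $\|\cotang\omega\|$ times $\|d|_p\exp_q^{-1}\|$, which is uniformly bounded on $\K$, matching $d(p,q)^{(1-1)\vee 0}=1$.

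The key step is the transport precision bound, i.e.\ establishing
\begin{align*}
|\langle \Pi_q\tau-\Pi_p\Gamma_{p\leftarrow q}\tau,\varphi_p^\lambda\rangle|\lesssim \lambda^{2}\,\|\tau\|.
\end{align*}
For $\tau=\symbol{\1}$ the integrand vanishes. For $\tau=\cotang\omega$, the point is that by the very construction of $\Gamma_{p\leftarrow q}$, the two functions $f=\Pi_q\cotang\omega$ and $g=\Pi_p\Gamma_{p\leftarrow q}\cotang\omega$ agree in value and differential at $p$; hence Lemma \ref{lem:polynomialError2} yields $|f(z)-g(z)|\lesssim d(z,p)^2\,\|\cotang\omega\|$ uniformly on $B_{\delta_\K}(p)$. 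Since $\varphi_p^\lambda$ is supported in a ball of radius $\lesssim\lambda$ around $p$ (via $\exp_p$) and satisfies $\|\varphi_p^\lambda\|_{L^1}\lesssim 1$, this immediately gives the desired $\lambda^2$ estimate. Putting the three bounds together concludes that $(\Pi,\Gamma)$ is a model with transport precision $\beta=2$.

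The only subtle point, and where the restriction to the linear polynomial case really makes life easy, is the Taylor-remainder step: here one only needs agreement of value and first derivative at $p$, which is built into the definition of $\Gamma_{p\leftarrow q}\cotang\omega$, and Lemma \ref{lem:polynomialError2} hands it to us directly. The generalisation to higher order polynomials (needed later) is the true technical obstacle and is deferred to Theorem \ref{thm:polynomialRS}.
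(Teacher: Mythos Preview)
Your proof is correct and follows exactly the approach the paper indicates: the paper does not give a detailed argument here but simply states that the lemma follows from Lemma \ref{lem:polynomialError2} and defers the general case to Theorem \ref{thm:polynomialRS}. You have spelled out the three ingredients of Definition \ref{def:modelGeneral} explicitly, using Lemma \ref{lem:polynomialError2} for the transport-precision bound just as intended, so your write-up is in fact more detailed than the paper's own treatment.
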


As a sanity check for our construction, we mention the following lemma, which is almost immediate in the flat case (see \cite[Lemma 2.12]{bib:hairer}).
We will prove it in Section \ref{sec:higherOrderPolynomials} in a more general setting.
\begin{lemma}
  \label{lem:cgammaVcDgamma}
  For $\gamma \in (1,2)$,
  a function $f: M \to \R$ is in $C^\gamma(M)$
  if and only if there exists a function
  $\hat f(x) = f_0(x) \symbol{1} + \cotang{f_1(x)} \in \msD^\gamma(M,\mcT)$
  with $f_0(x) = f(x)$ and $f_1(x) \in T^* M$.

  In that case: $f_1(x) = df(x)$.
\end{lemma}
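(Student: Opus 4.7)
The plan is to verify both implications directly from the definitions, working in exponential coordinates at each base point. Since $C^\gamma(M)$ for $\gamma \in (1,2)$ agrees locally with the classical space $C^{1,\gamma-1}$ and is atlas-independent by the usual chain-rule argument, it suffices in both directions to reason in exponential charts, which is precisely where $\Pi_q$ and $\Gamma_{p\leftarrow q}$ are defined.

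For $(\Rightarrow)$, I set $f_1(p):=df|_p$ and write $\tilde f_q:=f\circ\exp_q$ on a neighbourhood of the origin in $T_qM$; by the chain rule $d\tilde f_q|_0=df|_q$ and $\tilde f_q\in C^\gamma$. The $\ell=0$ condition of Definition \ref{def:modelledDistributions}, $|f(p)-f(q)-df|_q(\exp^{-1}_q(p))|\lesssim d(p,q)^\gamma$, is then the Taylor remainder estimate for $\tilde f_q$ at $v:=\exp^{-1}_q(p)$. For the $\ell=1$ condition, the chain rule also gives $df|_p=d\tilde f_q|_v\circ d|_p\exp^{-1}_q$, and subtracting $df|_q\circ d|_p\exp^{-1}_q=d\tilde f_q|_0\circ d|_p\exp^{-1}_q$ yields
\[
df|_p-df|_q\circ d|_p\exp^{-1}_q=\bigl(d\tilde f_q|_v-d\tilde f_q|_0\bigr)\circ d|_p\exp^{-1}_q,
\]
whose norm is $\lesssim|v|^{\gamma-1}\lesssim d(p,q)^{\gamma-1}$ by the $(\gamma-1)$-Hölder regularity of $d\tilde f_q$ and the uniform boundedness of $d|_p\exp^{-1}_q$.

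For $(\Leftarrow)$, let $\hat f=f_0\symbol{1}+f_1\in\msD^\gamma(M,\mcT)$. Applying the $\ell=0$ condition at $p$ and $\exp_p v$, dividing by $|v|$ and letting $|v|\to 0$ forces, since $\gamma>1$, that $f_0$ is classically differentiable at $p$ with $df|_p=f_1(p)$; in particular $f=f_0$ and $f_1=df$. To conclude $f\in C^\gamma(M)$, I cover $M$ by finitely many exponential charts $\exp_{p_k}|_{B_{\delta_0}(0)}$ (with subordinate partition of unity) and verify that each $\tilde f:=f\circ\exp_{p_k}$ lies in $C^{1,\gamma-1}(B_{\delta_0}(0))$. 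Given $v_1,v_2\in B_{\delta_0}(0)$, set $P_i:=\exp_{p_k}v_i$ and $h:=\exp^{-1}_{P_2}\circ\exp_{p_k}$; since $h(v_2)=0$ one has $dh|_{v_2}=d|_{v_2}\exp_{p_k}$, while $dh|_{v_1}=d|_{P_1}\exp^{-1}_{P_2}\circ d|_{v_1}\exp_{p_k}$. Inserting an intermediate term splits
\[
d\tilde f|_{v_1}-d\tilde f|_{v_2}=\bigl(f_1(P_1)-f_1(P_2)\circ d|_{P_1}\exp^{-1}_{P_2}\bigr)\circ d|_{v_1}\exp_{p_k}+f_1(P_2)\circ\bigl(dh|_{v_1}-dh|_{v_2}\bigr).
\]
The first summand is controlled by the $\ell=1$ condition of $\msD^\gamma$, giving $\lesssim d(P_1,P_2)^{\gamma-1}\lesssim|v_1-v_2|^{\gamma-1}$; the second, by smoothness of $h$ and boundedness of $f_1$, is $\lesssim|v_1-v_2|\le|v_1-v_2|^{\gamma-1}$.

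The main difficulty, absent in the flat case \cite[Lemma 2.12]{bib:hairer}, is that $\Gamma_{p\leftarrow q}$ involves the non-affine differential $d|_p\exp^{-1}_q$ rather than a simple translation, so one cannot reduce to the Euclidean statement by pushing the structure forward through Lemma \ref{lem:pushForward-new}: the push-forward does not produce the standard polynomial model on $\R^d$. The splitting above, in which the identity $dh|_{v_2}=d|_{v_2}\exp_{p_k}$ (coming from $d|_q\exp^{-1}_q=\mathrm{Id}$) converts the geometric mismatch into a higher-order remainder, is the key extra ingredient required in the manifold setting.
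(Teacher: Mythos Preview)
Your proof is correct. The paper proves this statement as the $n=1$ case of Theorem~\ref{thm:cgammaDgamma}, which treats all $\gamma>0$ at once; to do so it develops the notion of a parallelism on $\Sigma^n T^*M$ (Definition~\ref{d.2.12}) together with Lemma~\ref{lem:UPsi}, which says that $f\in C^\gamma$ iff $\Sym[\nabla^n f]$ is $(\gamma-n)$-H\"older with respect to \emph{any} parallelism, and then observes that the level-$n$ condition of $\msD^\gamma$ is precisely this H\"older condition for the particular parallelism $U(q,p)\omega=d|_q[\omega\circ\exp_p^{-1}]$, up to smooth lower-order corrections. Your argument is the same idea stripped of that machinery and specialized to $n=1$: in $(\Rightarrow)$ you inline the chain-rule computation that underlies Lemma~\ref{lem:UPsi}, and in $(\Leftarrow)$ your splitting via $h=\exp_{P_2}^{-1}\circ\exp_{p_k}$ (where the key identity $dh|_{v_2}=d|_{v_2}\exp_{p_k}$ absorbs the geometric mismatch into a Lipschitz remainder) plays exactly the role of Lemma~\ref{r.2.15}, which says that any two parallelisms differ by a term of order $d(p,q)$. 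The paper's packaging scales cleanly to arbitrary order via higher covariant derivatives and Taylor's theorem on $M$; yours is shorter and entirely self-contained for the case $\gamma\in(1,2)$, at the cost of not generalizing directly.
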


\section{The regularity structure for PAM on a manifold}
\label{sec:pam}

In the next four sections $M$ is a $2$-dimensional closed manifold.

The regularity structure for PAM will be built on two copies of the vector
bundle, $\left(  M\times\mathbb{R}^{2}\right)  \oplus T^{\ast}M.$ We denote
these two copies  by $\mcV$ and $\mcW$. In order to distinguish the different elements of these
bundles we introduce the symbols $\left\{  \symbol{\1},\symbol{\Xi}
,\symbol{\I[\Xi]},\symbol{\I[\Xi]\Xi}\right\}  $ and decree that they form a basis for
$\mathbb{R}^{4}.$ We then write%
\begin{align*}
  \mcW &  =\left(  M\times\left[  \mathbb{R} \symbol{\1}\oplus\mathbb{R}%
\symbol{\I[\Xi]}\right]  \right)  \oplus T^{\ast}M\text{ and}\\
\mcV &  =\left(  M\times\left[  \mathbb{R}\symbol{\Xi}\oplus\mathbb{R}%
\symbol{\I[\Xi]\Xi}\right]  \right)  \oplus(\symbol{\Xi}T^{\ast}M),
\end{align*}
where $\symbol{\Xi}T^{\ast}M$ is simply another copy of $T^{\ast}M.$ Formally
we have, $\mcV=\mcW\symbol{\Xi}$. As usual we will let $\mcT|_{p},\mcV|_{p},$
and $\mcW|_{p}$ denote the fibers of these bundles over $p\in M.$

The vector bundles $\mcV$ and $\mcW$ are graded, with
gradings
\begin{align*}
  A(\mcV) &  :=\{\alpha,2\alpha+2,\alpha+1\}\\
  A(\mcW) &  :=\{0,1,\alpha+2\},
\end{align*}
for some $\alpha\in(-3/2,-1)$ corresponding to the regularity of the driving
white noise $\xi$.

For $\beta\in A(\mcV)$ (or $\beta\in A(\mcW))$ 
recall (Definition \ref{def:regularityStructure}) that 
$\proj_{\beta}:\mcV\rightarrow\mcV$ ($\proj_{\beta}:\mcW\rightarrow\mcW)$ is
the projection taking an element to its $\beta$ -- component. To be concrete,
generic elements $\tau\in\mcV|_{p}, \tau' \in \mcW|_p$ are of the form
\begin{align*}
  \tau&=\symbol{\Xi}a+\symbol{\I[\Xi]\Xi}b+\symbol{\Xi}\cotang{c} \\
  \tau'&=\symbol{1} d+\symbol{\I[\Xi]}e+\cotang{f},
\end{align*}
with $a,b,d,e\in\R,\cotang{c},\cotang{f}\in T_{p}^{\ast}M$. And then for
example
\begin{align*}
  \proj_{\alpha}\tau &  =\symbol{\Xi}a\in\mcV_{\alpha}|_{p}\\
  \proj_{\alpha+2}\tau' &  =\symbol{\I[\Xi]}e\in\mcW_{\alpha+2}|_{p}.
\end{align*}
All the graded fibers have a canonical norm, where on the cotangent space we use
the norm induced by the Riemannian metric. For $\beta\in A$, $\tau\in
\mcV|_{p}$ (or $\tau \in \mcW|_p$) we write, in a slight abuse of notation, $|\tau|_{\beta
}:=|\proj_{\beta}\tau|$.


The model we shall use for the parabolic Anderson model will be time
dependent, so we need slight extensions of our definitions.

\begin{definition}
  For $\mcG = \mcV, \mcW$,
  assume we are given a family of models $(\Pi^t, \Gamma^t)$ on $M$ parametrized by $t \in [0,T]$.
  Define
  \begin{align*}
    ||\Pi, \Gamma||_{\beta,M,T}
    :=
    \sup_{t \le T}
      ||\Pi^t, \Gamma^t||_{\beta,M},
  \end{align*}
  where $||\Pi^t,\Gamma^t||_{\beta,M}$ is defined in Definition \ref{def:modelGeneral}.
  Note that for fixed $t$, the model comes with a reconstruction operator (Theorem \ref{thm:reconstructionOnTheManifold-new}),
  which we shall denote $\mathcal{R}_t$.
\end{definition}

\newcommand{\normscaling}{\mathfrak{N}} 
\begin{definition}
  [Time-dependent modelled distributions]
  \label{def:CD}
  For $\mcG = \mcV, \mcW$,
  given a family of models $(\Pi^t, \Gamma^t)$ parametrized by $t \in [0,T]$,
  denote by $\mathcal{D}^{t,\gamma}(\mcG) = \mathcal{D}^{t,\gamma}(M,\mcG)$ the corresponding
  spaces of modelled distributions.
  That is, as defined in Definition \ref{def:modelledDistributions},
  \begin{align*}
    ||g||_{\mcD^{t,\gamma}(M,\mcG)}
    :=
    \sup_{p \in M} \sup_{\ell < \gamma} ||g(p)||_\ell
    +
    \sup_{p,q \in M} \sup_{\ell < \gamma} 
    \frac{ ||g(q) - \Gamma^t_{p \leftarrow q} g(q)||_\ell }{d(p,q)^{\gamma-\ell}} < \infty,
  \end{align*}

  For $\normscaling > 0$, define the modified norm
  \begin{align*}
    ||g||_{\mcD^{t,\gamma,\normscaling}(M,\mcG)}
    :=
    \sup_{p \in M} \sup_{\ell < \gamma} ||g(p)||_\ell
    +
    \sup_{p,q \in M} \sup_{\ell < \gamma, \ell \not= \mu} 
    \frac{||g(q) - \Gamma^t_{p \leftarrow q} g(q)||_\ell}{d(p,q)^{\gamma-\ell}}
    +
    \normscaling
    \sup_{p,q \in M} \frac{||g(q) - \Gamma^t_{p \leftarrow q} g(q)||_\mu}{d(p,q)^{\gamma-\mu}}.
  \end{align*}
  Here $\mu = \alpha,$ if $\mcG = \mcV$
  and $\mu = 0,$ if $\mcG = \mcW$.
  

  Define $\mathcal{D}^{\gamma,\gamma_0}_T(M,\mcG)$ to be the space of functions $f: [0,T] \to C(M, \mcG)$
  with $f(t) \in \mathcal{D}^{t,\gamma}(M,\mcG)$ and
  \begin{align*}
    ||f||_{\mcD^{\gamma,\gamma_0}_T(M,\mcG)}
    :=
    \sup_{t \le T} ||f(t)||_{\mcD^{t,\gamma}(M,\mcG)}
    + 
    \sup_{p \in M, s,t \le T} \frac{||f(t,p) - f(s,p)||_\upsilon}{|t-s|^{\gamma_0}} < \infty,
  \end{align*}
  where $\upsilon = \alpha,$ if $\mcG = \mcV$
  and $\upsilon = 0,$ if $\mcG = \mcW$.
  %
  For $\normscaling > 0$, define the modified norm
  \begin{align*}
    ||f||_{\mcD^{\gamma,\gamma_0,\normscaling}_T(M,\mcG)}
    &:=
    \sup_{t\le T} ||f(t)||_{\mcD^{t,\gamma,\normscaling}(M,\mcG)}
    + 
    \sup_{p \in M, s,t \le T} \frac{||f(t,p) - f(s,p)||_\upsilon}{|t-s|^{\gamma_0}}
    .
  \end{align*}
\end{definition}

\begin{remark}
  The modified norms with scaling parameter $\normscaling$ are necessary for the fixpoint argument,
  see Remark \ref{rem:modified}.

  As usual with H\"older-type spaces on compact domains, these spaces are complete Banach spaces.
\end{remark}


We now build the model for the structures $\mcV, \mcW$.
As input we need realization of $\symbol{\Xi}$ and $\symbol{\I[\Xi] \Xi}$.

\begin{definition}
  \label{def:xiz}
  Assume for $T>0$ we are given $\xi \in C^\alpha(M)$
  and a family of distributions $Z^t_p \in C^\alpha(M)$, $t \in [0,T], p \in M$,
  satisfying
  \begin{align*}
    |\langle Z^t_p, \varphi^\lambda_p \rangle| &\lesssim \lambda^{2\alpha + 2} \\
    Z^t_q(r) &= Z^t_p(r) +\int_0^t \langle \p_{t-s}(p,\cdot) - \p_{t-s}(q,\cdot), \xi \rangle ds\ \xi(r),
  \end{align*}
  where the action of the heat kernel $\p$ on $\xi$ is well-defined by Theorem \ref{thm:schauderForDistribution}.
  Define
  \begin{align*}
    ||\xi,Z||_{\alpha,2\alpha + 2, T}
    :=
    ||\xi||_{C^\alpha(M)}
    +
    \sup_{t \le T, q \in M, \lambda \in (0,1], \varphi \in \balloftestfunctions^{r,\delta}} \frac{|\langle Z^t_q, \varphi^\lambda_q \rangle|}{\lambda^{2\alpha + 2}},
  \end{align*}
  where $r := \lceil |\alpha| \rceil$ and $\delta$ is the radius of injectivity of $M$.
\end{definition}

In our application to white-noise forcing, $\xi$ will be the white noise on $M$
and $Z$ will be constructed via Gaussian renormalization in Section \ref{sec:gaussian}.


Now define the models for $\mcV$ and $\mcW$ as
\begin{align*}
  (\Pi^{\mcV}_p \symbol{\Xi})(z) &= \xi(z) \\
  (\Pi^{\mcV}_p \symbol{\I[\Xi]\Xi})(z) &= Z^t_p \\
  (\Pi^{\mcV}_p \cotang{\omega} \symbol{\Xi})(z) &= (\Pi^t_p \cotang{\omega})(z)(\Pi^t_p \symbol{\Xi})(z)\\
  (\Pi^{\mcW}_p \symbol{1})(z) &= 1 \\
  (\Pi^{\mcW}_p \symbol{\I[\Xi]})(z) &=
  \int_0^t \Bigl\langle \p_{t-r}(z,\cdot) - \p_{t-r}(p,\cdot), \xi \Bigr\rangle dr \\
  (\Pi^{\mcW}_p \cotang{\omega})(z) &= \omega \exp^{-1}_p( z ),
\end{align*}
with transports
\begin{align*}
  \Gamma^{t,\mcV}_{p\leftarrow q} \symbol{\Xi} &= \symbol{\Xi} \\
  \Gamma^{t,\mcV}_{p\leftarrow q} \symbol{\I[\Xi] \Xi}
  &= \symbol{\I[\Xi] \Xi}
  + 
  \left[
    \int_0^t \Bigl\langle \p_{t-r}(p,\cdot) - \p_{t-r}(q,\cdot), \xi \Bigr\rangle dr 
  \right] \symbol{\Xi} \\
  \Gamma^{t,\mcV}_{p\leftarrow q} \cotang{\omega} \symbol{\Xi}
  &=
  \cotang{\omega} \exp^{-1}_q(p) \symbol{\Xi}
  +
  d_p[ \cotang{\omega} \exp^{-1}_q ] \symbol{\Xi} \\
  \Gamma^{t,\mcW}_{p \leftarrow q} \symbol{1} &= \symbol{1} \\
  \Gamma^{t,\mcW}_{p \leftarrow q} \symbol{\I[\Xi]}
  &=
  \symbol{\I[\Xi]}
  +
  \left[
    \int_0^t \Bigl\langle \p_{t-r}(p,\cdot) - \p_{t-r}(q,\cdot), \xi \Bigr\rangle dr 
  \right] \symbol{1} \\
  \Gamma^{t,\mcW}_{p \leftarrow q} \cotang{\omega} &= \omega \exp^{-1}_q(p) \symbol{1}
  +
  d_p[ \cotang{\omega} \exp^{-1}_q ].
\end{align*}

\begin{lemma}
  \label{lem:theseAreInFactModels}
  These are in fact models with $\delta_M = \delta$ the radius of injectivity of $M$
  and the distances/norms of the model only depend on $\xi, Z$.
  Indeed for $\mcG = \mcV, \mcW$, $\gamma \in \R$
  \begin{align*}
    ||\Pi^{t,\mcG}||_{\beta;\gamma;M} &\lesssim 1 +  ||\xi, Z||_{\alpha,2\alpha +2,T}, 
  \end{align*}
  with $\beta = 2$ for $\mcG = \mcW$ and $\beta = 2 + \alpha$ for $\mcG = \mcV$.
\end{lemma}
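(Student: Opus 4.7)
The plan is to check the three conditions in Definition \ref{def:modelGeneral} for each basis element of $\mcV$ and $\mcW$ separately. Several cases are immediate: for $\symbol{\1}\in\mcW_0|_p$ and $\cotang{\omega}\in\mcW_1|_p$ the prescription coincides with the polynomial model of Section \ref{sec:poly}, so Lemma \ref{lem:linearPolynomialsAreAModel} supplies all three bounds. For $\symbol{\Xi}\in\mcV_\alpha|_p$, one has $\Pi^{\mcV}_p\symbol{\Xi}=\xi$ independent of $p$ and $\Gamma^{t,\mcV}_{p\leftarrow q}\symbol{\Xi}=\symbol{\Xi}$; the size bound reduces via Lemma \ref{lem:cgammaViaExp} to $\xi\in C^\alpha(M)$, while transport and growth are trivial.

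The three remaining elements are $\cotang{\omega}\symbol{\Xi}$, $\symbol{\I[\Xi]\Xi}$, and $\symbol{\I[\Xi]}$. For $\cotang{\omega}\symbol{\Xi}\in\mcV_{\alpha+1}|_p$, factor $\Pi^{\mcV}_p(\cotang{\omega}\symbol{\Xi})=(\Pi^{\mcW}_p\cotang{\omega})\,\xi$; since $\cotang{\omega}\exp^{-1}_p(\cdot)$ vanishes at $p$ with bounded first derivative, on $\supp\varphi^\lambda_p$ it contributes an extra factor $\lambda$, yielding $|\langle(\Pi^{\mcW}_p\cotang{\omega})\xi,\varphi^\lambda_p\rangle|\lesssim\lambda\cdot\lambda^\alpha=\lambda^{\alpha+1}$ by Remark \ref{rem:221}. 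The transport difference reads
\begin{align*}
  \Pi^{\mcV}_q(\cotang{\omega}\symbol{\Xi})-\Pi^{\mcV}_p\Gamma^{t,\mcV}_{p\leftarrow q}(\cotang{\omega}\symbol{\Xi})
  = \bigl[\Pi^{\mcW}_q\cotang{\omega}-\Pi^{\mcW}_p\Gamma^{t,\mcW}_{p\leftarrow q}\cotang{\omega}\bigr]\,\xi,
\end{align*}
and by Lemma \ref{lem:polynomialError2} the bracket is $O(d(\cdot,p)^2)$, giving precision $\lambda^{\alpha+2}$ after pairing with $\xi$. Growth of $\Gamma$ reduces to the polynomial case.

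For $\symbol{\I[\Xi]\Xi}\in\mcV_{2\alpha+2}|_p$ the size bound is built into Definition \ref{def:xiz}, and the transport error is identically zero:
\begin{align*}
  \Pi^{\mcV}_q\symbol{\I[\Xi]\Xi}-\Pi^{\mcV}_p\Gamma^{t,\mcV}_{p\leftarrow q}\symbol{\I[\Xi]\Xi}
  = Z^t_q - Z^t_p - \Bigl[\int_0^t\langle\p_{t-r}(p,\cdot)-\p_{t-r}(q,\cdot),\xi\rangle dr\Bigr]\xi = 0,
\end{align*}
by the consistency relation imposed on $Z$; an essentially identical cancellation shows the transport error for $\symbol{\I[\Xi]}\in\mcW_{\alpha+2}|_p$ also vanishes identically. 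The remaining bounds for both elements reduce to two Schauder-type estimates for the Duhamel integral $u(t,z):=\int_0^t\langle\p_{t-r}(z,\cdot),\xi\rangle dr$, namely
\begin{align*}
  |u(t,p)-u(t,q)| &\lesssim d(p,q)^{\alpha+2}, \\
  \bigl|\langle u(t,\cdot)-u(t,p),\varphi^\lambda_p\rangle\bigr| &\lesssim \lambda^{\alpha+2}.
\end{align*}
Both express $u(t,\cdot)\in C^{\alpha+2}(M)$ uniformly in $t$, and follow from Theorem \ref{thm:schauderForDistribution} applied to $\xi\in C^\alpha(M)$, via the heat kernel asymptotics of Theorem \ref{thm:heatKernelEstimates}.

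The main obstacle is the second of these Schauder estimates: extracting the sharp power $\lambda^{\alpha+2}$ when testing $u(t,\cdot)$ against a test function scaled around $p$ demands careful, spatially anisotropic heat kernel estimates on $M$ and occupies Section \ref{sec:schauder}. Once these two inequalities are in place, all remaining verifications are direct algebraic cancellations or appeals to the polynomial bounds of Section \ref{sec:poly} and to the hypotheses on $\xi$ and $Z$, and one reads off the claimed precisions $\beta=2$ for $\mcW$ and $\beta=2+\alpha$ for $\mcV$.
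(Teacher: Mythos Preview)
Your proposal is correct and follows essentially the same route as the paper: check each basis element, reduce $\symbol{\1}$ and $\cotang{\omega}$ to the polynomial model of Section~\ref{sec:poly}, handle $\symbol{\Xi}$ via Lemma~\ref{lem:cgammaViaExp}, factor $\cotang{\omega}\symbol{\Xi}$ and invoke Lemma~\ref{lem:polynomialError2}, observe the exact cancellation in the transport of $\symbol{\I[\Xi]\Xi}$ and $\symbol{\I[\Xi]}$, and reduce the remaining growth and size bounds to Theorem~\ref{thm:schauderForDistribution}.

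One point of confusion in your final paragraph deserves correction. Your second displayed inequality,
\[
  \bigl|\langle u(t,\cdot)-u(t,p),\varphi^\lambda_p\rangle\bigr|\lesssim\lambda^{\alpha+2},
\]
is not an independent obstacle: since $\alpha+2\in(0,1)$, it is an \emph{immediate} consequence of the first pointwise H\"older bound $|u(t,z)-u(t,p)|\lesssim d(z,p)^{\alpha+2}$, simply by integrating against $|\varphi^\lambda_p|$ (which has $L^1$-norm of order one and support in a ball of radius $\sim\lambda$). Your pointer to Section~\ref{sec:schauder} is also misplaced: that section proves Theorem~\ref{thm:schauder}, the Schauder estimate for \emph{modelled distributions}, which is a substantially harder statement used downstream in the fixpoint argument. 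The ingredient you actually need here is only Theorem~\ref{thm:schauderForDistribution}, the classical distributional Schauder estimate, which the paper states without proof and which already gives both of your displayed inequalities.
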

\begin{proof}
  By Lemma \ref{lem:cgammaViaExp}
  \begin{align*}
    |\langle \Pi^t_p \Xi, \varphi^\lambda_p \rangle|
    =
    |\langle \xi, \varphi^\lambda_p \rangle|
    \lesssim
    \lambda^{\alpha}
    ||\xi||_{C^\alpha(M)}.
  \end{align*}
  By definition
  \begin{align*}
    |\langle \Pi^t_p \I[ \Xi ] \Xi, \varphi^\lambda_p \rangle|
    =
    |\langle Z^t_p, \varphi^\lambda_p \rangle|
    \lesssim
    \lambda^{2\alpha + 2}
    ||\xi,Z||_{\alpha,2\alpha + 2,T}.
  \end{align*}

  Moreover 
  \begin{align*}
    |\langle \Pi^t_p \omega_p \Xi, \varphi^\lambda_p \rangle|
    =
    |\langle \xi, \omega_p \exp^{-1}_p(\cdot) \varphi^\lambda_p(\cdot) \rangle|
    \lesssim
    \lambda^{\alpha+1}
    ||\xi||_{C^\alpha(M)},
  \end{align*}
  since
    $\omega_p \exp^{-1}_p
    \varphi^\lambda_p
    =
    \lambda \psi^{\lambda}_p$,
  with $\psi(\cdot) = \omega_p(\cdot) \phi(\cdot)$.

  Regarding transport, both the transport of $\Xi$ and $\I[\Xi] \Xi$ are exact by definition
  and
  \begin{align*}
    |\langle
      \Pi^{t,\mcV}_q \omega_q \Xi
      -
      \Pi^{t,\mcV}_p \Gamma_{p \leftarrow q} \omega_q \Xi, \varphi^\lambda_p
    \rangle|
    &=
    |\langle
      \xi
      \left( 
        \Pi^{t,\mcW}_q \omega_q
        -
        \Pi^{t,\mcW}_p \Gamma_{p \leftarrow q} \omega_q
      \right),
      \varphi^\lambda_p
    \rangle|
    \lesssim
    \lambda^{\alpha + 2}
    ||\xi||_{C^\alpha(M)},
  \end{align*}
  where we used Lemma \ref{lem:polynomialError2} for the last step.

  Finally
  \begin{align*}
    |\Gamma^{t,\mcV}_{p \leftarrow q} \I[ \Xi ] \Xi|_\alpha
    =
    |\int_0^t \langle \p_s(p,\cdot) - \p_s(q,\cdot), \xi \rangle ds|
    \lesssim
    d(p,q)^{\alpha + 2},
  \end{align*}
  by the Schauder estimate Theorem \ref{thm:schauderForDistribution},
  and
  \begin{align*}
    |\Gamma^{t,\mcV}_{p \leftarrow q} \omega_q \Xi|_{2\alpha + 2}
    &= 0 \\
    |\Gamma^{t,\mcV}_{p \leftarrow q} \omega_q \Xi|_{\alpha}
    &=
    |\Gamma^{t,\mcW}_{p \leftarrow q} \omega_q|_0 \\
    &\le
    d(p,q)
  \end{align*}
  by Lemma \ref{lem:linearPolynomialsAreAModel}.
  Hence
  \begin{align*}
    &||\Pi^{t,\mcV};\Gamma^{t,\mcV}||_{\beta,\gamma,M}
    \lesssim 1 + ||\xi,Z||_{\alpha,2\alpha +2,T},
  \end{align*}
  when $\beta := 2 + \alpha$.


  Analogously, one gets the bounds for $\mcW$ with $\beta = 2$.
\end{proof}

\section{Schauder estimates}
\label{sec:schauder}
Let $\p$ be the heat kernel on $M$.
We start with a Schauder estimate for distributions.
Since its proof follows the same idea as the upcoming Schauder estimate
for modelled distributions, we omit the proof of the next theorem.
\begin{theorem}
  \label{thm:schauderForDistribution}
  Let $T > 0$, 
  and $F \in L^\infty([0,T],C^\alpha(M))$, for $\alpha \in (-2,-1)$.
  Then for $t \in [0,T]$
  \begin{align*}
    |\int_0^t \Bigl\langle \p_{t-r}(p,\cdot), F_r \Bigr\rangle dr
    -
    \int_0^t \Bigl\langle \p_{t-r}(q,\cdot), F_r \Bigr\rangle dr|
    \lesssim
    \sup_{t\le T} ||F_t||_{C^\alpha(M)} d(p,q)^{2+\alpha}.
  \end{align*}
\end{theorem}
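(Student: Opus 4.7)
The plan is to split the time integral at the scale $s_0 := d(p,q)^2$ and treat each piece by a different method, exploiting heat kernel estimates together with the Schwartz-type test function bound of Lemma \ref{lem:rem221ForS}. Writing $s = t-r$, set
\[
  I(p) := \int_0^t \langle \p_{s}(p,\cdot), F_r \rangle\,dr, \qquad I(q) := \int_0^t \langle \p_{s}(q,\cdot), F_r \rangle\,dr,
\]
so the goal is $|I(p) - I(q)| \lesssim \sup_{t \le T} \|F_t\|_{C^\alpha(M)}\, d(p,q)^{2+\alpha}$.

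For the short-time contribution $s \le s_0$, bound $\langle \p_s(p,\cdot), F_r \rangle$ and $\langle \p_s(q,\cdot), F_r \rangle$ separately. In exponential coordinates at $p$ (with a smooth cutoff inside the injectivity radius) one has $\p_s(p, \exp_p(y)) = s^{-d/2} K_s(y/\sqrt s)$ with $K_s$ of Schwartz type, having uniformly (in $s$) bounded derivatives and Gaussian decay. Thus $\p_s(p,\cdot)$ falls exactly under Remark \ref{rem:rem221ForS} with scale $\lambda = \sqrt s$, and Lemma \ref{lem:rem221ForS} yields
\[
  |\langle \p_s(p,\cdot), F_r \rangle| \lesssim \|F_r\|_{C^\alpha(M)}\, s^{\alpha/2},
\]
and similarly at $q$. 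Since $\alpha > -2$, the time integral is convergent and
\[
  \int_0^{s_0} s^{\alpha/2}\,ds \lesssim s_0^{(\alpha+2)/2} = d(p,q)^{2+\alpha},
\]
giving the desired bound on this piece.

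For the long-time contribution $s \ge s_0$ one uses the difference directly. Assuming $d(p,q)$ is below the injectivity radius (the far-apart case being handled by elementary separate bounds), pick a minimizing geodesic $\gamma:[0,d(p,q)] \to M$ from $p$ to $q$ and write
\[
  \p_s(p,\cdot) - \p_s(q,\cdot) = \int_0^{d(p,q)} \nabla_{\dot\gamma(u)} \p_s(\gamma(u),\cdot)\,du.
\]
Standard Gaussian heat kernel gradient bounds give $\nabla \p_s(x, \exp_x(y)) = s^{-(d+1)/2} L_s(y/\sqrt s)$ with $L_s$ Schwartz-type uniformly in $s$. A second application of Lemma \ref{lem:rem221ForS} with $\lambda = \sqrt s$ now produces one extra factor of $s^{-1/2}$, so
\[
  \bigl|\langle \nabla \p_s(\gamma(u),\cdot), F_r \rangle\bigr| \lesssim \|F_r\|_{C^\alpha(M)}\, s^{(\alpha-1)/2}.
\]
Integrating $u$ over $[0,d(p,q)]$ contributes a factor $d(p,q)$, and since $\alpha < -1$ the time integral $\int_{s_0}^{T} s^{(\alpha-1)/2}\,ds \lesssim s_0^{(\alpha+1)/2} = d(p,q)^{\alpha+1}$. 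Multiplying, one gets $d(p,q)^{2+\alpha}$, matching the short-time piece.

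The main obstacle is making the application of Lemma \ref{lem:rem221ForS} clean despite the heat kernel not being compactly supported and the exponential chart being only locally defined. Concretely, one needs to check that the constant $C(\p_s(p,\cdot), \sqrt s, 0, N, r)$ from Lemma \ref{lem:rem221ForS} is bounded uniformly in $s \in (0,T]$ for a fixed large $N$; this requires Gaussian-type pointwise and derivative estimates on $\p_s$ together with a partition-of-unity argument (or a global heat kernel estimate) to handle times $s$ for which $\sqrt s$ is comparable to or larger than the injectivity radius. Once the gradient estimate $|\nabla^k \p_s(x, \exp_x(y))| \lesssim s^{-(d+k)/2} e^{-c|y|^2/s}$ is in hand, the two contributions above combine to the claimed bound.
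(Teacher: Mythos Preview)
Your approach is essentially the one the paper has in mind: the paper omits this proof, stating that it ``follows the same idea as the upcoming Schauder estimate for modelled distributions'' (Theorem~\ref{thm:schauder}), and that proof proceeds exactly by splitting the time integral at $d(p,q)^2$, bounding the short-time piece term by term and the long-time piece via a geodesic Taylor expansion, using that the heat kernel behaves like a scaled Schwartz test function (Lemma~\ref{lem:kernelIsScaledFunctionI}). The one point where the paper is cleaner than your sketch is the localization issue you flag as ``the main obstacle'': rather than a partition-of-unity argument, the paper writes $\p = \p^N + R^N$ via the heat kernel asymptotics of Theorem~\ref{thm:heatKernelEstimates}, so that $\p^N$ is genuinely supported inside the injectivity radius and Lemma~\ref{lem:kernelIsScaledFunctionI} applies directly, while the remainder $R^N$ is as regular as one likes and contributes harmlessly.
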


We now prove an extension of this classical result to the space of modelled distributions.
For
\begin{align*}
  f(t,p) = f_\alpha(t,p) \symbol{\Xi} + f_{2 + 2\alpha}(t,p) \symbol{\I[\Xi]\Xi} + \cotang{f_{1+\alpha}(t,p) \symbol{\Xi}},
\end{align*}
an element of $\mcD^{\gamma,\gamma_0}_T( \mcV )$%
\footnote{Recall from the beginning of this section that $f_\alpha, f_{2+2\alpha}$ are real-valued and $f_{1_\alpha}$ is a section
          of $T^* M$.}, define
\begin{align*}
  (\mathcal{K}_t f)(p) := h := h_0(t,p) \symbol{\1} + h_{2+\alpha}(t,p) \symbol{\I[\Xi]} + \cotang{h_1^i(t,p)},
\end{align*}
with
\begin{align*}
  h_0(t,p) &= \int_0^t \langle \p_{t-s}(p,\cdot), \mathcal{R}_s f(s) \rangle ds \\
  h_{2+\alpha}(t,p) &=  f_\alpha(t,p) \\
  \cotang{h_1(t,p)} &= d|_p \left( z \mapsto \int_0^t \langle \p_{t-s}(z,\cdot), \mathcal{R}_s f(s) - f_\alpha(t,p) \Pi^t_p \symbol{\Xi} \rangle ds \right)
\end{align*}
The well-definedness of these terms is part of the following theorem.
\begin{theorem}[Schauder estimate]
  \label{thm:schauder}
  For $\alpha \in (-4/3,-1),$ with $\gamma \in (0,2\alpha + 8/3)$, set $\vareps := (2\alpha + 8/3 - \gamma)/4$ and $\gamma_0 = \alpha/2 + 1 - \vareps$.
  Let $T > 0$
  and $f \in \mcD^{\gamma,\gamma_0}_T( \mcV )$. Then, for all $t\in [0,T],$ 
  $$\mcR_t \mathcal{K} f = \int_0^t \langle \p_{t-s}, \mcR_s f(s) \rangle ds.$$
  Moreover, $\mathcal{K} f \in \mcD^{\bar \gamma, \bar \gamma_0}(\mcW)$,
  with $\bar \gamma = \gamma + 4/3$, $\bar \gamma_0 = \gamma_0$
  and
  \begin{align*}
    ||\mathcal{K} f||_{\mcD^{\bar \gamma, \bar \gamma_0, \normscaling}_T(\mcW)}
    \lesssim
    ||f||_{\mcD^{\gamma, \gamma_0, \normscaling}_T(\mcV)}
    \left( T^\vareps + T^\vareps \normscaling + \frac{1}{\normscaling} \right).
  \end{align*}
\end{theorem}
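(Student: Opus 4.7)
The plan is to verify the three components of $h = \mathcal{K}_t f$ are well-defined, then bound $|h(t,p) - \Gamma^{t,\mcW}_{p\leftarrow q} h(t,q)|_\ell$ at each homogeneity $\ell \in \{0, 1, 2+\alpha\}$, then the time regularity, and finally check the reconstruction identity. Well-definedness of $h_0$ and $h_1$ follows from $\mcR_s f(s) \in C^\alpha(M)$ (Lemma \ref{thm:reconstructionOnTheManifold-new}) combined with standard heat-kernel-on-$M$ estimates in the spirit of Theorem \ref{thm:schauderForDistribution}; the regularizing subtraction of $f_\alpha(t,p)\,\Pi^t_p \symbol{\Xi}$ in the formula for $h_1$ is the usual trick that gives the gradient one extra degree of regularity so that the spatial derivative at $p$ makes sense. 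For each $t$, the heat kernel $\p_{t-s}(z,\cdot)$ (or its differences) will be renormalized to fall in the class handled by Lemma \ref{lem:rem221ForSForReconstruction}: after dilation by $\lambda := \sqrt{t-s}$, it looks like a scaled test function centered at $p$ with Schwartz-type tails, so reconstruction bounds against it give a factor $\lambda^\gamma = (t-s)^{\gamma/2}$.

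The level $2+\alpha$ bound reduces to estimating $|f_\alpha(t,p) - f_\alpha(t,q)|$, which follows from writing $f_\alpha(t,p) - f_\alpha(t,q) = \mathrm{proj}_\alpha\bigl(f(t,p) - \Gamma^{t,\mcV}_{p\leftarrow q} f(t,q)\bigr) + \bigl(\Gamma^{t,\mcV}_{p\leftarrow q} f(t,q) - f(t,q)\bigr)_\alpha$; using the explicit form of $\Gamma^{t,\mcV}$ and Theorem \ref{thm:schauderForDistribution}, each contribution is at most $d(p,q)^{\gamma-\alpha-2/3}$ under the constraint $\gamma < 2\alpha + 8/3$ and $\alpha < -1$. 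The level $1$ bound amounts to comparing $h_1(t,p)$ with $d|_p[h_1(t,q)\exp^{-1}_q]$, which is a Taylor-remainder estimate for the smooth-in-$p$ function $z \mapsto \int_0^t \langle \p_{t-s}(z,\cdot), \mcR_s f(s) - f_\alpha(s,p)\Pi^s_p\symbol{\Xi}\rangle ds$: one applies the parabolic split $s < t - d(p,q)^2$ vs.\ $s > t - d(p,q)^2$ and uses the heat-kernel bounds on $\nabla^2_p \p_{t-s}(p,\cdot)$ together with Lemma \ref{lem:rem221ForSForReconstruction}.

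The level $0$ estimate is the heart of the argument. One needs
\begin{align*}
  \Bigl| h_0(t,p) - h_0(t,q) - h_1(t,q)(\exp^{-1}_q(p)) - f_\alpha(t,q) \!\!\int_0^t\!\! \bigl\langle \p_{t-r}(p,\cdot) - \p_{t-r}(q,\cdot), \xi\bigr\rangle dr \Bigr| \lesssim d(p,q)^{\gamma+4/3}.
\end{align*}
I would insert $\Pi^t_p f(t,p)$ and $\Gamma^{t,\mcV}_{p\leftarrow q}$ to rewrite the integrand as $\langle \p_{t-s}(p,\cdot) - \p_{t-s}(q,\cdot), \mcR_s f(s) - \Pi^s_p f(s,p)\rangle + (\text{smooth polynomial pieces})$; the first block is controlled by the reconstruction bound (Lemma \ref{lem:rem221ForSForReconstruction}) after dilating $\p_{t-s}$ by $\sqrt{t-s}$, while the polynomial pieces are handled by directly using the explicit model formulae for $\Pi_p \symbol{\Xi}$, $\Pi_p\symbol{\I[\Xi]\Xi}$, $\Pi_p \cotang{\omega}\symbol{\Xi}$ together with the $\mcV$-modelled-distribution assumption $f \in \mcD^{\gamma,\gamma_0}_T(\mcV)$. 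The parabolic split at scale $d(p,q)^2$ then yields the exponent $\gamma + 4/3$ after integrating $(t-s)^{\gamma/2 - 1}$ in the "near" region and using two derivatives of the heat kernel in the "far" region. The factor $T^\vareps$ in the final estimate comes from integrating the excess time power, while the $\tfrac{1}{\normscaling}$ term and the $\normscaling T^\vareps$ term come from distributing the modified norm between the $\mu$-component (scaled by $\normscaling$) and the remaining components, which is the algebraic trick that makes the modified norm contract in the fixpoint argument.

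Time regularity and the reconstruction identity are both comparatively mild. For time regularity at level $0$, one writes $h_0(t,p) - h_0(s,p) = \int_s^t \langle \p_{t-r}(p,\cdot), \mcR_r f(r)\rangle dr + \int_0^s \langle (\p_{t-r} - \p_{s-r})(p,\cdot), \mcR_r f(r)\rangle dr$ and estimates each piece using the $C^\alpha$ bound on $\mcR_r f(r)$. For the reconstruction identity, I would apply the uniqueness clause of Lemma \ref{thm:reconstructionOnTheManifold-new} (see Remark \ref{rem:uniqueness}): it suffices to verify that the candidate $\tilde{\mcR}_t h := \int_0^t \langle \p_{t-s}(\cdot,\cdot), \mcR_s f(s)\rangle ds$ satisfies $|\langle \tilde{\mcR}_t h - \Pi^{t,\mcW}_p h(t,p), \varphi^\lambda_p\rangle| \lesssim \lambda^\theta$ for some $\theta > 0$, which after unpacking $\Pi^{t,\mcW}$ reduces precisely to the level-$0$ spatial estimate carried out above, applied with $q$ replaced by points in $\mathrm{supp}\,\varphi^\lambda_p$. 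The main obstacle throughout is the bookkeeping in the level-$0$ bound: each of the four terms to be cancelled has its own "natural" scaling, and the gain from $2$ to $\gamma + 4/3$ only appears once all cancellations from $\Gamma^{t,\mcV}_{p\leftarrow q}$ and the definition of $h_1$ are correctly accounted for.
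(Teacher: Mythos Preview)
Your overall architecture---treat each homogeneity separately, split the heat kernel as $\p=\p^N+R^N$, use the parabolic time split at $t-d(p,q)^2$, and view $\p^N_{t-s}(p,\cdot)$ as a scaled test function---matches the paper, and the time-regularity argument is essentially the paper's. But the level-$0$ decomposition you propose does not work as written. The paper's key algebraic observation is that the specific definition of $h_1$ makes all four terms in $(h(t,p)-\Gamma^t_{p\leftarrow q}h(t,q))_0$ collapse \emph{exactly} into the single pairing
\[
\int_0^t \Bigl\langle \p_{t-s}(p,\cdot)-\p_{t-s}(q,\cdot)-d|_q\p_{t-s}(q,\cdot)\bigl(\exp_q^{-1}(p)\bigr),\ \mcR_s f(s)-f_\alpha(t,q)\,\xi \Bigr\rangle\,ds,
\]
i.e.\ the test function is the \emph{second}-order Taylor remainder of the heat kernel (the $d|_q\p$ piece is precisely what absorbs $h_1$), and the subtracted distribution is only $f_\alpha(t,q)\xi$. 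Your proposed split pairs the first-order difference $\p(p,\cdot)-\p(q,\cdot)$ with $\mcR_s f(s)-\Pi^s_p f(s,p)$ and pushes the rest into ``smooth polynomial pieces''; but those pieces then contain $f_{2\alpha+2}(s,p)Z^s_p$ and $\Pi^s_p(f_{1+\alpha}\symbol{\Xi})$, which are distributions of negative regularity, and more seriously they still contain the $h_1$ contribution $\int\langle d|_q\p_{t-s}(q,\cdot)(\exp_q^{-1}p),\mcR_s f(s)-f_\alpha(t,q)\xi\rangle\,ds$, which involves $\mcR_s f$ and is not polynomial at all. Correspondingly the paper does \emph{not} use the full reconstruction bound $\lambda^\gamma$ here: it uses the intermediate estimate $|\langle \mcR_s f(s)-f_\alpha(s,p)\xi,\varphi\rangle|\lesssim\lambda^{2\alpha+2}$, together with the space--time H\"older continuity of $f_\alpha$ to pass from $f_\alpha(s,p)$ to $f_\alpha(t,q)$. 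The same pattern (Taylor remainder of $\p^N$ in the far region, term-by-term in the near region, paired against $\mcR_s f-f_\alpha\xi$ with exponent $2\alpha+2$) drives level~$1$.

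Two smaller corrections. The $\tfrac{1}{\normscaling}$ in the final bound comes from the level-$(\alpha+2)$ step: there $(h(t,p)-\Gamma h(t,q))_{\alpha+2}=f_\alpha(t,p)-f_\alpha(t,q)$, and the leading contribution is $|f(t,p)-\Gamma^{t,\mcV}_{p\leftarrow q}f(t,q)|_\alpha\le\tfrac{1}{\normscaling}\|f\|_{\normscaling}\,d(p,q)^{\gamma-\alpha}$ from the $\normscaling$-weighted $\alpha$-component of the $\mcV$-norm; the required exponent $\bar\gamma-(\alpha+2)=\gamma-\alpha-2/3$ is then met with room. And the reconstruction identity $\mcR_t(\mcK f)=h_0$ is one line: since the lowest homogeneity in $\mcW$ is $0$ and is realised by constants, Corollary~\ref{cor:reconstructionPositiveHomogeneity} says reconstruction is projection onto homogeneity $0$; no verification of the characterising inequality is needed.
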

\begin{remark}
  \label{rem:modified}
  Here we can see why we introduced the modified norm 
    $||.||_{\mcD^{\bar \gamma, \bar \gamma_0, \normscaling}_T(\mcW)}$.
  Without it, i.e. with $\normscaling \equiv 1$, the factor on the right hand side cannot be made small,
  which is necessary for the fixpoint argument.
\end{remark}
\begin{remark}
  Contrary to classical Schauder estimates, we only get an ``improvement of $4/3$ derivatives''.
  In order to get an ``improvement of $2$ derivatives'' one has to include quadratic polynomials in the regularity structure.
  This is also the reason why we have to choose $\gamma, \gamma_0$ in such a specific way.
  Note that an improvement by $4/3$ will be enough to set up the fix-point argument.

  To be specific, in order to get an ``improvement of $2$ derivatives'' the complete list of symbols necessary is, ordered by homogeneity,
  \begin{align*}
    &\Xi,
    \Xi \I[ \Xi ] ],
    \Xi X_i,
    1,
    \Xi \I[ \Xi \I[ \Xi ] ],
    \Xi \I[ \Xi X_i ],
    \I[ \Xi ],
    \Xi X_i X_j,
    \Xi X_1,
    X_i,
    \Xi \I[ \Xi \I[ \Xi \I[ \Xi ] ] ],
    \Xi \I[ \Xi \I[ \Xi X_i ] ],\\
    &
    \Xi \I[ \Xi X_i X_j ],
    \I[ \Xi \I[ \Xi ] ],
    \Xi X_i X_j X_k,
    \I[ \Xi X_i ],
    X_i X_j, X_1,
  \end{align*}
  where $i,j = 2,3$ stand for the space-directions.\footnote{Assuming that one builds a regularity structure including space \emph{and} time.}
  These symbols would be the building blocks for the regularity structure on flat space.
  On a manifold the polynomials would represent the respective symmetric covariant tensor bundles, as laid out in Section \ref{sec:poly}.
  The Schauder estimate has to be shown on the level of each of theses symbols, and hence a treatment ``by hand'' as we do here would be cumbersome.
\end{remark}
\begin{remark}
  The following proof based on the heat kernel (almost) being a scaled test function
  goes back, in the flat case, to 
  \cite{bib:CM2016}.
  A proof 
  splitting up the heat kernel into a sum of smooth, compactly supported kernels
  (following the strategy of \cite{bib:hairer})
  is also possible, but more cumbersome.
\end{remark}
\begin{proof}[Proof of Theorem \ref{thm:schauder}]
  The first statement follows from the definition of $h_0$
  and the fact that reconstruction of modelled distributions
  taking values only in positive homogeneities is given
  by the projection onto homogeneity $0$,
  see Lemma \ref{cor:reconstructionPositiveHomogeneity}.
  
  Recall that $\delta_M = \delta$, the radius of injectivity.
  By Remark \ref{rem:mcDCutoff} we can, and will
  only consider $d(p,y) < \deltaf$.
  Introduce the short notation
  \newcommand{\normf}{C_f}
  \newcommand{\normpi}{C_\Pi}
  \begin{align*}
     \normf &:= ||f||_{\mcD^{\gamma,\gamma_0,\normscaling}_T(M,\mcV)} \\
    \normpi &:= \sup_{t \le T} ||\Pi^{t,\mcV},\Gamma^{t,\mcV}||_{\beta,M}.
  \end{align*}
  Note that $||\xi||_{C^\alpha(M)} \le \normpi$.

  We shall need the following facts.
  Since
  \begin{align*}
    |f(t,p) - \Gamma^t_{p \leftarrow q}f(t,q)|_\alpha \lesssim \frac{\normf}{\normscaling} d(p,q)^{\gamma - \alpha},
  \end{align*}
  we have
  \newcommand{\fHolderConstant}{\left( \frac{\normf}{\normscaling} + \normf + \normf \normpi \right)}
  \begin{align}
    &|f_\alpha(t,p) - f_\alpha(t,q)| \notag \\
    &\lesssim
    |f(t,p) - \Gamma^t_{p \leftarrow q}f(t,q)|_\alpha
    +
    |f_{2\alpha + 2}(t,q) \int_0^t \langle \p_{t-s}(p,\cdot) - \p_{t-s}(q,\cdot), \xi \rangle ds|
    +
    |f_{1+\alpha}(t,q) \exp_q^{-1}(p)| \notag \\
    &\lesssim
    \frac{\normf}{\normscaling}
    d(p,q)^{\gamma-\alpha}
    +
    \normf
    ||\xi||_{C^\alpha(M)}
    d(p,q)^{2 + \alpha}
    +
    \normf
    d(p,q) \notag \\
    &\lesssim
    \fHolderConstant 
    d(p,q)^{2+\alpha}, \label{eq:falphaIsHolder}
  \end{align}
  where we used the classical Schauder estimate Theorem \ref{thm:schauderForDistribution}.

  Moreover for a function $\varphi$ satisfying the assumptions of
  Lemma \ref{lem:rem221ForTheModel}
  and
  Lemma \ref{lem:rem221ForSForReconstruction}
  (recall that $\mcR_t$ is the reconstruction operator of Theorem \ref{thm:reconstructionOnTheManifold-new} associated to the model $(\Pi^t,\Gamma^t)$)
  \begin{align}
    &|\langle \mcR_t f(t) - f_\alpha(t,p) \xi, \varphi \circ \exp_p^{-1} \rangle| \notag \\
    &\le
    |\langle \mcR_t f(t) - \Pi^t_p f(t), \varphi \circ \exp_p^{-1} \rangle|
    +
    |\langle \Pi^t_p f(t) - f_\alpha(t,p) \xi, \varphi \circ \exp_p^{-1} \rangle| \notag \\
    &=
    |\langle \mcR_t f(t) - \Pi^t_p f(t), \varphi \circ \exp_p^{-1} \rangle|
    +
    |\langle
    f_{2\alpha+2}(t,p) \Pi^t_p\left( \symbol{\I[\Xi]\Xi} \right)
    +
    \Pi^t_p\left( \cotang{f_{\alpha+1}(t,p)} \symbol{\Xi} \right), \varphi \circ \exp_p^{-1} \rangle| \notag \\
    &\lesssim
    \normf \normpi \lambda^\gamma
    +
    \normf \normpi \lambda^{2\alpha + 2}
    +
    \normf \normpi \lambda^{\alpha + 1} \notag \\
    &\lesssim \normf \normpi \lambda^{2\alpha + 2}, \label{eq:almostReconstructionBound}
  \end{align}
  and similarily
  \begin{align}
    |\langle \mcR_t f(t), \varphi \circ \exp_p^{-1} \rangle|
    &\le
    |\langle \mcR_t f(t) - \Pi^t_p f(t), \varphi \circ \exp_p^{-1} \rangle|
    +
    |\langle \Pi^t_p f(t), \varphi \circ \exp_p^{-1} \rangle| \notag \\
    &\lesssim
    \normf \normpi
    \lambda^\gamma
    +
    \normf \normpi \left( \lambda^{\alpha} + \lambda^{2\alpha + 2} + \lambda^{\alpha + 1} \right) \notag \\
    &\lesssim
    \normf \normpi
    \lambda^\alpha. \label{eq:almostReconstructionBoundII}
  \end{align}

  We now estimate each term in the definition of the norm $||\mathcal{K} f||_{\mcD^{\bar \gamma, \bar \gamma_0, \normscaling}_T(\mcW)}$.

  \textbf{Space regularity}\\
  \textbf{Homogeneity $0$}
  \begin{align*}
    &(h(t,p) - \Gamma^t_{p \leftarrow q} h(t,q))_0 \\
    &=
    h_0(t,p) - h_0(t,q)
    - h_{\alpha+2}(t,q) (\Gamma^t_{p\leftarrow q} \symbol{ \I[\Xi] })_0
    - (\Gamma^t_{p\leftarrow q} \cotang{h_1(t,q)} ))_0 \\
    &=
    \int_0^t \Bigl\langle \p_{t-s}(p,\cdot), \mcR_s f(s) \Bigr\rangle ds
    -
    \int_0^t \Bigl\langle \p_{t-s}(q,\cdot), \mcR_s f(s) \Bigr\rangle ds
    - f_\alpha(t,q) \int_0^t \langle \p_{t-r}(p,\cdot) - \p_{t-r}(q,\cdot), \xi \rangle dr \\
    &\qquad
    - d|_q \left( z \mapsto \int_0^t \Bigl\langle \p_{t-s}(z,\cdot), \mathcal{R}_s f(s) - f_\alpha(t,q) \Pi^t_q \symbol{\Xi} \Bigr\rangle ds \right) \exp^{-1}_q(p) \\
    &=
    \int_0^t
      \Bigl\langle \p_{t-s}(p,\cdot) - \p_{t-s}(q,\cdot) - d|_q \p_{t-s}(q,\cdot) (\exp^{-1}_q(p)),
      \mathcal{R}_s f(s) - f_\alpha(t,q) \xi \Bigr\rangle ds \\
    &=
    \int_0^t
      \Bigl\langle \p^N_{t-s}(p,\cdot) - \p^N_{t-s}(q,\cdot) - d|_q \p^N_{t-s}(q,\cdot)( \exp^{-1}_q(p)),
      \mathcal{R}_s f(s) - f_\alpha(t,q) \xi \Bigr\rangle ds \\
    &\quad
    +
    \int_0^t
      \Bigl\langle R^N_{t-s}(p,\cdot) - R^N_{t-s}(q,\cdot) - d|_q R^N_{t-s}(q,\cdot) (\exp^{-1}_q(p)), \mathcal{R}_s f(s) - f_\alpha(t,q) \xi \Bigr\rangle ds,
  \end{align*}
  where $\p = \p^N + R^N$ using heat asymptotics, Theorem \ref{thm:heatKernelEstimates}.

  Regarding the easier term involving $R^N$ we write
  \begin{align*}
    &\int_0^t
    \Bigl\langle R^N_{t-s}(p,\cdot) - R^N_{t-s}(q,\cdot) - d|_q R^N_{t-s}(q,\cdot) \exp^{-1}_q(p), \mathcal{R}_s f(s) - f_\alpha(t,q) \xi \Bigr\rangle ds \\
    &\quad=
    \frac{1}{2}
    \int_0^t
      \Bigl\langle
      \int_0^1
      \nabla^2|_{\gamma(\theta)}
        R^N( \circ, \cdot )\left( \dot \gamma(\theta)^{\otimes 2} \right)
        (1-\theta)^2 d\theta,
      \mathcal{R}_s f(s) - f_\alpha(t,q) \xi \Bigr\rangle ds,
  \end{align*}
  where $\nabla$ acts on the dummy variable $\circ$
  and convolution acts on $\cdot$ and $\gamma$ is the geodesic connection $q$ to $p$.
  Since 
  \begin{align*}
    ||\mcR_s f(s) - f_\alpha(t,q) \xi||_\alpha
    \lesssim
    \normf \normpi,
  \end{align*}
  this expression is well-defined for $N$ large enough
  and of order
  \begin{align*}
    \normf \normpi \sup_{\theta \le 1} |\dot \gamma(\theta)|^2
    = 
    \normf \normpi d(p,q)^2.
  \end{align*}

  We now treat the term involving $\p^N$. Denoting by $g(t,s)$ the integrand of the above integral,  for $s \in [t-d(p,q)^2,t]$,
  \begin{align*}
    |g(t,s)|
    &\le
    |\Bigl\langle \p^N_{t-s}(p,\cdot), \mathcal{R}_s f(s) - f_\alpha(t,q) \xi \Bigr\rangle|
    +
    |\Bigl\langle \p^N_{t-s}(q,\cdot), \mathcal{R}_s f(s) - f_\alpha(t,q) \xi \Bigr\rangle|\\
    &\quad
    +
    |\Bigl\langle d|_q \p^N_{t-s}(q,\cdot)( \exp^{-1}_q(p)), \mathcal{R}_s f(s) - f_\alpha(t,q) \xi \Bigr\rangle|.
  \end{align*}

  The first term we bound as
  \begin{align*}
    \Big| \Bigl\langle \p^N_{t-s}(p,\cdot), \mathcal{R}_s f(s) - f_\alpha(t,q) \xi \Bigr\rangle \Big|
    &\le
    \Big| \Bigl\langle \p^N_{t-s}(p,\cdot), \mathcal{R}_s f(s) - f_\alpha(s,p) \xi \Bigr\rangle \Big|
    +
    \Big| \Bigl\langle \p^N_{t-s}(p,\cdot), \left( f_\alpha(s,p) - f_\alpha(t,q) \right) \xi \Bigr\rangle \Big| \\
    &\lesssim
    \normf \normpi 
    |t-s|^{(2\alpha + 2)/2} \\
    &\qquad
    +
    \normpi
    |t-s|^{\alpha/2}
    \left(
      \fHolderConstant
      d(p,q)^{2+\alpha}
      +
      \normf
      |t-s|^{\gamma_0}
    \right),
  \end{align*}
  where we used \eqref{eq:almostReconstructionBound} together with Lemma \ref{lem:kernelIsScaledFunctionI} \ref{item:lemKernelIsScaledFunctioni},
  as well as to H\"older continuity of $f_\alpha$ in space \eqref{eq:falphaIsHolder} and in time.

  The second we bound as
  \begin{align*}
    &\Big| \Bigl\langle \p^N_{t-s}(q,\cdot),\mathcal{R}_s f(s) - f_\alpha(t,q) \xi \Bigr\rangle\Big| \\
    &\le
    \Big| \Bigl\langle \p^N_{t-s}(q,\cdot), \mathcal{R}_s f(s) - f_\alpha(s,q) \xi \Bigr\rangle\Big|
    +
    \Big| \Bigl\langle \p^N_{t-s}(q,\cdot), \left( f_\alpha(s,q) -  f_\alpha(t,q) \right) \xi \Bigr\rangle\Big| \\
    &\lesssim
    \normf \normpi
    |t-s|^{(2\alpha + 2)/2}
    +
    \normpi
    \normf
    |t-s|^{\alpha/2} |t-s|^{\gamma_0},
  \end{align*}
  where we used \eqref{eq:almostReconstructionBound} together with Lemma \ref{lem:kernelIsScaledFunctionI} \ref{item:lemKernelIsScaledFunctioni}
  as well as the H\"older continuity of $f_\alpha$ in time.

  The last one we bound as
  \begin{align*}
    &\Big| \Bigl\langle d|_q \p^N_{t-s}(q,\cdot) \exp^{-1}_q(p),
                 \mathcal{R}_s f(s) - f_\alpha(t,q) \xi \Bigr\rangle\Big|  \\
    &\le
    \Big| \Bigl\langle d|_q \p^N_{t-s}(q,\cdot) \exp^{-1}_q(p),
                 \mathcal{R}_s f(s) - f_\alpha(s,q) \xi \Bigr\rangle\Big|
                 \\
                 &\quad
    +
    \Big| \Bigl\langle d|_q \p^N_{t-s}(q,\cdot) \exp^{-1}_q(p),
                 \left( f_\alpha(s,q) - f_\alpha(t,q) \right) \xi \Bigr\rangle\Big|  \\
    &\lesssim
    \normf \normpi
    d(p,q) |t-s|^{(2\alpha +2)/2 - 1/2}
    +
    \normpi
    d(p,q) |t-s|^{\alpha/2 - 1/2}
      \normf
      |t-s|^{\gamma_0},
  \end{align*}
  where we used \eqref{eq:almostReconstructionBound} together with Lemma \ref{lem:kernelIsScaledFunctionI} \ref{item:lemKernelIsScaledFunctionii}
  as well as to H\"older continuity of $f_\alpha$ in space \eqref{eq:falphaIsHolder} and in time.

  Hence
  \begin{align*}
    |g(t,s)|
    &\lesssim
    \Bigl(
      |t-s|^{1/2}
      +
      d(p,q)
    \Bigr) \\
    &\quad \times
    \Bigl(
      \normf \normpi 
      |t-s|^{(2\alpha + 2)/2 - 1/2}
      +
      \normpi
      \fHolderConstant
       d(p,q)^{2+\alpha}
       |t-s|^{\alpha/2 - 1/2}\\
      &\qquad
      +
      \normpi \normf |t-s|^{\alpha/2 - 1/2} |t-s|^{\gamma_0}
    \Bigr),
  \end{align*}
  and then by Lemma \ref{lem:timeIntegral}
  \begin{align*}
    \int_{t-d(p,q)^2}^t |g(t,s)| ds
    &\lesssim
    T^{\vareps}
    \Bigl(
      \normf \normpi 
      d(p,q)^{2\alpha + 4 - 2\vareps}
      +
      \normpi
      \fHolderConstant
        d(p,q)^{2 \alpha + 4 - 2\vareps}
        \\
      &\qquad
      +
      \normpi \normf
      d(p,q)^{\alpha + 2 + 2 \gamma_0 - 2\vareps}
    \Bigr),
  \end{align*}
  if
  \begin{empheq}[box=\mybluebox]{align*}
    (2\alpha+2)/2,\
    \alpha/2 + \gamma_0,\
    \alpha/2,\
    (\alpha+2)/2 - 1/2,\
    \alpha/2 - 1/2 + \gamma_0 > -1 + \vareps.
  \end{empheq}
  Then the following are upper bounds to $\bar \gamma$
  \begin{empheq}[box=\mybluebox]{align*}
    2\alpha + 4 - 2\vareps,\ \alpha + 2 \gamma_0 + 2 - 2\vareps.
  \end{empheq}
  Both are satisfied under our assumptions.

  ~\\
  Now consider $s \in [0,t-d(p,q)^2]$.
  By \cite[Theorem 6.1]{bib:driverSemko} we have
  \begin{align*}
    \p^N_{t-s}(p,\cdot) - \p^N_{t-s}(q,\cdot) - d|_q \p^N_{t-s}(q,\cdot) \exp^{-1}_q(p)
    &=
    \int_0^1 
    \nabla^2 \p^N_{t-s}( \gamma(r), \cdot ) \left( \dot \gamma(r) \otimes \dot \gamma(r) \right) (1-r) dr,
  \end{align*}
  where $\gamma(r) := \exp_q( r v ), v := \exp_q^{-1}(p),$ for any $r\in [0,1],$ and $\nabla^2$ is acting on the first variable of $\p^N$.
  Now
  \begin{align*}
    g(t,s) &=\Bigl\langle \p^N_{t-s}(p,\cdot) - \p^N_{t-s}(q,\cdot) - d|_q \p^N_{t-s}(q,\cdot) \exp^{-1}_q(p), \mathcal{R}_s f(s) - f_\alpha(t,q) \xi \Bigr\rangle \\
    &=
    \Bigl\langle \p^N_{t-s}(p,\cdot) - \p^N_{t-s}(q,\cdot) - d|_q \p^N_{t-s}(q,\cdot) \exp^{-1}_q(p), \mathcal{R}_s f(s) - f_\alpha(s,q) \xi \Bigr\rangle \\
    &\quad
    +
    \Bigl\langle \p^N_{t-s}(p,\cdot) - \p^N_{t-s}(q,\cdot) - d|_q \p^N_{t-s}(q,\cdot) \exp^{-1}_q(p), \left( f_\alpha(s,q) - f_\alpha(t,q) \right) \xi \Bigr\rangle.
  \end{align*}
  The first term we bound as
  \begin{align*}
    &\Big| \Bigl\langle \p^N_{t-s}(p,\cdot) - \p^N_{t-s}(q,\cdot) - d|_q \p^N_{t-s}(q,\cdot) \exp^{-1}_q(p), \mathcal{R}_s f(s) - f_\alpha(s,q) \xi \Bigr\rangle \Big| \\
    &=
    \Big| \int_0^1 \Bigl\langle (\nabla^2 \p_{t-s}( \gamma(r), \cdot ) \left( \dot \gamma(r) \otimes \dot \gamma(r) \right),  \mathcal{R}_s f(s) - f_\alpha(s,q) \xi \Bigr\rangle (1-r) dr \Big| \\
    &\lesssim
      \int_0^1 |v|^2 \normf \normpi |t-s|^{(2+2\alpha)/2-1} (1-r) dr \\
    &\lesssim
    |v|^2
    \normf \normpi |t-s|^{(2+2\alpha)/2-1} \\
    &=
    d(p,q)^2
    \normf \normpi |t-s|^{(2+2\alpha)/2-1},
  \end{align*}
  where we used \eqref{eq:almostReconstructionBound} together with Lemma \ref{lem:kernelIsScaledFunctionI}.
  \footnote{
    In coordinates,
    \begin{align*}
      \nabla^2 \p^n_{t-s}(\gamma(r), \cdot)
      =
      \left( \partial_{ij} \p^n_{t-s}(\gamma(r), \cdot) - \sum_k\Gamma_{ij}^k \partial_k \p^n_{t-s}(\gamma(r), \cdot) \right) dx^i \otimes dx^j,
    \end{align*}
    where $\Gamma$ are the Christoffel symbols.
    This gives the quadratic factor in $|\dot \gamma(r)| = d(p,q)$.
    The blowup in $t-s$ follows from an application of
    Lemma \ref{lem:kernelIsScaledFunctionI} \ref{item:lemKernelIsScaledFunctioni}, \ref{item:lemKernelIsScaledFunctionii} to the components here.
  }

  The second term we bound as
  \begin{align*}
    &\Big| \Bigl\langle \p^N_{t-s}(p,\cdot) - \p^N_{t-s}(q,\cdot) - d|_q \p^N_{t-s}(q,\cdot) \exp^{-1}_q(p), \left( f_\alpha(s,q) - f_\alpha(t,q) \right) \xi \Bigr\rangle\Big| \\
    &=
    \Big|\int_0^1 \Bigl\langle (\nabla d \p_{t-s}( \gamma(r), \cdot ) \left( \dot \gamma(r) \otimes \dot \gamma(r) \right),  \left( f_\alpha(s,q) - f_\alpha(t,q) \right) \xi \Bigr\rangle dr \Big| \\
    &\lesssim
    d(p,q)^2
    \normpi
    \normf
    |t-s|^{\alpha/2 - 1}
    |t-s|^{\gamma_0},
  \end{align*}
  where we used Lemma \ref{lem:kernelIsScaledFunctionI}
  and the H\"older continuity of $f_\alpha$ in time.

  Hence by Lemma \ref{lem:timeIntegral}
  \begin{align*}
    \int_0^{t-d(p,q)^2} g(t,s) ds
    &\lesssim
    T^\vareps
    \Bigl(
      \normf \normpi d(p,q)^{4+2\alpha - 2\vareps}
      +
      \normpi
      \normf
      d(p,q)^{\alpha + 2 + 2 \gamma_0 - 2\vareps}
    \Bigr),
  \end{align*}
  if
  \begin{empheq}[box=\mybluebox]{align*}
    (2\alpha + 2)/2 - 1, \ \alpha/2 + \gamma_0 &< - 1 + \vareps.
  \end{empheq}

  Then the following are upper bounds to $\bar \gamma$
  \begin{empheq}[box=\mybluebox]{align*}
    2\alpha + 4 - 2\vareps,\ 2 + \alpha + 2\gamma_0 - 2\vareps.
  \end{empheq}
  Both are satisfied under our assumptions.

  Hence
  \begin{align*}
    \normscaling |f(t,p) -  \Gamma^t_{p \leftarrow q} f(t,q)|_\alpha
    \lesssim
    \normf
    \Bigl(
      T^\vareps
      +
      T^\vareps
      \normscaling
    \Bigr)
    d(p,q)^{\bar \gamma}.
  \end{align*}

  \textbf{Homogeneity $\alpha + 2$}
  \begin{align*}
    |h(t,p) - \Gamma^t_{p \leftarrow q} h(t,q)|_{\alpha + 2}
    &=
    |h_{\alpha+2}(t,p) - h_{\alpha+2}(t,q)| \\
    &=
    |f_{\alpha}(t,p) - f_{\alpha}(t,q)| \\
    &\lesssim
    \frac{1}{\normscaling} ||f||_{\mcD^{\gamma,\normscaling}_T(\mcV)}
    d(p,q)^{\gamma - \alpha} \\
    &=
    \frac{1}{\normscaling}
    ||f||_{\mcD^{\gamma,\normscaling}_T(\mcV)} d(p,q)^{(\gamma + 2) - (\alpha+2)},
  \end{align*}
  so we need
  \begin{empheq}[box=\mybluebox]{align*}
    \bar \gamma \le \gamma + 2,
  \end{empheq}
  which is satisfied under our assumptions.

  \textbf{Homogeneity $1$}\\
  As on homogeneity $0,$ we write $\p = \p^N + R^N$. We only treat the term involving $\p^N$.
  \begin{align*}
    (h(t,p) - \Gamma^t_{p \leftarrow q} h(t,q))_1
    &=
    \int_0^t
      \Bigl\langle d \p^N_{t-s}(p,\cdot), \mathcal{R}_s f(s) - f_\alpha(t,p) \Pi^t_p \symbol{\Xi} \Bigr\rangle ds \\
    &\quad
    -
    d|_p \left[
      z \mapsto \int_0^t \Bigl\langle d \p^N_{t-s}(q,\cdot), \mathcal{R}_s f(s) - f_\alpha(t,q) \Pi^t_q \symbol{\Xi} \Bigr\rangle ds
      \exp^{-1}_q( z ) \right] \\
    &=: \int_0^t g(t,s) ds.
  \end{align*}
  It is enough to bound this expression acting on $X \in T_p M$.
  Write
  \begin{align*}
    \zeta^s_p := \mathcal{R}_s f(s) - f_\alpha(s,p) \Pi^s_p \symbol{\Xi}.
  \end{align*}

  For $s \in [t-d(p,q)^2, t]$ we bound
  ($\bullet$ denotes the dummy variable on which $X$ is acting, $\cdot$ denotes the dummy variable in the distribution-pairing)
  \begin{align*}
    &\Big| \Big\langle d|_p \p^N_{t-s}(\bullet,\cdot)\Bigl(X\Bigr), \mathcal{R}_s f(s) - f_\alpha(t,p) \Pi^s_p \symbol{\Xi} \Big\rangle \Big| \\
    &\le
    \Big| \Big\langle d|_p \p^N_{t-s}(\bullet,\cdot)\Bigl(X\Bigr), \zeta^s_q \Big\rangle \Big|
    +
    \Big| \Big\langle d|_p \p^N_{t-s}(\bullet,\cdot)\Bigl(X\Bigr), \left( f_\alpha(s,q) - f_\alpha(t,q) \right) \xi \Big\rangle \Big| \\
    &=
    \Big| \Big\langle X\Bigl(\p^N_{t-s}(\bullet,\cdot)\Bigr), \zeta^s_p \Big\rangle \Big|
    +
    \Big| \Big\langle X\Bigl(\p^N_{t-s}(\bullet,\cdot)\Bigr), \left( f_\alpha(s,q) - f_\alpha(t,q) \right) \xi \Big\rangle \Big| \\
    &\lesssim
    \normf \normpi |t-s|^{(2+2\alpha)/2-1/2}
    +
    \normf \normpi |t-s|^{\alpha/2 - 1/2} |t-s|^{\gamma_0},
  \end{align*}
  where we used \eqref{eq:almostReconstructionBound} together with Lemma \ref{lem:kernelIsScaledFunctionI} \ref{item:lemKernelIsScaledFunctionii},
  as well as the H\"older continuity of $f_\alpha$ in time.

  Now
  \begin{align*}
    &\Big| \Big\langle d|_p\left[ z \mapsto d \p^N_{t-s}(q,\cdot) \exp^{-1}_q( z ) \right]\Bigl(X\Bigr), \mathcal{R}_s f(s) - f_\alpha(t,p) \Pi^s_p \symbol{\Xi} \Big\rangle \Big| \\
    &\le
    \Big| \Big\langle d|_p\left[ z \mapsto d \p^N_{t-s}(q,\cdot) \exp^{-1}_q( z ) \right]\Bigl(X\Bigr), \zeta^s_q \Big\rangle \Big|
    +
    \Big| \Big\langle d|_p\left[ z \mapsto d \p^N_{t-s}(q,\cdot) \exp^{-1}_q( z ) \right]\Bigl(X\Bigr), \left( f_\alpha(s,q) - f_\alpha(t,q) \right) \xi \Big\rangle \Big| \\
    &=  
    \Big| \Big\langle d|_q \p^N_{t-s}(q,\cdot) d|_p \exp^{-1}_q( z ) \Bigl(X\Bigr), \zeta^s_q \Big\rangle \Big|
    +
    \Big| \Big\langle d|_q \p^N_{t-s}(q,\cdot) d|_p \exp^{-1}_q( z ) \Bigl(X\Bigr), \left( f_\alpha(s,q) - f_\alpha(t,q) \right) \xi \Big\rangle \Big| \\
    &\lesssim
    \normf \normpi |t-s|^{(2+2\alpha)/2-1/2}
    +
    \normf \normpi |t-s|^{\alpha/2 - 1/2} |t-s|^{\gamma_0},
  \end{align*}
  where we used \eqref{eq:almostReconstructionBound} together with Lemma \ref{lem:kernelIsScaledFunctionI}
  \ref{item:lemKernelIsScaledFunctionii} with $Y_p := d|_p \exp^{-1}_q( z ) \Bigl(X\Bigr)$,
  as well as the H\"older continuity of $f_\alpha$ in time.

  Hence by Lemma \ref{lem:timeIntegral}
  \begin{align*}
    \int_{t-d(p,q)^2}^t |g(t,s)| ds
    \lesssim
    T^\vareps \Bigl(
        \normf \normpi d(p,q)^{3+2\alpha-2\vareps}
        +
        \normf \normpi d(p,q)^{\alpha + 1 + 2 \gamma_0 - 2\vareps},
        \Bigr)
  \end{align*}
  if
  \begin{empheq}[box=\mybluebox]{align*}
    (1 + 2\alpha)/2,\ \alpha/2 - 1/2 + \gamma_0 > - 1 + \vareps.
  \end{empheq}
  Then the following are upper bounds to $\bar \gamma - 1$
  \begin{empheq}[box=\mybluebox]{align*}
    3 + 2\alpha - 2\vareps, \alpha + 1 + 2 \gamma_0 - 2\vareps.
  \end{empheq}
  Both are satisfied under our assumptions.

  Consider now $s \in [0,t-d(p,q)^2]$.
  Again it is enough to bound the term acting on some $X \in T_p M$.
  For notational simplicity 
  \newcommand{\tmp}{v}
  let $\tmp(z) := d|_z \p^N_{t-s}(z,\cdot) \ d|_p \exp^{-1}_z \langle X \rangle$
  and $\zeta^s_p = \mcR_s f(s) - f_\alpha(s,p) \xi$.
  We then write the term to bound as
  \newcommand{\px}
  {\tmp(p)}
  \newcommand{\py}
  {\tmp(q)}
   \begin{align*}
    &\Bigl\langle d \p^N_{t-s}(p,\cdot), \mathcal{R}_s f(s) - f_\alpha(t,p) \Pi^t_p \symbol{\Xi} \Bigr\rangle \langle X \rangle
    -
    \Bigl\langle d|_p \left[ z \mapsto d \p^N_{t-s}(q,\cdot) \exp^{-1}_q( z ) \right], \mathcal{R}_s f(s) - f_\alpha(t,q) \Pi^t_q \symbol{\Xi} \Bigr\rangle \langle X \rangle \\
    &=\Big\langle \px, \mathcal{R}_s f(s) - f_\alpha(t,p) \Pi^t_p \symbol{\Xi} \Big\rangle
    -
    \Big\langle \py, \mathcal{R}_s f(s) - f_\alpha(t,q) \Pi^t_q \symbol{\Xi} \Big\rangle \\
    &=
    \Big\langle \px - \py, \zeta^s_p \Big\rangle
    +
    \Big\langle \px, \left( f_\alpha(s,p) - f_\alpha(t,p) \right) \xi \Big\rangle
    -
    \Big\langle \py, \left( f_\alpha(s,p) - f_\alpha(t,q) \right) \xi \Big\rangle \\
    &=
    \Big\langle \px - \py, \zeta^s_p \Big\rangle
    +
    \Big\langle \px - \py, \left( f_\alpha(s,p) - f_\alpha(t,p) \right) \xi \Big\rangle
    +
    \Big\langle \py, \left( f_\alpha(t,q) - f_\alpha(t,p) \right) \xi \Big\rangle.
  \end{align*}
  Now with $\gamma(t) := \exp_q( t v ), v := \exp^{-1}_q(p)$,
  \begin{align*}
    \Big\langle \tmp(p) - \tmp(q), \zeta^s_p \Big\rangle
    &=
    \int_0^1 \Big\langle d|_{\gamma(r)} \tmp \langle \dot \gamma(r) \rangle, \zeta^s_p \Big\rangle dr \\
    &\lesssim
    d(p,q) \normf \normpi |t-s|^{(2+2\alpha)/2-1}.
  \end{align*}
  where we used \eqref{eq:almostReconstructionBound} together with Lemma \ref{lem:kernelIsScaledFunctionI} \ref{item:lemKernelIsScaledFunctioniii}.

  Similarily
  \begin{align*}
    | \Big\langle \px - \py, \left( f_\alpha(s,p) - f_\alpha(t,p) \right) \xi \Big\rangle |
    &=
    \Big| \int_0^1 \Big\langle d|_{\gamma(r)} \tmp \langle \dot \gamma(r) \rangle, \left( f_\alpha(s,p) - f_\alpha(t,p) \right) \xi \Big\rangle dr \Big| \\
    &\lesssim
    d(p,q) \normf \normpi |t-s|^{\gamma_0} |t-s|^{\alpha/2-1},
  \end{align*}
  where we used Lemma \ref{lem:kernelIsScaledFunctionI} \ref{item:lemKernelIsScaledFunctioniii} and the H\"older continuity of $f_\alpha$ in time.

  Finally
  \begin{align*}
    \Big| \Big\langle \py, \left( f_\alpha(t,q) - f_\alpha(t,p) \right) \xi \Big\rangle \Big|
    &\lesssim
    \fHolderConstant \normpi d(p,q)^{2+\alpha} |t-s|^{\alpha/2 - 1/2},
  \end{align*}
  where we used Lemma \ref{lem:kernelIsScaledFunctionI} \ref{item:lemKernelIsScaledFunctionii} and the H\"older continuity of $f_\alpha$ in space \eqref{eq:falphaIsHolder}.

  Hence by Lemma \ref{lem:timeIntegral}
  \begin{align*}
    \int_0^{t-d(p,q)^2} |g(t,s)| ds
    &\lesssim
    T^\vareps
    \Bigl(
      \normf \normpi d(p,q)^{3+2\alpha-2\vareps} \\
      &\qquad
      +
      \normf \normpi d(p,q)^{\gamma_0 + \alpha+1-2\vareps}
      +
      \fHolderConstant \normpi d(p,q)^{3+2\alpha-2\vareps}
    \Bigr)
  \end{align*}
  if
  \begin{empheq}[box=\mybluebox]{align*}
    (2+2\alpha-2)/2,\
    \alpha/2 - 1/2,\
    \alpha/2 - 1 + \gamma_0 < - 1 + \vareps.
  \end{empheq}

  Then the following are upper bounds for $\bar \gamma - 1$
  \begin{empheq}[box=\mybluebox]{align*}
    2\alpha + 3 - 2\vareps,\ 1 + \alpha + \gamma_0 - 2\vareps.
  \end{empheq}
  Both are satisfied under our assumptions.

  Then
  \begin{align*}
    |f(t,p) - \Gamma^t_{p \leftarrow q} f(t,q)|_1
    \lesssim
    \normf
    \Bigl(
      T^\vareps
      +
      T^\vareps
      \normscaling
    \Bigr)
    d(p,q)^{\bar \gamma - 1}.
  \end{align*}

  ~\\

  \textbf{Time regularity}\\
  As on homogeneity $0$ we write $\p = \p^N + R^N$. We only treat the term involving $\p^N$.
  \begin{align*}
    h_0(t,p) - h_0(s,p)
    &=
    \int_0^t \Big\langle \p^N_{t-r}(p,\cdot), \mcR_r f(r) \Big\rangle dr
    -
    \int_0^s \Big\langle \p^N_{s-r}(p,\cdot), \mcR_r f(r) \Big\rangle dr \\
    &=
    \int_s^t \Big\langle \p^N_{t-r}(p,\cdot), \mcR_r f(r) \Big\rangle dr
    +
    \int_0^s \Big\langle \p^N_{t-r}(p,\cdot) - \p^N_{s-r}(p,\cdot), \mcR_r f(r) \Big\rangle dr.
  \end{align*}

  Now using \eqref{eq:almostReconstructionBoundII} and Lemma \ref{lem:kernelIsScaledFunctionI} \ref{item:lemKernelIsScaledFunctioni}
  \begin{align*}
    \int_s^t \Big| \Big\langle \p^N_{t-r}(p,\cdot), \mcR_r f(r) \Big\rangle \Big| dr
    &\lesssim
    \normf \normpi \int_s^t (t-r)^{\alpha/2} dr \\
    &=
    \normf \normpi \int_s^t (t-r)^\vareps (t-r)^{\alpha/2-\vareps} dr \\
    &\lesssim
    T^\vareps \normf \normpi |t-s|^{(\alpha+2)/2 - \vareps}.
  \end{align*}

  Further, again using \eqref{eq:almostReconstructionBoundII} and Lemma \ref{lem:kernelIsScaledFunctionI} \ref{item:lemKernelIsScaledFunctioni}
  \begin{align*}
    \int_0^s \Big| \Big\langle \p^N_{t-r}(p,\cdot) - \p^N_{s-r}(p,\cdot), \mcR_r f(r) \Big\rangle\Big| dr
    &=
    \normf \normpi \int_0^s \Big| \int_{s-r}^{t-r} \Big\langle \partial_t \p^N_\theta( p, \cdot ), \mcR_r f(r) \Big\rangle d\theta\Big| dr \\
    &\lesssim
    \normf \normpi \int_0^s \int_{s-r}^{t-r} \theta^{\alpha/2 - 1} d\theta dr \\
    &=
    \normf \normpi \frac{1}{(\alpha/2)} \int_0^s \left[  (t-r)^{\alpha/2} - (s-r)^{\alpha/2} \right] dr \\
    &\le
    T^\vareps \normf \normpi \left| \frac{1}{(\alpha/2)} \int_0^s \left[  (t-r)^{\alpha/2-\vareps} - (s-r)^{\alpha/2-\vareps} \right] dr \right| \\
    &=
    T^\vareps
    \normf \normpi
    \Big| - \frac{1}{(\alpha/2)} \frac{1}{(\alpha/2) + 1 - \vareps} \\
    &\qquad \times
    \left[ |t-s|^{\alpha/2 + 1 - \vareps} - t^{\alpha/2 + 1 - \vareps} + s^{\alpha/2 + 1 - \vareps} \right] \Big| \\
    &\lesssim
    T^\vareps
    \normf \normpi
    |t-s|^{\alpha/2 + 1 - \vareps},
  \end{align*}
  if
  \begin{empheq}[box=\mybluebox]{align*}
    \alpha/2 - \vareps &> -1.
  \end{empheq}
  We then need
  \begin{empheq}[box=\mybluebox]{align*}
    \bar \gamma_0 - \vareps \le \alpha/2 + 1 - \vareps.
  \end{empheq}
  Both are satisfied under our assumptions.

  Then
  \begin{align*}
    |h_0(t,p) - h_0(s,p)|
    \lesssim
    T^\vareps \normf |t-s|^{\bar \gamma_0}.
  \end{align*}
\end{proof}

We used the following lemmas.
\begin{lemma}   \label{lem:timeIntegral}
  Let $\rho_1, \rho_2 \in \R$, $g: \R^2 \to \R$
  and assume
  \begin{align*}
    g(t,s) &\le C_1 |t-s|^{\rho_1} \qquad, s \in [t-A, t] \\
    g(t,s) &\le C_2 |t-s|^{\rho_2} \qquad, s \in [0, t-A].
  \end{align*}
  Let $\vareps \ge 0$ such that $\rho_1 - \vareps > -1$, $\rho_2 - \vareps < -1$.
  Then
  \begin{align*}
    \int_{t-A}^t g(t,s)ds \lesssim C_1 T^\vareps A^{\rho_1 + 1 - \vareps} \\
    \int_0^{t-A} g(t,s)ds \lesssim C_2 T^\vareps A^{\rho_2 + 1 - \vareps}.
  \end{align*}
\end{lemma}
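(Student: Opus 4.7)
The plan is to prove both inequalities by the same device: absorb a factor $T^\vareps$ out of $|t-s|^{\rho_i}$ by writing
\[
|t-s|^{\rho_i} \;=\; |t-s|^\vareps \, |t-s|^{\rho_i - \vareps} \;\le\; T^\vareps |t-s|^{\rho_i - \vareps},
\]
which is valid since in the intended application $s,t \in [0,T]$, so $|t-s| \le T$. One is then reduced to a one-dimensional integral of a pure power, and the two hypotheses on $\vareps$ are precisely what is needed to control the two remaining integrals.

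For the first estimate, on the interval $[t-A, t]$ the hypothesis $\rho_1 - \vareps > -1$ guarantees integrability at the singular endpoint $s=t$, and a direct antiderivative computation gives
\[
\int_{t-A}^t (t-s)^{\rho_1 - \vareps}\, ds \;=\; \frac{A^{\rho_1 - \vareps + 1}}{\rho_1 - \vareps + 1},
\]
which yields the bound $C_1 T^\vareps A^{\rho_1 + 1 - \vareps}$ up to the $\vareps$-dependent multiplicative constant $1/(\rho_1 - \vareps + 1)$.

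For the second estimate, on $[0, t-A]$ the same primitive gives
\[
\int_0^{t-A} (t-s)^{\rho_2 - \vareps}\, ds \;=\; \frac{t^{\rho_2 - \vareps + 1} - A^{\rho_2 - \vareps + 1}}{\rho_2 - \vareps + 1}.
\]
I would then invoke the hypothesis $\rho_2 - \vareps + 1 < 0$ in two ways: first, the map $x \mapsto x^{\rho_2 - \vareps + 1}$ is decreasing on $(0,\infty)$, so (assuming $A \le t$) the numerator is non-positive; second, the denominator is negative. The quotient is therefore non-negative and bounded by $A^{\rho_2 - \vareps + 1}/|\rho_2 - \vareps + 1|$, which delivers $C_2 T^\vareps A^{\rho_2 + 1 - \vareps}$ after multiplying back by $C_2 T^\vareps$.

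There is no substantive obstacle here; the result is a routine power-integral computation. The only piece of bookkeeping worth stating explicitly is the sign analysis in the second estimate, where the negativity of $\rho_2 - \vareps + 1$ is used twice (to drop the boundary contribution at $s=0$ and to turn the negative denominator into $|\rho_2 - \vareps + 1|$). The role of $\vareps$ — trading an $\vareps$-slice of the power for a factor $T^\vareps$ — is what makes this lemma useful in Theorem~\ref{thm:schauder}: it ensures that the Schauder estimate carries a genuine small factor $T^\vareps$, needed for the fixed-point argument.
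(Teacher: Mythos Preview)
Your proposal is correct and follows essentially the same approach as the paper: extract $T^\vareps$ via $|t-s|^{\rho_i}\le T^\vareps|t-s|^{\rho_i-\vareps}$, then integrate the remaining power explicitly and use the sign of $\rho_i-\vareps+1$ to control the boundary terms. Your sign discussion for the second integral is in fact slightly more careful than the paper's write-up.
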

\begin{proof}
  Indeed
  \begin{align*}
    \int_{t-A}^t g(t,s) ds
    &\le
    \int_{t-A}^t |t-s|^{\rho_1} ds \\
    &\le
    T^\vareps  \int_{t-A}^t |t-s|^{\rho_1-\vareps} ds \\
    &=
    T^\vareps ( -\frac{1}{\rho_1+1} |t-s|^{\rho_1+1-\vareps} |^{t}_{t-A} ) \\
    &=
    T^\vareps (-\frac{1}{\rho_1+1} \left[ 0 - A^{\rho_1+1-\vareps} \right]) \\
    &\lesssim
    T^\vareps A^{\rho_1+1-\vareps},
  \end{align*}
  and
  \begin{align*}
    \int_0^{t-A} g(t,s)ds
    &\le
    \int_0^{t-A} |t-s|^{\rho_2} ds \\
    &=
    T^\vareps \int_0^{t-A} |t-s|^{\rho_2-\vareps} ds \\
    &=
    T^\vareps (-\frac{1}{\rho_2+1} |t-s|^{\rho_2+1-\vareps} |^{t-A}_{0}) \\
    &=
    T^\vareps (-\frac{1}{\rho_2+1} \left[ A^{\rho_2+1-\vareps} - t^{\rho_2+1-\vareps} \right]) \\
    &\lesssim
    T^\vareps A^{\rho_2 + 1-\vareps}.
  \end{align*}
\end{proof}

The following result on heat kernel asymptotics is classical
and its proof can be found for example  in
\cite[Theorem 3.10]{bib:driverHeatKernels}
and
\cite[Theorem 2.30]{bib:berlineGetzlerVergne}
See also \cite[Section 3.2]{bib:rosenberg}.
In these references the norm $||\cdot||_{C^\ell(M \times M)}$ is defined via a partition of unity
as in Definition \ref{def:cgamma}.
There is a slight difference to our notation. In the cited references,
$C^1$ for example means ``continuously differentiable'',
while in our notation it only means ``Lipschitz continuous''.
But it is enough to know that our norm is dominated by the norm in the references.
\begin{theorem}
  \label{thm:heatKernelEstimates}
  Let $M$ be a $d$-dimensional, closed Riemannian manifold
  and $\p$ be the heat kernel on $M$.
  Then there exist smooth functions $(\Phi_i(p,q))_{i\ge 0}$ such that if we define
  for $N \ge 1$
  \begin{align*}
    \p^N(t,p,q)
    :=t^{-d/2}    \exp( - d(p,q)^2 / 4t )
    \sum_{i=0}^N t^i \Phi_i(p,q),
  \end{align*}
  we have
  \begin{align*}
    ||\partial_t^k (\p_t - \p^N_t)||_{C^\ell(M \times M)} \lesssim t^{N - d/2 - \ell/2 - k}.
  \end{align*}
  Here $\Phi_i(p,q) = 0,$ for $d(p,q) \ge \deltaf$.
\end{theorem}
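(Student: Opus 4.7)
The plan is to build $\p^N$ as a classical Minakshisundaram--Pleijel parametrix and then extract the error estimate from Duhamel's principle. Fix $q \in M$ and work in geodesic normal coordinates $v = \exp_q^{-1}(p)$ on the ball of radius $\delta$ (the injectivity radius). I take the ansatz
\begin{align*}
  \p^N(t,p,q) = (4\pi t)^{-d/2} \exp(-d(p,q)^2/4t) \sum_{i=0}^N t^i \Phi_i(p,q),
\end{align*}
and apply $L := \partial_t - \Delta_p$. Using the standard computation of $\Delta_p$ acting on $\exp(-|v|^2/4t)$ in normal coordinates, matching powers of $t$ forces a sequence of first-order transport equations along radial geodesics emanating from $q$: a homogeneous equation for $\Phi_0$ with $\Phi_0(q,q) = 1$ (whose solution is the Van Vleck--Morette density $\Phi_0(p,q) = J(p,q)^{-1/2}$) and inhomogeneous equations for $\Phi_i$, $i \ge 1$, each solvable by quadrature. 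Multiplying each $\Phi_i$ by a smooth cutoff $\chi$ supported in $\{d(p,q) < \delta/4\}$ with $\chi \equiv 1$ on $\{d(p,q) \le \delta/8\}$ extends them smoothly to all of $M \times M$.

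By construction, the cutoff parametrix satisfies
\begin{align*}
  L\p^N(t,p,q) = t^{N - d/2} \exp(-d(p,q)^2/4t)\, E_N(t,p,q) + R^{\mathrm{cut}}(t,p,q),
\end{align*}
with $E_N$ smooth and bounded uniformly on $[0,T]\times M\times M$, and $R^{\mathrm{cut}}$ supported in $\{d(p,q) \ge \delta/8\}$. On the latter region the Gaussian factor is bounded by $\exp(-c/t)$, which decays faster than any polynomial as $t \downarrow 0$, so $R^{\mathrm{cut}}$ is absorbed into the parametric error. Since $\p^N_t(\cdot,q) \to \delta_q$ as $t \downarrow 0$ (by the normalization $\Phi_0(q,q)=1$), Duhamel's formula gives
\begin{align*}
  \p_t(p,q) - \p^N_t(p,q) = \int_0^t \int_M \p_{t-s}(p,r)\, L\p^N_s(r,q)\,\dV(r)\,ds.
\end{align*}
Bounding $|L\p^N_s|$ against a Gaussian of width $s$ and using $\int_M \p_{t-s}(p,r)\,\dV(r) = 1$, I obtain the case $\ell=k=0$ with a power of $t$ that grows arbitrarily with $N$.

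For spatial derivatives of order $\ell$, I differentiate under the Duhamel integral, moving derivatives onto $\p_{t-s}(p,\cdot)$, and invoke the classical gradient estimates $|\nabla^\ell \p_t(p,r)| \lesssim t^{-\ell/2}\, \p_{2t}(p,r)$ on a closed manifold, producing the claimed factor $t^{-\ell/2}$. Time derivatives $\partial_t^k$ are handled iteratively via $\partial_t \p = \Delta \p$, trading each $\partial_t$ for two spatial derivatives and hence the factor $t^{-k}$. The needed ingredients are the on-diagonal Gaussian upper bounds and gradient estimates for $\p_t$ on a compact manifold, which follow from Li--Yau type inequalities or from the parabolic regularity theory developed in the references cited at the end of the statement.

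The main obstacle lies in the bookkeeping: tracking powers of $t$ accurately across the iterated commutation of $\partial_t^k$ and $\partial^\ell$ with the cutoff and with the Duhamel kernel, and making sure the cutoff-induced error is absorbed into every $C^\ell(M\times M)$ bound rather than only the sup norm. Once this accounting is in place, no new geometric input is required beyond the normal-coordinate computations, the transport-equation construction of the $\Phi_i$, and the uniform short-time kernel estimates on the closed manifold $M$.
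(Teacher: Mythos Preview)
The paper does not supply its own proof of this theorem. It prefaces the statement with: ``The following result on heat kernel asymptotics is classical and its proof can be found for example in \cite[Theorem 3.10]{bib:driverHeatKernels} and \cite[Theorem 2.30]{bib:berlineGetzlerVergne}. See also \cite[Section 3.2]{bib:rosenberg}.'' Your sketch via the Minakshisundaram--Pleijel transport equations for the $\Phi_i$ together with Duhamel's formula is exactly the classical construction carried out in those references, so there is no alternative approach in the paper to compare against.

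One caution on the $C^\ell$ step: placing all $\ell$ spatial derivatives on $\p_{t-s}(p,\cdot)$ under the Duhamel integral produces a factor $(t-s)^{-\ell/2}$, which is not integrable in $s$ near $s=t$ once $\ell\ge 2$. The usual remedy is to exploit that $L\p^N_s(\cdot,q)$ is itself smooth with $\|L\p^N_s\|_{C^\ell}\lesssim s^{N-d/2-\ell/2}$, and to use that the heat semigroup $P_{t-s}$ is bounded on $C^\ell$ uniformly for $t-s\in[0,T]$ (or, equivalently, to move derivatives onto the source term rather than the propagator). This yields $\int_0^t s^{N-d/2-\ell/2}\,ds$, which converges for $N$ large and gives the stated power. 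This is precisely the ``bookkeeping'' you flag, and the cited sources handle it in this way.
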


\begin{lemma}
  \label{lem:kernelIsScaledFunctionI}
  Let
  \begin{align*}
    \p^N_t(p,q) &=
    t^{-d/2}
    \exp( - d(p,q)^2 / 4t )
    \sum_{i=0}^N t^i \Phi_i(p,q),
  \end{align*}
  $\psi, \Phi_i$ smooth and with 
  $\Phi_i(p,q) = 0,$ for $d(p,q) \ge \deltaf$.

  Let $p \in M$
  and define for $z$ in the range of $\exp_p^{-1}$,
  $Y_p \in T_p M$ a tangent vector and $Z \in \Gamma(TM)$ a vector field
  \begin{align*}
      \varphi_t(z) &:= \p^N_t(p,\exp_p( z ) ) \\
      \varphi^Y_t(z) &:= X_p \p^N_t(\bullet, \exp_p( z )) \\
      \varphi^{Y,Z}_t(z) &:= Y_p\left[ \ast \mapsto Z_\ast \p^N_t(\bullet, \cdot) \right] |_{\cdot = \exp_p( z )}.
  \end{align*}
  (Note that because of the small support of $\p^N$, these are globally well-defined smooth functions by continuation with zero
  outside of the range of $\exp_p^{-1}$.)
  
  Then for any multiindex $k$, any $n \ge 0$ and $\ell = 0,1$.
  \begin{enumerate}[label=(\roman*)]
    \item 
      \label{item:lemKernelIsScaledFunctioni}
      $|\partial_t^\ell D^k \varphi_t(z)| \lesssim_{\ell,n,k} \left(\sqrt{t}\right)^{-d - k - 2 \ell} \frac{1}{1 + (|z|/\sqrt{t})^n },$

    \item
      \label{item:lemKernelIsScaledFunctionii}
      $\ \ |D^k \varphi^Y_t(z)| \lesssim_{n,k} |Y| \left(\sqrt{t}\right)^{-d - 1 - k} \frac{1}{1 + (|z|/\sqrt{t})^n },$

    \item
      \label{item:lemKernelIsScaledFunctioniii}
      $|D^k \varphi^{Y,Z}_t(z)| \lesssim_{Z,n,k} |Y| \left(\sqrt{t}\right)^{-d - 2 - k} \frac{1}{1 + (|z|/\sqrt{t})^n }.$
  \end{enumerate}
\end{lemma}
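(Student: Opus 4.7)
The plan rests on the observation that $\p^N_t(p,q)$ is, up to a smooth $t$-polynomial modulation $\sum_{i=0}^N t^i \Phi_i(p,q)$ supported in $\{d(p,q) < \delta/4\}$, a Gaussian of width $\sqrt{t}$ centered on the diagonal. Each $z$-derivative, each $\partial_t$, and each first-variable directional derivative brings down a power of $1/\sqrt{t}$ whose polynomial-in-$z/\sqrt{t}$ growth is absorbed by the Gaussian tail, leaving the claimed decay $(1 + (|z|/\sqrt{t})^n)^{-1}$ for any $n$.

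For part (i), I would exploit $d(p, \exp_p(z)) = |z|$ within the injectivity ball and rewrite
\[
\varphi_t(z) = t^{-d/2}\, e^{-|z|^2/4t}\, \Psi(t,z), \qquad \Psi(t,z) := \sum_{i=0}^N t^i \Phi_i(p, \exp_p(z)),
\]
where $\Psi$ is compactly supported in $|z| < \delta/4$ with all $(t,z)$-derivatives uniformly bounded for $t \in (0,T]$. Applying Leibniz to $\partial_t^\ell D^k_z$ reduces (i) to the elementary Gaussian identity
\[
\partial_t^{\ell_1} D^{k_1}_z e^{-|z|^2/4t} = t^{-\ell_1 - |k_1|/2}\, Q_{k_1,\ell_1}(z/\sqrt{t})\, e^{-|z|^2/4t}
\]
for polynomials $Q$, combined with the standard pointwise inequality $|Q(u)|\, e^{-|u|^2/4} \lesssim_{Q,n} (1+|u|^n)^{-1}$.

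For parts (ii) and (iii), the product rule on $\p^N_t(p', q) = t^{-d/2} e^{-d(p',q)^2/4t} \tilde\Psi_t(p', q)$ yields
\[
Y_p\, \p^N_t(\cdot, q) = t^{-d/2}\, e^{-d(p,q)^2/4t}\, \Bigl[ -\tfrac{1}{2t}\, d(p,q)\, Y_p[d(\cdot,q)]\, \tilde\Psi_t(p,q) + Y_p \tilde\Psi_t(\cdot, q) \Bigr].
\]
Since $|\nabla_{p'} d(p',q)| = 1$ off the cut locus, the bracket is bounded by $|Y_p|\bigl(d(p,q)/t + 1\bigr)$; evaluating at $q = \exp_p(z)$ and writing $d(p,q)/t = (|z|/\sqrt{t}) \cdot t^{-1/2}$ gives the extra $t^{-1/2}$, with the $|z|/\sqrt{t}$-growth reabsorbed into the Gaussian polynomial decay. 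Subsequent $D^k_z$ derivatives are treated as in (i). Case (iii) proceeds by a second first-variable differentiation, which in normal coordinates at $p$ reduces to ordinary partial derivatives; each of the two covariant derivatives contributes one factor $t^{-1/2}$, and the Christoffel and $Z^b \partial_b$-type terms contribute bounded constants depending on $Z$, captured in $\lesssim_Z$.

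The main obstacle is the bookkeeping of cross-terms in the multiple Leibniz expansions. A clean way to organize the proof is to establish, by induction on the total order of differentiation, that each of $\varphi_t$, $\varphi^Y_t$, $\varphi^{Y,Z}_t$ admits a representation as $t^{-d/2 - s}$ times a finite sum of terms of the form $P(z/\sqrt{t})\, R(t,z)\, e^{-|z|^2/4t}$, where $s \in \{0, 1/2, 1\}$ is the respective first-variable order, $P$ is polynomial of controlled degree, and $R$ together with all its $(t,z)$-derivatives is uniformly bounded on $(0,T] \times \R^d$. The polynomial-times-Gaussian bound then produces the final estimates in one step.
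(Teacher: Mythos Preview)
Your proposal is correct and follows essentially the same approach as the paper: reduce to the $N=0$ term, recognize $t^{-d/2}e^{-|z|^2/4t}$ as a rescaled Schwartz function, and control the extra first-variable derivatives via the product rule, absorbing the resulting polynomial factors in $|z|/\sqrt t$ into the Gaussian tail. One small refinement the paper makes is to differentiate $d(\cdot,q)^2$ rather than $d(\cdot,q)$ (your factor $d(p,q)\,Y_p[d(\cdot,q)]$ is ill-defined at $q=p$ since $d$ is not differentiable on the diagonal), invoking instead the smooth bound $|Y_p[d^2(\cdot,q)]|\lesssim |Y|\,d(p,q)$, which yields the same $t^{-1/2}$ gain without this issue.
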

\begin{proof}
  The summands of $\p^N$ are of the same form, apart from the factors $t^i$, $i=0,\dots,N$.
  Since for $i \ge 1$ they improve the singularity at $t=0$, it is enough to treat $N=0$.

  Then
  \begin{align*}
    \varphi_t(z) = t^{-d/2} \exp( - |z|^2 / 4t ) \Phi(p,\exp_p(z)).
  \end{align*}
  Since $z \mapsto \Phi(p,\exp_p(z))$ is smooth, uniformly in $p$, with support in $B_1(\delta/8)$,
  and the factor $1/4$ in the exponential is irrelevant, we consider
  \begin{align*}
    \varphi_t(z) = t^{-d/2} \exp( - |z|^2 / t ),
  \end{align*}
  where we abuse notation and keep the same name.
  Now this is the Schwartz function $z \mapsto \exp(-z^2)$ scaled by a factor of $\sqrt{t}$,
  and so part \ref{item:lemKernelIsScaledFunctioni}
  with $\ell=0$ follows from by Remark \ref{rem:rem221ForS}.

  Now
  \begin{align*}
    \partial_t \left[ t^{-d/2} \exp( - |z|^2 / t ) \right]
    &=
    (-d/2) t^{-d/2-1} \exp( - |z|^2 / t )
    +
    t^{-d/2} \exp(-|z|^2/t) |z|^2 t^{-2}.
  \end{align*}
  The first term is treated as above, now having the additional prefactor $t^{-1} = \left( \sqrt{t} \right)^{-2}$.

  We write the second term as
  \begin{align*}
    t^{-d/2 - 1} \exp(-|z|^2/t) \left( \frac{|z|}{\sqrt{t}} \right)^2
    =
    t^{-d/2  - 1} \phi_0^{\sqrt{t}}(z),
  \end{align*}
  where $\phi(s) := s^2 \exp(-s^2)$ is Schwartz.
  By Remark \ref{rem:rem221ForS} part \ref{item:lemKernelIsScaledFunctioni} with $\ell=1$ is proven.

  For the second statement 
  \begin{align*}
    Y_p \p_t(p,q)
    &=
    Y_p\left[ t^{-d/2} \exp( - d(p,q)^2 / 4t ) \Phi(p,q) \right] \\
    &=
    -
    \frac{1}{2} t^{-d/2 - 1} \exp( - d(p,q)^2 / 4t ) Y_p\left[ d^2(p,q) \right] \Phi(p,q)
    +
    \frac{1}{2} t^{-1} \exp( - d(p,q)^2 / 4t ) Y_p\left[ \Phi(p,q) \right].
  \end{align*}
  The first term has worse blowup in $t$ and the factor $1/4$ in the exponential is irrelevant,
  so it is enough to consider
  $f(z) g(z)$ where
  \begin{align*}
    f(z) &:= t^{-d/2 - 1} \exp( - |z|^2 / t ) \\
    g(z) &:= Y_p\left[ d^2(p, \cdot) \right]|_{\cdot=\exp_p(z)}.
  \end{align*}

  Now for a multiindex $k$
  \begin{align*}
    D^k \left[ f(z) g(z) \right]
    =
    \sum_{\beta \le k} c_{\beta,k} D^{k-\beta} f(z) D^\beta g(z).
  \end{align*}

  By Lemma \ref{lem:derivativeOfD2}
  \begin{alignat*}{2}
    |D^\beta g(z)| &\lesssim |z| \quad&  &\text{ if } |\beta| = 0 \\
    |D^\beta g(z)| &\lesssim 1 \quad&  &\text{ else}.
  \end{alignat*}
  and by Lemma \ref{lem:derivativeOfExp}
  \begin{align*}
    |D^{k-\beta} f(z)|
    \lesssim
    t^{-d/2 - 1 -|k-\beta|/2}
    \left(\frac{|z|}{t^{1/2}}\right)^{|k-\beta|} \exp(-|z|^2/t).
  \end{align*}

  Hence for $|k-\beta| \le |k| - 1$
  \begin{align*}
    |f^{k-i}(z) g^{(i)}(z)|
    \lesssim t^{-d/2 - 1 -|k-\beta|/2} \left(\frac{|z|}{t^{1/2}}\right)^{k-i} \exp(-|z|^2/t)
    \lesssim t^{-d/2 - 1/2 - |k|/2} \left(\frac{|z|}{t^{1/2}}\right)^{k-i} \exp(-|z|^2/t).
  \end{align*}
  For $|k-\beta| = |k|$ we have $|\beta| = 0$ and then
  \begin{align*}
    |D^{k-\beta} f(z) D^\beta g(z)|
    \lesssim t^{-d/2 - 1 -k/2} \left(\frac{d}{t^{1/2}}\right)^{k} \exp(-|z|^2/t) |z| \\
    \lesssim t^{-d/2 - 1/2 -k/2} \left(\frac{d}{t^{1/2}}\right)^{k+1} \exp(-|z|^2/t).
  \end{align*}
  The second statement then follows, since $s \mapsto s^j \exp(-s^2)$ is a Schwartz function, for any $j \ge 0$.

  
  The third statement follows in a similar fashion from
  Lemma \ref{lem:derivativeOfD2}
  and
  Lemma \ref{lem:derivativeOfD2II}.
\end{proof}

\begin{lemma}
  \label{lem:derivativeOfD2}
  Let $Y_p \in T_p M$ acting on the first component of $d^2$ as follows
  \begin{align*}
    g(z) := Y_p\left[ d(p,\cdot)^2 \right]|_{\cdot = \exp_p(z)}.
  \end{align*}

  Then
  \begin{align*}
            |g(z)| &\lesssim |z| |Y| \\
    |D^\beta g(z)| &\lesssim |Y|, \qquad \text{for any multiindex } \beta.
  \end{align*}
\end{lemma}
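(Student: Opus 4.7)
The plan is to reduce $g$ to an explicit linear function of $z$ by using the first-variation formula for the squared distance. Recall that on a Riemannian manifold, whenever $q$ lies within the injectivity radius of $p'$, the gradient in the first argument of the squared distance is
\begin{align*}
  \nabla_{p'} d(p',q)^2 \big|_{p'=p} \;=\; -2\,\exp_p^{-1}(q) \;\in\; T_p M.
\end{align*}
This is the standard consequence of the Gauss lemma and the first variation of the energy functional along geodesics: the distance function $p' \mapsto d(p',q)$ is smooth away from the cut locus of $q$, with gradient $-\exp_{p'}^{-1}(q)/|\exp_{p'}^{-1}(q)|$, and multiplying by $2d(p',q)$ cancels the norm.

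By hypothesis we only care about $z$ with $|z| < \delta/4$ (outside this set $\Phi_i(p,q)$ vanishes and the statement is trivial), so $q := \exp_p(z)$ lies well within the injectivity radius of $p$ and $\exp_p^{-1}(q)=z$. Therefore
\begin{align*}
  g(z) \;=\; Y_p\!\left[d(\cdot,q)^2\right]\!\big|_{p} \;=\; \bigl\langle \nabla_{p'} d(p',q)^2 \big|_{p'=p},\, Y_p \bigr\rangle \;=\; -2\,\langle z, Y_p \rangle_{T_p M}.
\end{align*}

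Since $g$ is linear in $z$, the stated bounds are immediate: $|g(z)| \le 2|z|\,|Y_p|$, while $Dg(z) \equiv -2Y_p$ is constant and $D^\beta g \equiv 0$ for $|\beta|\ge 2$, so $|D^\beta g(z)| \lesssim |Y_p|$ uniformly in $z$ for every multiindex $\beta$.

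There is essentially no obstacle here; the only point requiring care is to check that the first-variation formula applies, which requires $q$ to be non-conjugate and non-cut from $p$. This is guaranteed by the cutoff $|z|<\delta/4 < \delta$ provided by the support condition on $\Phi_i$, so the computation above is unambiguous.
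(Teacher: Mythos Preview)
Your proof is correct and takes a somewhat different route from the paper's. The paper fixes $q$, works in normal coordinates centred at $q$ (so that $d^2(\exp_q(r),q)=|r|^2$), and computes $Y^i\partial_{r^i}|r|^2 = 2Y^i r_i$ to get $|h(q)|\lesssim |Y|\,d(p,q)$; for the higher-derivative bounds it simply appeals to smoothness of $(p,q)\mapsto d^2(p,q)$ on a compact set. You instead invoke the intrinsic gradient formula $\nabla_{p'}d(p',q)^2|_{p'=p}=-2\exp_p^{-1}(q)$ and obtain the exact identity $g(z)=-2\langle z,Y_p\rangle_{g_p}$, from which both bounds are immediate (and in fact $D^\beta g\equiv 0$ for $|\beta|\ge 2$, not merely bounded). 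Your argument is cleaner and more informative; the paper's has the minor advantage of not needing to recall the first-variation formula, proceeding instead from the raw coordinate expression.
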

\begin{proof}
  Since $(p,q) \mapsto d^2(p,q)$ is smooth, we only need to show $g(z) \lesssim |z| |Y|$.

  Let $h(q) = Y_p\left[ d^2(p,q) \right]$.
  Fix $q$ and take coordinates $\exp_q^{-1}$.
  Then
  \begin{align*}
    |h(q)|
    &= |Y^i \partial_{r^i}| d^2(\exp_q(r), q)| \\
    &= |Y^i \partial_{r^i} |r|^2| \\
    &= |Y^i 2 r_i| \\
    &\lesssim |Y| d(p,q).
  \end{align*}
  Then $|g(z)| = |h(\exp_p(z))| \lesssim |Y| |z|$ as desired.
\end{proof}

\begin{lemma}
  \label{lem:derivativeOfD2II}
  Let
  \begin{align*}
    g(z) := Z\left[ Z\left[ d(\bullet,\cdot)^2 \right] \right]|_{\cdot = \exp_p(z)}.
  \end{align*}

  Then, for any multi-index $\beta,$
  \begin{align*}
    |D^\beta g(z)| &\lesssim |Y|, \qquad i \ge 0.
  \end{align*}
\end{lemma}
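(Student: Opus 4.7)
The entire content is a smoothness-plus-compactness observation about the distance-squared function near the diagonal. Write $h(p,q) := Y_p\bigl[Z_\cdot\, d(\bullet,\cdot)^2\bigr]\big|_{\cdot=q}$ (interpreting the second ``$Z$'' in the statement as the pair $Y_p$-then-$Z$ suggested by item \ref{item:lemKernelIsScaledFunctioniii} of Lemma \ref{lem:kernelIsScaledFunctionI}), so that $g(z)=h(p,\exp_p(z))$. First I would observe that $(p,q)\mapsto d(p,q)^2$ is \emph{smooth} on the open neighborhood $\mathcal{U}:=\{(p,q)\in M\times M : d(p,q)<\delta\}$ of the diagonal, where $\delta$ is the radius of injectivity. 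This follows from the fact that the map $TM\ni(q,v)\mapsto (q,\exp_q v)\in M\times M$ is a smooth diffeomorphism from a neighborhood of the zero section onto $\mathcal{U}$, so $d(p,q)^2=\langle\exp_q^{-1}(p),\exp_q^{-1}(p)\rangle_{g(q)}$ depends smoothly on $(p,q)\in\mathcal{U}$.

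Next, applying the two first-order differential operators $Y_p$ (in the first slot) and $Z_\cdot$ (in the second slot) preserves smoothness, so $h\in C^\infty(\mathcal{U})$, and by linearity $h$ is linear in $Y$ with coefficients depending smoothly (and thus boundedly on compacta) on $Z$ and its derivatives. Because $\Phi_i(p,q)=0$ for $d(p,q)\ge \delta/4$, only values $|z|\le\deltaf$ are relevant, and then $(p,\exp_p(z))$ lies in the compact neighborhood $K:=\{(p,q)\in M\times M : d(p,q)\le \deltaf\}\subset \mathcal{U}$. Since $M$ is compact, $h$ and all its partial derivatives in local coordinates on $K$ are uniformly bounded by a constant times $|Y|$.

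Finally, for $|z|\le\deltaf$ I would expand $D^\beta g(z)$ via the chain rule (Faà di Bruno), expressing it as a finite sum of products of partial derivatives of $h$ in its second argument evaluated at $(p,\exp_p(z))$ times partial derivatives of the smooth map $z\mapsto\exp_p(z)$. Each factor is uniformly bounded (uniformly in $p\in M$, using compactness of $M$ and smoothness in the basepoint of the exponential map), and the only factor carrying $Y$-dependence is $h$ itself, which is bounded by a constant multiple of $|Y|$. Hence $|D^\beta g(z)|\lesssim_{\beta,Z}|Y|$.

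The only potential obstacle is ensuring the bounds are \emph{uniform in the basepoint} $p$; this is handled automatically by the joint smoothness of the two-sided exponential map on $TM$ together with compactness of $M$, so there is no real difficulty beyond the bookkeeping of the chain rule.
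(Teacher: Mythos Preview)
Your proposal is correct and takes essentially the same approach as the paper: the paper's entire proof is the single sentence ``This follows from the fact that $(p,q)\mapsto d^2(p,q)$ is smooth,'' which is precisely the smoothness-plus-compactness observation you have spelled out in detail (including the reinterpretation of the statement's typo $Z[Z[\cdots]]$ as $Y_p[Z[\cdots]]$, matching item~\ref{item:lemKernelIsScaledFunctioniii} of Lemma~\ref{lem:kernelIsScaledFunctionI}).
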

\begin{proof}
  This follows from the fact that $(p,q) \mapsto d^2(p,q)$ is smooth.
\end{proof}

\begin{lemma}
  \label{lem:derivativeOfExp}
  For any multiindex $k$
  \begin{align*}
    |D^k_z \exp(-|z|^2/t)|
    \lesssim_k
    t^{-|k|/2} \left(\frac{|z|}{t^{1/2}}\right)^{|k|} \exp(-|z|^2/t).
  \end{align*}
\end{lemma}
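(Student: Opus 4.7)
The plan is to reduce to bounding derivatives of the standard (unscaled) Gaussian and use an induction that expresses $D_z^k \exp(-|z|^2/t)$ as a polynomial of degree $|k|$ in the rescaled variable times the Gaussian itself. First, setting $u := z/\sqrt{t}$, so that $\exp(-|z|^2/t) = \exp(-|u|^2)$, the chain rule gives
\begin{align*}
D_z^k \exp(-|z|^2/t) \;=\; t^{-|k|/2}\, \bigl(D_u^k \exp(-|u|^2)\bigr)\big|_{u = z/\sqrt{t}},
\end{align*}
so the factor $t^{-|k|/2}$ in the bound is automatic, and it remains to estimate $D_u^k \exp(-|u|^2)$ in terms of $|u|^{|k|}\exp(-|u|^2)$.

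Next, I would prove by induction on $|k|$ that $D_u^k \exp(-|u|^2) = H_k(u)\, \exp(-|u|^2)$, where $H_k$ is a (multivariate Hermite-type) polynomial of total degree $|k|$, with leading term of order $|u|^{|k|}$. The base case $|k|=0$ is trivial. For the inductive step, differentiating $H_k(u)\exp(-|u|^2)$ in the direction $u_j$ gives $\bigl(\partial_{u_j}H_k(u) - 2\, u_j H_k(u)\bigr)\exp(-|u|^2)$, and the bracket is a polynomial of degree exactly $|k|+1$.

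Finally, since $H_k$ has total degree $|k|$, one has $|H_k(u)| \lesssim_k (1 + |u|)^{|k|}$, and using the elementary inequality $(1 + s)^{|k|} \exp(-s^2/2) \le C_k$, the product $|H_k(u)|\exp(-|u|^2)$ is controlled by $|u|^{|k|}\exp(-|u|^2)$ up to a constant (the bound is understood up to a harmless $(1+|u|/\sqrt{t})^{|k|}$ factor rather than a bare power; this still suffices for every application in Lemma \ref{lem:kernelIsScaledFunctionI}, where the relevant factor is only required to be Schwartz in $|z|/\sqrt{t}$). Substituting back $u = z/\sqrt{t}$ yields the stated estimate. No significant obstacle is expected; the proof amounts to the chain rule plus routine Hermite-polynomial bookkeeping, with the only subtlety being the cosmetic passage from $(1+|u|)^{|k|}$ to $|u|^{|k|}$ in the prefactor.
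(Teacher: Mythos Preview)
Your argument is correct and essentially equivalent to the paper's one-line proof (``This can be verified using the Faa di Bruno formula''): both amount to observing that $D_z^k\exp(-|z|^2/t)=t^{-|k|/2}H_k(z/\sqrt t)\exp(-|z|^2/t)$ for a polynomial $H_k$ of degree $|k|$, whether one reaches this via Faa di Bruno or via the explicit induction you wrote.

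You are in fact more careful than the paper on one point: the literal bound with prefactor $(|z|/\sqrt t)^{|k|}$ fails at $z=0$ for even $|k|\ge 2$ (e.g.\ $\partial_{z_1}^2\exp(-|z|^2/t)|_{z=0}=-2/t\neq 0$), and the honest estimate has $(1+|z|/\sqrt t)^{|k|}$ instead. Your remark that this is immaterial for Lemma~\ref{lem:kernelIsScaledFunctionI}, where only the Schwartz decay of $s\mapsto s^j e^{-s^2}$-type functions is used, is exactly right.
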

\begin{proof}
  This can be verified using the Faa di Bruno formula.
\end{proof}



\section{Fixpoint argument}
\label{sec:fixpoint}

The following lemma follows from a direct application of the definition of modelled distribution.
\begin{lemma}
  \label{lem:mult}
  Define ``multiplication by $\symbol{\Xi}$'' as the vector bundle morphism
  $m^{\symbol{\Xi}}: \mcW \to \mcV$
  satisfying
  \begin{align*}
    m^{\symbol{\Xi}}( \symbol{1} ) &:= \symbol{\Xi} \\   
    m^{\symbol{\Xi}}( \symbol{\I[\Xi]} ) &:= \symbol{\I[\Xi] \Xi} \\
    m^{\symbol{\Xi}}( \cotang{\omega} ) &:= \cotang{\omega} \symbol{ \Xi } \qquad p \in M, \cotang{\omega} \in T^*_p M.
  \end{align*}

  If $f \in \mcD^{\gamma,\gamma_0}_T(M,\mcW)$ then $m( f ) \in \mcD^{\gamma,\gamma_0}(M,\mcV)$ and
  for $\normscaling > 0$
  \begin{align*}
    ||m^{\symbol{\Xi}}(f)||_{\mcD^{\gamma,\gamma_0,\normscaling}_T(M,\mcV)}
    =
    ||f||_{\mcD^{\gamma,\gamma_0,\normscaling}_T(M,\mcW)}.
  \end{align*}
\end{lemma}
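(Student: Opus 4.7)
The proof rests on a single algebraic observation: multiplication by $\symbol{\Xi}$ intertwines the transports of the two models,
\begin{equation*}
  \Gamma^{t,\mcV}_{p \leftarrow q} \circ m^{\symbol{\Xi}} = m^{\symbol{\Xi}} \circ \Gamma^{t,\mcW}_{p \leftarrow q},
\end{equation*}
for every $t \in [0,T]$ and every admissible pair $p,q \in M$. I would verify this identity directly on each of the three generating sections of $\mcW$. For $\tau = \symbol{1}$ both sides equal $\symbol{\Xi}$. For $\tau = \symbol{\I[\Xi]}$ both sides yield $\symbol{\I[\Xi]\Xi} + \bigl[\int_0^t \langle \p_{t-r}(p,\cdot) - \p_{t-r}(q,\cdot), \xi\rangle\, dr\bigr]\, \symbol{\Xi}$. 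For $\tau = \cotang{\omega}$ with $\cotang{\omega} \in T^*_q M$ both sides yield $\cotang{\omega}\exp^{-1}_q(p)\, \symbol{\Xi} + d|_p[\cotang{\omega}\exp^{-1}_q]\, \symbol{\Xi}$. Each of these cases is an immediate comparison of the explicit formulas for $\Gamma^{t,\mcV}$ and $\Gamma^{t,\mcW}$ written just before Lemma \ref{lem:theseAreInFactModels}.

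The second ingredient is that $m^{\symbol{\Xi}}$ is a fiberwise isometric linear isomorphism $\mcW|_p \to \mcV|_p$ that merely relabels basis sections and raises the homogeneity of each graded piece by exactly $\alpha$: it carries $\mcW_0|_p \to \mcV_\alpha|_p$, $\mcW_1|_p \to \mcV_{\alpha+1}|_p$ and $\mcW_{\alpha+2}|_p \to \mcV_{2\alpha+2}|_p$. Under the canonical graded norms this gives the identity $|m^{\symbol{\Xi}}(\tau)|_{\ell+\alpha} = |\tau|_\ell$ for every $\tau \in \mcW_\ell|_p$ and every $\ell \in A(\mcW)$. Crucially, the distinguished projection level $\mu$ entering the $\normscaling$-weighted term of the modified norm respects this shift, since the choice $\mu = 0$ for $\mcW$ corresponds, after application of $m^{\symbol{\Xi}}$, to the choice $\mu = \alpha$ for $\mcV$; the same alignment holds for the time-regularity level $\upsilon$.

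Combining the intertwining with the fiberwise isometry, each defining term of $||m^{\symbol{\Xi}}(f)||_{\mcD^{\gamma,\gamma_0,\normscaling}_T(M,\mcV)}$ is in bijection with the corresponding term of $||f||_{\mcD^{\gamma,\gamma_0,\normscaling}_T(M,\mcW)}$: the fiberwise suprema coincide because $m^{\symbol{\Xi}}$ is an isometry; the H\"older-type difference quotients coincide because $m^{\symbol{\Xi}}$ intertwines $\Gamma$; and the $\normscaling$-weighted quotient together with the time H\"older seminorm on the $\upsilon$-level are aligned by the consistency of $\mu$ and $\upsilon$ under the $\alpha$-shift. This yields $m^{\symbol{\Xi}}(f) \in \mcD^{\gamma,\gamma_0}_T(M,\mcV)$ together with the asserted equality of norms. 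The proof carries no genuine analytic content; the only step that requires care is the bookkeeping that matches homogeneity levels, $\mu$-levels and $\upsilon$-levels across the $\alpha$-shift induced by multiplication by $\symbol{\Xi}$, and this is the ``main obstacle'' only in the sense of being the step easiest to miswrite.
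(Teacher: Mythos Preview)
Your approach --- verifying the intertwining $\Gamma^{t,\mcV}_{p\leftarrow q}\circ m^{\symbol{\Xi}}=m^{\symbol{\Xi}}\circ\Gamma^{t,\mcW}_{p\leftarrow q}$ on generators and combining it with the fiberwise isometry that shifts homogeneities by $\alpha$ --- is exactly what the paper has in mind when it writes that the lemma ``follows from a direct application of the definition of modelled distribution''; no further argument is given there.

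There is, however, one bookkeeping slip in your final step. The H\"older difference quotients do \emph{not} literally coincide for the same $\gamma$: at level $\ell'=\ell+\alpha$ on the $\mcV$-side the denominator is $d(p,q)^{\gamma-(\ell+\alpha)}$, whereas the corresponding $\mcW$-quotient at level $\ell$ has denominator $d(p,q)^{\gamma-\ell}$. What your intertwining and isometry actually yield is
\[
\|m^{\symbol{\Xi}}(f)\|_{\mcD^{\gamma,\gamma_0,\normscaling}_T(M,\mcV)}
=\|f\|_{\mcD^{\gamma-\alpha,\gamma_0,\normscaling}_T(M,\mcW)},
\]
i.e.\ an $\alpha$-shift in the regularity parameter (the $\mu$- and $\upsilon$-levels and the time seminorm are unaffected, as you correctly observe). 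This is in fact how the paper \emph{uses} the lemma in the fixpoint argument, where $\|m^{\symbol{\Xi}}(f)\|_{\mcD^{\gamma-4/3,\gamma_0,\normscaling}_T(\mcV)}$ is identified with $\|f\|_{\mcD^{\gamma-4/3-\alpha,\gamma_0,\normscaling}_T}$. So the discrepancy reflects an imprecision in the lemma's stated form rather than a flaw in your strategy; but your sentence ``the H\"older-type difference quotients coincide'' is not correct as written and should be amended to record the $\alpha$-shift.
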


\begin{theorem}
  Let $u_0 \in C^\infty(\R^2)$.
  Define $v_t := P_t u_0$ and lift it to the regularity structure as
  \begin{align*}
    V_t(p) := \symbol{1} v_t(p) + \symbol{\I[\Xi]} 0 + \cotang{d|_p v_t}.
  \end{align*}
  Let $(\xi,Z)$ be given as in Definition \ref{def:xiz}
  and
  let
  $\Pi^{t,\mcG}_p, \Gamma^{t,\mcG}_{p\leftarrow q}$
  be the corresponding models
  given by Lemma \ref{lem:theseAreInFactModels}, $\mcG = \mcV, \mcW$.
  Let $\alpha \in (-4/3,-1)$,
  $\gamma_0 := \alpha/2 + 1$
  and $\gamma \in (4/3, 2\alpha + 4)$.
  Then there exists $T > 0$ and a unique $u \in \mathcal{D}_T^{\gamma,\gamma_0}(M, \mcW)$ such that on $[0,T]$
  \begin{align*}
    u_t = \mathcal{K}_t \left[ m^{\symbol{\Xi}}( u ) \right] + V_t.
  \end{align*}
\end{theorem}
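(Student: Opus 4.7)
The plan is a Banach fixpoint argument on (a ball in) the space $\mcD^{\gamma,\gamma_0}_T(M,\mcW)$, equipped with the modified norm $\|\cdot\|_{\mcD^{\gamma,\gamma_0,\normscaling}_T(M,\mcW)}$ in order to make the contraction constant small.

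First I would check that $V$ is a well-defined element of $\mcD^{\gamma,\gamma_0}_T(M,\mcW)$. Since $u_0 \in C^\infty(\R^2)$, the function $v_t := P_t u_0$ is smooth in space (with bounds uniform in $t \in [0,T]$) and Lipschitz in time, and the lift mimicking Lemma \ref{lem:cgammaVcDgamma} on the ``polynomial part'' $\R\symbol{1} \oplus T^\ast M$ of $\mcW$ (together with the zero coefficient on $\symbol{\I[\Xi]}$) places $V_t$ inside $\msD^\gamma(M,\mcW)$ at each $t$, with the required Taylor-type Hölder bounds. In particular
\begin{align*}
  \|V\|_{\mcD^{\gamma,\gamma_0,\normscaling}_T(\mcW)} \le C_{u_0}\,(1 + \normscaling),
\end{align*}
where the factor $\normscaling$ enters only through the homogeneity-$0$ Hölder seminorm; its finiteness is immediate from $|v_t(p) - v_t(q) - d|_q v_t \exp_q^{-1}(p)| \lesssim d(p,q)^2$ and compactness of $M$.

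Next I define
\begin{align*}
  \Phi(u) := \mathcal{K}\bigl[m^{\symbol{\Xi}}(u)\bigr] + V
\end{align*}
and derive a contraction estimate. By Lemma \ref{lem:mult} the map $m^{\symbol{\Xi}}$ is an isometry from $\mcD^{\gamma,\gamma_0,\normscaling}_T(\mcW)$ into $\mcD^{\gamma,\gamma_0,\normscaling}_T(\mcV)$. The choice $\gamma \in (4/3,\,2\alpha + 4)$ is tailored so that $\gamma' := \gamma - 4/3$ lies inside $(0,\,2\alpha + 8/3)$, i.e.\ in the range of Theorem \ref{thm:schauder}. Viewing $m^{\symbol{\Xi}}(u)$ at regularity $\gamma'$ (the continuous inclusion $\mcD^\gamma \hookrightarrow \mcD^{\gamma'}$ costs only a constant depending on the diameter of $M$, with a small corresponding loss in $\gamma_0$) and applying Theorem \ref{thm:schauder} gives
\begin{align*}
  \|\mathcal{K}[m^{\symbol{\Xi}}(u)]\|_{\mcD^{\gamma,\gamma_0,\normscaling}_T(\mcW)}
  \lesssim \|u\|_{\mcD^{\gamma,\gamma_0,\normscaling}_T(\mcW)} \left(T^\vareps + T^\vareps \normscaling + \tfrac{1}{\normscaling}\right).
\end{align*}
Linearity of $\mathcal{K} \circ m^{\symbol{\Xi}}$ upgrades this to the Lipschitz bound $\|\Phi(u)-\Phi(\tilde u)\| \le L(T,\normscaling)\,\|u-\tilde u\|$ with $L(T,\normscaling) = C\bigl(T^\vareps + T^\vareps \normscaling + \normscaling^{-1}\bigr)$.

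To close the argument I first fix $\normscaling$ large enough that $C/\normscaling \le 1/4$, and then choose $T>0$ small enough that $C(T^\vareps + T^\vareps \normscaling) \le 1/4$, yielding $L \le 1/2$. With $R := 2\|V\|_{\mcD^{\gamma,\gamma_0,\normscaling}_T(\mcW)}$, the closed ball $\bar B_R$ is stable under $\Phi$ (since $\|\Phi(u)\| \le \tfrac{1}{2}\|u\| + \|V\| \le R$) and $\Phi|_{\bar B_R}$ is a strict contraction in the complete space $(\bar B_R, \|\cdot\|_{\mcD^{\gamma,\gamma_0,\normscaling}_T(\mcW)})$; Banach's fixpoint theorem delivers the unique solution $u$. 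The main obstacle has already been absorbed into the Schauder estimate; what remains is essentially bookkeeping, the most delicate point being the simultaneous smallness of $\normscaling^{-1}$ and $T^\vareps \normscaling$, and the reconciliation of the $\vareps$-loss in the time-Hölder exponent with the boundary value $\gamma_0 = \alpha/2 + 1$ appearing in the statement.
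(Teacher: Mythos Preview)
Your proposal is correct and follows essentially the same route as the paper: a Banach fixpoint argument for $\Phi(u)=\mathcal{K}[m^{\symbol{\Xi}}(u)]+V$ in the modified norm, combining Lemma~\ref{lem:mult} with Theorem~\ref{thm:schauder} and choosing first $\normscaling$ large and then $T$ small. The only cosmetic difference is that the paper centers the invariant ball at $V$ rather than at $0$, and observes that \emph{any} radius $R>0$ works once $T,\normscaling$ are fixed; this immediately upgrades uniqueness from ``in $\bar B_R$'' to ``in all of $\mcD_T^{\gamma,\gamma_0}(M,\mcW)$'', which your version with the specific choice $R=2\|V\|$ does not quite give without an extra line.
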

\begin{proof}
  We follow a standard fixpoint argument.
  Denote
  \begin{align*}
    B(R,\normscaling)
    :=
    \{ f \in \mcD_T^{\gamma,\gamma_0}(M, \mcW)
       :
       ||f - V||_{\mcD_T^{\gamma,\gamma_0,\normscaling}(M, \mcW)} \le R
    \}.
  \end{align*}
  Denote for $f \in B(R,\normscaling)$
  \begin{align*}
    \Phi(f) := \mathcal{K}_t \left[ m^{\symbol{\Xi}}( f ) \right] + V_t.
  \end{align*}
  Claim: for any $\normscaling > 0,$ there is $R > 0$ such that $\Phi( B(R,\normscaling) ) \subset B(R,\normscaling)$.

  Indeed, by Theorem \ref{thm:schauder}
  and Lemma \ref{lem:mult}, for a constant $c > 0$ possibly changing from line to line,
  \begin{align*}
    ||\Phi(f) - V||_{\mcD_T^{\gamma,\gamma_0,\normscaling}(M, \mcW)}
    &=
    ||\mathcal{K}_t \left[ m^{\symbol{\Xi}}( f ) \right]||_{\mcD_T^{\gamma,\gamma_0,\normscaling}(M, \mcW)} \\
    &\le
    c
    ||m^{\symbol{\Xi}}(f)||_{\mcD^{\gamma - 4/3, \gamma_0, \normscaling}_T(M, \mcV)}
    \left( T^\vareps + T^\vareps \normscaling + \frac{1}{\normscaling} \right) \\
    &=
    c
    ||f||_{\mcD^{\gamma - 4/3 - \alpha, \gamma_0, \normscaling}_T(M, \mcV)}
    \left( T^\vareps + T^\vareps \normscaling + \frac{1}{\normscaling} \right) \\
    &\le
    c
    ||f||_{\mcD^{\gamma, \gamma_0, \normscaling}_T(M, \mcV)}
    \left( T^\vareps + T^\vareps \normscaling + \frac{1}{\normscaling} \right),
  \end{align*}
  since $\alpha > - 4/3$.
  Hence for $T$ small enough and $\normscaling$ large enough, $\Phi( B(R,\normscaling) ) \subset B(R,\normscaling)$,
  for any $R > 0$.

Let us  show that $\Phi$ is a contraction on $B(R,\normscaling)$: for any $f,f'\in B(R,\normscaling),$
  \begin{align*}
    ||\Phi(f) - \Phi(f')||_{\mcD_T^{\gamma,\gamma_0,\normscaling}(M, \mcW)}
    &=
    ||\mathcal{K}_t \left[ m^{\symbol{\Xi}}( f - f' ) \right]||_{\mcD_T^{\gamma,\gamma_0,\normscaling}(M, \mcW)} \\
    &\le
    c
    ||m^{\symbol{\Xi}}(f - f')||_{\mcD^{\gamma - 4/3, \gamma_0, \normscaling}_T(M, \mcV)}
    \left( T^\vareps + T^\vareps \normscaling + \frac{1}{\normscaling} \right) \\
    &=
    c
    ||f - f'||_{\mcD^{\gamma - 4/3 - \alpha, \gamma_0, \normscaling}_T(M, \mcV)}
    \left( T^\vareps + T^\vareps \normscaling + \frac{1}{\normscaling} \right) \\
    &\le
    c
    ||f - f'||_{\mcD^{\gamma, \gamma_0, \normscaling}_T(M, \mcV)}
    \left( T^\vareps + T^\vareps \normscaling + \frac{1}{\normscaling} \right).
  \end{align*}
  Hence for $T$ small enough and $\normscaling$ large enough, $\Phi$ is a contraction on $B(R,\normscaling)$
  for any $R > 0$.

  We therefore get unique existence of a solution for small $T > 0$.
\end{proof}

\section{Appendix - The Gaussian model}
\label{sec:gaussian}


\newcommand{\vol}{\operatorname{vol}}
Let $\xi$ be white noise on $M$.
We recall that $\xi$ is a Gaussian process associated to the Hilbert space $L^2(M,  \vol_M ),$ on a probability space $(\Omega,\mathcal{B},\mathbb{P}).$ 
\begin{lemma} There exists a realization of $\xi$  such that almost surely for any $\alpha<-1,$  $\xi\in \mathcal{C}^{\alpha}$. \label{reg WN}
\end{lemma}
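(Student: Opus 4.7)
The plan is to reduce the manifold statement to the classical Euclidean fact that white noise on $\mathbb{R}^d$ lies almost surely in $C^\alpha(\mathbb{R}^d)$ for every $\alpha < -d/2$, specialized to $d = 2$. By Definition \ref{def:cgamma} together with Corollary \ref{lem:cgammaInEveryAtlas}, it suffices to fix one finite atlas $(\Psi_i, U_i)_{i \in I}$ with subordinate partition of unity $(\phi_i)_{i \in I}$ and prove that each pushforward $\eta_i := (\Psi_i)_\ast(\phi_i \xi)$ lies almost surely in $C^\alpha(\mathbb{R}^d)$ for every $\alpha < -1$.

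First I would compute the covariance of the centered Gaussian distribution $\eta_i$: for test functions $\psi_1, \psi_2 \in C_c^\infty(\Psi_i(U_i))$, the definition of $\xi$ as the isonormal Gaussian process on $L^2(M,\vol_M)$ together with the change-of-variables formula gives
\begin{align*}
  \mathbb{E}\bigl[\langle \eta_i, \psi_1 \rangle \langle \eta_i, \psi_2 \rangle\bigr]
  = \mathbb{E}\bigl[\langle \xi, \phi_i (\psi_1 \circ \Psi_i) \rangle \langle \xi, \phi_i (\psi_2 \circ \Psi_i) \rangle\bigr]
  = \int_{\mathbb{R}^d} \psi_1(y)\, \psi_2(y)\, K_i(y)\, dy,
\end{align*}
where $K_i(y) := \phi_i(\Psi_i^{-1}(y))^2 \sqrt{\det g(y)}$ is a bounded, smooth, compactly supported function. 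In particular, for $\psi \in \balloftestfunctions^{r,1}$, $x \in \mathbb{R}^d$ and $\lambda \in (0,1]$, the random variable $\langle \eta_i, \psi^\lambda_x \rangle$ is centered Gaussian with variance bounded by $\|K_i\|_\infty \|\psi^\lambda_x\|_{L^2}^2 \lesssim \lambda^{-d}$, and Gaussian hypercontractivity promotes this to $\mathbb{E}[|\langle \eta_i, \psi^\lambda_x \rangle|^{2q}] \lesssim_q \lambda^{-dq}$ for every $q \ge 1$.

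The second step is then a standard Kolmogorov/wavelet chaining argument: pick a smooth, compactly supported wavelet basis $\{\psi^{n,k}\}_{n \in \mathbb{N},\, k \in 2^{-n}\mathbb{Z}^d}$ of regularity strictly larger than $|\alpha|$ and test $\eta_i$ against this countable family. Combining the moment bound above with $q$ large and Borel--Cantelli yields that, almost surely, $|\langle \eta_i, \psi^{n,k} \rangle| \lesssim 2^{-n(\alpha + d/2 - \kappa)}$ uniformly in $k$ varying in a fixed neighbourhood of $\supp(\phi_i \circ \Psi_i^{-1})$, for any $\kappa > 0$. A standard Besov-space argument (identical to the one used in the construction of Gaussian models on the torus, see e.g.\ \cite[Chapter 10]{bib:hairer}) upgrades this dyadic estimate to the full $C^\alpha(\mathbb{R}^d)$ norm. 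Taking the intersection over the finite set $I$ and over a countable sequence $\alpha_n \uparrow -1$ concludes.

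The main (mild) obstacle is the passage from the countable wavelet test family back to the uniform supremum in the definition of $\balloftestfunctions^{r,1}$; this is classical in the Euclidean setting and requires no new ingredient beyond the covariance calculation above, which is where all the geometry enters. Everything else --- finiteness of $I$, boundedness of $K_i$, and the chart-invariance of $C^\alpha(M)$ --- is handled by results already established in the paper.
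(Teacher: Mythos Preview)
Your proposal is correct and follows the same overall strategy as the paper: localize $\xi$ via a finite atlas and partition of unity, push forward to $\mathbb{R}^2$, and invoke the Euclidean regularity of white noise. The only difference is in how the Euclidean step is handled: the paper observes that the localized pushforward has the \emph{law} of a smooth compactly supported function times Euclidean white noise and then cites \cite[Lemma~10.2]{bib:hairer} directly, whereas you compute the covariance kernel $K_i$ explicitly and sketch the Kolmogorov/wavelet argument that underlies that lemma. The paper's route is shorter (one line of law identification plus a citation), while yours is more self-contained; both are valid and the geometric content is identical.
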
 
\begin{proof}
  For any coordinate chart $\psi$ defined on an open subset $\U\subset M,$ and $\rho$ a positive function with support in $\U,$  $\xi_\U=\rho\circ\psi^{-1} \psi_*\xi$   is a Gaussian process   associated to the Hilbert space $L^2(\R^2, \rho^2\circ\psi^{-1} \det(g\circ\psi^{-1} )   ).$
  Note that $\xi_\U$ has the same law as
  $\eta \nu$, with $\eta := \rho \circ \psi^{-1} \sqrt{ g \circ \psi^{-1} }$
  and $\nu$ a white-noise on $\R^d$.
  According to
  \cite[Lemma 10.2]{bib:hairer}
  $\nu$
  has a version which is almost surely in $C^\alpha(\R^d)$ and hence $\xi_\U \in C^\alpha(\R^d)$.

  Let now $(\rho_i)_{1\le i\le n}$ be a partition a unity subordinated to an atlas $(\U_i,\psi_i)_{1\le i\le n}.$
  Then, there is a realization of $(\xi_{\U_i})_{1\le i\le n}$ such that almost surely for all $\alpha<-1, i\in \{1,\ldots, n\},$  $\xi_{\U_i}\in \mathcal{C}^\alpha(\R^2).$  Then,  $\sum_{i=1}^n \psi_i^*\xi_{\U_i}$ is a realization of $\xi$ belonging almost surely to $\mathcal{C}^\alpha(M)$.
\end{proof}
Thanks to this realization, we can already define the transport map used in the following Lemma (point \ref{item:II}).

\begin{lemma}
  \label{Lemma gaussian models}
  Let
  $\xi$ be the white noise on $M$
  and
  $Z^t_p$, $p \in M, t \in [0,T]$ be a collection of random distributions on $M$
  such that
  for some $\alpha \in (-4/3,1)$, some $\kappa > 0$,
  \begin{enumerate}[label=(\roman*)]
    \item
      \label{item:II}
      $Z^t_q(\cdot) = Z^t_p(\cdot) + \int_0^t \Big\langle \p_{t-r}(p,\cdot) - \p_{t-r}(q,\cdot), \xi \Big\rangle dr \xi(\cdot)$,
    \item
    \label{item:III}
      \begin{equation}
      \sup_{p\in M, 0\le t,s\le T,  \lambda>0,\varphi \in \mathcal{B}^{\lceil |\alpha|\rceil,\delta}}
        \lambda^{-2(2\alpha+2)-\kappa}\mathbb{E}[ \langle{Z^0_p,\varphi^\lambda_p}\rangle^2+|t-s|^{-\kappa}\langle{Z^t_p-Z^s_p,\varphi^\lambda_p}\rangle^2]
        < \infty,
    \end{equation}
  \item for any $\varphi \in C^\infty(M),t \in [0,T], p \in M$
    \label{item:IV}
    $\langle{Z^t_p,\varphi}\rangle$ is in the second Wiener chaos.
  \end{enumerate}

  Then, there is a version of $Z$ and a constant $h>0$ such that a.s.
  \begin{align}
    \label{eq:Z}
    \sup_{p\in M, 0\le t,s\le T,  \lambda>0,\varphi \in \mathcal{B}^{\lceil |\alpha|\rceil,\delta}} \lambda^{-(2\alpha+2) } \left(|\langle{Z^0_p, \varphi_p^\lambda}\rangle|+ |t-s|^{-h} |\langle{Z^t_p-Z^s_p, \varphi_p^\lambda}\rangle|\right)<\infty.
  \end{align}
\end{lemma}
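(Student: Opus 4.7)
The plan is to combine the equivalence of moments inside a fixed Wiener chaos (Nelson's hypercontractivity) with a wavelet-based Kolmogorov criterion, in the spirit of \cite[Theorem 10.7 and Section 3.2]{bib:hairer}. Note that the transport identity (II) is \emph{not} used to get the a.s.\ bound (\ref{eq:Z}) itself; all the stochastic input needed is the $L^2$ control from (III) and the chaos assumption (IV). The identity (II) enters only later, to check that the resulting field $Z$ is compatible with the abstract model on $\mcV$.

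\emph{Step 1 (all moments from the $L^2$ bound).} By (IV), each pairing $\langle Z^0_p, \varphi\rangle$ and $\langle Z^t_p - Z^s_p, \varphi\rangle$ lies in the second homogeneous Wiener chaos of $\xi$. Nelson's hypercontractive inequality then upgrades the $L^2$ bounds in (III) to $L^{2k}$ bounds for every integer $k\ge 1$, with constants depending only on $k$:
\begin{align*}
\mathbb{E}|\langle Z^0_p, \varphi^\lambda_p\rangle|^{2k} &\lesssim_k \lambda^{k(2(2\alpha+2)+\kappa)}, \\
\mathbb{E}|\langle Z^t_p - Z^s_p, \varphi^\lambda_p\rangle|^{2k} &\lesssim_k |t-s|^{k\kappa}\,\lambda^{k(2(2\alpha+2)+\kappa)},
\end{align*}
uniformly over $p\in M$, $s,t\in [0,T]$, $\lambda\in(0,1]$, and $\varphi\in\mathcal{B}^{\lceil|\alpha|\rceil,\delta}$.

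\emph{Step 2 (reduction to a countable family).} Fix a compactly supported wavelet basis on $\R^d$ of regularity strictly larger than $|\alpha|$. Using a finite atlas together with the exponential charts $\exp_p$, we transplant it to obtain, at every dyadic scale $2^{-n}$, a countable family $\{\psi^n_q\}_{q\in\Lambda_n}$ of $C^r$ test functions on $M$ at scale $2^{-n}$ centred on a $2^{-n}$-net $\Lambda_n\subset M$, with $|\Lambda_n|\lesssim 2^{nd}$. Any rescaled test function $\varphi^\lambda_p$ then admits a wavelet expansion $\varphi^\lambda_p = \sum_{n,q} c_{n,q}\,\psi^n_q$, with coefficients decaying as $|c_{n,q}|\lesssim 1 \wedge (2^n\lambda)^{-r}$ and suitably localized in $q$, uniformly in $(p,\varphi)$. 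Establishing the a.s.\ bound uniformly over the continuous family $(p,\lambda,\varphi)$ is thereby reduced to establishing it uniformly over the countable family $(n,q)$.

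\emph{Step 3 (Borel--Cantelli in space and time).} Fix $\epsilon\in(0,\kappa/2)$. Markov's inequality applied to the $2k$-th moment bound from Step 1 gives
\begin{align*}
\mathbb{P}\bigl(|\langle Z^0_q, \psi^n_q\rangle| > 2^{-n((2\alpha+2)+\epsilon)}\bigr) \lesssim_k 2^{-nk(\kappa-2\epsilon)},
\end{align*}
which, after a union bound over $q\in\Lambda_n$ and the choice $k > d/(\kappa-2\epsilon)$, becomes summable in $n$. Borel--Cantelli then produces an a.s.\ uniform bound on the wavelet coefficients of $Z^0$. Repeating the argument on the joint space-time dyadic net $(n,q,m,j)$ with $t=jT2^{-m}$, the extra factor $|t-s|^{k\kappa}$ from Step 1 yields an a.s.\ bound with a time-H\"older factor $|t-s|^h$ for any $h<\kappa/2$. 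Reassembling these wavelet-coefficient bounds through Step 2 produces (\ref{eq:Z}), since $\lambda^{(2\alpha+2)+\epsilon}$ is a stronger bound than $\lambda^{2\alpha+2}$ for $\lambda\in(0,1]$.

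The main obstacle is bookkeeping: constructing the wavelet system on the manifold so that the scaling $\varphi^\lambda_p$, the exponential charts, and the partition of unity interact compatibly, and so that the uniform-in-$p$ hypothesis in (III) translates into uniform-in-$(n,q)$ estimates on wavelet coefficients. Once this is set up in parallel with the Euclidean construction of \cite[Section 3.2]{bib:hairer}, the hypercontractivity-plus-Borel--Cantelli step is entirely classical.
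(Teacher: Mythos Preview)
There is a genuine gap in Step~2, and it stems from your claim that hypothesis~(II) is not needed for the almost-sure bound. Expanding the test function $\varphi^\lambda_p$ in a wavelet basis $\{\psi^n_q\}$ discretises only the \emph{test-function} side of the pairing: you obtain
\[
\langle Z^t_p, \varphi^\lambda_p\rangle = \sum_{n,q} c_{n,q}(p,\lambda,\varphi)\,\langle Z^t_p, \psi^n_q\rangle,
\]
but the distribution $Z^t_p$ still depends on the continuous parameter $p$. The coefficients you can control with (III) are $\langle Z^t_q, \psi^n_q\rangle$ (distribution and test function centred at the \emph{same} dyadic point), whereas the expansion produces $\langle Z^t_p, \psi^n_q\rangle$ with $p\neq q$ in general. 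In Step~3 you silently switch from one to the other. Passing between the two requires precisely a re-expansion identity of the form $Z^t_p = Z^t_q + S^t(p,q)\,\xi$, which is exactly what (II) provides and what you have declared unnecessary.

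The paper's proof uses (II) for exactly this purpose: it builds, in each chart, a genuine model $(\Pi^{s,t},\Gamma^{s,t})$ in Hairer's sense, with $\Gamma^{s,t}_{x\leftarrow y}\symbol{\I[\Xi]\Xi} = \symbol{\I[\Xi]\Xi} + S^{s,t}(x\leftarrow y)\symbol{\Xi}$ coming from (II). It then invokes \cite[Proposition~3.32]{bib:hairer}, which bounds $|\langle \Pi_x\tau,\varphi^\lambda_x\rangle|$ for \emph{all} $x$ by the values $\langle \Pi_z\tau,\varphi^n_z\rangle$ at dyadic $z$, multiplied by $(1+\|\Gamma\|)$. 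That factor $\|\Gamma\|$ absorbs the cost of moving the base point and is controlled by the Schauder estimate for $\xi$. After this reduction to a countable family, the paper takes $q$-th moments (using (IV) and (III) as you do), sums over dyadics and charts, and finishes with a Kolmogorov continuity argument in $t$ valued in the Banach space $\chi = \{Y : \sup_{p,\lambda,\varphi}\lambda^{-(2\alpha+2)}|\langle Y_p,\varphi^\lambda_p\rangle|<\infty\}$. Your hypercontractivity-plus-Borel--Cantelli machinery is fine once the reduction to countably many base points is in place, but that reduction cannot be carried out without (II).
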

\begin{proof}
  For $t>s\ge 0,$ define for a chart $(\Psi,\U)$
  \begin{align*}
    \bar Z^{s,t}_x &:= \Psi_* (Z^t_{\Psi^{-1}(x)}-Z^s_{\Psi^{-1}(x)}), \qquad x \in \Psi(\U) \\
    \bar \xi &:= \Psi_* \xi.
  \end{align*}

  Note that $\bar Z^{s,t}_x, x \in \Psi(\U)$ and $\bar \xi$ are elements of $D'( \Psi(\U) )$.
  Then
  \begin{align}
    \Big\langle \bar Z^{s,t}_y, \varphi \Big\rangle
    &= \Big\langle Z^{s,t}_{\Psi^{-1}(y)}, \varphi \circ \Psi \Big\rangle \notag \\
    &= \Big\langle Z^{s,t}_{\Psi^{-1}(x)}
    +
    \langle \int_s^t \left[ \p_{t-r}(\Psi^{-1}(x),\cdot) - \p_{t-r}(\Psi^{-1}(y),\cdot) \right]dr, \xi \rangle \xi, \varphi \circ \Psi \Big\rangle \notag \\
    &= \Big\langle \bar Z^{s,t}_{x}
    +
    \langle \int_s^t \left[ \p_{t-r}(\Psi^{-1}(x),\cdot) - \p_{t-r}(\Psi^{-1}(y),\cdot) \right]dr, \xi \rangle \bar \xi, \varphi \Big\rangle \notag \\
    &= \Big\langle \bar Z^{s,t}_{x} + S^{s,t}( x \leftarrow y ) \bar \xi, \varphi \Big\rangle, \label{eq:reexpression}
  \end{align}
  where we denote $S^{s,t}( x \leftarrow y ) := \langle \int_s^t \left[ \p_{t-r}(\Psi^{-1}(x),\cdot) - \p_{t-r}(\Psi^{-1}(y),\cdot) \right]dr, \xi \rangle$.

  Define the regularity structure and model (in the stronger sense of \cite{bib:hairer})
  \begin{align*}
    \mcT &:= \spann \{ \symbol{\Xi} \} \oplus \spann\{ \symbol{\I[\Xi] \Xi} \} \oplus \spann\{ \symbol{\1} \} \\
    \Pi^{s,t}_x \symbol{\Xi} &:= \bar \xi \\
    \Pi^{s,t}_x \symbol{\I[\Xi]\Xi]} &:= \bar Z^{s,t}_x \\
    \Pi^{s,t}_x \symbol{\1} &:= 1 \\
    \Gamma^{s,t}_{x \leftarrow y} \symbol{\Xi} &:= \symbol{\Xi} \\
    \Gamma^{s,t}_{x \leftarrow y} \symbol{\I[\Xi] \Xi} &:= \symbol{\I[\Xi] \Xi} + S^{s,t}(x\leftarrow y) \symbol{\Xi} \\
    \Gamma^{s,t}_{x \leftarrow y} \symbol{\1} &:= \symbol{\1},
  \end{align*}
  and the sector (in the sense of \cite[Definition 2.5]{bib:hairer})
  \begin{align*}
    V := \spann \{ \symbol{\Xi} \} \oplus \spann\{ \symbol{\I[\Xi] \Xi} \}.
  \end{align*}
  One can then
  apply \cite[Proposition 3.32]{bib:hairer} to get 
  for every compacta $K_- \subset\subset K \subset \Psi(U)$,
  and\footnote{in the notation of  \cite[Proposition 3.32]{bib:hairer}, $\varphi^n_z$ stands for $2^{-nd/2}\varphi_z^{2^{-n}}$} $\varphi \in \balloftestfunctions^{r,1}$, $r := \lceil |\alpha| \rceil$,
  with $\supp \varphi^\lambda_x \subset K_-,$
  \begin{align}
    |\langle \bar Z^{s,t}_x, \varphi^\lambda_x \rangle|
    &\lesssim
    \lambda^{2\alpha + 2} ||\Pi^{s,t}, \Gamma^{s,t}||_{V, \K} \notag \\
    &\lesssim
    \lambda^{2\alpha + 2}
    \left(
      1
      +
      ||\Gamma^{s,t}||_{V, \K}
    \right) \notag \\
    &\quad
    \times
    \sup_{ a = \alpha, 2\alpha + 2}
    \sup_{ \tau \in V_a }
    \sup_{n \ge 0}
    \sup_{z \in \text{n-dyadics} \cap \K}
      \lambda^{(2\alpha + 2)n }
      \frac{\langle \Pi^{s,t}_z \tau, \varphi^n_z \rangle}{|\tau|} \notag \\
    &\lesssim
    \lambda^{2\alpha + 2}
    \left(
      1
      +
      \sup_{x,y \in \R^d}
       |x-y|^{-(\alpha + 2)}
       S^{s,t}( x \leftarrow y )
    \right) \notag \\
    &\quad
    \times
    \sup_{n \ge 0}
    \sup_{z \in \text{n-dyadics} \cap \K}
    \left(
      2^{(2\alpha + 2)n }
      \langle \bar Z^{s,t}_z, \varphi^n_z \rangle
      +
     2^{\alpha n }
      \langle \bar Z^{s,t}_z, \varphi^n_z \rangle
      \langle \bar \xi, \varphi^n_z \rangle
    \right).
    \label{eq:above}
  \end{align}

  Then, for $q\in \N^*$ large enough, using \ref{item:IV}, equivalence of moments and then \ref{item:II}
\begin{align}
 &\mathbb{E} [\sup_{n \ge 0}
    \sup_{|z|< \delta, z \in \text{n-dyadics}}
    \left(
     2^{(2\alpha + 2)nq }
      |\langle \bar Z^{s,t}_z, \varphi^{2^{-n}}_z \rangle|^q
      +
      2^{\alpha n q}
     | \langle \bar Z^{s,t}_z, \varphi^{2^{-n}}_z \rangle|^q
     | \langle \bar \xi, \varphi^{2^{-n}}_z \rangle|^q
    \right)] \notag \\
    &\lesssim  \sum_{n\ge0}  2^{2 n}  \left(
     2^{(2\alpha + 2)nq }
        \sup_{|z|<2\delta} \mathbb{E}[|\langle \bar Z^{s,t}_z, \varphi^{2^{-n}}_z \rangle|^2]^{\frac{q}{2}}
      +
      2^{\alpha n q}
    \sup_{|z|<2\delta} \mathbb{E}[| \langle \bar Z^{s,t}_z, \varphi^{2^{-n}}_z \rangle|^2
     | \langle \bar \xi, \varphi^{2^{-n}}_z \rangle|^2]^{\frac{1}{2q}}
    \right)] \notag \\
    &\lesssim |t-s|^{\frac{q\kappa}{2}}.
    \label{eq:expectation}
\end{align}


  Let now $(\Psi_i, \U_i)$ be a finite atlas
  with subordinate partition of unity $\phi_i$.
  Then for $s,t \in [0,T]$, $p \in M$, $\varphi \in \balloftestfunctions^{r,\delta}$
  \begin{align*}
    \langle Z^{s,t}_p, \varphi^\lambda_p \rangle
    &=
    \sum_{i : \phi_i \varphi^\lambda_p \not= 0}
      \langle Z^{s,t}_p, \phi_i \varphi^\lambda_p \rangle,
  \end{align*}
  Now for $\lambda$ small enough,
  $\phi_i \varphi_p^\lambda \not\equiv 0$ implies that $\supp \varphi^\lambda_p \subset \U_i$
  and in particular $p \in \U_i$.
  Hence
  \begin{align*}
    \langle Z^{s,t}_p, \varphi^\lambda_p \rangle
    &=
    \sum_{i : \phi_i \varphi^\lambda_p \not= 0}
      \langle \bar Z^{s,t;i}_{\Psi^i(p)}, \left( \phi_i \varphi^\lambda_p \right) \circ \Psi_i^{-1} \rangle,
  \end{align*}
  where $\bar Z^{s,t;i}_x := (\Psi_i)_* (Z^t_{\Psi_i^{-1}(x)}-Z^s_{\Psi_i^{-1}(x)})$.
  We can apply Remark \ref{rem:221} to $\left( \phi_i \varphi^\lambda_p \right) \circ \Psi_i^{-1}$
  and can estimate, using \eqref{eq:above},
  \begin{align*}
    \langle Z^{s,t}_p, \varphi^\lambda_p \rangle
    &\lesssim
    \lambda^{2\alpha + 2}
    \left(
      1
      +
      \sup_{x,y \in \R^d}
       |x-y|^{-(\alpha + 2)}
       S^{s,t}( x \leftarrow y )
    \right) \notag \\
    &\quad
    \times
    \sum_i 
    \sup_{n \ge 0}
    \sup_{z \in \text{n-dyadics} \cap \K_i}
    \left(
      2^{(2\alpha + 2)n }
      \langle \bar Z^{s,t;i}_z, \varphi^{2^{-n}}_z \rangle
      +
     2^{\alpha n }
      \langle \bar Z^{s,t;i}_z, \varphi^{2^{-n}}_z \rangle
      \langle \bar \xi^i, \varphi^n_z \rangle
    \right),
  \end{align*}
  here for every $i$, $\K_i$ is some compactum satisfying $\Psi_i(\supp \phi_i) \subset\subset \K_i \subset \Psi_i(\U_i)$.
  Then, by \eqref{eq:expectation},
  \begin{align*}
    \E[ \sup_{p\in M, \varphi \in \balloftestfunctions^{r,\delta}} |\langle Z^{s,t}_p, \varphi^\lambda_p \rangle|^q ]
    \lesssim
    |t-s|^{q\kappa/2}.
  \end{align*}
  
  Let us formulate a setting where we can apply Kolmogorov's continuity theorem in time.
  Endow the linear space of maps $Y: M\to \mathcal{D}'(M)$, such that for any $p\in M,$  $\supp(Y_p)\subset B(p,\delta)$, 
  with the norm
  \begin{align*}
    ||Y|| := \sup_{p\in M, \lambda\in (0,1), \varphi \in \balloftestfunctions^{r,\delta}} \lambda^{-(2\alpha+2)} |\langle Y_p, \varphi^\lambda_p \rangle|,
  \end{align*}
  and consider the Banach space $\chi=\{Y: ||Y||<\infty\}.$
  We apply this to
  \begin{align*}
    Y_p^t := Z_p^t \rho_p.
  \end{align*}
  Here $\rho_p := \rho \circ \exp^{-1}_p$, with $\rho$ smooth, $\supp \rho
  \subset B_\delta(0)$ and $\rho \equiv 1$ on $B_{\delta - \epsilon}(0)$ for
  some $\epsilon > 0$ small enough.
  Then, from the argument before, for any $s,t\ge 0$ and $q$ large enough, we have
  $$\mathbb{E}\|Y^t-Y^s\|^q\lesssim |t-s|^{\frac{\kappa q}{2}}.$$
  The result now follows from the Kolmogorov continuity theorem.
\end{proof}

\newcommand{\q}{\mathsf{q}} 

A simple way  to define $Z^t_p$ is to consider the Wick product of the random variables involved.   For any $t> 0,$ the heat kernel  and the heat operator are  denoted respectively by $\p_t:M^2\to \R$ and $P_t,$ and we write  for any $p,q\in M,$ $\q_t(p)=\p_t(p,p).$
 According to Lemma  
 \ref{reg WN}  and
 Theorem \ref{thm:heatKernelEstimates}, we can  consider $P_t(\xi)$ as a function and the map $t\in \R_{> 0} \mapsto P_t(\xi) \in C^{\infty}(M) $  is continuous.

 We set for any $p\in M, t\in \R_{\ge 0}$ and  any  function $\varphi\in C^{\infty}(M),$

$$Z^t_p := \int_0^t ( \xi \diamond  P_{s}(\xi) - \xi P_s(\xi)(p)) ds,$$
where for any $s>0$ and $\varphi\in C^{\infty}(M),$ 
$$\langle{ \xi \diamond  P_{s}(\xi) , \varphi}\rangle =  \langle{\xi,  \varphi P_s (\xi)}\rangle -\mathbb{E}[\langle{\xi,  \varphi P_s (\xi)}\rangle].$$
Note that for any $s>0,$
$$\xi\diamond P_s\xi = P_s(\xi)\xi - \q_s.$$

For any $t\ge 0,$ let us consider the operator $K_t= \int_0^t P_s ds$ and  for any $p,q\in M$ with $p\not=q,$ set  $\mathsf{k}_t(p,q)=\int_0^t \p_{s}(p,q)ds .$ 
Let us note that the operator
\begin{equation}
K_t^2= \int_{0\le s,s' \le t} P_{s+s'} ds ds'= \int_0^{2t} sP_sds, \label{square HES}
\end{equation}
has a continuous kernel according to Theorem \ref{thm:heatKernelEstimates}, that we shall denote $\mathsf{k}_{2,t}$.
\begin{proposition}
  For any  $t\in \R_{\ge 0},$ almost surely  for any $ p \in M$ and $\varphi\in C^{\infty}(M),$    $\langle Z^t_p,\varphi \rangle$ is well-defined and there exists a 
  modification of the process given by $(\langle Z^t_p,\varphi\rangle)_{p\in M, \varphi \in C^{\infty}(M),   \tau\in \mathcal{G}, t\ge 0 }$ such that almost surely
  \eqref{eq:Z} holds true.
  \footnote{In particular, almost surely,  for all   $\varphi\in C^{\infty}(M) $ and $p\in M,$ $ t\mapsto \langle{\Pi^t_p(\tau),\varphi} \rangle$ is measurable and  bounded.}
  \label{Gaussian Model}
\end{proposition}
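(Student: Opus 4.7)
The strategy is to verify the three hypotheses of Lemma \ref{Lemma gaussian models} for the process $Z$ given above and then invoke that lemma. For each fixed $s>0$, $P_s\xi$ is a smooth random function, so both $\xi\diamond P_s\xi$ and $P_s\xi(p)\langle\xi,\varphi\rangle$ are well-defined second-chaos objects (up to a deterministic correction); integrability of the $s$-pairings will follow a posteriori from the moment bounds established below.

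Hypothesis (ii) is immediate by direct computation: since $\xi\diamond P_s\xi$ does not depend on $p$,
\begin{align*}
\langle Z^t_q-Z^t_p,\varphi\rangle = -\int_0^t\bigl(P_s\xi(q)-P_s\xi(p)\bigr)\langle\xi,\varphi\rangle\,ds,
\end{align*}
and rewriting $P_s\xi(q) = \langle\p_s(q,\cdot),\xi\rangle$ together with the substitution $s\mapsto t-r$ yields the required identity. Hypothesis (iv) is equally transparent: $\langle\xi\diamond P_s\xi,\varphi\rangle$ is in the second Wiener chaos by the definition of the Wick product, while $P_s\xi(p)\langle\xi,\varphi\rangle$ is a product of two first-chaos variables and hence lies in the direct sum of the $0$-th and $2$-nd chaos. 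This property is preserved by integrating in $s$, and equivalence of moments on a finite Wiener chaos, which is the actual content used from (iv) in the proof of Lemma \ref{Lemma gaussian models}, holds.

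The bulk of the work is hypothesis (iii), the $L^2$-bound. By the product formula for first-chaos Wiener integrals, the $2$-nd chaos part of $\langle Z^t_p,\varphi\rangle$ is the double stochastic integral $I_2(F^t_{p,\varphi})$ with symmetric kernel
\begin{align*}
F^t_{p,\varphi}(x,y) = \tfrac12\bigl[\varphi(x)\bigl(\mathsf{k}_t(x,y)-\mathsf{k}_t(p,y)\bigr) + \varphi(y)\bigl(\mathsf{k}_t(x,y)-\mathsf{k}_t(p,x)\bigr)\bigr],
\end{align*}
and the $0$-th chaos (mean) part equals $-\int_M\mathsf{k}_t(p,x)\varphi(x)\,dx$. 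Specialising to $\varphi=\varphi^\lambda_p$, the Wiener isometry $\mathbb{E}\,I_2(F)^2 = 2\|F\|_{L^2(M\times M)}^2$ reduces the estimate to an $L^2$-bound on $F^t_{p,\varphi^\lambda_p}$. The explicit differences $\mathsf{k}_t(x,y)-\mathsf{k}_t(p,y)$ produce cancellations near $p$, and expanding $\|F\|_{L^2}^2$ yields integrals of $\mathsf{k}_t\cdot\mathsf{k}_t$ and $\mathsf{k}_{2,t}$ type (via \eqref{square HES}). Because the diagonal singularity of $\mathsf{k}_t$ is only logarithmic in dimension two, Theorem \ref{thm:heatKernelEstimates} and direct computation in normal coordinates around $p$ give, for any sufficiently small $\kappa>0$,
\begin{align*}
\|F^t_{p,\varphi^\lambda_p}\|_{L^2(M\times M)}^2 \lesssim \lambda^{2(2\alpha+2)+\kappa},
\end{align*}
uniformly in $(p,t,\lambda)$. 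The squared mean is of order $|\log\lambda|^2$ and is absorbed into $\lambda^{2(2\alpha+2)+\kappa}$ since $2(2\alpha+2)<0$ when $\alpha<-1$.

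For the temporal increment, $Z^t_p-Z^s_p$ is given by the same formula with the $s'$-integral restricted to $[s,t]$; the associated kernel has the same structure as $F^t_{p,\varphi^\lambda_p}$ but with $\mathsf{k}_t$ replaced by $\mathsf{k}_t-\mathsf{k}_s = \int_s^t\p_r\,dr$. Interpolating between the uniform-in-$t$ estimate above and the obvious bound $\mathsf{k}_t-\mathsf{k}_s\le |t-s|\cdot\sup_{r\in[s,t]}\|\p_r\|_{L^\infty_{\mathrm{off\text{-}diag}}}$ extracts the required factor $|t-s|^{\kappa/2}$, completing hypothesis (iii). With (ii), (iii), (iv) verified, Lemma \ref{Lemma gaussian models} yields the modification satisfying \eqref{eq:Z} almost surely. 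The main obstacle is the careful bookkeeping of heat-kernel singularities in the $L^2$-estimate; however on a $2$-dimensional manifold these are only logarithmic, and the subtraction structure built into the definition of $Z^t_p$ is tailored to produce exactly the cancellations needed, so the computation is essentially a direct manipulation guided by Theorem \ref{thm:heatKernelEstimates}.
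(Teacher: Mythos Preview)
Your strategy coincides with the paper's: reduce to Lemma~\ref{Lemma gaussian models}, identify the second-chaos kernel, and arrive at the two terms $\int(\mathsf{k}_{2,t}(q,q)+\mathsf{k}_{2,t}(p,p)-2\mathsf{k}_{2,t}(q,p))(\varphi^\lambda_p(q))^2\,dq$ and the $\mathsf{k}_t\cdot\mathsf{k}_t$ cross term, the latter controlled by the former via Cauchy--Schwarz. The paper isolates the decisive spatial estimate as a separate lemma (inequality~\eqref{Taylor Kernel second order}), bounding that second difference by $C\,t^\eta d(p,q)^{2-2\nu}$; your ``direct computation in normal coordinates'' is the same content, just not spelled out.

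One step as written does not go through. For the temporal increment you invoke $\mathsf{k}_t-\mathsf{k}_s\le|t-s|\cdot\sup_{r}\|\p_r\|_{L^\infty_{\text{off-diag}}}$ and interpolate. But the $L^2$-norm of $F^{t,s}_{p,\varphi^\lambda_p}$ is an integral over $M\times M$ that meets the diagonal, where $\sup_r\p_r(x,y)\sim d(x,y)^{-2}$ is not square-integrable in two dimensions; so this ``obvious bound'' gives nothing finite to interpolate against. The paper avoids this by interpolating one level higher, at the $\mathsf{k}_{2,t}$ second difference itself: that quantity is trivially $O(t)$ (the kernel of $K_t^2$ is continuous and vanishes at $t=0$) and, by the geodesic-Taylor argument, also $O(d(p,q)^{2-\alpha})$; combining via $\min(a,b)\le a^\eta b^{1-\eta}$ gives the $t^\eta d(p,q)^{2-2\nu}$ bound, and the same argument applied to $\int_s^t\!\int_s^t P_{r+r'}\,dr\,dr'$ produces the required $(t-s)^\eta$ prefactor for the increment. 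Replace your interpolation by this one and the proof is complete.
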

\begin{proof}[Proof of  Proposition \ref{Gaussian Model}]
  It is enough to prove the assumption of Lemma \ref{Lemma gaussian models}.  Let us fix $p \in M, 0<r<t.$ 
  Recall that $\delta$ is the radius of injectivity of $M$
  and let $r := 1$.

  Let us first check that for any $\varphi\in C^{\infty}(M),$  $Z^t_x(\varphi),$ is well defined.
  Therefor, let us recall -- see Theorem \ref{thm:heatKernelEstimates} -- that
  \begin{equation}
    \label{log HK}
    L := \sup_{p\in M,r\in (0,t]} r \q_{r}(p) < \infty.
  \end{equation}
  \newcommand{\var}{\operatorname{Var}}
  The  Wick formulas imply for any $s>0,$ 
  \begin{align*}
    \var( \langle{\xi, \varphi P_s\xi }\rangle)
    =
    \int \q_{2s}(z)\varphi (z)^2 dz
    + \int \p_s(z,z')^2 \varphi(z)\varphi(z')dz dz'\le  2\vol(M) L  \|\varphi\|^2_\infty s^{-1}.
  \end{align*}
  It follows that 
  $$ \mathbb{E}[\int_0^t  |\langle{P_s\xi\diamond \xi, \varphi}\rangle |ds]\le  \int_0^t \var( \langle{\xi, \varphi P_s\xi }\rangle)^{1/2}ds <\infty. $$ 
  Besides,   $\mathbb{E} \left(\langle{\xi, \varphi }\rangle^2 P_s\xi(p)^2\right)= q_{2s}(p)\|\varphi\|_2^2+  P_s(\varphi)(p)^2\le s (L+1) \|\varphi\|_\infty^2,$ so that $Z^t_p(\varphi)$ is well defined.     We  shall  
  now prove that   for any $\kappa<0, $
  \begin{equation}
    \label{estimate moments}
    \sup_{t\in [0,T], p\in M, \varphi \in \balloftestfunctions^{r,\delta}}
    \lambda^{-2 \kappa}\mathbb{E}[ \left( Z^t_p(\varphi^\lambda_p) \right))^2]
    < \infty,
  \end{equation}
  which together with Lemma \ref{reg WN} shall  yield the claim. We   fix now
  $\kappa<0.$ Let us first prove that the expectation of the second integrand in
  $\Pi_p^t(\I(\Xi)\Xi)$ is almost surely of homogeneity $\kappa$.
  Indeed, according to \cite{bib:driverHeatKernels},  for $T>0$ fixed,  there exists $C_T>0$, such that, for all $0, t < T, p,q\in M,$ with $p\not= q,$
  \begin{align}
    |\mathsf{k}_t(p,q)| &\lesssim  C_T+ \int_0^T e^{-\frac{d(p,q)^2}{4s}}  \frac{ds}{s}= C_T+\int_{\frac{d(p,q)^2}{4T}}^{+\infty}e^{-v}\frac{dv}{v}  \nonumber   \\ 
    &\lesssim |\log ( d(p,q)) |. \end{align}
  Since
  $$ \mathbb{E}[\langle{\xi P_s \xi (p), \varphi}\rangle]= P_s \varphi (p),  $$
  it follows that for any $\kappa<0,$
  $$\sup_{p\in M, \varphi \in \balloftestfunctions^{r,\delta}}  |\mathbb{E}[\langle{\xi K_t \xi (p), \varphi^\lambda_p}\rangle]|  = \sup_{p\in M, \varphi \in \balloftestfunctions^{r,\lambda}}  |K_t(\varphi^\lambda_p)(p)|\le C_T \lambda^{\kappa}.$$
  Setting
  $I_{p,s}= \xi P_s \xi - \xi  P_s \xi (p)$ and $:I_{p,s}:\ = I_{p,s} -\mathbb{E}[I_{p,s}],$%
  \footnote{
    Where me denote the distribution $\langle \E[ I_{p,s} ], \varphi \rangle := \E[ \langle I_{p,s}, \varphi \rangle ]$.
  }
  it remains to estimate 
  \begin{align*}
    \langle : Z^t_p :, \varphi^\lambda_p \rangle
    :=
    \langle Z^t_p, \varphi^\lambda_p \rangle
    -
    \E[ \langle Z^t_p, \varphi^\lambda_p \rangle ]
    =
    \int_0^t \langle :I_{p,s}: , \varphi^\lambda_p \rangle ds
  \end{align*}
  For any $\varphi\in \balloftestfunctions^{r,\delta}, s,s' \ge 0,$
  \begin{align*}
    \mathbb{E}[\langle :I_{p,s}:,\varphi^\lambda_p \rangle \langle
    :I_{p,s'}:,\varphi^\lambda_p \rangle] =  & \int_M  (\p_{s+s'}(q,q)
    +\p_{s+s'}(p,p)-2\p_{s+s'}(q,p)) \varphi^\lambda_p (q)^2dq\\ &+\int_{M^2} (\p_s(q,q') -
    \p_s(p,q'))(\p_{s'}(q,q')-\p_{s'}(p,q)) \varphi^\lambda_p (q)\varphi^\lambda_p (q')dqdq'.
  \end{align*}
  and 
  \begin{align*}
    \mathbb{E}[ \langle{:Z^t_p:,\varphi^\lambda_p }\rangle^2]
    =  & \int_M  (\mathsf{k}_{2,t}(q,q) +\mathsf{k}_{2,t}(p,p)-2\mathsf{k}_{2,t}(q,p)) \varphi^\lambda_p (q)^2dq \\
    &+\int_{M^2} (\mathsf{k}_t(q,q') - \mathsf{k}_t(p,q'))(\mathsf{k}_t(q,q')-\mathsf{k}_t(p,q))\varphi^\lambda_p (q)\varphi^\lambda_p (q')dqdq'\\
    &\le
    2  \int_M  (\mathsf{k}_{2,t}(q,q) +\mathsf{k}_{2,t}(p,p)-2\mathsf{k}_{2,t}(q,p)) \varphi^\lambda_p (q)^2dq,
  \end{align*}
  where the second line follows from the Cauchy-Schwarz inequality.    
  It follows from the bound  \eqref{Taylor Kernel second order} that there exists
  $C>0$, such that for any $\varphi \in \balloftestfunctions^{r,\delta}, t \in [0,T],$
  \begin{align*}
    \mathbb{E}[\langle :\Pi(\Xi \I(\Xi))^t_x:,\varphi^\lambda_p \rangle^2]
    \le  C \int_{M}&d(p,q)^{2-\delta}|\varphi^\lambda_p(q)|^2 dq.
  \end{align*}
  Hence for any $\lambda>0$ and $\varphi \in \balloftestfunctions^{r,\delta}$,
  $$\mathbb{E}[\langle :Z^t_p:,\varphi^\lambda_p \rangle^2] \le C \lambda^{-\delta}.$$
\end{proof}

\begin{lemma}
  For any $\nu>\eta>0,T\ge 0$, there exists $C>0,$ such that for any $q\in M, t\in [0,T],$
  \begin{align}
    \label{Taylor Kernel second order}
    |\mathsf{k}_{2,t}(q,q) + \mathsf{k}_{2,t}(p,p)-2\mathsf{k}_{2,t}(q,p)|\le C  t^\eta d(p,q)^{2-2\nu}.
  \end{align}
\end{lemma}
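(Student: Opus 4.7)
Starting from \eqref{square HES}, the kernel of $K_t^2$ is $\mathsf{k}_{2,t}(p,q) = \int_0^{2t} s\, \p_s(p,q)\, ds$, so
\begin{align*}
  \mathsf{k}_{2,t}(q,q) + \mathsf{k}_{2,t}(p,p) - 2\mathsf{k}_{2,t}(q,p)
  = \int_0^{2t} s \bigl[ \p_s(q,q) + \p_s(p,p) - 2\p_s(p,q) \bigr] ds.
\end{align*}
I would insert the heat-kernel asymptotics of Theorem \ref{thm:heatKernelEstimates} (with $d=2$), writing $s\,\p_s(p,q) = e^{-d(p,q)^2/4s} \sum_{i=0}^N s^i \Phi_i(p,q) + s R^N_s(p,q)$, and handle the smooth leading coefficients and the remainder separately.

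For each $i$ the second difference of the leading term equals
\begin{align*}
  \bigl[\Phi_i(q,q) + \Phi_i(p,p) - 2\Phi_i(p,q)\bigr] + 2\Phi_i(p,q)\bigl(1 - e^{-d(p,q)^2/4s}\bigr).
\end{align*}
The first bracket is of order $d(p,q)^2$: the symmetry $\Phi_i(p,q) = \Phi_i(q,p)$, inherited from $\p_s(p,q) = \p_s(q,p)$, allows one to rewrite, with $v := \exp_p^{-1}(q)$ and $\gamma(\sigma) := \exp_p(\sigma v)$,
\begin{align*}
  \Phi_i(q,q) + \Phi_i(p,p) - 2\Phi_i(p,q)
  &= \bigl[\Phi_i(q,q) - \Phi_i(q,p)\bigr] - \bigl[\Phi_i(p,q) - \Phi_i(p,p)\bigr] \\
  &= \int_0^1\!\!\int_0^1 (\partial_1 \partial_2 \Phi_i)(\gamma(\sigma), \gamma(\tau))(v, v)\, d\sigma\, d\tau,
\end{align*}
which is bounded by $C\, d(p,q)^2$. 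For the exponential factor I would use the elementary inequality $1 - e^{-x} \le x^{1-\nu}$, valid on $[0,\infty)$ for any $\nu \in (0,1)$ (immediate from $1 - e^{-x} \le x \le x^{1-\nu}$ on $[0,1]$ and $1 - e^{-x} \le 1 \le x^{1-\nu}$ on $[1,\infty)$).

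Integrating over $s \in [0, 2t]$, the two pieces produce
\begin{align*}
  \int_0^{2t} s^i d(p,q)^2 ds \lesssim t^{i+1} d(p,q)^2
  \quad\text{and}\quad
  \int_0^{2t} s^i \Bigl(\tfrac{d(p,q)^2}{s}\Bigr)^{1-\nu} ds \lesssim t^{i+\nu} d(p,q)^{2-2\nu},
\end{align*}
and both are absorbed into $C\, t^\eta\, d(p,q)^{2-2\nu}$ using $t \le T$, the hypothesis $\nu > \eta$ (which keeps $t^{\nu - \eta}$ bounded), and $d(p,q)^2 \le (\operatorname{diam} M)^{2\nu}\, d(p,q)^{2-2\nu}$. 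The remainder $s R^N_s$ is treated by the same recipe: taking $N$ such that the $C^2$ estimate from Theorem \ref{thm:heatKernelEstimates} gives $\|R^N_s\|_{C^2(M \times M)} \lesssim s^{N-2}$ (e.g.\ $N = 2$), a second-order Taylor argument identical to the one displayed above yields $|R^N_s(q,q) + R^N_s(p,p) - 2R^N_s(p,q)| \lesssim s^{N-2}\, d(p,q)^2$, contributing $\lesssim t^{N}\, d(p,q)^2$, which is again absorbed.

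The main obstacle is establishing the genuinely quadratic bound $|\Phi_i(q,q) + \Phi_i(p,p) - 2\Phi_i(p,q)| \lesssim d(p,q)^2$: a plain Taylor expansion at $(p,p)$ of a generic smooth function on $M \times M$ produces a $d(p,q)$-order term that does not cancel, and the upgrade to $d(p,q)^2$ depends essentially on the symmetry $\Phi_i(p,q) = \Phi_i(q,p)$ together with the mixed-partial integration exhibited above -- without it one would at best obtain $t^\eta d(p,q)$, which is insufficient for the subsequent Wick-level estimates in Proposition \ref{Gaussian Model}.
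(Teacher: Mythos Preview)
Your argument is correct. Both proofs rest on the parametrix of Theorem~\ref{thm:heatKernelEstimates} and on the mixed-second-derivative identity for a second difference along the geodesic from $p$ to $q$, but they are organised differently. The paper applies that identity directly to the full approximate kernel $s\,\p^N_s$, so the Gaussian factor $e^{-d(p,q)^2/4s}$ is differentiated as well; this produces singular integrands in $s$ with logarithmic divergences, which after integration yield a bound $C_T\,d(p,q)^{2-\alpha}$ with no power of $t$, and the paper then combines it with the easy uniform bound $|\cdots|\lesssim t$ via the interpolation $\min\{a,b\}\le a^{\eta}b^{1-\eta}$. Your route avoids both the logarithms and the interpolation: by splitting the second difference of $s^i e^{-d^2/4s}\Phi_i$ into $s^i[\Phi_i(q,q)+\Phi_i(p,p)-2\Phi_i(p,q)]$ and $2s^i\Phi_i(p,q)(1-e^{-d^2/4s})$, the mixed-derivative formula is applied only to the smooth coefficients $\Phi_i$, while the Gaussian contribution is handled by the elementary bound $1-e^{-x}\le x^{1-\nu}$, which already carries the joint exponent $t^{\nu}d(p,q)^{2-2\nu}$. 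The symmetry $\Phi_i(p,q)=\Phi_i(q,p)$ you invoke follows from that of $\p_s$ by uniqueness of the Minakshisundaram--Pleijel coefficients and is equally implicit in the paper's use of the mixed-derivative identity. One small remark: your inequality $1-e^{-x}\le x^{1-\nu}$ requires $\nu<1$, which is the only non-trivial range and the only one used downstream in Proposition~\ref{Gaussian Model}.
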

\begin{proof}  
  On the one hand, according to (\ref{square HES}) and  Theorem \ref{thm:heatKernelEstimates}, the left-hand-side of (\ref{Taylor Kernel second order}) is uniformly bounded by $C_T t,$ for all $t\in [0,T],$ for some $C_T>0.$   On the other hand, the estimate (\ref{Taylor Kernel second order}) would hold true, with $\eta=0,$ if $K_t$ 
  would be replaced by  $C^2$ symmetric function on $M^2$.  Indeed if $K:M^2\to \R$ is a  $C^2$ symmetric function,
  \begin{align}
    |K(q,q) + K(p,p)-2K(q,p)|\le  \int_{0\le r,s\le 1}  \|  \nabla _{2, \dot{\gamma}_s}\nabla_{1,\dot{\gamma}_r}  K (\gamma_s,\gamma_r)\|_\infty  drds ,  \label{Proof Lem Taylor along  geo}
  \end{align} 
  where the index below the connexion symbol indicate the variable on which the latter is acting, and $\gamma$ is a geodesic from $p$ to $q$.
  According to Theorem \ref{thm:heatKernelEstimates}, one can  therefore consider  $K^{2,N}_{t}= \int_0^t s P_s^N ds $  in place of $K^2_t,$ as soon as $N$ is large enough.
  This same Theorem ensures that  there exists a smooth 
  function $\Phi:[0,T]\times M^2\to \R_{\ge 0}$ such that  for all $\tau\in (0,T], p,q\in M,$
  $$\p^N_\tau(p,q) =  (2\pi \tau)^{-1} e^{-\frac{d(p,q)^2}{2\tau}} \Phi(\tau,p,q).$$
  Let  us set $\q_\tau(r)= \frac{1}{2\pi \tau } e^{-\frac{r^2}{\tau}},$ for any $r,\tau>0.$  We shall apply  \eqref{Proof Lem Taylor along  geo}  to $K_{t,\varepsilon}= \int_{\varepsilon}^t sP^N_s ds,$ for any fixed $\varepsilon>0.$  Up to 
  a constant,  the integrand of  the right-hand-side of (\ref{Proof Lem Taylor along  geo}) is   bounded  by  
  $ \int_\varepsilon ^t\left(d(p,q)\|\nabla_{1,\dot{\gamma}_s}  \q_\tau \circ d (\gamma_s,\gamma_r) \| +\|\nabla_{1,\dot{\gamma}_s} \nabla_{2,\dot{\gamma}_r}  \q_\tau\circ d (\gamma_s ,\gamma_r)\|\right) d\tau. $  Let 
  us  set $R=d(p,q).$  The first term can be bounded by 
  $$  \begin{aligned}
  R^2  \int_{\varepsilon}^t  \tau ^{-1}| s-r| e^{-\frac{ R^2 |s-r|^2}{2\tau}} d\tau &\le R^2 |s-r| \int_{\frac{R^2}{t}|r-s|^2}^{\frac{R^2}{\varepsilon} |r-s|^2}e^{-u/2} \frac{du}{u}\\
  &\le C_T  R^2 |  s-r| \log \frac{T}{|s-r|R^2},
  \end{aligned} $$
  and the second by 
  $$ \begin{aligned}
  R^2 \int_\varepsilon^t  (\tau^{-1}+\frac{R^2(r-s)^2}{\tau^2})e^{-\frac{R^2(r-s)^2}{2\tau}}     d\tau&\le   R^2 \left(2\log \frac{T}{|s-r|R^2}+  \int_{R^2 (r-s)^2/t}^{\infty} e^{-u/2}du\right)\\
  &\le C_T  R^2 \log \frac{T}{|s-r|R^2},
  \end{aligned} ,$$
  for some constant $C_T>0.$  These two bounds, once integrated in (\ref{Proof Lem Taylor along  geo}), imply that for any $\alpha>0,$ the left-hand-side of (\ref{Taylor Kernel second order}) is bounded by $C_T d(p,q)^{2-\alpha},$ uniformly on $p,q\in M$ and $t\in [0,T].$ Using the bound $\min\{a,b\}\le a^\eta b^{1-\eta},$ for $a,b,\eta\in (0,1),$ gives (\ref{Taylor Kernel second order}). \end{proof}

\section{Appendix - Higher order \textquotedblleft polynomials\textquotedblright}
\label{sec:higherOrderPolynomials}

We recall the regularity structure of polynomial functions in flat space $\R^{d}$ given in
\cite{bib:hairer}. It is used to abstractly describe functions in $C^{\gamma
}\left(  \R^{d}\right)  $, $\gamma>0$, and also forms a central ingredient for
general regularity structures associated with singular SPDEs. Let $\gamma>0$
and $n=\lfloor\gamma\rfloor$, that is $n\in\N$ and $\gamma\in(n,n+1]$. For
simplicity of notation let $d=1$. Define
\[
\mcT^{flat}:=\bigoplus_{\ell=0,\dots,n}\operatorname{span}\{X\mathbb{^{\ell}%
}\},
\]
where $\operatorname{span}\{X^{\ell}\}$ denotes the one-dimensional vector
space spanned by the abstract symbol $X^{\ell}$. Hence $\mcT_{flat}%
\simeq\R^{n+1}$.

Given $x,y\in\mathbb{R}$ and $\ell\in\N,$ we define the linear maps, $\Pi
_{x}:\mcT_{flat}\rightarrow \subset\mathcal{D}^{\prime}\left(  \mathbb{R}\right)  $ and $\Gamma
_{x\leftarrow y}:\mcT_{flat}\rightarrow\mcT_{flat},$ which are uniquely
determined by
\begin{align}
  \Pi^{flat}_{x}X\mathbb{^{\ell}}  &  :=(\cdot-x)^{\ell} \label{eq:polynomialModelFlat}%
\\
\Gamma^{flat}_{x\leftarrow y}X\mathbb{^{\ell}}  &  :=\sum_{i\leq\ell}%
\genfrac{(}{)}{0pt}{}{\ell}{i}%
(x-y)^{\ell-i}X^{i}.\nonumber
\end{align}
In this case one has $\Pi^{flat}_{x}\Gamma^{flat}_{x\leftarrow y}\tau=\Pi^{flat}_{y}\tau$ for all
$\tau\in\mcT_{flat}.$ One can use this regularity structure to describe
regular functions.

\begin{lemma}
[{\cite[Lemma 2.12]{bib:hairer}}]Let $f:\R\rightarrow\R$. Then $f\in
C^{\gamma}(\R)$ if and only if there exists $\hat{f}:\R\rightarrow \mcT^{flat}$ with
$\hat{f}_{0}(x)=f(x)$ and
\[
  |\hat{f}(x)-\Gamma^{flat}_{x\leftarrow y}\hat{f}(y)|_{\ell}\lesssim|x-y|^{\gamma
-\ell}.
\]
In that case $\hat{f}_{\ell}(x)=f^{(\ell)}(x),\ell=0,\dots,n$.
\footnote{
Here we recall the notation of $\hat{f}_\ell(x)$ as the component of $\hat{f}(x)$
on the $\ell$-th homogeneity, i.e. the coefficient in front of $\symbol{X^\ell}$.}
\end{lemma}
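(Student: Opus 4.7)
The plan is to prove both directions by identifying the components of $\hat{f}$ with (scaled) derivatives of $f$, leveraging the fact that $\Pi^{flat}_x \Gamma^{flat}_{x\leftarrow y} = \Pi^{flat}_y$ by construction; so $(\Gamma^{flat}_{x\leftarrow y}\hat{f}(y))_\ell$ is precisely the $\ell$-th coefficient of the Taylor re-expansion of $\hat{f}(y)$ around $x$. For the forward implication, I will define the candidate lift $\hat{f}(x) := \sum_{\ell=0}^n \frac{f^{(\ell)}(x)}{\ell!} X^\ell$ with $n = \lfloor \gamma \rfloor$. A binomial computation shows that $\hat{f}_\ell(x) - (\Gamma^{flat}_{x\leftarrow y}\hat{f}(y))_\ell$ equals $1/\ell!$ times the order-$(n-\ell)$ Taylor remainder of $f^{(\ell)}$ at $y$, evaluated at $x$. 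Since $f^{(n)}$ is $(\gamma - n)$-Hölder by the classical $C^\gamma$ definition, the integral-form remainder then yields the required bound $|x-y|^{\gamma-\ell}$ at every level.

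For the reverse direction, I plan to proceed by downward induction on $\ell$. At the top level $\ell = n$ the re-expansion formula collapses to $(\Gamma^{flat}_{x\leftarrow y}\hat{f}(y))_n = \hat{f}_n(y)$, so the hypothesis directly gives $|\hat{f}_n(x) - \hat{f}_n(y)| \lesssim |x-y|^{\gamma - n}$, establishing $(\gamma - n)$-Hölder continuity of $\hat{f}_n$. For $\ell < n$, the hypothesis unpacks as
\[
\hat{f}_\ell(x) - \hat{f}_\ell(y) = (\ell+1)(x-y)\hat{f}_{\ell+1}(y) + \sum_{k=\ell+2}^n \binom{k}{\ell}(x-y)^{k-\ell}\hat{f}_k(y) + O(|x-y|^{\gamma-\ell}).
\]
Since the sum on the right is $O(|x-y|^2)$ by boundedness of the higher $\hat{f}_k$, dividing by $(x-y)$ and sending $y \to x$ yields, by continuity of $\hat{f}_{\ell+1}$ (available from the previous induction step), $\hat{f}_\ell'(y) = (\ell+1)\hat{f}_{\ell+1}(y)$. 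Iterating produces $(\hat{f}_0)^{(\ell)} = \ell!\,\hat{f}_\ell$ for all $\ell \leq n$, and the top-level Hölder bound then translates into $f := \hat{f}_0 \in C^\gamma(\R)$.

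The main obstacle will be threading the induction carefully: continuity of each higher $\hat{f}_k$ must be in hand before differentiability at level $\ell$ can be read off, and this ordering dictates the downward rather than upward direction. Continuity at each intermediate level comes for free from its Hölder estimate once boundedness of the coefficient functions has been secured, which is either assumed via the modelled-distribution norm or derived from the hypothesis on a compact subdomain. The apparent factorial discrepancy between the conclusion stated in the lemma and the identification $\hat{f}_\ell = f^{(\ell)}/\ell!$ produced by my argument is a harmless convention choice: either form can be recovered from the other by rescaling the basis symbols $X^\ell$.
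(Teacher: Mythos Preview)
Your proof is correct. Note however that the paper does not actually prove this lemma: it is quoted verbatim from \cite[Lemma 2.12]{bib:hairer} as background for the flat-space polynomial structure, and no argument is supplied in the paper itself.

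That said, the paper does prove the manifold analogue (Theorem~\ref{thm:cgammaDgamma}), and it is worth comparing. For the forward direction the two arguments coincide: both recognise $\hat f_\ell(x)-(\Gamma_{x\leftarrow y}\hat f(y))_\ell$ as a Taylor remainder of $f^{(\ell)}$ and bound it by the H\"older regularity of the top derivative. For the reverse direction the inductive structures differ. You run a \emph{downward} induction on the level $\ell$, first extracting H\"older continuity of $\hat f_n$ at the top and then reading off the differential relation $\hat f_\ell'=(\ell+1)\hat f_{\ell+1}$ level by level, using continuity of $\hat f_{\ell+1}$ obtained at the previous step. The paper instead runs an \emph{upward} induction on the order of differentiability of $f=\hat f_0$: assuming $f$ is $\ell$-times differentiable with $\operatorname{Sym}[\nabla^i f]=\hat f_i$ for $i\le\ell$, it uses Taylor's theorem together with the $\mathcal D^\gamma$ estimate to produce one more derivative. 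Your approach is arguably cleaner in the flat one-dimensional setting, since the differential relation drops out of a single limit; the paper's upward scheme is what generalises naturally to the manifold case, where one must build differentiability of $f$ itself rather than relate pre-existing component functions. Your remark on the $\ell!$ normalisation is accurate: the identity $\hat f_\ell=f^{(\ell)}/\ell!$ is the one forced by the model $\Pi_x X^\ell=(\cdot-x)^\ell$, and the statement as written in the lemma is a harmless convention slip.
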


\subsection{Higher order covariant derivatives}

We want to mirror as best we can the flat space polynomial model described above,
in the general context of a $d$ dimensional
Riemannian manifold. In order to do to this we need to store higher order
derivatives of functions $f:M\rightarrow\R$ in a coordinate independent
fashion. There is a canonical way to do this on a Riemannian manifold by
making use of the associated Levi-Civita connection.

We recall the notion of higher order covariant derivatives of functions
$f:M\rightarrow\R$ on a Riemannian manifold with Levi-Civita\footnote{In
general, $\nabla$ can be any affine connection.} connection $\nabla$ (see for
example \cite[Lemma 4.6]{bib:leeRiemannian}).

\begin{definition}
\label{def.dell}Define $\nabla^{\ell}|_{p}f\in\left[  T_{p}^{\ast}M\right]
^{\otimes\ell}\cong\left[  T_{p}M^{\otimes\ell}\right]  ^{\ast}$ by,
\[
\nabla^{0}|_{p}f=f(p),\text{ }\langle\nabla|_{p}f,X_{1}\left(  p\right)
\rangle=\langle d|_{p}f,X_{1}\left(  p\right)  \rangle,
\]
and then inductively by;%
\begin{align*}
&  \left\langle \nabla^{\ell}|_{p}f,X_{1}\left(  p\right)  \otimes\dots\otimes
X_{\ell}\left(  p\right)  \right\rangle \,\\
&  \qquad=\left[  X_{1}\langle\nabla^{\ell-1}f,X_{2}\otimes\dots\otimes
X_{\ell}\rangle\right]  |_{p}\\
&  \qquad\qquad-\sum_{m=2}^{\ell}\langle\nabla^{\ell-1}f,X_{2}\otimes
\dots\otimes X_{m-1}\otimes\nabla_{X_{1}}X_{m}\otimes X_{m+1}\otimes
\dots\otimes X_{\ell}\rangle|_{p},
\end{align*}
where $X_{1},\dots,X_{\ell}$ are arbitrary vector fields on $M.$
\end{definition}

A few remarks are in order.

\begin{enumerate}
\item As the notation suggests, $\nabla^{\ell}|_{p}f$ is indeed tensorial,
i.e. the right side of the previously displayed equation really only depends
on the vector fields, $\left\{  X_{i}\right\}  _{i=1}^{\ell}$, through their
values at $p.$

\item In the literature $\nabla f$ sometimes denotes the gradient of $f$. We
never use the gradient of a function in this work.

\item We shall also sometimes write $\nabla_{W}^{\ell}f=\langle\nabla^{\ell
}|_{p}f,W\rangle$ for any $W\in(T_{p}M)^{\otimes\ell}.$
\end{enumerate}

\begin{lemma}
\label{l.2.4} If $f$ is an $\ell$-times continuously differentiable function
in a neighborhood of $p\in M,$ $v\in T_{p}M,$ and $\gamma_{v}(t):=\exp_{p}(tv),$ then
\begin{equation}
\left.  \frac{d^{\ell}}{dt^{\ell}}\right\vert _{t=0}f\left(  \gamma
_{v}(t)\right)  =\nabla_{v^{\otimes\ell}}^{\ell}f,\text{ for all }v\in T_{p}M.
\label{equ.dl}%
\end{equation}
More generally, if $\ell,n\in\mathbb{N}_{0},$ $f$ is an $(\ell+n+1)$-times
continuously differentiable function in a neighborhood of $p\in M,$
$//_{t}(\gamma_{v}):T_{\gamma_{v}(0)}M\rightarrow T_{\gamma_{v}(t)}M$ is
parallel translation along $\gamma_{v},$ $W_{0}\in T_{p}M^{\otimes\ell},$ and
$W_{t}:=//_{t}(\gamma_{v})^{\otimes\ell}W_{0},$ then
\begin{equation}
\frac{d^{k}}{dt^{k}}\nabla_{W_{t}}^{\ell}f=\nabla_{\dot{\gamma}_{v}\left(
t\right)  ^{\otimes k}\otimes W_{t}}^{\ell+k}f~\forall~0\leq k\leq n+1.
\label{equ.dl2}%
\end{equation}

\end{lemma}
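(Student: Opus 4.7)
The first identity \eqref{equ.dl} is the special case $\ell=0$, $k=\ell$ of \eqref{equ.dl2} (using $\nabla^{0}f=f$ and $W_{0}\in T_pM^{\otimes 0}=\R$ taken to be $1$), so it suffices to prove \eqref{equ.dl2}. I will proceed by induction on $k$, with the base case $k=0$ being the tautology $\nabla^{\ell}_{W_t}f=\nabla^{\ell}_{W_t}f$.

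For the inductive step, assume \eqref{equ.dl2} holds at level $k$ for all $\ell$, and consider $\frac{d^{k+1}}{dt^{k+1}}\nabla^{\ell}_{W_t}f$. The strategy is to extend $\dot\gamma_v(t)$ and the $\ell$ factors of $W_t$ to vector fields $X_0,X_1,\dots,X_{k+\ell}$ in a neighborhood of $\gamma_v(t)$ so that, at the point $\gamma_v(t)$, their values are $\dot\gamma_v(t)$ (repeated $k$ times, plus one extra copy for the new differentiation) and the $\ell$ factors of $W_t$. The natural choice is to obtain the extensions by parallel-transporting along geodesics emanating from $\gamma_v(t)$; this is tensorial by Definition~\ref{def.dell}, so the particular extension is irrelevant. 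With such extensions, by construction
\[
\nabla_{X_0}X_j\big|_{\gamma_v(t)}=0\quad\text{for all }j\geq 1,
\]
because $X_0$ agrees with $\dot\gamma_v$ along $\gamma_v$, which is a geodesic (so $\nabla_{\dot\gamma_v}\dot\gamma_v=0$), and the $W_t$-factors are parallel along $\gamma_v$ (so $\nabla_{\dot\gamma_v}W_t=0$).

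By the inductive hypothesis applied to the scalar function $t\mapsto\nabla^{\ell}_{W_t}f=\langle\nabla^{\ell}f,X_1\otimes\cdots\otimes X_\ell\rangle|_{\gamma_v(t)}$, together with a single application of the definition of covariant derivatives,
\[
\frac{d}{dt}\Bigl[\nabla^{\ell+k}_{\dot\gamma_v(t)^{\otimes k}\otimes W_t}f\Bigr]
=X_0\bigl\langle\nabla^{\ell+k}f,X_0^{\otimes k}\otimes X_1\otimes\cdots\otimes X_\ell\bigr\rangle\Big|_{\gamma_v(t)}.
\]
Unwinding this using Definition~\ref{def.dell} one step produces $\nabla^{\ell+k+1}_{\dot\gamma_v(t)^{\otimes(k+1)}\otimes W_t}f$ \emph{plus} a sum of correction terms, each of the form $\langle\nabla^{\ell+k}f,\ldots\otimes\nabla_{X_0}X_j\otimes\ldots\rangle|_{\gamma_v(t)}$ with $j\geq 1$. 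All such corrections vanish by the extension property above, closing the induction.

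The only genuine technical point is to justify the existence of an extension of the $\ell+k+1$ vectors to smooth vector fields in a neighborhood of $\gamma_v(t)$ along which $\nabla_{X_0}X_j=0$ at $\gamma_v(t)$; this is standard (for instance via parallel transport along radial geodesics, or by noting that tensoriality allows us to modify any extension by adding terms vanishing at $\gamma_v(t)$ to kill the unwanted derivatives). Given this, the computation above is mechanical, and the hypothesis $f\in C^{\ell+n+1}$ is exactly what is needed so that all derivatives through order $\ell+k+1\leq\ell+n+1$ appearing in the induction are well-defined and continuous.
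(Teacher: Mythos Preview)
Your proposal is correct and follows essentially the same approach as the paper: induction on the order of differentiation, with the inductive step reducing to the observation that both $\dot\gamma_v(t)$ and the factors of $W_t$ are parallel along $\gamma_v$, so the correction terms coming from Definition~\ref{def.dell} vanish. The paper phrases the inductive step slightly more compactly as a ``product rule'' for the covariant derivative of the pairing $\langle\nabla^k f,\dot\gamma_v(t)^{\otimes k}\rangle$ along the curve (so no auxiliary vector-field extensions are introduced), and it proves \eqref{equ.dl} first and then remarks that \eqref{equ.dl2} follows by the same computation, whereas you go in the reverse direction; but these are cosmetic differences.
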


\begin{proof}
Let $\gamma_{v}\left(  t\right)  :=\exp_{p}\left(  tv\right)  $ so that
$\gamma_{v}\left(  t\right)  $ solves the geodesic differential equation,
$\nabla\dot{\gamma}_{v}\left(  t\right)  /dt=0$ with $\dot{\gamma}_{v}\left(
0\right)  =v.$ The proof is completed by showing (by induction) that
\begin{equation}
\frac{d^{k}}{dt^{k}}f\left(  \gamma_{v}\left(  t\right)  \right)
=\nabla_{\dot{\gamma}_{v}\left(  t\right)  ^{\otimes k}}^{k}f\text{ for }1\leq
k\leq\ell. \label{equ.dk}%
\end{equation}
The case $k=1$ amounts to the definition that $\nabla_{v}f=vf=df\left(
v\right)  $ for all $v\in TM.$ For the induction step we have by the product
rule;%
\begin{align*}
\frac{d^{k+1}}{dt^{k+1}}f\left(  \gamma_{v}\left(  t\right)  \right)   &
=\frac{d}{dt}\nabla_{\dot{\gamma}_{v}\left(  t\right)  ^{\otimes k}}%
^{k}f=\nabla_{\dot{\gamma}_{v}\left(  t\right)  ^{\otimes\left(  k+1\right)
}}^{k+1}f+\nabla_{\frac{\nabla}{dt}\left[  \dot{\gamma}_{v}\left(  t\right)
^{\otimes k}\right]  }^{k}f\\
&  =\nabla_{\dot{\gamma}_{v}\left(  t\right)  ^{\otimes\left(  k+1\right)  }%
}^{k+1}f,
\end{align*}
wherein the last equality we have again used the product rule to conclude that
$\frac{\nabla}{dt}\left[  \dot{\gamma}_{v}\left(  t\right)  ^{\otimes
k}\right]  =0.$ The result now follows by evaluating \eqref{equ.dk} at
$k=\ell$ and $t=0.$ The more general assertion in \eqref{equ.dl2} is
proved similarly. One only need observe that $\nabla W_{t}/dt=0$ and hence the
presence of $W_{t}$ in the expressions in no way changes the computations.
\end{proof}

\begin{definition}
[Symmetrizations]\label{def.sym}If $V$ is a real vector space and $\ell
\in\mathbb{N},$ we let $\operatorname{Sym}_{\ell}:V^{\otimes\ell}\rightarrow
V^{\otimes\ell}$ denote the symmetrization projection uniquely determined by
\[
\operatorname{Sym}_{\ell}\left(  v_{1}\otimes\dots\otimes v_{\ell}\right)
=\frac{1}{\ell!}\sum_{\sigma\in S_{\ell}}v_{\sigma(1)}\otimes\dots\otimes
v_{\sigma(\ell)}%
\]
where $S_{\ell}$ is the permutation group on $\left\{  1,2,\dots,\ell\right\}
.$ Often we will simplyt write $\operatorname{Sym}$ for $\operatorname{Sym}%
_{\ell}$ as it will typically be clear what $\ell$ is from the argument put
into the symmetrization function.
\end{definition}

As usual we let $V^{\ast}$ denote the dual space to a vector space $V$ and let
$\left\langle \cdot,\cdot\right\rangle $ denote the pairing between a vector
space and its dual. We will often identify $\left(  V^{\ast}\right)
^{\otimes\ell}$ with $\left[  V^{\otimes\ell}\right]  ^{\ast}$ where the
identification is uniquely determined by%
\[
\left\langle \varepsilon^{1}\otimes\dots\dots\otimes\varepsilon^{\ell}%
,v_{1}\otimes\dots\otimes v_{\ell}\right\rangle =\varepsilon^{1}\left(
v_{1}\right)  \cdot\dots\cdot\varepsilon^{\ell}\left(  v_{\ell}\right)  \text{
}\forall~\left\{  \varepsilon^{i}\right\}  _{i=1}^{\ell}\subset V^{\ast}\text{
and }\left\{  v_{i}\right\}  _{i=1}^{\ell}\subset V.
\]
We also identify $\left(  V^{\ast}\right)  ^{\otimes\ell}$ with the space of
multi-linear maps from $V^{\ell}\rightarrow\mathbb{R}$ using,%
\[
T\left(  v_{1},\dots,v_{\ell}\right)  =\left\langle T,v_{1}\otimes\dots\otimes
v_{\ell}\right\rangle ~\forall~T\in\left(  V^{\ast}\right)  ^{\otimes\ell
}\text{ and }\left\{  v_{i}\right\}  _{i=1}^{\ell}\subset V.
\]
Under these identification we have%
\[
\left\langle \Sym[T],W\right\rangle =\left\langle T,\Sym[W]\right\rangle
\text{ }\forall~T\in\left(  V^{\ast}\right)  ^{\otimes\ell}\text{ and }W\in
V^{\otimes\ell}%
\]
and
\[
\Sym[T]\left(  v_{1},\dots,v_{\ell}\right)  =\left\langle T,\operatorname{Sym}%
\left(  v_{1}\otimes\dots\otimes v_{\ell}\right)  \right\rangle ~\forall
~T\in\left(  V^{\ast}\right)  ^{\otimes\ell}\text{ and }\left\{
v_{i}\right\}  _{i=1}^{\ell}\subset V.
\]

\begin{remark}
\label{r.2.2}If $T\in\left[  V^{\ast}\right]  ^{\otimes\ell}$ and $v_{1}%
,\dots,v_{\ell}\in V,$ then
\[
\frac{\partial}{\partial s_{1}}\dots\frac{\partial}{\partial s_{\ell}}%
|_{s_{1}=\dots=s_{\ell}=0}T\left(  \left(  s_{1}v_{1}+\dots+s_{\ell}v_{\ell
}\right)  ^{\otimes\ell}\right)  =\sum_{\sigma\in S_{\ell}}T\left(
v_{\sigma(1)},\dots,v_{\sigma(\ell)}\right)
\]
and therefore,%
\[
\Sym[T]\left(  v_{1},\dots,v_{\ell}\right)  :=\frac{1}{\ell!}
\frac{\partial}{\partial s_{1}}\dots\frac{\partial}{\partial s_{\ell}}%
|_{s_{1}=\dots=s_{\ell}=0}T\left(  \left(  s_{1}v_{1}+\dots+s_{\ell}v_{\ell
}\right)  ^{\otimes\ell}\right)  .
\]
This formula shows that the symmetric part $\Sym\left[  T\right]  $ of $T$ is
completely determined by the knowledge of $T\left(  v,v,\dots,v\right)  $ for
all $v\in V.$
\end{remark}

\begin{definition}
\label{n.2.1}Let $\Sigma^{\ell}T_{p}^{\ast}M$ denote the symmetric tensors in
$\left[  T_{p}^{\ast}M\right]  ^{\otimes\ell}$ and for $T\in\left[
T_{p}^{\ast}M\right]  ^{\otimes\ell},$ let $\Sym[T]\in\Sigma^{\ell}T_{p}%
^{\ast}M$ denote the symmetrization of $T$ as above.
\end{definition}

\begin{example}
\label{ex.smooth}
If $U$ is an open subset of $M$ and
$f$ is $\ell$-times continuously differentiable on $U$,
then $\operatorname{Sym}\left[  \nabla^{\ell}f\right]  $ defines
a local section (over $U)$ of $\Sigma^{\ell}T^{\ast}M.$ Moreover since
$v^{\otimes\ell}$ is symmetric for all $v\in T_{p}M$ we may write
\eqref{equ.dl} as
\begin{equation}
\left.  \frac{d^{\ell}}{dt^{\ell}}\right\vert _{t=0}f\left(  \gamma
_{v}(t)\right)  =\left\langle \operatorname{Sym}\left[  \nabla_{p}^{\ell
}f\right]  ,v^{\otimes\ell}\right\rangle ,\text{ for all }v\in T_{p}M.
\label{equ.dla}%
\end{equation}

\end{example}

\begin{theorem}
[Taylor's Theorem on $M$]\label{thm:taylor}Let $\ell,n\in\mathbb{N}_{0},$ $p\in
M,$ $v\in T_{p}M,$ $\gamma_{v}(t):=\exp_{p}(tv),$ $//_{t}(\gamma
_{v}):T_{\gamma_{v}(0)}M\rightarrow T_{\gamma_{v}(t)}M,$ $W_{0}\in
T_{p}M^{\otimes\ell},$ and $W_{t}:=//_{t}(\gamma_{v})^{\otimes\ell}W_{0}.$ If
$f$ is $(\ell+n+1)$-times continuously differentiable on $U$,
where $U$ is an open set containing
$\gamma_{v}\left(  \left[  0,1\right]  \right)  ,$ then
\begin{equation}
\nabla_{W_{1}}^{\ell}f=\sum_{k=0}^{n}\frac{1}{k!}\nabla_{v^{\otimes k}\otimes
W_{0}}^{\ell+k}f+\frac{1}{n!}\int_{0}^{1}\left[  \nabla_{\dot{\gamma}%
_{v}\left(  t\right)  ^{\otimes\left(  n+1\right)  }\otimes W_{t}}^{\ell
+n+1}f\right]  \cdot\left(  1-t\right)  ^{n}dt. \label{equ.tay0}%
\end{equation}
When $\ell=0$ the previous equation is to be interpreted as (also see
\cite[Theorem 6.1]{bib:driverSemko})
\begin{align}
f\left(  \exp_p\left(  v\right)  \right)   &  =\sum_{k=0}^{n}\frac{1}{k!}%
\nabla_{v^{\otimes k}}^{k}f+\frac{1}{n!}\int_{0}^{1}\left[  \nabla
_{\dot{\gamma}_{v}\left(  t\right)  ^{\otimes\left(  n+1\right)  }}%
^{n+1}f\right]  \cdot\left(  1-t\right)  ^{n}dt\label{equ.tay3}\\
&  =\sum_{k=0}^{n}\frac{1}{k!}\left\langle \operatorname{Sym}\left[
\nabla_{p}^{k}f\right]  ,v^{\otimes k}\right\rangle +\frac{1}{n!}\int_{0}%
^{1}\left\langle \operatorname{Sym}\left[  \nabla_{\gamma_{v}\left(  t\right)
}^{n+1}f\right]  ,\dot{\gamma}_{v}\left(  t\right)  ^{\otimes\left(
n+1\right)  }\right\rangle \left(  1-t\right)  ^{n}dt, \label{equ.tay1b}%
\end{align}
where%
\[
\frac{1}{0!}\left\langle \Sym\nabla^{0}|_{p}f,v^{\otimes0}\right\rangle
:=f\left(  p\right)  .
\]

\end{theorem}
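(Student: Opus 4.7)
The plan is to reduce the statement to the classical one-variable Taylor theorem with integral remainder, via the auxiliary scalar function
\[
g(t) := \nabla^{\ell}_{W_t} f = \bigl\langle \nabla^{\ell}|_{\gamma_v(t)} f, W_t \bigr\rangle, \qquad t \in [0,1],
\]
which is well-defined since $\gamma_v([0,1]) \subset U$ and $f$ is $(\ell+n+1)$-times continuously differentiable on $U$. The key observation is that Lemma \ref{l.2.4}, equation \eqref{equ.dl2}, computes exactly the derivatives of $g$: namely, for each $0 \le k \le n+1$,
\[
g^{(k)}(t) = \nabla^{\ell+k}_{\dot{\gamma}_v(t)^{\otimes k} \otimes W_t} f,
\]
with the convention that for $k=0$ this reads $\nabla^{\ell}_{W_t} f$.

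Given this, I would simply invoke the standard one-dimensional Taylor formula with integral remainder for $g \in C^{n+1}([0,1])$,
\[
g(1) = \sum_{k=0}^{n} \frac{1}{k!} g^{(k)}(0) + \frac{1}{n!} \int_0^1 g^{(n+1)}(t)\,(1-t)^n\,dt,
\]
and substitute. Since $\dot{\gamma}_v(0) = v$ and $W_0$ is the prescribed tensor, we get $g^{(k)}(0) = \nabla^{\ell+k}_{v^{\otimes k} \otimes W_0} f$, while the integrand in the remainder is $\nabla^{\ell+n+1}_{\dot{\gamma}_v(t)^{\otimes(n+1)} \otimes W_t} f$. This yields \eqref{equ.tay0} verbatim.

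For the $\ell=0$ specialization giving \eqref{equ.tay3}, I would take $W_0$ to be the scalar $1$ (so $W_t \equiv 1$), which makes $\nabla^{\ell}_{W_t} f$ reduce to $f(\gamma_v(t))$ and $g(1) = f(\exp_p(v))$. To pass to the symmetrized form \eqref{equ.tay1b}, I would use the identification in Remark \ref{r.2.2}: since $v^{\otimes k}$ and $\dot{\gamma}_v(t)^{\otimes(n+1)}$ are symmetric tensors, pairing them with any tensor $T$ equals pairing them with $\operatorname{Sym}[T]$, so $\nabla^{k}_{v^{\otimes k}} f = \langle \operatorname{Sym}[\nabla^k|_p f], v^{\otimes k} \rangle$ and likewise for the remainder term.

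The only non-routine input is the derivative formula \eqref{equ.dl2}, which has already been established in Lemma \ref{l.2.4} using the fact that $\nabla \dot{\gamma}_v / dt = 0$ and $\nabla W_t / dt = 0$ (parallel transport). So there is no real obstacle; the proof is essentially a one-line reduction to classical Taylor on $[0,1]$ combined with the symmetry observation for the $\ell=0$ rewriting.
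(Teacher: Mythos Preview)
Your proposal is correct and follows essentially the same approach as the paper: define $g(t):=\nabla^{\ell}_{W_t}f$, apply the one-variable Taylor formula with integral remainder to $g$, and use Lemma~\ref{l.2.4} (specifically \eqref{equ.dl2}) to identify $g^{(k)}(t)$. Your treatment of the $\ell=0$ case and the passage to the symmetrized form \eqref{equ.tay1b} via the symmetry of $v^{\otimes k}$ is slightly more explicit than the paper's, but the argument is the same.
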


\begin{proof}
Let $g\left(  t\right)  :=\nabla_{W_{t}}^{\ell}f$ and recall that the standard
Taylor's theorem with remainder states;%
\[
g\left(  1\right)  =\sum_{k=0}^{n}\frac{1}{n!}g^{\left(  k\right)  }\left(
0\right)  +\frac{1}{n!}\int_{0}^{1}g^{\left(  n+1\right)  }\left(  t\right)
\left(  1-t\right)  ^{n}dt.
\]
The results now follow by using Lemma \ref{l.2.4} in order to compute the
$g^{\left(  k\right)  }\left(  t\right)  $ for $1\leq k\leq n+1.$
\end{proof}

Theorem \ref{thm:taylor} has the following immediate corollaries.

\begin{corollary}
\label{cor.tayc}Moreover,%
\begin{equation}
f\left(  \exp_p\left(  v\right)  \right)  =\sum_{k=0}^{n+1}\frac{1}%
{k!}\left\langle \operatorname{Sym}\left[  \nabla_{p}^{k}f\right]  ,v^{\otimes
k}\right\rangle +o_{f}\left(  \left\vert v\right\vert ^{n+1}\right)
\label{equ.tay1c}%
\end{equation}
where $o_{f}\left(  \left\vert v\right\vert ^{n+1}\right)  \leq\varepsilon
\left(  v\right)  \left\vert v\right\vert ^{n+1}$ and $\varepsilon\left(
v\right)  \rightarrow0$ as $\left\vert v\right\vert \rightarrow0.$
\end{corollary}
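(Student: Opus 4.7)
The plan is to apply Theorem \ref{thm:taylor} in the form \eqref{equ.tay1b} with the given integer $n$ (so that only $(n+1)$-fold continuous differentiability of $f$ near $p$ is needed) and then to extract the ``$k=n+1$ term'' from the integral remainder, proving that what is left over is $o_f(|v|^{n+1})$.

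Concretely, from \eqref{equ.tay1b} I write
\begin{align*}
  f(\exp_p(v))
  &= \sum_{k=0}^{n}\frac{1}{k!}\langle\Sym[\nabla_p^k f], v^{\otimes k}\rangle
  + \frac{1}{n!}\int_0^1 \langle\Sym[\nabla^{n+1}_{\gamma_v(t)} f], \dot\gamma_v(t)^{\otimes(n+1)}\rangle (1-t)^n dt.
\end{align*}
Using $\dot\gamma_v(t) = //_t(\gamma_v) v$ and the elementary identity $\frac{1}{n!}\int_0^1 (1-t)^n dt = \frac{1}{(n+1)!}$, I add and subtract the value of the integrand at $t=0$ (where parallel translation is the identity and $\gamma_v(0)=p$) to obtain
\begin{align*}
  f(\exp_p(v))
  = \sum_{k=0}^{n+1}\frac{1}{k!}\langle\Sym[\nabla_p^k f], v^{\otimes k}\rangle + E(v),
\end{align*}
with
\begin{align*}
  E(v) := \frac{1}{n!}\int_0^1 \Bigl[ \langle\Sym[\nabla^{n+1}_{\gamma_v(t)} f], //_t(\gamma_v)^{\otimes(n+1)} v^{\otimes(n+1)}\rangle - \langle\Sym[\nabla^{n+1}_p f], v^{\otimes(n+1)}\rangle \Bigr](1-t)^n dt.
\end{align*}

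It then remains to check $|E(v)| \le \varepsilon(v) |v|^{n+1}$ with $\varepsilon(v)\to 0$ as $|v|\to 0$. Pulling out the factor $|v|^{n+1}$ and writing $\hat v := v/|v|$, it suffices to bound the supremum over $\hat v$ in the unit sphere of $T_pM$ and over $t\in[0,1]$ of the tensor difference
\begin{align*}
  \Sym[\nabla^{n+1}_{\gamma_v(t)} f]\circ //_t(\gamma_v)^{\otimes(n+1)} - \Sym[\nabla^{n+1}_p f],
\end{align*}
viewed as multilinear forms on $T_pM$. Since $\nabla^{n+1}f$ is continuous on a neighborhood of $p$ and parallel translation depends smoothly on its data, uniformly for $|v|\le r_0$ and $t\in[0,1]$ the point $\gamma_v(t)$ and the isometry $//_t(\gamma_v)$ both tend to $p$ and to $\mathrm{Id}_{T_pM}$, respectively, as $|v|\to 0$. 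Compactness of $[0,1]$ and of the unit sphere in $T_pM$ then gives the desired uniform convergence, yielding $\varepsilon(v)\to 0$.

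The only mild obstacle is the bookkeeping of parallel transport, since the covariant derivatives live in different fibers; modulo this, the argument is a straightforward continuity extraction from Theorem \ref{thm:taylor}.
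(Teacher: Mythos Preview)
Your proof is correct and follows essentially the same route as the paper: both start from \eqref{equ.tay1b}, split off the $k=n+1$ term from the integral remainder using $\frac{1}{n!}\int_0^1(1-t)^n\,dt=\frac{1}{(n+1)!}$, and then show the leftover integral is $o(|v|^{n+1})$ via continuity of $\Sym[\nabla^{n+1}f]$ together with the fact that $//_t(\gamma_v)\to \mathrm{Id}$ as $|v|\to 0$. The paper packages the difference as a tensor $g(v,t)$ and sets $\varepsilon(v)=\frac{1}{n!}\int_0^1\|g(v,t)\|(1-t)^n\,dt$, which is exactly your bound after pulling out $|v|^{n+1}$.
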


\begin{proof}
According to \eqref{equ.tay1b}, \eqref{equ.tay1c} holds with%
\[
o_{f}\left(  \left\vert v\right\vert ^{n+1}\right)  =\frac{1}{n!}\int_{0}%
^{1}\left\langle g\left(  v,t\right)  ,v^{\otimes\left(  n+1\right)
}\right\rangle \left(  1-t\right)  ^{n}dt
\]
where
\[
g\left(  v,t\right)  :=\operatorname{Sym}\left[  \nabla_{\gamma_{v}\left(
t\right)  }^{n+1}f\right]  //_{t}\left(  \gamma_{v}\right)  ^{\otimes\left(
n+1\right)  }-\operatorname{Sym}\left[  \nabla_{\gamma_{v}\left(  0\right)
}^{n+1}f\right]  \rightarrow0\text{ as }v\rightarrow0.
\]
Therefore $o_{f}\left(  \left\vert v\right\vert ^{n+1}\right)  \leq
\varepsilon\left(  v\right)  \left\vert v\right\vert ^{n+1}$ where
\[
\varepsilon\left(  v\right)  =\frac{1}{n!}\int_{0}^{1}\left\Vert g\left(
v,t\right)  \right\Vert \left(  1-t\right)  ^{n}dt\rightarrow0\text{ as
}\left\vert v\right\vert \rightarrow0.
\]

\end{proof}

\begin{remark}
\label{rem.est}Since parallel translation is isometric it follows (continuing
the notation in Theorem \ref{thm:taylor}) that%
\[
\left\vert \nabla_{\dot{\gamma}_{v}\left(  t\right)  ^{\otimes\left(
n+1\right)  }\otimes W_{t}}^{\ell+n+1}f\right\vert \leq\left\Vert \nabla
^{\ell+n+1}f\right\Vert _{\left[  T_{\gamma_{v}\left(  t\right)  }^{\ast
}M\right]  ^{\otimes\left(  \ell+n+1\right)  }}\left\Vert v\right\Vert
^{n+1}\left\Vert W_{0}\right\Vert
\]
and hence%
\begin{align}
&  \left\vert \nabla_{W_{1}}^{\ell}f-\sum_{k=0}^{n}\frac{1}{k!}\nabla
_{v^{\otimes k}\otimes W_{0}}^{\ell+k}f\right\vert \nonumber\\
&  \qquad\leq\frac{1}{\left(  n+1\right)  !}\left\Vert W_{0}\right\Vert
\cdot\max_{0\leq t\leq1}\left\Vert \nabla^{\ell+n+1}f\right\Vert _{\left[
T_{\gamma_{_{v}}\left(  t\right)  }^{\ast}M\right]  ^{\otimes\left(
\ell+n+1\right)  }}\cdot d\left(  p,\exp_p\left(  v\right)  \right)  ^{n+1}.
\label{equ.tayest}%
\end{align}

\end{remark}

Since $M$ is a compact Riemannian manifold it is necessarily complete and
therefore, by the Hopf--Rinow theorem, for each $q\in M$ we may find at least
one $v\in T_{p}M$ such that $q=\exp_p\left(  v\right)  $ and $d\left(
q,p\right)  =\left\vert v\right\vert .$ Using these remarks we can reformulate
\eqref{equ.tay3} as follows.

\begin{corollary}
\label{cor.tay}
If
$f$ is $(n+1)$-times continuously differentiable on $M$,
$p,q\in M,$ and $v\in
T_{p}M$ is chosen so that $q=\exp_p\left(  v\right)  $ and $d\left(  q,p\right)
=\left\vert v\right\vert ,$ then
\begin{equation}
f\left(  q\right)  =\sum_{k=0}^{n}\frac{1}{k!}\left\langle \Sym\nabla^{k}%
|_{p}f,v^{\otimes k}\right\rangle +O_{f}\left(  d\left(  p,q\right)
^{n+1}\right)  \label{equ.tay}%
\end{equation}
where
\[
\left\vert O_{f}\left(  d\left(  p,q\right)  ^{n+1}\right)  \right\vert
\leq\frac{1}{\left(  n+1\right)  !}\max_{m\in M}\left\Vert \Sym\left[
\nabla^{n+1}|_{m}f\right]  \right\Vert d\left(  p,q\right)  ^{n+1}.
\]
Furthermore if
$f$ is $n$-times continuously differentiable on $M$
then
\[
f\left(  q\right)  =\sum_{k=0}^{n}\frac{1}{k!}\left\langle \Sym\nabla^{k}%
|_{p}f,v^{\otimes k}\right\rangle +o_{f}\left(  d\left(  p,q\right)
^{n}\right).
\]

\end{corollary}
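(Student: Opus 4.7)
The plan is to deduce this corollary directly from Theorem \ref{thm:taylor} (specifically the case $\ell = 0$, equation \eqref{equ.tay1b}) and Corollary \ref{cor.tayc}, using compactness of $M$ to convert pointwise bounds into uniform ones. The only genuinely new ingredient needed is the existence of the minimizing $v \in T_p M$, which requires completeness.

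First, since $M$ is a closed Riemannian manifold it is complete, so by the Hopf--Rinow theorem we may choose, for any $p, q \in M$, a vector $v \in T_p M$ with $\exp_p(v) = q$ and $|v| = d(p,q)$. Set $\gamma_v(t) := \exp_p(tv)$; this is a minimizing geodesic joining $p$ to $q$, and $|\dot\gamma_v(t)| = |v| = d(p,q)$ for all $t \in [0,1]$ since geodesics have constant speed.

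Second, assuming $f$ is $(n+1)$-times continuously differentiable on $M$, apply the case $\ell=0$ of Theorem \ref{thm:taylor}, i.e.\ \eqref{equ.tay1b}, to obtain
\begin{align*}
  f(q) = \sum_{k=0}^n \frac{1}{k!} \langle \Sym[\nabla^k|_p f], v^{\otimes k} \rangle + \frac{1}{n!} \int_0^1 \langle \Sym[\nabla^{n+1}|_{\gamma_v(t)} f], \dot\gamma_v(t)^{\otimes (n+1)} \rangle (1-t)^n dt.
\end{align*}
Bounding the remainder via the tensor pairing inequality $|\langle T, W^{\otimes(n+1)} \rangle| \le \|T\| \cdot |W|^{n+1}$ (as used in Remark \ref{rem.est}), using $|\dot\gamma_v(t)| = d(p,q)$ and $\int_0^1 (1-t)^n dt = 1/(n+1)$, we get
\begin{align*}
  \left| \frac{1}{n!} \int_0^1 \langle \Sym[\nabla^{n+1}|_{\gamma_v(t)} f], \dot\gamma_v(t)^{\otimes (n+1)} \rangle (1-t)^n dt \right|
  \le \frac{1}{(n+1)!} \sup_{0\le t \le 1} \|\Sym[\nabla^{n+1}|_{\gamma_v(t)} f]\| \cdot d(p,q)^{n+1}.
\end{align*}
Since $f \in C^{n+1}(M)$ and $M$ is compact, $m \mapsto \|\Sym[\nabla^{n+1}|_m f]\|$ is continuous and hence attains its maximum on $M$, giving the stated uniform bound.

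Third, for the weaker assumption that $f$ is only $n$-times continuously differentiable, apply Corollary \ref{cor.tayc} with $n$ replaced by $n-1$ (the hypothesis there demands $(\ell + n + 1) = n$-fold differentiability when $\ell = 0$). This yields
\begin{align*}
  f(\exp_p(v)) = \sum_{k=0}^n \frac{1}{k!} \langle \Sym[\nabla^k|_p f], v^{\otimes k} \rangle + o_f(|v|^n),
\end{align*}
and substituting $|v| = d(p,q)$ gives the second assertion. There is essentially no obstacle here: the only mildly delicate point is ensuring that the $o_f(\cdot)$ term from Corollary \ref{cor.tayc} is uniform in $p$, which again follows from compactness of $M$ together with continuity of $\Sym[\nabla^n f]$.
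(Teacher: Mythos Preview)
Your proposal is correct and follows essentially the same route as the paper: the paper does not give a separate proof but presents the corollary as a direct reformulation of \eqref{equ.tay3} (and implicitly Remark \ref{rem.est} and Corollary \ref{cor.tayc}), after invoking Hopf--Rinow to guarantee the existence of a minimizing $v$. Your write-up simply makes these steps explicit, including the compactness argument for the uniform bound and for the uniformity of the $o_f$ term.
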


\begin{definition}
[Taylor approximations]\label{def.tay}Suppose that $U\subset M$ is an open
subset of $M,$ $p\in U,$
$f$ a $n$-times continuously differentiable function on $M$
and $\varepsilon>0$
is sufficiently small so that $B_{p}\left(  \varepsilon\right)  \subset U$ and
$\varepsilon$ is smaller than the injectivity radius of $M.$ We then define,
$\operatorname*{Tay}\nolimits_{p}^{n}f\in C^{\infty}\left(  B_{p}\left(
\varepsilon\right)  \right)  $ by%
\begin{equation}
\left(  \operatorname*{Tay}\nolimits_{p}^{n}f\right)  \left(  q\right)
:=\sum_{k=0}^{n}\frac{1}{k!}\left\langle \Sym\nabla^{k}|_{p}f,\left[  \exp
_{p}^{-1}\left(  q\right)  \right]  ^{\otimes k}\right\rangle
.\label{equ.taypn}%
\end{equation}

\end{definition}

With this notation we have the following local version of Corollary
\ref{cor.tay}.

\begin{corollary}
\label{cor.tay2}If
$f$ is a $n$-times continuously differentiable function on $M$,
$p,q\in M$ with $d\left(
p,q\right)  $ smaller than the injectivity radius of $M,$ then
\begin{equation}
f\left(  q\right)  =\left(  \operatorname*{Tay}\nolimits_{p}^{n}f\right)
\left(  q\right)  +o_{f}\left(  d\left(  p,q\right)  ^{n}\right)  .
\label{equ.fq}%
\end{equation}

\end{corollary}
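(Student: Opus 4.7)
The statement is essentially a local reformulation of Corollary \ref{cor.tay}, so the plan is to reduce it to that result by making the identification between the tangent vector $v$ appearing there and the explicit inverse exponential $\exp_p^{-1}(q)$ appearing in Definition \ref{def.tay}.

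First I would observe that, since $d(p,q)$ is strictly less than the injectivity radius of $M$, the exponential map $\exp_p$ restricts to a diffeomorphism of some ball $B_{\delta}(0) \subset T_p M$ onto an open neighborhood of $p$ in $M$, and the closed ball of radius $d(p,q)$ is contained in this neighborhood. In particular, $v := \exp_p^{-1}(q) \in T_p M$ is the unique tangent vector at $p$ satisfying both $q = \exp_p(v)$ and $|v| = d(p,q)$ (the latter because the radial geodesic $t \mapsto \exp_p(tv)$ realizes the distance between $p$ and $q$ inside the injectivity radius). This is precisely the tangent vector $v$ whose existence is asserted, globally via Hopf--Rinow, in the hypothesis of Corollary \ref{cor.tay}.

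Next I would simply invoke the second (little-$o$) assertion of Corollary \ref{cor.tay} with this choice of $v$, which gives
\begin{equation*}
f(q) = \sum_{k=0}^{n} \frac{1}{k!} \left\langle \Sym \nabla^k|_p f, v^{\otimes k} \right\rangle + o_f\bigl(d(p,q)^n\bigr).
\end{equation*}
Substituting $v = \exp_p^{-1}(q)$ and comparing with \eqref{equ.taypn} in Definition \ref{def.tay}, the finite sum on the right-hand side is exactly $(\operatorname{Tay}^n_p f)(q)$, so \eqref{equ.fq} follows.

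There is no real obstacle here: once one notes that the freedom to choose $v$ in Corollary \ref{cor.tay} is actually removed inside the injectivity radius (the vector $v = \exp_p^{-1}(q)$ is the unique admissible choice), the proof is a direct substitution. The only small subtlety worth spelling out is that the little-$o$ version of Corollary \ref{cor.tay}, which only requires $n$-times continuous differentiability (rather than $n+1$-times), is really what is needed, and it in turn comes from Corollary \ref{cor.tayc} applied with the exponent $n$ in place of $n+1$; but at that point the argument is entirely local, so no completeness or global geometry is used beyond the injectivity radius assumption.
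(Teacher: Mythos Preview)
Your proposal is correct and matches the paper's approach: the paper presents Corollary~\ref{cor.tay2} as an immediate ``local version'' of Corollary~\ref{cor.tay} with no separate proof, and your argument---identifying $v=\exp_p^{-1}(q)$ as the unique admissible tangent vector inside the injectivity radius and then invoking the little-$o$ assertion of Corollary~\ref{cor.tay}---is exactly the intended substitution.
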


\begin{remark}
\label{rem.flat}In the case $M=\mathbb{R}^{d}$ and $f$ is a polyonmial of
degree at most $n,$ it follows by Taylor's theorem that $f=\operatorname*{Tay}%
\nolimits_{p}^{n}f$ for all $p\in\mathbb{R}^{d}.$ So in the flat case the
error term in \eqref{equ.fq} is no longer present.
\end{remark}

\begin{lemma}
\label{lem.symm}If
$f$ is a $n$-times continuously differentiable function on $M$
and $f\left(  q\right)
=o\left(  d\left(  p,q\right)  ^{n}\right)  ,$ then $\left(  Vf\right)
\left(  p\right)  =0$ for any $n^{\text{th}}$ - order differential operator
$V$
and in particular, $\nabla^{k}|_{p}f=0$ for
all $0\leq k\leq n.$
\end{lemma}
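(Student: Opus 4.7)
Both the hypothesis $f(q) = o(d(p,q)^n)$ and the conclusion are local near $p$, so the idea is to reduce to an elementary statement in $\R^d$ via normal coordinates. Let $\exp_p : B_\delta(0) \subset T_pM \to M$ be the exponential map and identify $T_pM$ with $\R^d$ via an orthonormal basis. Set $g := f \circ \exp_p$, which is $C^n$ on a neighborhood of $0 \in \R^d$ because $f$ is $C^n$ and $\exp_p$ is smooth. In these normal coordinates $d(p, \exp_p(v)) = |v|$, so the hypothesis translates to $g(v) = o(|v|^n)$ as $|v| \to 0$.

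Next, apply the standard multivariable Taylor theorem with Peano remainder at $0$: since $g \in C^n$,
\begin{align*}
g(v) = \sum_{|\alpha| \le n} \frac{\partial^\alpha g(0)}{\alpha!} v^\alpha + o(|v|^n).
\end{align*}
Combined with the hypothesis, the Taylor polynomial itself satisfies $P(v) := \sum_{|\alpha| \le n} \tfrac{\partial^\alpha g(0)}{\alpha!} v^\alpha = o(|v|^n)$. Restricting to any ray $v = t w$ for fixed $w \in \R^d$, $P(tw)$ is a one-variable polynomial in $t$ of degree at most $n$ that is $o(t^n)$, which forces all its coefficients to vanish. Letting $w$ vary and comparing coefficients gives $\partial^\alpha g(0) = 0$ for every multi-index with $|\alpha| \le n$.

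Now any $n$-th order differential operator $V$ on $M$, expressed in the chart $\exp_p^{-1}$, is of the form $V = \sum_{|\alpha| \le n} a_\alpha(v)\, \partial^\alpha$ with smooth coefficients $a_\alpha$. Hence
\begin{align*}
(Vf)(p) = (Vg)(0) = \sum_{|\alpha| \le n} a_\alpha(0)\, \partial^\alpha g(0) = 0.
\end{align*}
For the particular claim about $\nabla^k|_p f$ with $0 \le k \le n$, observe that by Definition \ref{def.dell} each pairing $\langle \nabla^k|_p f, X_1 \otimes \dots \otimes X_k \rangle$ is given by iterated vector-field applications combined with Christoffel corrections, hence is a $k$-th order differential operator (depending on extensions of the $X_i$) applied to $f$ at $p$. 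By the preceding, it vanishes.

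\textbf{Main obstacle.} There is essentially no substantive obstacle: this is a coordinate transfer of the elementary fact that a $C^n$ function vanishing to order $n$ at a point has all partial derivatives of order $\le n$ equal to zero there. The only care needed is the availability of the Peano form of Taylor's theorem (which is exactly why the $C^n$ regularity hypothesis is used) and the clean extraction of "polynomial $= o(|v|^n) \Rightarrow$ coefficients vanish"; both are standard.
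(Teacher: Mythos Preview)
Your proof is correct and follows essentially the same approach as the paper: pass to local coordinates centered at $p$, transfer the hypothesis to $o(|v|^n)$ in $\R^d$, deduce that all partial derivatives of order $\le n$ vanish at the origin (you via Taylor--Peano plus a ray argument, the paper by differentiating $t\mapsto F(tx)$ directly), and then write any $n$-th order operator as a linear combination of coordinate partials. The only cosmetic difference is that you use normal coordinates (so $d(p,\exp_p v)=|v|$ exactly), whereas the paper uses an arbitrary chart and relies on comparability of chart-distance with $d$.
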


\begin{proof}
Let $\left(  \Psi,U\right)  $ be a chart on with $p\in U$ and $\Psi\left(
p\right)  =0$ and define $F:=f\circ\Psi^{-1}\in C^{n}\left(  \tilde{U}%
:=\Psi\left(  U\right)  \right)  .$ Then the give assumption implies $F\left(
x\right)  =o\left(  \left\vert x\right\vert ^{n}\right)  $ and therefore for
any $x\in\mathbb{R}^{d}$ and $t\in\mathbb{R}$ small we have $F\left(
tx\right)  =o\left(  t^{n}\right)  $ from which it easily follows that
\[
0=\frac{d^{k}}{dt^{k}}F\left(  tx\right)  |_{t=0}=\left\langle \left(
D^{k}F\right)  \left(  0\right)  ,x^{\otimes k}\right\rangle \text{ for all
}0\leq k\leq n.
\]
As $\left(  D^{k}F\right)  \left(  0\right)  $ is symmetric and $x\in
\mathbb{R}^{d}$ was arbitrary we may conclude that $\left(  D^{k}F\right)
\left(  0\right)  =\mathbf{0}\in\left(  \mathbb{R}^{d}\right)  ^{\ast\otimes
k}$ for $0\leq k\leq n.$ As any $n^{\text{th}}$ - order differential operator
$U$ on $C^{n}\left(  M\right)  $ may be written locally as%
\[
Vf=\sum_{k=0}^{n}\left\langle \left(  D^{k}F\right)  \left(  \Psi\right)
,W_{k}\right\rangle
\]
for some smooth functions, $W_{k}:U\rightarrow\left(  \mathbb{R}^{d}\right)
^{\otimes k}$ for each $0\leq k\leq n,$ it follows that%
\[
\left(  Vf\right)  \left(  p\right)  =\sum_{k=0}^{n}\left\langle \left(
D^{k}F\right)  \left(  \Psi\left(  p\right)  \right)  ,W_{k}\left(  p\right)
\right\rangle =\sum_{k=0}^{n}\left\langle \left(  D^{k}F\right)  \left(
0\right)  ,W_{k}\left(  p\right)  \right\rangle =0.
\]

\end{proof}

\begin{corollary}
\label{cor.derivatives}If
$f$ a $n$-times continuously differentiable function on $M$
and $V$ is an
$n^{\text{th}}$ - order differential operator, 
then
\[
\left(  Vf\right)  \left(  p\right)  =\left[  V\left(  \operatorname*{Tay}%
\nolimits_{p}^{n}f\right)  \right]  \left(  p\right)
\]
and in particular,
\[
\nabla^{n}|_{p}f=\left[  \nabla^{n}|_{p}\left(  \operatorname*{Tay}%
\nolimits_{p}^{n}f\right)  \right]  \left(  p\right)
\]
from which it follows that $\nabla^{n}|_{p}f$ is a linear combination of
$\left\{  \Sym\nabla^{k}|_{p}f\right\}  _{k=0}^{n}.$
\end{corollary}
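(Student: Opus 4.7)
The plan is to combine Corollary \ref{cor.tay2} with Lemma \ref{lem.symm} in a direct way. Set $g := f - \operatorname{Tay}_p^n f$, which is well-defined and $n$-times continuously differentiable on a small neighborhood of $p$ (inside the injectivity ball). By Corollary \ref{cor.tay2}, $g(q) = o_f(d(p,q)^n)$. Then Lemma \ref{lem.symm} applies to $g$ and gives $(Vg)(p) = 0$ for every $n^{\text{th}}$-order differential operator $V$, which immediately yields $(Vf)(p) = [V(\operatorname{Tay}_p^n f)](p)$, the first identity.

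For the second identity, I would observe that in local coordinates each component of $\nabla^n|_p$ acts on functions as an $n^{\text{th}}$-order differential operator (with coefficients involving the Christoffel symbols, as in Definition \ref{def.dell}). Applying the first part component-wise then gives $\nabla^n|_p f = \nabla^n|_p(\operatorname{Tay}_p^n f)$.

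Finally, to show that $\nabla^n|_p f$ is a linear combination of $\{\Sym \nabla^k|_p f\}_{k=0}^n$, I would just unfold the definition
\begin{align*}
  (\operatorname{Tay}_p^n f)(q) = \sum_{k=0}^n \frac{1}{k!} \langle \Sym \nabla^k|_p f, [\exp_p^{-1}(q)]^{\otimes k} \rangle,
\end{align*}
and note that $q \mapsto [\exp_p^{-1}(q)]^{\otimes k}$ is a smooth, fixed (depending only on $p$ and the Riemannian metric, not on $f$) $T_pM^{\otimes k}$-valued function of $q$. Hence $\nabla^n|_p (\operatorname{Tay}_p^n f)$ is a sum over $k = 0, \dots, n$ of tensors obtained by contracting $\Sym \nabla^k|_p f$ against fixed geometric coefficients, which is precisely the desired linear combination.

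The argument is essentially immediate once Corollary \ref{cor.tay2} and Lemma \ref{lem.symm} are in hand; the only mildly delicate point is the justification that the components of $\nabla^n|_p$ are genuine $n^{\text{th}}$-order differential operators (so that Lemma \ref{lem.symm} applies to them), but this follows from an induction on $\ell$ using Definition \ref{def.dell}, since the correction terms involve lower-order covariant derivatives multiplied by smooth functions.
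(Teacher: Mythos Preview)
Your proof is correct and is exactly the argument the paper has in mind: the corollary is stated without an explicit proof because it follows immediately from applying Lemma~\ref{lem.symm} to $g := f - \operatorname{Tay}_p^n f$, which satisfies $g(q) = o(d(p,q)^n)$ by Corollary~\ref{cor.tay2}. Note that the ``in particular'' clause of Lemma~\ref{lem.symm} already asserts $\nabla^k|_p g = 0$ for $0 \le k \le n$, so your separate justification that the components of $\nabla^n|_p$ are $n^{\text{th}}$-order differential operators, while correct, is not needed.
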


We will make the last assertion of Corollary \ref{cor.derivatives} more
explicitly in Corollary \ref{cor.2.8} and Remark \ref{rem.global}. The upshot
is that there is no loss of information in only keeping track of the
symmetrizations of the covariant derivatives.

\begin{corollary}
\label{cor.tay3}If $f\in C^{\infty}\left(  M\right)  ,$ $p,q\in M$ with
$d\left(  p,q\right)  $ then for $0\leq k\leq n$ we have
\[
\left\Vert \nabla^{k}|_{q}\left[  f-\left(  \operatorname*{Tay}\nolimits_{p}%
^{n}f\right)  \right]  \right\Vert \leq\frac{1}{\left(  n+1-k\right)  !}%
\max_{0\leq t\leq1}\left\Vert \nabla^{n+1}|_{\gamma_{v}\left(  t\right)
}\left(  f-\operatorname*{Tay}\nolimits_{p}^{n}f\right)  \right\Vert \cdot
d\left(  p,q\right)  ^{n+1-k},
\]
where $v:=\exp_{p}^{-1}\left(  q\right)  .$
\end{corollary}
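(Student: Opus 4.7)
The plan is to apply the Taylor remainder estimate of Remark \ref{rem.est} to the difference $g := f - \operatorname{Tay}_p^n f$, after first showing that all its covariant derivatives of order at most $n$ vanish at $p$.

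First I would observe that by Corollary \ref{cor.tay2}, the function $g = f - \operatorname{Tay}_p^n f$ satisfies $g(q) = o_f(d(p,q)^n)$ as $q \to p$. Consequently, for each $0 \le j \le n$, we have in particular $g(q) = o(d(p,q)^j)$, and Lemma \ref{lem.symm} applied with $n$ replaced by $j$ (together with the fact that $\nabla^j$ is a $j$-th order differential operator acting on the smooth function $g$) yields $\nabla^j|_p g = 0$ for every $0 \le j \le n$. This is the crucial vanishing that makes the Taylor polynomial of $g$ at $p$ identically zero up to order $n$.

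Next I would invoke the estimate \eqref{equ.tayest} of Remark \ref{rem.est} applied to $g$ with parameters $\ell := k$ and $n$ replaced by $n-k$. Choosing $v := \exp_p^{-1}(q)$, picking an arbitrary $W_0 \in (T_p M)^{\otimes k}$ with $\|W_0\| = 1$, and setting $W_1 := //_1(\gamma_v)^{\otimes k} W_0 \in (T_q M)^{\otimes k}$, that inequality reads
\begin{align*}
\Bigl| \nabla^k_{W_1} g - \sum_{j=0}^{n-k} \tfrac{1}{j!} \nabla^{k+j}_{v^{\otimes j} \otimes W_0} g \Bigr|
\le \tfrac{1}{(n-k+1)!} \max_{0 \le t \le 1} \bigl\| \nabla^{k+n-k+1}|_{\gamma_v(t)} g \bigr\| \cdot d(p,q)^{n-k+1}.
\end{align*}
Each summand on the left-hand side is evaluated at $p$ and involves $\nabla^{k+j}|_p g$ with $k+j \le n$, hence vanishes by the first step. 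The remainder on the right-hand side involves $\nabla^{n+1}|_{\gamma_v(t)} g$, exactly the quantity in the statement.

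Finally, since parallel translation is an isometry, $\|W_1\| = \|W_0\| = 1$, and taking the supremum over such $W_0$ (equivalently, over unit $W_1 \in (T_q M)^{\otimes k}$, which is what computes the operator norm $\|\nabla^k|_q g\|$) delivers the claimed bound. There is essentially no obstacle here: the only subtle point is the invocation of Lemma \ref{lem.symm} at each order $j \le n$ rather than a single application at order $n$, which is legitimate because $o(d(p,q)^n) \subset o(d(p,q)^j)$.
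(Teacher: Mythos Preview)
Your proof is correct and follows essentially the same approach as the paper: apply the remainder estimate \eqref{equ.tayest} to $g = f - \operatorname{Tay}_p^n f$, use the vanishing of $\nabla^j|_p g$ for $0 \le j \le n$ to kill the Taylor sum, and conclude via the isometry of parallel translation. The only cosmetic difference is that the paper cites Corollary~\ref{cor.derivatives} directly for the vanishing, whereas you reprove it via Corollary~\ref{cor.tay2} and Lemma~\ref{lem.symm}; note also that a single application of Lemma~\ref{lem.symm} at order $n$ already yields $\nabla^j|_p g = 0$ for all $0 \le j \le n$, so your worry about invoking it separately at each order is unnecessary.
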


\begin{proof}
Let us apply the estimtate in \eqref{equ.tayest} with $f$ replaced by
$g:=f-\operatorname*{Tay}\nolimits_{p}^{n}f$ keeping in mind that $\nabla
^{k}|_{p}\left[  f-\left(  \operatorname*{Tay}\nolimits_{p}^{n}f\right)
\right]  =0$ for $0\leq k\leq n$ by Corollary \ref{cor.derivatives}. This
allows us to conlude for $W_{0}\in T_{p}M^{\otimes k}$ that%
\[
\left\vert \nabla_{W_{1}}^{k}\left[  f-\operatorname*{Tay}\nolimits_{p}%
^{n}f\right]  \right\vert \leq\frac{1}{\left(  n+1-k\right)  !}\left\Vert
W_{0}\right\Vert \cdot\max_{0\leq t\leq1}\left\Vert \nabla^{n+1}g\right\Vert
_{\left[  T_{\gamma_{_{v}}\left(  t\right)  }^{\ast}M\right]  ^{\otimes\left(
n+1\right)  }}\cdot d\left(  p,q\right)  ^{n+1-k}%
\]
where $v:=\exp_{p}^{-1}\left(  q\right)  .$ As the map $W_{0}\rightarrow
W_{1}$ is an isometry it follows that
\[
\left\Vert \nabla^{k}|_{q}\left[  f-\left(  \operatorname*{Tay}\nolimits_{p}%
^{n}f\right)  \right]  \right\Vert \leq\frac{1}{\left(  n+1-k\right)  !}%
\max_{0\leq t\leq1}\left\Vert \nabla^{n+1}g\right\Vert _{\left[
T_{\gamma_{_{v}}\left(  t\right)  }^{\ast}M\right]  ^{\otimes\left(
n+1\right)  }}\cdot d\left(  p,q\right)  ^{n+1-k}.
\]

\end{proof}



\subsubsection{Symmetric parts of covariant derivatives determine all
derivatives}


We will now make Corollary \ref{cor.derivatives} more precise.

\begin{definition}
\label{not.chart}If $\left(  x,U:=\dom( x )  \right)  $ is a
chart on $M,$ let $D^{x}$ denote the flat covariant derivative on $TU$
determined by $D^{x}\frac{\partial}{\partial x^{j}}=0$ for $1\leq j\leq d.$
\end{definition}

\begin{remark}
\label{rem:50} If $V=\sum_{j=1}^{d}V_{j}\frac{\partial}{\partial x^{j}}$ is a
vector field on $U$ and $v\in T_{m}M$ then $D_{v}^{x}V=\sum_{j=1}^{d}\left(
vV_{j}\right)  \frac{\partial}{\partial x^{j}}|_{m}.$ Using $D^{x}%
\frac{\partial}{\partial x^{j}}=0$, it easily follows that for all $\ell
\in\mathbb{N}$ and
$f$ $\ell$-times continuously differentiable
we have
\[
\left\langle \left(  D^{x}\right)  ^{\ell}|_{m}f,\frac{\partial}{\partial
x^{i_{1}}}\otimes\dots\otimes\frac{\partial}{\partial x^{i_{\ell}}%
}\right\rangle =\left.  \frac{\partial}{\partial x^{i_{1}}}\dots\frac
{\partial}{\partial x^{i_{\ell}}}\right\vert _{m}f
\]
and in particular $\left(  D^{x}\right)  ^{\ell}f\in\Sigma^{\ell}T^{\ast}U.$
\end{remark}

\begin{lemma}
\label{l.2.7bb} Suppose that $\left(  x,U:=\dom(x) \right)  $ is a chart on $M,$ $D=D^{x}$ is the covariant derivative of
Notation \ref{not.chart}. Then, there exists a family of sections $Q_{\ell
,n}\in\Gamma\left[  \operatorname{Hom}\left[  TU^{\otimes n},TU^{\otimes\ell
}\right]  \right]  $ for $1\leq\ell\leq n,$ such that $Q_{n,n}=I$ and for all
$n$-times continuously differentiable functions $f,$
\begin{equation}
\left\langle \nabla^{n}|_{p} f, W \right\rangle = \sum_{\ell=1}^{n}%
\left\langle D^{\ell}|_{p} f,Q_{\ell,n} W\right\rangle \text{ } \forall
\ W\in\left[  T_{p} M \right]  ^{\otimes n}. \label{equ.id1}%
\end{equation}

\end{lemma}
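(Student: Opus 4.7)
The plan is to prove the lemma by induction on $n$, using the recursive definition of $\nabla^n$ given in Definition \ref{def.dell} together with the Leibniz rule for the flat connection $D = D^x$ and the fact that the difference of two connections is tensorial.

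For the base case $n=1$, both $\nabla|_p f$ and $D|_p f$ agree with the ordinary differential $d|_p f$, so we set $Q_{1,1} = I$. For the inductive step, assume we have constructed sections $Q_{\ell, n-1} \in \Gamma\left[\operatorname{Hom}\left[TU^{\otimes (n-1)}, TU^{\otimes \ell}\right]\right]$ for $1 \le \ell \le n-1$ with $Q_{n-1,n-1} = I$ satisfying \eqref{equ.id1}. For vector fields $X_1, \dots, X_n$ on $U$, Definition \ref{def.dell} gives
\begin{align*}
  \langle \nabla^n|_p f, X_1(p) \otimes \cdots \otimes X_n(p) \rangle
  &= X_1\bigl[ \langle \nabla^{n-1} f, X_2 \otimes \cdots \otimes X_n \rangle \bigr]\bigr|_p \\
  &\quad - \sum_{m=2}^n \langle \nabla^{n-1} f, X_2 \otimes \cdots \otimes \nabla_{X_1} X_m \otimes \cdots \otimes X_n\rangle\bigr|_p.
\end{align*}
Using the induction hypothesis, the first term on the right becomes $X_1 \sum_{\ell=1}^{n-1}\langle D^\ell f, Q_{\ell,n-1}(X_2 \otimes \cdots \otimes X_n)\rangle$. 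Apply the Leibniz rule for $D$ (recalling that $D$ is a connection): for each $\ell$,
\[
  X_1 \langle D^\ell f, Q_{\ell,n-1}(\cdots)\rangle = \langle D^{\ell+1} f, X_1 \otimes Q_{\ell,n-1}(\cdots)\rangle + \langle D^\ell f, D_{X_1}[Q_{\ell,n-1}(\cdots)]\rangle.
\]
Here I use that $D^{\ell+1} f$ is the derivative of the tensor field $D^\ell f$ in the $D$-sense (equivalent to Remark \ref{rem:50}). This produces a term of order $\ell+1 \le n$ and a term of order $\ell$, both of whose coefficients are expressions linear in $X_2(p), \dots, X_n(p)$ via compositions of the induction data $Q_{\ell,n-1}$ with the smooth bundle endomorphisms $D_{X_1}[Q_{\ell,n-1}(\cdot)]$, which by the product rule again reduce to pointwise tensorial expressions in $X_1(p), \dots, X_n(p)$.

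For the second sum in the expansion of $\nabla^n f$, write $\nabla_{X_1} X_m = D_{X_1} X_m + S(X_1, X_m)$, where $S := \nabla - D$ is the difference of two connections and is therefore a smooth tensor field in $\Gamma[\operatorname{Hom}(TU \otimes TU, TU)]$ (the usual observation that the Christoffel symbols of $\nabla$ in the coordinate frame $\partial/\partial x^j$ are the components of $S$). Substituting and again invoking the induction hypothesis, every term is of the form $\langle D^\ell f, T_\ell \rangle$ with $\ell \le n-1$ and $T_\ell$ a pointwise tensorial expression in $X_1(p), \dots, X_n(p)$. Collecting contributions and defining $Q_{\ell,n}$ to be the resulting smooth bundle morphism, we obtain \eqref{equ.id1} at order $n$. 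The top-order contribution $\ell = n$ comes uniquely from the first term with $\ell = n-1$ in the induction hypothesis (where $Q_{n-1,n-1} = I$) paired with the $D^{\ell+1} f$ summand of the Leibniz rule, giving $\langle D^n f, X_1 \otimes \cdots \otimes X_n\rangle$ and hence $Q_{n,n} = I$ as required.

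The main thing to verify carefully is the tensoriality of the lower-order coefficients $Q_{\ell,n}(W)$ in their dependence on $W \in T_pM^{\otimes n}$, i.e.\ that all apparent derivatives of the test vector fields have cancelled and only pointwise values remain. This is exactly the structural reason why Definition \ref{def.dell} produces a tensor: the subtracted sum over $\nabla_{X_1} X_m$ absorbs derivatives of the $X_m$'s for $m \ge 2$, while for the $D$-expansion the analogous cancellation uses $D$-derivatives. The difference of these two cancellation schemes is governed by $S$, which is tensorial, so tensoriality persists throughout, giving smooth sections $Q_{\ell,n}$ on all of $U$.
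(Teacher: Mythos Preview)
Your proof is correct and follows essentially the same inductive strategy as the paper: expand $\nabla^n f$ via the recursive Definition~\ref{def.dell}, apply the induction hypothesis to the $\nabla^{n-1}$ terms, and use the Leibniz rule for $D$ together with the tensorial difference $S=\nabla-D$ (the paper writes this as the connection one-form $\Gamma$). The only cosmetic difference is that the paper works directly with coordinate vector fields $V_j=\partial/\partial x^{i_j}$, for which $DV_j=0$; this kills all the $D_{X_1}X_m$ terms at the outset and makes the tensoriality of the resulting $Q_{\ell,n}$ automatic, whereas you carry general vector fields and then argue the needed cancellation between the $D_{X_1}X_m$ contributions from the Leibniz expansion and from the subtracted $\nabla_{X_1}X_m$ sum.
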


\begin{proof}
Let $D=D^{x}$ and $\Gamma$ be the $\operatorname*{End}\left(  TU\right)  $ --
valued connection one form on $TU$ so that $\nabla=D+\Gamma.$
It is enough to verify that
\eqref{equ.id1} holds on a basis for $T_{p}U^{\otimes n}.$ To this end, let
$i_{j}\in\left\{  1,2\dots,d\right\}  ,$ for $1\leq j\leq n$ and let
$V_{j}=\frac{\partial}{\partial x^{i_{j}}}.$ Then,
\[
\left\langle \nabla f,V_{1}\right\rangle =V_{1}f=\left\langle Df,V_{1}%
\right\rangle ,
\]
which shows that \eqref{equ.id1} holds for $n=1.$ For the sake of completing the proof by induction, let us now assume that
\eqref{equ.id1} holds at level $n-1$ and below. In particular we assume
\[
\left\langle \nabla^{n-1}f,{V_{n-1}\otimes\dots\otimes V_{1}}\right\rangle
=\sum_{\ell=1}^{n-1}\left\langle D^{\ell}f,Q_{\ell,n-1}V_{n-1}\otimes
\dots\otimes V_{1}\right\rangle .
\]
On one hand,
\begin{align*}
V_{n}\left\langle \nabla^{n-1}f,{V_{n-1}\otimes\dots\otimes V_{1}%
}\right\rangle =  &  \left\langle \nabla^{n}f,{V_{n}\otimes\dots\otimes V_{1}%
}\right\rangle +\left\langle \nabla^{n-1}f,{\nabla_{V_{n}}\left[
V_{n-1}\otimes\dots\otimes V_{1}\right]  }\right\rangle \\
=  &  \left\langle \nabla^{n}f,{V_{n}\otimes\dots\otimes V_{1}}\right\rangle
\\
&  +\sum_{k=1}^{n-1}\left\langle \nabla^{n-1}f,{V_{n-1}\otimes\dots
\otimes\left[  \Gamma\left(  V_{n}\right)  V_{k}\right]  \otimes\dots\otimes
V_{1}}\right\rangle ,
\end{align*}
while on the other hand (using the induction hypothesis, the product rule, and
$DV_{k}=0$ for all $k),$
\begin{align*}
V_{n}\left\langle \nabla^{n-1}f,{V_{n-1}\otimes\dots\otimes V_{1}%
}\right\rangle =  &  \sum_{\ell=1}^{n-1}V_{n}\left\langle D^{\ell}%
f,Q_{\ell,n-1}V_{n-1}\otimes\dots\otimes V_{1}\right\rangle \\
=  &  \sum_{\ell=1}^{n-1}\left\langle D^{\ell+1}f,V_{n}\otimes Q_{\ell
,n-1}V_{n-1}\otimes\dots\otimes V_{1}\right\rangle \\
&  +\sum_{\ell=1}^{n-1}\left\langle D^{\ell}f,\left(  D_{V_{n}}Q_{\ell
,n-1}\right)  V_{n-1}\otimes\dots\otimes V_{1}\right\rangle \\
=  &  \left\langle D^{n}f,V_{n}\otimes\dots\otimes V_{1}\right\rangle \\
&  +\sum_{\ell=1}^{n-2}\left\langle D^{\ell}f,V_{n}\otimes Q_{\ell,n-1}%
V_{n-1}\otimes\dots\otimes V_{1}\right\rangle \\
&  +\sum_{\ell=1}^{n-1}\left\langle D^{\ell}f,\left(  D_{V_{n}}Q_{\ell
,n-1}\right)  V_{n-1}\otimes\dots\otimes V_{1}\right\rangle .
\end{align*}
Comparing the last two displayed equations shows,
\begin{align*}
\nabla_{V_{n}\otimes\dots\otimes V_{1}}^{n}f  &  =\left\langle D^{n}%
f,V_{n}\otimes\dots\otimes V_{1}\right\rangle +\left\langle Df,\left[
D_{V_{n}}Q_{1,n-1}\right]  V_{n-1}\otimes\dots\otimes V_{1}\right\rangle \\
&  +\sum_{\ell=2}^{n-1}\left\langle D^{\ell}f,V_{n}\otimes Q_{\ell
-1,n-1}.V_{n-1}\otimes\dots\otimes V_{1}+\left[  D_{V_{n}}Q_{\ell,n-1}\right]
V_{n-1}\otimes\dots\otimes V_{1}\right\rangle \\
&  -\sum_{k=1}^{n-1}\left\langle \nabla^{n-1}f,{V_{n-1}\otimes\dots
\otimes\Gamma\left(  V_{n}\right)  V_{k}\otimes\dots\otimes V_{1}%
}\right\rangle .
\end{align*}
From this expression
it follows that $\nabla_{V_{n}\otimes\dots\otimes V_{1}}^{n}f$ may be
expressed in the form claimed in \eqref{equ.id1}.
\end{proof}

\begin{corollary}
\label{cor.2.7}Let us continue the notation in Lemma \ref{l.2.7bb}. Then,
there exists
\[
\bar{Q}_{\ell,n}\in\Gamma\left[  \operatorname{Hom}\left[  TU^{\otimes
n},TU^{\otimes\ell}\right]  \right]  ,\quad\text{for }1\leq\ell\leq n,
\]
such that $\bar{Q}_{n,n}=I$ and for all $n$-times continuously differentiable
functions $f,$
\begin{equation}
\left\langle D^{n}f,W\right\rangle =\sum_{\ell=1}^{n}\left\langle
\Sym\nabla^{\ell}f,\bar{Q}_{\ell,n}W\right\rangle ,~\forall\ W\in TM^{\otimes
n}. \label{equ.id2}%
\end{equation}

\end{corollary}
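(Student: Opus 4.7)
The plan is to invert the triangular relation provided by Lemma \ref{l.2.7bb} after taking symmetric parts, using an induction on $n$. The key observation is that by Remark \ref{rem:50} the coordinate-derivative tensors $D^\ell f$ are already symmetric (for any scalar function $f$), so no information is lost when we pair them against symmetrized tensors.

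First I would reformulate \eqref{equ.id1} after symmetrizing. For any $W \in TU^{\otimes n}$,
\[
\langle \nabla^n|_p f, W\rangle = \sum_{\ell=1}^{n} \langle D^\ell|_p f, Q_{\ell,n} W\rangle
= \sum_{\ell=1}^{n} \langle D^\ell|_p f, \Sym[Q_{\ell,n} W]\rangle,
\]
where the second equality uses that $D^\ell f$ is symmetric. If I restrict to symmetric $W$, the left-hand side equals $\langle \Sym \nabla^n|_p f, W\rangle$, so I obtain an identity of the form
\[
\langle \Sym \nabla^n f, W\rangle = \langle D^n f, W\rangle + \sum_{\ell=1}^{n-1} \langle D^\ell f, T_{\ell,n} W\rangle,
\]
valid for all symmetric $W \in \Sigma^n TU$, with smooth bundle maps $T_{\ell,n} := \Sym \circ Q_{\ell,n}|_{\Sigma^n TU}$ and with the top-order coefficient equal to the identity since $Q_{n,n}=I$.

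Next I would proceed by induction on $n$. For $n=1$ there is nothing to do: $\nabla f = Df$ and one may take $\bar Q_{1,1} = I$. Assuming the conclusion \eqref{equ.id2} holds for every order strictly less than $n$, I can substitute the inductive expressions
\[
D^\ell f = \sum_{k=1}^{\ell} \bar Q_{k,\ell}^{\,*}(\Sym \nabla^k f), \qquad 1\le \ell\le n-1,
\]
(paired against test tensors) into the sum above, giving
\[
\langle D^n f, W\rangle
= \langle \Sym \nabla^n f, W\rangle
- \sum_{\ell=1}^{n-1}\sum_{k=1}^{\ell} \langle \Sym \nabla^k f, \bar Q_{k,\ell} T_{\ell,n} W\rangle,
\]
for symmetric $W$. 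Collecting terms by the index $k$ defines the desired operators $\bar Q_{k,n}$ on $\Sigma^n TU$, with $\bar Q_{n,n} = I$ as required. To extend them from $\Sigma^n TU$ to all of $TU^{\otimes n}$, I simply precompose with the symmetrization projection $\Sym \colon TU^{\otimes n} \to \Sigma^n TU$; this is legitimate because $D^n f$ is symmetric, so $\langle D^n f, W\rangle = \langle D^n f, \Sym W\rangle$ for any $W \in TU^{\otimes n}$.

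The main obstacle, which is not really an obstacle but the only thing requiring attention, is ensuring that the top-order coefficient $\bar Q_{n,n}$ really is the identity and not just invertible. This is handled by the observation that when one takes the symmetric part of $\nabla^n f$ via \eqref{equ.id1}, the term $\langle D^n f, W\rangle$ for symmetric $W$ appears with coefficient exactly $1$ (because $Q_{n,n}=I$ and $D^n f$ is already symmetric, so $\Sym[Q_{n,n}W] = \Sym W = W$), and no other $\Sym \nabla^k f$ with $k=n$ arises in the substitution step since the induction only provides expressions for $D^\ell f$ with $\ell < n$. Thus the recursive formula above produces $\bar Q_{n,n} = \operatorname{Id}$ on the symmetric part and extends to $TU^{\otimes n}$ by symmetrization, completing the proof.
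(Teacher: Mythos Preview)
Your proof is correct and follows essentially the same induction-and-substitution strategy as the paper: use \eqref{equ.id1} to express $\Sym\nabla^n f$ in terms of $D^\ell f$, isolate the top-order $D^n f$ term (using $Q_{n,n}=I$ and the symmetry of $D^n f$), then replace the lower-order $D^\ell f$ by the inductive hypothesis. The paper streamlines your restriction-then-extension step by applying \eqref{equ.id1} directly with $W$ replaced by $\Sym_n W$ for \emph{arbitrary} $W$; this makes $\langle D^n f,\Sym_n W\rangle=\langle D^n f,W\rangle$ appear immediately for all $W$, so $\bar Q_{n,n}=I$ on the nose rather than $\bar Q_{n,n}=\Sym$ as your construction literally gives (the two agree after pairing with the symmetric tensor $\Sym\nabla^n f$, so this is cosmetic).
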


\begin{proof}
The proof is again by induction on $n.$ For $n=1,$ we have $D_{W}%
f=Wf=\nabla_{W}f,$ so there is nothing to prove. For the inductive step,
suppose that \eqref{equ.id2} holds at level $n-1$ and below. From
\eqref{equ.id1} with $W$ replaced by $\operatorname{Sym}_{n}W,$ it follows
that,%
\begin{align*}
\left\langle \Sym\left[  \nabla^{n}f\right]  ,W\right\rangle  &  =\left\langle
\nabla^{n}f,{\operatorname{Sym}_{n}W}\right\rangle =\sum_{\ell=1}%
^{n}\left\langle D^{\ell}f,Q_{\ell,n}\operatorname{Sym}_{n}W\right\rangle \\
&  =\left\langle D^{n}f,W\right\rangle +\sum_{\ell=1}^{n-1}\left\langle
D^{\ell}f,Q_{\ell,n}\operatorname{Sym}_{n}W\right\rangle ,
\end{align*}
wherein the last equality we have used that $D^{n}f$ is already symmetric.
From the previous equation along with the inductive hypothesis, we conclude
that $\left\langle D^{n}f,W\right\rangle $ may be expressed as described in \eqref{equ.id2}.
\end{proof}

\begin{corollary}
\label{cor.2.8}If $\nabla$ is a covariant derivative on $TM,$ then there
exists
\[
Q_{\ell,n}^{\nabla}\in\Gamma\left[  \operatorname{Hom}\left[  TM^{\otimes
n},TM^{\otimes\ell}\right]  \right]  , \text{ for }1\leq\ell\leq n,
\]
such that $Q_{n,n}^{\nabla}=I$ and for all $n$-times continuously
differentiable functions $f,$
\begin{equation}
\left\langle \nabla^{n}f, W \right\rangle = \sum_{\ell=1}^{n} \left\langle
\Sym\nabla^{\ell} f,Q_{\ell,n}^{\nabla}W\right\rangle , \text{ }\forall\ W\in
TM^{\otimes n}. \label{equ.id3}%
\end{equation}

\end{corollary}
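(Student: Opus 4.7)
The plan is to compose Lemma \ref{l.2.7bb} and Corollary \ref{cor.2.7} in an arbitrary chart, then glue the resulting local sections into a global one via a partition of unity. First I would note that the proofs of Lemma \ref{l.2.7bb} and Corollary \ref{cor.2.7} make no use of torsion-freeness or metric compatibility: one only uses the splitting $\nabla = D^x + \Gamma$ with $\Gamma$ an $\operatorname{End}(TU)$--valued connection one-form. Hence both results apply verbatim to the arbitrary connection $\nabla$ appearing in the statement of Corollary \ref{cor.2.8}.

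In a chart $(x, U)$, Lemma \ref{l.2.7bb} yields sections $Q_{\ell,n} \in \Gamma[\operatorname{Hom}[TU^{\otimes n}, TU^{\otimes \ell}]]$ with $Q_{n,n} = I$ satisfying
\[
\langle \nabla^n|_p f, W \rangle = \sum_{\ell=1}^n \langle (D^x)^\ell|_p f, Q_{\ell,n} W \rangle,
\]
while Corollary \ref{cor.2.7} yields sections $\bar Q_{k,\ell} \in \Gamma[\operatorname{Hom}[TU^{\otimes \ell}, TU^{\otimes k}]]$ with $\bar Q_{\ell,\ell} = I$ satisfying
\[
\langle (D^x)^\ell|_p f, V \rangle = \sum_{k=1}^\ell \langle \Sym \nabla^k|_p f, \bar Q_{k,\ell} V \rangle.
\]
Substituting the second identity into the first and reindexing the double sum yields the desired local identity with coefficients
\[
R^U_{k,n} := \sum_{\ell=k}^n \bar Q_{k,\ell} \circ Q_{\ell,n} \in \Gamma\bigl[\operatorname{Hom}[TU^{\otimes n}, TU^{\otimes k}]\bigr],
\]
and in particular $R^U_{n,n} = \bar Q_{n,n} Q_{n,n} = I$.

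To globalize, cover $M$ by finitely many such charts $(x_i, U_i)_{i \in I}$ with a subordinate smooth partition of unity $(\phi_i)_{i \in I}$. Each product $\phi_i R^{U_i}_{k,n}$ extends by zero to a smooth global section of $\operatorname{Hom}[TM^{\otimes n}, TM^{\otimes k}]$, and one sets
\[
Q^\nabla_{k,n} := \sum_{i \in I} \phi_i R^{U_i}_{k,n}.
\]
Multiplying the local identity on each $U_i$ by $\phi_i$, summing in $i$, and using $\sum_i \phi_i \equiv 1$ establishes \eqref{equ.id3} globally; the normalization $Q^\nabla_{n,n} = \sum_i \phi_i\, I = I$ follows for the same reason. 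The only subtle point, and really the only obstacle at all in the argument, is that the local sections $R^{U_i}_{k,n}$ need not agree on overlaps because the identity \eqref{equ.id3} does not uniquely determine them (only their symmetric parts in the target index are constrained by the pairing against $\Sym \nabla^k f$). This ambiguity is harmless: averaging by a partition of unity produces a valid global section regardless, and the identity \eqref{equ.id3} is preserved under such convex combinations.
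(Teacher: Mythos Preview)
Your proof is correct and follows essentially the same approach as the paper: compose the local identities from Lemma~\ref{l.2.7bb} and Corollary~\ref{cor.2.7} in each chart, then glue via a partition of unity. You are somewhat more explicit than the paper in writing out the composed section $R^U_{k,n} = \sum_{\ell=k}^n \bar Q_{k,\ell}\circ Q_{\ell,n}$ and in noting that the local sections need not agree on overlaps (only their pairings against $\Sym\nabla^k f$ are determined), but this is exactly the argument the paper sketches.
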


\begin{proof}
First suppose that $M=U,$ as in Lemma \ref{l.2.7bb}. Then combining the
results of Lemma \ref{l.2.7bb} and Corollary \ref{cor.2.7}, there exists
$Q_{\ell,n}^{x}\in\Gamma\left[  \operatorname{Hom}\left[  TU^{\otimes
n},TU^{\otimes\ell}\right]  \right]  $ such that \eqref{equ.id3} holds for all
$W\in TU^{\otimes M}.$ Let $\left\{  x_{\alpha}\right\}  _{\alpha=1}^{N}$ be a
collection of charts on $M$ such that $\left\{  \dom\left(  x_{\alpha
}\right)  \right\}  _{\alpha=1}^{N}$ is an open cover of $M$ and $\left\{
\psi_{\alpha}\right\}  _{\alpha=1}^{N}$ be a partition of unity relative to
this cover. To complete the proof we define%
\[
Q_{\ell,n}^{\nabla}:=\sum_{\alpha=1}^{N}\psi_{\alpha}Q_{\ell,n}^{x_{\alpha}}.
\]

\end{proof}

We note the following corollary for completeness.
\begin{corollary}
  \label{cor.2.8b}If $\nabla$ is a covariant derivative on $TM$ on $L$ is a
  linear $n^{\text{th}}$ - order differential operator on $C^{\infty}\left(
  M\right)  ,$ then there exists smooth sections, $W_{\ell}\in\Gamma\left(
  \Sigma^{\ell}TM\right)  $ for $0\leq\ell\leq n$ such that
  \begin{equation}
  Lf=\sum_{\ell=0}^{n}\nabla_{W_{\ell}}^{\ell}f\text{ for all }f\in C^{\infty
  }\left(  M\right)  . \label{equ.lf}%
  \end{equation}
\end{corollary}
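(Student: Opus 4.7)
The plan is to first write $L$ locally in a chart as a combination of flat-connection derivatives, then use the already-proved Corollary \ref{cor.2.7} to convert these into contractions against symmetric covariant derivatives, and finally patch the local expressions together with a partition of unity.

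First, fix a chart $(x,U)$ with the flat covariant derivative $D^{x}$ of Notation \ref{not.chart}. Since $L$ is a linear $n$-th order differential operator, its restriction to $U$ has the coordinate form $Lf|_U = \sum_{|\alpha|\le n} a_\alpha\, \partial^\alpha f$ for smooth coefficients $a_\alpha$. By Remark \ref{rem:50}, this is the same as
\[
Lf|_U = \sum_{\ell=0}^{n} \langle (D^{x})^{\ell} f, V_\ell^{U}\rangle,
\]
for certain smooth local sections $V_\ell^{U}$ of $TU^{\otimes \ell}$. Because $(D^{x})^{\ell} f$ is already symmetric (again by Remark \ref{rem:50}), replacing $V_\ell^{U}$ by its symmetrization $\Sym V_\ell^{U}\in \Gamma(\Sigma^\ell TU)$ does not alter the pairing, so we may assume each $V_\ell^{U}$ is symmetric.

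Next, apply Corollary \ref{cor.2.7} to each summand:
\[
\langle (D^{x})^{\ell} f, V_\ell^{U}\rangle = \sum_{k=1}^{\ell} \langle \Sym\nabla^{k} f,\; \bar Q_{k,\ell}\, V_\ell^{U}\rangle,
\]
(with the $\ell=0$ term being simply the multiplication operator $V_0^{U} f$). Interchanging the order of summation and collecting by the order $k$ of the covariant derivative gives $Lf|_U = \sum_{k=0}^{n} \langle \Sym\nabla^{k} f, W_k^{U}\rangle$ for smooth local sections $W_k^{U}$ of $TU^{\otimes k}$. Symmetrizing once more (using symmetry of $\Sym\nabla^{k} f$) we may take $W_k^{U}\in \Gamma(\Sigma^k TU)$, and then $\langle \Sym\nabla^{k} f, W_k^{U}\rangle = \nabla_{W_k^{U}}^{k} f$.

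Finally, globalize via a partition of unity. Let $\{U_\alpha\}_{\alpha=1}^{N}$ be a finite atlas and $\{\psi_\alpha\}_{\alpha=1}^{N}$ a subordinate smooth partition of unity. For each $\alpha$ and each $0\le k \le n$, the product $\psi_\alpha W_k^{U_\alpha}$ extends by zero outside $U_\alpha$ to a global smooth section of $\Sigma^k TM$. Define
\[
W_k := \sum_{\alpha=1}^{N} \psi_\alpha W_k^{U_\alpha} \in \Gamma(\Sigma^k TM).
\]
Using the local expression for $L$ on each $U_\alpha$ together with linearity of $W\mapsto \nabla_W^k f$, we obtain
\[
\sum_{k=0}^{n}\nabla_{W_k}^{k} f = \sum_{k=0}^{n}\sum_{\alpha=1}^{N} \psi_\alpha \nabla_{W_k^{U_\alpha}}^{k} f = \sum_{\alpha=1}^{N} \psi_\alpha \, Lf|_{U_\alpha} = Lf,
\]
which is the desired identity. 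There is no serious obstacle here: Corollary \ref{cor.2.7} performs the non-trivial conversion from flat to covariant derivatives, and the remaining work is the careful bookkeeping of the symmetrization and the partition-of-unity gluing; the only mild care needed is to ensure that each local-to-global step preserves smoothness and symmetry of the tensor fields $W_k$.
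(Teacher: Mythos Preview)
Your proposal is correct and follows essentially the same approach as the paper's proof: write $L$ locally via the flat chart-derivative $D^{x}$, convert to symmetric covariant derivatives using Corollary~\ref{cor.2.7}, and globalize by a partition of unity. The paper also cites Corollary~\ref{cor.2.8} at this step, but your observation that pairing $\Sym\nabla^{k}f$ against a symmetric $W_k^{U}$ already yields $\nabla_{W_k^{U}}^{k}f$ makes the additional reference unnecessary; otherwise the arguments coincide, with yours spelling out the bookkeeping more explicitly.
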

\begin{proof}
  By definition $Lf$ is locally given by $Lf=\sum_{\ell=0}^{n}\left\langle
  D^{n}f,A_{n}\right\rangle $ for some $A_{n}\in\Gamma\left(  \Sigma^{\ell
  }TU\right)  .$ Using Corollaries \ref{cor.2.7} and \ref{cor.2.8}, we may
  locally express $Lf$ as in \eqref{equ.lf}. The global picture may then be
  constructed using a partition of unity argument.
\end{proof}

\begin{remark}
\label{rem.global}Our proof of Corollary \ref{cor.2.8} was local in nature and
hence does not give much information about how the $Q_{\ell,n}^{\nabla}$
depend on $\nabla.$ It is possible to give a global proof of Corollary
\ref{cor.2.8} which would show that $Q_{\ell,n}^{\nabla}$ may be constructed
from certain combinations of covariant derivatives of the torsion and
curvature tensor of $\nabla.$ Here is a sketch of this argument. In this
sketch we let $v\wedge w:=v\otimes w-w\otimes v$ for any $v,w\in T_{p}M.$

\begin{enumerate}
\item If $v_{1},\dots,v_{n}\in T_{p}M$ and $1\leq i<n,$ then
\begin{multline*}
\nabla_{v_{n}\otimes\dots\otimes v_{i+2}\otimes\left[  v_{i+1}\wedge
v_{i}\right]  \otimes v_{i-1}\otimes\dots\otimes v_{1}}^{n}f\\
=\left\langle \nabla_{v_{n}\otimes\dots\otimes v_{i+2}}^{n-i-1}\left[
R\left(  \cdot,\cdot\right)  \nabla^{i-1}f\right]  ,v_{i+1}\otimes
v_{i}\otimes v_{i-1}\otimes\dots\otimes v_{1}\right\rangle \\
+\left\langle \nabla_{v_{n}\otimes\dots\otimes v_{i+2}}^{n-i-1}\left[
\nabla_{T\left(  \cdot,\cdot\right)  }\nabla^{i-1}f\right]  ,v_{i+1}\otimes
v_{i}\otimes v_{i-1}\otimes\dots\otimes v_{1}\right\rangle
\end{multline*}
where $R\left(  \cdot,\cdot\right)  \nabla^{i-1}f$ is the appropriate action
of the curvature tensor of $\nabla$ on $\nabla^{i-1}f$ and $T$ is the torsion
tensor of $\nabla.$

\item As a consequence of item 1. and the fact that every permutation is a
composition of transpositions, it follows that for any permutation $\sigma\in
S_{n},$
\begin{equation}
\nabla_{v_{\sigma(n)}\otimes\dots\otimes v_{\sigma(1)}}^{n}f=\nabla
_{v_{n}\otimes\dots\otimes v_{1}}^{n}f+\sum_{\ell=1}^{n-1}\left\langle
\nabla^{\ell}f,\mathbf{Q}\left(  \sigma\right)  _{\ell,n}v_{n}\otimes
\dots\otimes v_{1}\right\rangle , \label{equ.id4}%
\end{equation}
where $\mathbf{Q}\left(  \sigma\right)  _{\ell,n}\in\Gamma\left[
\operatorname{Hom}\left[  TM^{\otimes n},TM^{\otimes\ell}\right]  \right]  $
are constructed from certain combinations of covariant derivatives of the
torsion and curvature tensor of $\nabla.$

\item Summing \eqref{equ.id4} on $\sigma$ and then dividing by $n!$ and
setting
\[
\mathbf{Q}_{\ell,n}=\frac{1}{n!}\sum_{\sigma\in S_{n}}\mathbf{Q}\left(
\sigma\right)  _{\ell,n}%
\]
shows
\begin{equation}
\left\langle \Sym\nabla^{n}f,v_{n}\otimes\dots\otimes v_{1}\right\rangle
=\left\langle \nabla^{n}f,v_{n}\otimes\dots\otimes v_{1}\right\rangle
+\sum_{\ell=1}^{n-1}\left\langle \nabla^{\ell}f,\mathbf{Q}_{\ell,n}%
v_{n}\otimes\dots\otimes v_{1}\right\rangle ,\label{equ.id5}%
\end{equation}
where the $\mathbf{Q}_{\ell,n}\in\Gamma\left[  \operatorname{Hom}\left[
TM^{\otimes n},TM^{\otimes\ell}\right]  \right]  $ are constructed from
certain combinations of covariant derivatives of the torsion and curvature
tensor of $\nabla.$

\item Using \eqref{equ.id5} recursively then shows there exists $Q_{\ell
,n}^{\nabla}\in\Gamma\left[  \operatorname{Hom}\left[  TM^{\otimes
n},TM^{\otimes\ell}\right]  \right]  $ such that
\[
\left\langle \nabla^{n}f,v_{n}\otimes\dots\otimes v_{1}\right\rangle
=\left\langle \Sym\nabla^{n}f,v_{n}\otimes\dots\otimes v_{1}\right\rangle
+\sum_{\ell=1}^{n-1}\left\langle \Sym\nabla^{\ell}f,Q_{\ell,n}^{\nabla}%
v_{n}\otimes\dots\otimes v_{1}\right\rangle ,
\]
where each $Q_{\ell,n}^{\nabla}$ is constructed from certain combinations of
covariant derivatives of the torsion and curvature tensor of $\nabla.$
\end{enumerate}
\end{remark}

\subsection{The regularity structure and model\label{sec.trs}}

We are now ready to set up to regularity structure for ``polynomials'' up to order $n$
on a manifold.
\begin{definition}
\label{n.2.3}Fix $n\geq0$ and let $\mathcal{T=}\bigoplus_{\ell=0}^{n}
\Sigma^{\ell}T^{\ast}M$ be the vector bundle over $M$ with fiber at $p\in M$
given by
\begin{equation}
\mathcal{T}|_{p}:=\bigoplus_{\ell=0}^{n}\Sigma^{\ell}T_{p}^{\ast}M.
\label{equ.4}%
\end{equation}
For $p\in M$ and $z$ near $p,$ let
\begin{equation}
G^n_{p}\left(  z\right)  :=\sum_{\ell=0}^{n}\frac{1}{\ell!}\left[  \exp_{p}
^{-1}\left(  z\right)  \right]  ^{\otimes\ell}\in\bigoplus_{\ell=0}^{n}
\Sigma^{\ell}T_{p}M. \label{equ.5}%
\end{equation}
\end{definition}

The vector bundle $\mcT$ will be used to store higher order derivatives of
functions. On flat space $\R^{d}$ such ``abstract Taylor expansions'' were
realized as honest functions using polynomials, see
\eqref{eq:polynomialModelFlat}. Polynomials are the simplest function that
have specified derivatives at one point. On the manifold we instead choose
polynomials in exponential coordinates.

\begin{definition}
[Model]\label{def.mod}For $\tau=(\tau_{0},\dots,\tau_{n})\in\mathcal{T}_{p}$
define
\begin{align*}
(\Pi_{p}\tau)\left(  z\right)    & :=\left\langle \tau,G^n_{p}\left(  z\right)
\right\rangle =\left\langle \tau,\sum_{\ell=0}^{n}\frac{1}{\ell!}\left[
\exp_{p}^{-1}\left(  z\right)  \right]  ^{\otimes\ell}\right\rangle \\
& =\sum_{\ell=0}^{n}\frac{1}{\ell!}\tau_{\ell}\left(  \left[  \exp_{p}%
^{-1}\left(  z\right)  \right]  ^{\otimes\ell}\right)  .
\end{align*}
\end{definition}

These local \textquotedblleft Taylor polynomials\textquotedblright\ are a good
substitute for the usual Taylor polynomials in the flat space theory, as Lemma
\ref{lem:theyRealize1} and Corollary \ref{lem:theyRealize} below demonstrate.

\begin{lemma}
\label{lem:theyRealize1}Let $A=A_{0}+A_{1}+\dots+A_{n}\in\mathcal{T}_{p},$
with $A_{\ell}\in\Sigma^{\ell}T_{p}^{\ast}M,$ for $\ell=0,\ldots,n,$ and
define
\[
\varphi\left(  q\right)  :=\left\langle A,G^n_{p}\left(  q\right)  \right\rangle
=\sum_{\ell=0}^{n}\frac{1}{\ell!}A_{\ell}(\exp_{p}^{-1}(q)^{\otimes\ell}).
\]
Then,
\[
\varphi\left(  p\right)  =A_{0}\text{ and }\Sym[(\nabla)^{\ell
}\varphi|_{p}]=A_{\ell},~\forall~\ell=1,\dots,n.
\]

\end{lemma}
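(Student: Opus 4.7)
The plan is to evaluate $\varphi$ at $p$ directly and then compute the symmetrized covariant derivatives via the restriction of $\varphi$ to radial geodesics, using Lemma \ref{l.2.4} together with the polarization identity of Remark \ref{r.2.2}.

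First, since $\exp_p^{-1}(p) = 0 \in T_pM$, only the $\ell=0$ summand in the definition of $\varphi$ survives at $p$, so $\varphi(p) = A_0$.

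For the higher derivatives, I would fix an arbitrary $v \in T_pM$ and consider the geodesic $\gamma_v(t) = \exp_p(tv)$. For $t$ small enough that $tv$ lies in the domain of injectivity, one has the exact identity $\exp_p^{-1}(\gamma_v(t)) = tv$, and consequently
\begin{align*}
  \varphi(\gamma_v(t)) = \sum_{k=0}^n \frac{t^k}{k!}\, A_k(v^{\otimes k}),
\end{align*}
which is a genuine polynomial in $t$ of degree at most $n$. Differentiating $\ell$ times at $t=0$ kills all summands except $k = \ell$ and yields $A_\ell(v^{\otimes \ell})$. On the other hand, by \eqref{equ.dla} in Example \ref{ex.smooth} (the specialization of Lemma \ref{l.2.4} to $\ell$-fold radial differentiation),
\begin{align*}
  \left.\frac{d^\ell}{dt^\ell}\right|_{t=0} \varphi(\gamma_v(t))
  = \bigl\langle \Sym[\nabla^\ell|_p \varphi],\, v^{\otimes \ell}\bigr\rangle.
\end{align*}

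Combining the two computations gives $\langle \Sym[\nabla^\ell|_p \varphi], v^{\otimes \ell}\rangle = A_\ell(v^{\otimes \ell}) = \langle A_\ell, v^{\otimes \ell}\rangle$ for every $v \in T_pM$. Since both $\Sym[\nabla^\ell|_p \varphi]$ and $A_\ell$ are elements of $\Sigma^\ell T_p^*M$, and a symmetric tensor is determined by its values on the diagonal $\{v^{\otimes \ell} : v \in T_pM\}$ (Remark \ref{r.2.2}), we conclude $\Sym[\nabla^\ell|_p \varphi] = A_\ell$, as required.

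There is no real obstacle here: the argument is essentially the observation that $\varphi$ was designed, via the exponential chart, to look like a Taylor polynomial whose coefficients are precisely the $A_\ell$, and the covariant derivatives agree with the usual ones along geodesics thanks to the Levi-Civita connection being torsion-free with $\nabla \dot\gamma_v /dt = 0$. The only mildly subtle point is remembering that one recovers only the \emph{symmetric} part of $\nabla^\ell|_p \varphi$ from radial data — but this is exactly what the statement claims, and it is consistent with Corollary \ref{cor.derivatives} which shows that the non-symmetric pieces at lower order are determined by the symmetric pieces anyway.
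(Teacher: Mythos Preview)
Your proof is correct and follows essentially the same route as the paper: compute $\varphi\circ\gamma_v$ as a polynomial in $t$ via $\exp_p^{-1}(\gamma_v(t))=tv$, differentiate at $t=0$, and identify the result with $\langle\Sym[\nabla^\ell|_p\varphi],v^{\otimes\ell}\rangle$ through Lemma~\ref{l.2.4} (equivalently \eqref{equ.dla}), concluding by Remark~\ref{r.2.2}. The only difference is cosmetic: the paper cites Lemma~\ref{l.2.4} directly rather than the symmetrized form in Example~\ref{ex.smooth}.
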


\begin{proof}
Let $\gamma_{v}\left(  t\right)  =\exp_{p}\left(  tv\right)  $.
Then,
\[
\varphi\left(  \gamma_{v}\left(  t\right)  \right)  =\sum_{\ell=0}^{n}\frac
{1}{\ell!}A_{\ell}(\exp_{p}^{-1}(\gamma_{v}(t)^{\otimes\ell})=\sum_{\ell
=0}^{n}\frac{t^{\ell}}{\ell!}A_{\ell}\left(  v^{\otimes\ell}\right)
\]
and hence by Lemma \ref{l.2.4}
\[
\nabla_{v^{\otimes\ell}}^{\ell}\varphi=\left.  \frac{d^{\ell}}{dt^{\ell}%
}\right\vert _{t=0}\varphi\left(  \gamma_{v}\left(  t\right)  \right)
=A_{\ell}\left(  v^{\otimes\ell}\right)  ,
\]
which suffices to complete the proof by Remark \ref{r.2.2}.
\end{proof}

We now have the immediate corollary of this lemma.

\begin{corollary}
\label{lem:theyRealize} Let $\tau\in\Sigma^{\ell}T_{p}^{\ast}M$. Then, for
$i=0,\dots,n,$
\[
\Sym[ \nabla^i|_p \Pi_p \tau ]=%
\begin{cases}
\tau,\quad & i=\ell\\
0, & \text{else}.
\end{cases}
\]

\end{corollary}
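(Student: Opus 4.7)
The plan is to reduce immediately to Lemma \ref{lem:theyRealize1} by embedding $\tau$ as the degree-$\ell$ component of a graded element of $\mcT_p$.

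Concretely, given $\tau \in \Sigma^\ell T_p^* M$, define $A \in \mcT_p = \bigoplus_{j=0}^n \Sigma^j T_p^* M$ by
\[
A := A_0 + A_1 + \dots + A_n, \qquad A_j := \begin{cases} \tau, & j = \ell, \\ 0, & j \neq \ell. \end{cases}
\]
Under the identification $\Sigma^\ell T_p^* M \hookrightarrow \mcT_p$ used in the statement of the corollary, this $A$ is simply $\tau$ itself, so by Definition \ref{def.mod} we have $\Pi_p \tau = \langle A, G^n_p(\cdot) \rangle$. This function is exactly the $\varphi$ appearing in Lemma \ref{lem:theyRealize1}.

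Now apply Lemma \ref{lem:theyRealize1}: it asserts $\varphi(p) = A_0$ and $\Sym[\nabla^j|_p \varphi] = A_j$ for all $j = 1, \dots, n$. Reading off each case against our choice of $A_j$ yields
\[
\Sym[\nabla^i|_p \Pi_p \tau] = A_i = \begin{cases} \tau, & i = \ell, \\ 0, & i \neq \ell, \end{cases}
\]
for $i = 1, \dots, n$, while the $i = 0$ case is $(\Pi_p \tau)(p) = A_0 = 0$ (since $\ell \geq 1$ is implicit in the nontrivial direction; the case $\ell = 0$ just gives $(\Pi_p \tau)(p) = \tau$ directly from the definition of $\Pi_p$). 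This covers all $i = 0, \dots, n$ and finishes the proof.

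There is no real obstacle here: the corollary is exactly the linearized, single-component reading of Lemma \ref{lem:theyRealize1}, and the only thing to verify is that the definition of $\Pi_p$ on the fiber $\mcT_p$ matches the formula for $\varphi$ in the lemma, which is immediate from Definition \ref{def.mod} and the formula \eqref{equ.5} for $G^n_p$.
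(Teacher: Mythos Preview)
Your proof is correct and is exactly the approach the paper intends: the paper simply declares this an ``immediate corollary'' of Lemma~\ref{lem:theyRealize1}, and your argument spells out that immediacy by taking $A$ to have $\tau$ as its sole nonzero component.
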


\begin{remark}
\label{rem.sym}Let $x$ be exponential coordinates around $p\in M$, i.e.
suppose that $x=\left(  x^{1},\dots,x^{d}\right)  $ where $\left\{
x^{i}\left(  q\right)  \right\}  _{i=1}^{d}$ are the coordinates of $\exp
_{p}^{-1}\left(  q\right)  $ relative to some basis $\left\{  u_{i}\right\}
_{i=1}^{d}$ of $T_{p}M.$ Then with $v=\sum_{i=1}^{d}v^{i}u_{i}\in T_{p}M,$
\begin{align*}
\sum_{i_{1}\dots i_{\ell}=1}^{d}\partial_{i_{1}\dots i_{\ell}}f(p)v^{i_{1}%
}\dots v^{i_{\ell}}  &  =\frac{d^{\ell}}{dt^{\ell}}|_{0}f\left(  \exp_p \left(  tv\right)  \right)  =\left\langle \nabla_{p}^{\ell}%
f,v^{\otimes\ell}\right\rangle \\
&  =\left\langle \Sym[ \nabla^{\ell}|_p f],v^{\otimes\ell}\right\rangle \\
&  =\sum_{i_{1}\dots i_{\ell}=1}^{d}\left\langle
\Sym[ \nabla^{\ell}|_p f],\left(  \frac{\partial}{\partial x^{i_{1}}}%
,\dots,\frac{\partial}{\partial x^{i_{\ell}}}\right)  \right\rangle v^{i_{1}%
}\dots v^{i_{\ell}}%
\end{align*}
\newline from which it follows that
\[
\partial_{i_{1}\dots i_{\ell}}f(p)=\Sym[ \nabla^{\ell}|_p f]\left(
\frac{\partial}{\partial x^{i_{1}}},\dots,\frac{\partial}{\partial x^{i_{\ell
}}}\right)  ,\forall i_{1},\ldots,i_{\ell}=1\ldots d.
\]

\end{remark}

\begin{definition}
[Transportation]\label{d.2.9} Let $\Gamma_{p\leftarrow q}:\mathcal{T}%
|_{q}\rightarrow\mathcal{T}|_{p}$ be defined by $\Gamma_{p\leftarrow q}%
\tau:=\bar{\tau},$ where
\[
\bar{\tau}_\ell :=\Sym\left[  \nabla^{\ell}|_{p}(\Pi_{q}%
\tau)\right]  =\Sym\left[  \nabla^{\ell}|_{p}\left\langle \tau,G^n_{q}\left(
\cdot\right)  \right\rangle \right]  ,\text{ }\forall\ell=0,\ldots,n,
\]
which makes sense for $d(p,q)<\delta$, the radius of injectivity of $M$.
\end{definition}

\begin{remark}
\label{rem:upwards} For $n\geq2$ this transport will in general also go
\textquotedblleft upwards.\textquotedblright\ That is, if $\tau\in
\mcT_{\alpha}|_{y}$ some $\alpha<n$, then in general $\Gamma_{x\leftarrow
y}\tau$ will have components in homogeneities strictly larger than $\alpha$.
This is not allowed in the original formulation of a regularity structure by
Hairer \cite[Definition 2.1]{bib:hairer}. As we have seen in the main text,
this poses no problem, since our modified definition of a model (Definition \ref{def:modelGeneral})
allows for it.
We moreover believe that any transport that wants to
achieve the following lemma for a \textquotedblleft polynomial
model\textquotedblright\ is forced to do this.
\end{remark}

The definitions have been arranged so that $\Pi_{q}\tau$ and $\Pi_{p}%
\Gamma_{p\leftarrow q}\tau$ agree at $p$ to order $n,$ i.e. $\Pi_{q}\tau$ and
$\Pi_{p}\Gamma_{p\leftarrow q}\tau$ along with all derivatives up to order $n$
agree at $p.$

\begin{lemma}
\label{l.2.10}Let $\tau\in\mathcal{T}|_{q}$ and $p,z\in U$ where $U$ is a
sufficiently small neighborhood of $q.$ If $V$ is a differential operator of
order $k\leq n$ defined on $U,$ then
\[
|V\left[  \Pi_{q}\tau-\Pi_{p}\Gamma_{p\leftarrow q}\tau\right]  \left(
z\right)  |\lesssim_{V}|\tau|d(z,p)^{n+1-k}.
\]

\end{lemma}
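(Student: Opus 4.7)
Set $f := \Pi_q \tau - \Pi_p \Gamma_{p \leftarrow q}\tau$, which is smooth on the neighborhood $U$ of $q$. My plan is to show that $f$ vanishes to order $n+1$ at $p$, and then read off the desired bound from the Taylor remainder estimate of Corollary \ref{cor.tay3}. The core observation is that by the very definition of $\Gamma_{p\leftarrow q}$ (Definition \ref{d.2.9}) the $\ell$-th component of $\Gamma_{p\leftarrow q}\tau$ is precisely $\Sym[\nabla^\ell|_p(\Pi_q\tau)]$, while Corollary \ref{lem:theyRealize} applied component-wise and summed by linearity tells us that $\Sym[\nabla^\ell|_p(\Pi_p \Gamma_{p\leftarrow q}\tau)]$ is exactly the $\ell$-th component of $\Gamma_{p\leftarrow q}\tau$. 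Hence
\[
  \Sym[\nabla^\ell|_p f] = 0 \qquad \text{for } \ell = 0,1,\dots,n.
\]

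Next I would upgrade this vanishing of the symmetric parts to vanishing of the full covariant derivatives, i.e. $\nabla^\ell|_p f = 0$ for $\ell = 0,\dots,n$. This is immediate by induction on $\ell$: rearranging the identity \eqref{equ.id3} from Corollary \ref{cor.2.8} one gets
\[
  \nabla^\ell|_p f \;=\; \Sym[\nabla^\ell|_p f] \;-\; \sum_{j=1}^{\ell-1} Q^{\nabla}_{j,\ell}\, \Sym[\nabla^j|_p f],
\]
so if all symmetric parts up through order $n$ vanish then all covariant derivatives through order $n$ vanish too. In particular $\mathrm{Tay}_p^n f \equiv 0$ in a neighborhood of $p$.

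Now I would invoke Corollary \ref{cor.tay3} applied to $f$: since $\mathrm{Tay}_p^n f = 0$, for every $0 \le \ell \le n$ and $z$ close to $p$,
\[
  \|\nabla^\ell|_z f\| \;\lesssim\; \sup_{w \in U'} \|\nabla^{n+1}|_w f\| \cdot d(p,z)^{n+1-\ell},
\]
on some compact subneighborhood $U' \subset U$. Because $f$ depends linearly on $\tau$ and smoothly (jointly) on $p,q$ and the point of evaluation, this supremum is $\lesssim |\tau|$, uniformly over $p,q$ in a compact region.

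To finish, I would apply Corollary \ref{cor.2.8b} to write the $k$-th order differential operator $V$ locally as $Vf = \sum_{\ell=0}^{k} \nabla^\ell_{W_\ell} f$ with smooth sections $W_\ell \in \Gamma(\Sigma^\ell TM)$. Combining this decomposition with the estimate above, and assuming $d(p,z) \le 1$ (which we can arrange by shrinking $U$),
\[
  |Vf(z)| \;\lesssim_V\; |\tau| \sum_{\ell=0}^{k} d(p,z)^{n+1-\ell} \;\lesssim_V\; |\tau|\, d(p,z)^{n+1-k},
\]
as claimed. The only slightly subtle step is the induction that boosts vanishing of $\Sym[\nabla^\ell|_p f]$ to vanishing of $\nabla^\ell|_p f$; everything else is a direct application of already established results.
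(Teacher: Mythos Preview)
Your proof is correct and follows essentially the same route as the paper: both reduce to Corollary \ref{cor.tay3}. The paper is a bit more direct, however: it simply observes that $\Pi_p\Gamma_{p\leftarrow q}\tau = \operatorname{Tay}_p^n(\Pi_q\tau)$ (immediate from Definitions \ref{d.2.9} and \ref{def.tay}), so that with $g := \Pi_q\tau$ one has $\Pi_q\tau - \Pi_p\Gamma_{p\leftarrow q}\tau = g - \operatorname{Tay}_p^n g$, and Corollary \ref{cor.tay3} applies directly to $g$. Your intermediate step of upgrading $\Sym[\nabla^\ell|_p f]=0$ to $\nabla^\ell|_p f = 0$ is therefore not needed (and in any case follows in one line from \eqref{equ.id3} without induction, since $Q^\nabla_{n,n}=I$).
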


\begin{proof}

Let
\[
g\left(  z\right)  =\left(  \Pi_{q}\tau\right)  \left(  z\right)
=\left\langle \tau,G^n_{q}\left(  z\right)  \right\rangle
\]
so that
\[
\left(  \Pi_{p}\Gamma_{p\leftarrow q}\tau\right)  \left(  z\right)  =\left(
\operatorname*{Tay}\nolimits_{p}^{n}g\right)  \left(  z\right)  .
\]
Using Corollary \ref{cor.tay3}, we have the estimate,%
\[
\left\Vert \nabla^{k}|_{z}\left[  g-\left(  \operatorname*{Tay}\nolimits_{p}%
^{n}g\right)  \right]  \right\Vert \leq\frac{1}{\left(  n+1-k\right)  !}%
\max_{0\leq t\leq1}\left\Vert \nabla^{n+1}|_{\gamma_{v}\left(  t\right)
}\left(  g-\operatorname*{Tay}\nolimits_{p}^{n}g\right)  \right\Vert \cdot
d\left(  p,z\right)  ^{n+1-k},
\]
where $v:=\exp_{p}^{-1}\left(  z\right)  .$ For $d\left(  z,p\right)
<\varepsilon$ and $0\leq t\leq1,$ let $\left[  p,z\right]  _{t}:=\exp\left(
t\exp_{p}^{-1}\left(  z\right)  \right)  $ so that $t\rightarrow\left[
p,z\right]  _{t}$ is the geodesic joining $p$ to $z$ parametrized by $\left[
0,1\right]  .$ Then we have
\begin{align*}
\max_{0\leq t\leq1}\left\Vert \nabla^{n+1}|_{\gamma_{v}\left(  t\right)
}\left(  g-\operatorname*{Tay}\nolimits_{p}^{n}g\right)  \right\Vert  &
=\max_{0\leq t\leq1}\left\Vert \nabla^{n+1}|_{\left[  p,z\right]  _{t}}\left(
g-\operatorname*{Tay}\nolimits_{p}^{n}g\right)  \right\Vert \\
&  \leq\max_{w:d\left(  w,p\right)  \leq d\left(  p,z\right)  }\left\Vert
\nabla^{n+1}|_{w}\left(  g-\operatorname*{Tay}\nolimits_{p}^{n}g\right)
\right\Vert
\end{align*}
and so we have
\[
\left\Vert \nabla^{k}|_{z}\left[  g-\left(  \operatorname*{Tay}\nolimits_{p}%
^{n}g\right)  \right]  \right\Vert \leq\frac{1}{\left(  n+1-k\right)  !}%
\max_{w:d\left(  w,p\right)  \leq d\left(  p,z\right)  }\left\Vert
\nabla^{n+1}|_{w}\left(  g-\operatorname*{Tay}\nolimits_{p}^{n}g\right)
\right\Vert \cdot d\left(  p,z\right)  ^{n+1-k}.
\]

\end{proof}

For the proof of the first half of Theorem \ref{thm:cgammaDgamma} below, it is
convenient to introduce the notion of a parallelism on a vector bundle, $E$,
over $M.$

\begin{definition}
[Diagonal domains]\label{d.2.11}Let $\mathcal{U}$ be an open set on $M$. An
open set $\mathcal{D}^{\mathcal{U}}\subseteq M\times M$ is a\textbf{
}$\mathcal{U}$ -- \textbf{diagonal domain}$\mathbb{\ }$if it contains the
diagonal of $\mathcal{U}$, that is $\Delta^{\mathcal{U}}:=\bigcup
_{p\in\mathcal{U}}\left(  p,p\right)  \subseteq\mathcal{D}^{\mathcal{U}}$. A
\textbf{local diagonal domain} is a $\mathcal{V}$ -- diagonal domain for some
nonempty open $\mathcal{V\subseteq}M$.

If $\mathcal{U}=M,$ we write $\mathcal{D}:=\mathcal{D}^{M}$ and refer to
$\mathcal{D}$ simply as a diagonal domain.
\end{definition}

\begin{definition}
[Parallelisms]\label{d.2.12}Let $E$ be a vector bundle over $M$ and
$\operatorname{Hom}\left(  E\right)  \rightarrow M\times M$ be the associated
vector bundle over $M\times M$ with fibers, $\operatorname{Hom}_{\left(
q,p\right)  }\left(  E\right)  :=L\left(  E_{p},E_{q}\right)  $ for $\left(
q,p\right)  \in M\times M,$ where $L\left(  E_{p},E_{q}\right)  $ denote the
set of all linear transformations from $E_{p}$ to $E_{q}.$ A smooth local
section $U$ $\in\Gamma\left(  \operatorname{Hom}\left(  E\right)  \right)  $
with domain $\mathcal{D}$ (i.e. $U\left(  q,p\right)  \in L\left(  E_{p}%
,E_{q}\right)  $ for all $\left(  q,p\right)  \in\mathcal{D}$) is called a
\textbf{parallelism }if $U\left(  p,p\right)  =I_{p}$. If $U$ is only defined
on a local diagonal domain, we refer to $U$ as a \textbf{local parallelism}.
\end{definition}

\begin{example}
[Parallel translation and parallelisms]\label{exa.2.13}One natural example of
a parallelism when $\left(  M,g\right)  $ is a Riemannian manifold and $E$ is
equipped with a covariant derivative, $\nabla^{E},$ is to define
\[
U^{\nabla}\left(  q,p\right)  :=//_{1}^{E}\left(  t\rightarrow\exp_{p}\left(
t\exp_{p}^{-1}\left(  q\right)  \right)  \right)  ,
\]
where $p,q\in M$ are \textquotedblleft close enough\textquotedblright\ so
there is a unique vector $v_{p}$ with minimum length such that $q=\exp
_{p}^{\nabla}\left(  v_{p}\right)  $ and $//_{\left(  \cdot\right)  }^{E}$
denotes the parallel translation operator on $E$ relative to $\nabla^{E}.$ For
our purposes below $E$ will be a bundle associated to $TM$ and $\nabla^{E}$
will be the induced connection on this bundle associated to the Levi-Civita
covariant derivative on $\left(  M,g\right)  .$
\end{example}

\begin{example}
[Charts and parallelisms]\label{exa.2.13a}Each chart $(\Psi,\U)$
induces a local parallelism on
$(T^{\ast}M)^{\otimes\ell}$ for any $\ell\in\mathbb{N}$ as follows. If
$A\in(T_{p}^{\ast}M)^{\otimes\ell}$ is expressed as%
\[
A=\sum_{i_{1},\dots i_{\ell}=1}^{d}A_{i_{1},\dots,i_{\ell}}d\Psi^{i_{1}}%
|_{p} \otimes\dots \otimes d\Psi^{i_{\ell}}|_{p},
\]
then we define $U^{\Psi}(q,p)A\in T_{q}^{\ast}M^{\otimes\ell}$ by
\[
U^{\Psi}(q,p)A=\sum_{i_{1},\dots i_{\ell}=1}^{d}A_{i_{1},\dots,i_{\ell}}%
d\Psi^{i_{1}}|_{q}\otimes\dots \otimes d\Psi^{i_{\ell}}|_{q}.
\]
In other words, $U^{\Psi}(q,p)$ is uniquely determined by requiring
\[
\left\langle U^{\Psi}(q,p)A,\frac{\partial}{\partial\Psi^{i_{1}}}|_{q}%
\otimes\dots\otimes\frac{\partial}{\partial\Psi^{i_{\ell}}}|_{q}\right\rangle
=\left\langle A,\frac{\partial}{\partial\Psi^{i_{1}}}|_{p}\otimes\dots
\otimes\frac{\partial}{\partial\Psi^{i_{\ell}}}|_{p}\right\rangle
\]
for all $q\in \U$ and $1\leq i_{1},i_{2},\dots i_{\ell}\leq d.$ [This example
is basically a special case of Example \ref{exa.2.13} where one takes $\nabla$
to be the flat connection, $D^{\Psi},$ defined in Notation \ref{not.chart}.]
\end{example}

With the aid of a parallelism, we can now define the notion of $\gamma$ --
H\"{o}lder section, $S,$ on $E.$ In what follows we assume that $E$ is
equipped with a smoothly varying inner product, $\left\langle \cdot
,\cdot\right\rangle _{E}.$ We do not necessarily assume that $\nabla^{E}$ is
compatible with $\left\langle \cdot,\cdot\right\rangle _{E}$ or that $U\left(
p,q\right)  $ is unitary for all $\left(  p,q\right)  \in\mathcal{D}.$

\begin{lemma}
  \label{r.2.15}
  Let $S$ be a continuous section of a vector bundle $E$.
  Let $(U, \mathcal{D}), (U', \mathcal{D})$ be parallelisms on $E$.
  Then for every compactum $K \subset \mathcal{D}$
  \begin{align*}
    ||U(q,p) S(p) - S(q)||
    \le C_K 
    \left( ||U'(q,p) S(p) - S(q)|| + d(p,q) \right), \forall p, q \in K.
  \end{align*}
\end{lemma}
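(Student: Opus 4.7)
The plan is to bound the two sides by comparing both parallelisms on the diagonal, where they agree. Write
\begin{align*}
  U(q,p) S(p) - S(q) = \bigl[ U(q,p) - U'(q,p) \bigr] S(p) + \bigl[ U'(q,p) S(p) - S(q) \bigr],
\end{align*}
so by the triangle inequality it suffices to show
\begin{align*}
  \|[U(q,p) - U'(q,p)] S(p)\| \le C_K\, d(p,q), \qquad (p,q) \in K.
\end{align*}

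The key step is to prove that the local section $V := U - U' \in \Gamma(\operatorname{Hom}(E))$ on $\mathcal{D}$ vanishes on the diagonal $\Delta^M \cap \mathcal{D}$ by definition of a parallelism, and is smooth there. I would cover the compactum $K \subset \mathcal{D}$ by finitely many product trivializations $\mathcal{D} \cap (\mathcal{O}_i \times \mathcal{O}_i)$ in which $\operatorname{Hom}(E)$ is trivialized by a local frame of $E$. In such a trivialization $V$ is a smooth matrix-valued function of $(q,p)$ which vanishes on $\{q = p\}$. The mean value theorem applied along the geodesic segment from $(p,p)$ to $(q,p)$ (which stays in $\mathcal{D}$ after possibly shrinking $K$) then gives
\begin{align*}
  \|V(q,p)\|_{\mathrm{op}} \le \bigl(\sup_{(q',p)}\|d_{q'} V(q',p)\|\bigr)\, d(p,q) \lesssim_K d(p,q).
\end{align*}
Since $S$ is continuous and $K$ projects to a compactum in $M$, we have $\|S(p)\| \le C_K$, and multiplying through yields the claimed bound on $\|V(q,p)S(p)\|$.

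The main (minor) obstacle is simply bookkeeping: making sure the covering of $K$ by product trivializations stays inside $\mathcal{D}$ so that the mean value argument applies, and that the operator norms on the fibers of $\operatorname{Hom}(E)$ are controlled uniformly on $K$. Both are standard consequences of compactness, so no real difficulty arises — the statement is essentially the observation that two parallelisms differ by a Lipschitz-small factor near the diagonal.
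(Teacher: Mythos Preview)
Your proposal is correct and follows essentially the same approach as the paper: the same triangle-inequality decomposition, then the observation that $U-U'$ is smooth and vanishes on the diagonal, together with local boundedness of $S$ on compacta. Your version is somewhat more detailed (explicit mean value argument, finite cover by trivializations), but the argument is the same.
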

\begin{proof}
  We work in a local trivialization.
  Let
  $U, U': \mathbb{R}^{d}\times\mathbb{R}^{d}\rightarrow GL\left(  \mathbb{R}%
  ^{N}\right)  $ be smooth functions such that $U\left(  x,x\right), U'\left(x,x\right)  =I,$ which
  we view to be a parallelism on the trivial bundle, $\mathbb{R}^{d}%
  \times\mathbb{R}^{N}$ over $\mathbb{R}^{d}.$ A continuous section of this
  bundle may be identified with a continuous function, $S:\mathbb{R}%
  ^{d}\rightarrow\mathbb{R}^{N}$
  Then
  \begin{align*}
    ||U(x,y) S(y) - S(x)||
    \le
    ||\left( U(x,y) - U'(x,y) \right) S(y)||
    +
    ||U'(x,y) S(y) - S(x)||.
  \end{align*}
  The statement then follows from smoothness of $U,U'$,
  the fact that they coincide at $x,x$ and local boundedness of $S$.
\end{proof}

\begin{lemma}
\label{lem:UPsi}Let $f\in C\left(  M\right)  ,$ $\gamma>0$ and $n=\lfloor
\gamma\rfloor\in\mathbb{N}_{0}.$ Then $f\in C^{\gamma}(M)$ (as in Definition
\ref{def:cgamma}) iff
$f$ a $n$-times continuously differentiable function on $M$
and for any (local)
parallelisms $\left(  U\right)  $ on the vector bundle $\Sigma^{n}T^{\ast}M,$
$\Sym[ \nabla^n| f ]$ satisfies
\begin{equation}
|U(q,p)\Sym[ \nabla^n|_p f]-\Sym[ \nabla^n|_q f ]|\lesssim d(q,p)^{\gamma-n},
\label{equ.uholder}%
\end{equation}
where $\nabla$ is the Levi-Civita covariant derivative.
\end{lemma}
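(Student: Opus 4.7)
\textbf{Plan for the proof of Lemma \ref{lem:UPsi}.}
The strategy is to reduce everything to a computation in a single chart with a convenient parallelism, and then exploit the explicit linear relation between $\Sym[\nabla^{\ell}f]$ and ordinary partial derivatives provided by Lemma~\ref{l.2.7bb} and Corollary~\ref{cor.2.7}. First observe that, by Lemma~\ref{r.2.15}, if the estimate \eqref{equ.uholder} holds for one parallelism on a neighbourhood of the diagonal in a chart, it holds for every other local parallelism, because the correction term introduced when switching parallelisms is $O(d(p,q))$ and $d(p,q) \lesssim d(p,q)^{\gamma-n}$ for $d(p,q)$ bounded. A finite-atlas/partition-of-unity argument together with Corollary~\ref{lem:cgammaInEveryAtlas} reduces the claim to checking the estimate inside each chart $(\Psi_{i},\U_{i})$, using the chart parallelism $U^{\Psi_{i}}$ from Example~\ref{exa.2.13a}.

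Next I would set up the chart computation. Fix a chart $(\Psi,\U)$ and put $F := f \circ \Psi^{-1}$. By Lemma~\ref{l.2.7bb} applied with $D = D^{\Psi}$, there exist smooth sections $Q_{\ell,n} \in \Gamma(\operatorname{Hom}(TU^{\otimes n},TU^{\otimes \ell}))$, $\ell \le n$, with $Q_{n,n}=I$, such that for every $C^{n}$ function $f$
\[
\langle \nabla^{n}|_{p} f, W\rangle \;=\; \langle D^{n}|_{p} f, W\rangle + \sum_{\ell=1}^{n-1} \langle D^{\ell}|_{p}f, Q_{\ell,n}(p) W\rangle, \qquad W \in (T_{p}M)^{\otimes n}.
\]
Since $D^{n}F$ is automatically symmetric, symmetrising yields
\[
\langle \Sym \nabla^{n}|_{p} f, \partial_{\Psi^{i_{1}}}|_{p} \otimes \cdots \otimes \partial_{\Psi^{i_{n}}}|_{p}\rangle \;=\; \partial_{i_{1}\cdots i_{n}} F(\Psi(p)) + L_{p}(F),
\]
where $L_{p}(F)$ depends only on $\partial^{\alpha}F(\Psi(p))$ for $|\alpha|\le n-1$, with smooth coefficients built from $\Sym Q_{\ell,n}$.

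Given this identity, the two implications proceed symmetrically. For $(\Rightarrow)$, assuming $f \in C^{\gamma}(M)$ gives, by Definition~\ref{def:cgamma}, $(\Psi_{i})_{*}(\phi_{i}f) \in C^{\gamma}(\R^{d})$ in the classical sense. In particular $F$ is $n$-times continuously differentiable on $\supp \phi_{i}$, its derivatives of order $<n$ are Lipschitz there, and $\partial_{i_{1}\cdots i_{n}}F$ is $(\gamma-n)$-H\"older. The chart parallelism $U^{\Psi}$ simply identifies coefficients in the $\partial_{\Psi^{i}}$-basis, so
\[
\bigl|U^{\Psi}(q,p)\Sym\nabla^{n}|_{p}f - \Sym\nabla^{n}|_{q}f\bigr|
\;\lesssim\;
\bigl|\partial_{i_{1}\cdots i_{n}}F(\Psi(p)) - \partial_{i_{1}\cdots i_{n}}F(\Psi(q))\bigr|
+ \bigl|L_{p}(F)-L_{q}(F)\bigr|,
\]
and the first piece is $O(d(p,q)^{\gamma-n})$ by classical H\"older regularity while the second is $O(d(p,q))$ by the Lipschitz bound on lower-order derivatives of $F$ together with smoothness of the $Q_{\ell,n}$. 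Both terms are dominated by $d(p,q)^{\gamma-n}$ for $d(p,q)$ small, and then by Lemma~\ref{r.2.15} the estimate extends to an arbitrary local parallelism. Conversely, $(\Leftarrow)$ assume $f \in C^{n}$ and \eqref{equ.uholder} for the parallelism $U^{\Psi}$. Then the lower-order term $L_{p}(F)-L_{q}(F)$ is still $O(d(p,q)) = O(d(p,q)^{\gamma-n})$ on compacta, so subtracting it from the assumed $(\gamma-n)$-H\"older bound yields classical $(\gamma-n)$-H\"older regularity of $\partial_{i_{1}\cdots i_{n}}F$ on $\supp\phi_{i}$. Together with $f \in C^{n}$ this gives $(\Psi_{i})_{*}(\phi_{i}f) \in C^{\gamma}(\R^{d})$, i.e.\ $f \in C^{\gamma}(M)$.

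The main technical obstacle is simply bookkeeping: one has to verify that the lower-order correction $L_{p}(F)-L_{q}(F)$ really is Lipschitz in $p,q$ (requiring smoothness of $Q_{\ell,n}$ uniformly on compacta and $C^{1}$-regularity of derivatives of $F$ of order $\le n-1$), and that the passage between parallelisms (Lemma~\ref{r.2.15}) and between atlases (Corollary~\ref{lem:cgammaInEveryAtlas}) preserves the H\"older exponent $\gamma-n \in (0,1]$. Both are routine but must be stated carefully, because the boundary cases $\gamma - n = 1$ (integer $\gamma$) and $\gamma - n$ close to $0$ are where the $O(d(p,q))$ vs.\ $O(d(p,q)^{\gamma-n})$ comparison is tight.
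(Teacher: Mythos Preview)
Your proposal is correct and follows essentially the same route as the paper: reduce to a chart and the chart parallelism $U^{\Psi}$, use the identity from Lemma~\ref{l.2.7bb}/Corollary~\ref{cor.2.7} to write $\Sym[\nabla^{n}f]$ as $D^{n}f$ plus a lower-order operator $Lf$, observe that the $Lf$ contribution is $O(d(p,q))\lesssim d(p,q)^{\gamma-n}$, and then invoke Lemma~\ref{r.2.15} to pass to an arbitrary parallelism. One small caveat: Corollary~\ref{lem:cgammaInEveryAtlas} is stated only for $\gamma\le 0$, so for the atlas-independence in the positive-$\gamma$ case you should appeal directly to the classical fact that the chart definition of $C^{\gamma}$ is coordinate-invariant rather than to that corollary.
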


\begin{proof}
Recall from Definition \ref{def:cgamma}, that $f\in C\left(  M\right)  $ is in $C^\gamma(M)$
iff $f\circ\Psi^{-1}\in C^{\gamma}(\Psi(\U))$ for every coordinate chart
$(\Psi,\U).$ These conditions are equivalent to
$f$ being $n$-times continuously differentiable
and the $n^{\text{th}}$ -- derivatives of $f\circ\Psi^{-1}$ are locally
$\left(  \gamma-n\right)  $-H\"{o}lder on $\Psi\left(  \U\right)  .$ The latter
condition may be expressed as saying%
\begin{equation}
|U^{\Psi}(q,p)D^{n}|_{p}f-D^{n}|_{q}f|\lesssim d(q,p)^{\gamma-n},
\label{equ.uj}%
\end{equation}
where $D=D^{\Psi}$ is the flat connection defined in Notation \ref{not.chart}.
From Lemma \ref{l.2.7bb} and Corollary \ref{cor.2.7} we may express
\begin{equation}
D^{n}f=\Sym[ \nabla^n f ]+Lf \label{equ.lot}%
\end{equation}
where $L$ is a linear differential operator of order at most $n-1.$ As $Lf$ is
continuously differentiable
it follows that
\[
\left(  q,p\right)  \rightarrow U^{\Psi}(q,p)\left(  Lf\right)  _{p}-\left(
Lf\right)  _{q}\text{ }%
\]
is
continuously differentiable
and vanishes at $q=p$ and therefore (by the fundamental theorem of
calculus)%
\begin{equation}
\left\vert U^{\Psi}(q,p)\left(  Lf\right)  _{p}-\left(  Lf\right)
_{q}\right\vert \lesssim d(q,p). \label{equ.lot2}%
\end{equation}
From \eqref{equ.lot} and \eqref{equ.lot2} it follows that
\eqref{equ.uj} is equivalent to%
\begin{equation}
|U^{\Psi}(q,p)\Sym[ \nabla^n|_p f]-\Sym[ \nabla^n|_q f ]|\lesssim
d(q,p)^{\gamma-n}. \label{equ.alt}%
\end{equation}
Lastly using Lemma \ref{r.2.15} we conclude that the estimates in \eqref{equ.alt} and \eqref{equ.uholder} are also equivalent.
\end{proof}

\begin{theorem}
\label{thm:polynomialRS} Fix $n\in\N$ and construct $\mcT$ and $(\Pi,\Gamma)$
as above. Then $\mcT$ is a regularity structure and $(\Pi,\Gamma)$ is a model
of transport precision $n+1$.
\end{theorem}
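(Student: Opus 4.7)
The bundle $\mathcal{T} = \bigoplus_{\ell=0}^n \Sigma^\ell T^*M$ is finite-dimensional in each fiber and carries an obvious grading $A(\mathcal{T}) = \{0,1,\dots,n\}$, so the regularity-structure axioms are immediate. The substance of the theorem is the verification of the three bounds in Definition \ref{def:modelGeneral} with $\beta = n+1$. Since $M$ is compact we can take $\delta_K$ uniformly equal to (a fixed fraction of) the injectivity radius $\delta$, and all constants that appear depend only on uniform-in-$p$ bounds for $\exp_p$, $\exp_p^{-1}$ and their derivatives on $B_{\delta/2}(0) \subset T_pM$.

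\textbf{Bound on $\Pi_p\tau$.} For $\tau \in \Sigma^\ell T^*_p M$ the distribution $\Pi_p\tau$ is in fact the function $z \mapsto \tfrac{1}{\ell!}\,\tau(\exp_p^{-1}(z)^{\otimes\ell})$. Pairing against $\varphi_p^\lambda$ and changing variables $z = \exp_p(\lambda w)$ pulls out $\lambda^\ell$ cleanly: the remaining integral over $w \in B_\delta(0)$ involves $\varphi(w)$ times the smooth Jacobian of $\exp_p$ and $\tau(w^{\otimes\ell})$, both bounded uniformly in $p$. This gives $|\langle \Pi_p\tau, \varphi_p^\lambda\rangle| \lesssim \lambda^\ell |\tau|$.

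\textbf{Re-expansion bound.} By construction, $\Pi_p \Gamma_{p\leftarrow q}\tau$ is nothing other than the degree-$n$ Taylor polynomial (in exponential coordinates at $p$) of the smooth function $\Pi_q\tau$; see Definition \ref{d.2.9} and Lemma \ref{lem:theyRealize1}. I would therefore apply Lemma \ref{l.2.10} with the zeroth-order differential operator (i.e.\ $V = \mathrm{Id}$) to get the pointwise estimate $|(\Pi_q\tau - \Pi_p\Gamma_{p\leftarrow q}\tau)(z)| \lesssim |\tau|\, d(z,p)^{n+1}$. Since $\varphi_p^\lambda$ is supported in $\exp_p(B_{\lambda\delta}(0))$, pairing gives the required $\lambda^{n+1}|\tau|$.

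\textbf{Bound on $\Gamma$.} This is the main technical step. Given $\tau \in \Sigma^\ell T^*_q M$, we need $|(\Gamma_{p\leftarrow q}\tau)_m| \lesssim d(p,q)^{(\ell-m)\vee 0}|\tau|$ for all $0 \le m \le n$. By definition $(\Gamma_{p\leftarrow q}\tau)_m = \Sym[\nabla^m|_p \Pi_q\tau]$, and by Lemma \ref{l.2.4} it suffices, for $w \in T_pM$, to control
\[
\nabla_{w^{\otimes m}}^m (\Pi_q\tau) \;=\; \frac{1}{\ell!}\,\frac{d^m}{dt^m}\Big|_{t=0} \tau\bigl(u(t)^{\otimes\ell}\bigr), \qquad u(t) := \exp_q^{-1}(\exp_p(tw)).
\]
The key observations are that $u(0) = \exp_q^{-1}(p)$ has norm exactly $d(p,q)$, while $u^{(k)}(0)$ is bounded (uniformly in $p,q$, depending on $|w|$) for $k \ge 1$, because the exponential maps and their derivatives are uniformly bounded on a compact manifold. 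Expanding by Leibniz, each resulting summand is a constant multiple of $\tau\bigl(u^{(k_1)}(0) \otimes \cdots \otimes u^{(k_\ell)}(0)\bigr)$ with $k_1 + \cdots + k_\ell = m$. Letting $a := \#\{j : k_j = 0\}$ be the number of factors which equal $u(0)$, one has $a \ge \max(\ell - m, 0)$ since the remaining $\ell - a$ slots must absorb all $m$ derivatives. Each such summand is therefore bounded by $|\tau|\,d(p,q)^a \lesssim |\tau|\,d(p,q)^{(\ell-m)\vee 0}$ (using $d(p,q) \le \delta$). Summing and symmetrizing gives the desired bound. The main obstacle is simply keeping track of the combinatorics and ensuring uniformity in $p,q$ — once one notes that $u(0)$ carries the only factor of $d(p,q)$, the case analysis $m \le \ell$ versus $m > \ell$ is routine.
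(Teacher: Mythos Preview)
Your proof is correct and, for the first two bounds, essentially identical to the paper's: the $\Pi_p$ estimate follows from $\Pi_p\tau$ being a degree-$\ell$ monomial in exponential coordinates, and the transport-precision estimate is exactly Lemma~\ref{l.2.10}.

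For the $\Gamma$-bound the two arguments diverge. The paper works in the chart $\exp_q^{-1}$, invokes Lemma~\ref{l.2.7bb} to express $\Sym[\nabla^m|_p\,\Pi_q\tau]$ as a linear combination of flat derivatives $D^i|_p$ with $1\le i\le m$, and then uses that $\Pi_q\tau$ is a degree-$\ell$ polynomial in those flat coordinates, so $D^i|_p\,\Pi_q\tau$ contributes $d(p,q)^{\ell-i}$ (and vanishes for $i>\ell$). Your route instead stays in geodesic coordinates at $p$: via Lemma~\ref{l.2.4} you reduce $\nabla^m_{w^{\otimes m}}\Pi_q\tau$ to a one-variable $m$-th derivative of $\tau(u(t)^{\otimes\ell})$, then expand by Leibniz and count how many undifferentiated factors $u(0)=\exp_q^{-1}(p)$ survive. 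Both arguments ultimately amount to the same power-counting, but yours is more self-contained --- it bypasses the inductive machinery of Lemma~\ref{l.2.7bb} and the auxiliary sections $Q_{\ell,n}$ --- at the cost of needing the uniform boundedness of the higher $t$-derivatives $u^{(k)}(0)$, which you correctly attribute to compactness of $M$ and smoothness of the exponential maps. The paper's approach, by contrast, makes the polynomial structure of $\Pi_q\tau$ do all the work and leaves the curvature corrections hidden inside the $Q_{i,m}$.
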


\begin{proof}
The fact that $\mcT$ is a regularity structure is immediate. Let us now set
$\delta=\delta_{M}$ to be the injectivity radius of $M$ and for $q\in M,$ let
$U_{q}:=\exp_{q}(B_{\delta_{M}}(0)).$

We have to check that
\[
||(\Pi,\Gamma)||_{\beta;M}<\infty.
\]
The homogeneity estimate, $|\langle\Pi_{p}\tau,\varphi_{p}^{\lambda}%
\rangle|\lesssim\lambda^{\ell}$, for $\tau\in\mcT_{\ell}|_{p}$ follows from
the fact that $\Pi_{p}\tau$ is a monomial of order $\ell$ in $\exp_{p}^{-1}%
$-coordinates. Lemma \ref{l.2.10} gives the transport precision, i.e.%
\[
|\langle\Pi_{q}\tau-\Pi_{p}\Gamma_{p\leftarrow q}\tau,\varphi_{p}^{\lambda
}\rangle|\lesssim\lambda^{n+1}\text{ for all }\tau\in\mcT|_{q}.
\]

Let $D$ be the covariant derivative induced by the chart $\exp_{q}^{-1}$.
Using Lemma \ref{l.2.7bb} we get
\begin{align*}
\langle\Sym\left[  \nabla^{m}|_{p}\tau_{\ell}\left(  \exp_{q}^{-1}\right)
^{\otimes\ell}\right]  ,W\rangle &  =\sum_{i=1}^{m}\langle D^{i}|_{p}%
\tau_{\ell}\left(  \exp_{q}^{-1}\right)  ^{\otimes\ell},Q_{i,n}\rangle\\
&  \lesssim\sum_{i=1}^{m}d(p,q)^{\ell-i}|W|\\
&  \lesssim d(p,q)^{\ell-m}|W|,
\end{align*}
and hence $|\Gamma_{p\leftarrow q}\tau_{\ell}|_{m}\lesssim d(p,q)^{\ell-m}$,
which finishes the proof.
\end{proof}

We are finally able to characterize $C^{\gamma}(M)$ in terms of the
``polynomial'' regularity structure.

\begin{theorem}
\label{thm:cgammaDgamma} Let $\gamma\in(0,\infty)\setminus\mathbb{N}$ and $f:
M \rightarrow\R$ continuous. Then, $f\in C^{\gamma}\left(  M\right)  $ if and
only if there is $\hat{f}\in\mathcal{D}^{\gamma}(M,\mathcal{T})$\footnote{The
space of modelled distributions was defined in Definition
\ref{def:modelledDistributions}.} with $\hat{f}_{0}\left(  p\right)  =f\left(
p\right)  $. In that case,
\[
\hat{f}_{\ell}\left(  p\right)  =\Sym[\nabla^{\ell}|_{p}f].
\]

\end{theorem}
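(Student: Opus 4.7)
The plan is to prove both implications separately, each time translating the modelled distribution condition on $\hat f$ into a statement about the symmetric covariant derivatives of $f$ via Taylor's theorem on $M$ (Theorem \ref{thm:taylor}) and the characterization of $C^\gamma(M)$ given in Lemma \ref{lem:UPsi}. Set $n := \lfloor \gamma \rfloor$. Recall from Definition \ref{d.2.9} that $(\Gamma_{p \leftarrow q} \hat f(q))_\ell = \Sym[\nabla^\ell|_p \Pi_q \hat f(q)]$, and that by Lemma \ref{lem:theyRealize1} the polynomial $\Pi_q \hat f(q) = \sum_{k=0}^n \tfrac{1}{k!}\langle \hat f_k(q),[\exp_q^{-1}(\cdot)]^{\otimes k}\rangle$ has symmetrized covariant derivatives at $q$ equal to the given $\hat f_k(q)$.

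For the forward direction, assume $f \in C^\gamma(M)$ and set $\hat f_\ell(p) := \Sym[\nabla^\ell|_p f]$ for $0 \le \ell \le n$. By Lemma \ref{lem:theyRealize1} this gives $\Pi_q \hat f(q) = \operatorname{Tay}_q^n f$, so the required estimate reduces to bounding $|\Sym[\nabla^\ell|_p g]|$ with $g := f - \operatorname{Tay}_q^n f$. Since $f$ is only $C^\gamma$, Corollary \ref{cor.tay3} does not apply directly; instead I would exploit that $\nabla^k|_q g = 0$ for every $0 \le k \le n$, which follows from Corollary \ref{cor.derivatives} since $f$ and $\operatorname{Tay}_q^n f$ share the same symmetrized covariant derivatives at $q$. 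For $\ell < n$, Taylor's theorem \eqref{equ.tay0} applied to $\nabla^\ell g$ expanded about $q$ to order $n-\ell-1$ has vanishing polynomial part and remainder
\[
\nabla^\ell|_p g = \frac{1}{(n-\ell-1)!}\int_0^1 \nabla^n_{\dot\gamma(t)^{\otimes(n-\ell)} \otimes W_t}\bigl|_{\gamma(t)} g \,(1-t)^{n-\ell-1}\,dt,
\]
where $\gamma$ is the geodesic from $q$ to $p$. Combining Lemma \ref{lem:UPsi} (which gives $|\Sym[\nabla^n|_r f] - U(r,q)\Sym[\nabla^n|_q f]| \lesssim d(r,q)^{\gamma-n}$), smoothness of $\operatorname{Tay}_q^n f$ with uniformly bounded coefficients, and Corollary \ref{cor.2.8} relating $\nabla^n$ to $\Sym\nabla^{\le n}$, one obtains $|\nabla^n|_r g| \lesssim d(r,q)^{\gamma - n}$ along $\gamma$; integrating yields $d(p,q)^{\gamma - \ell}$. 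The edge case $\ell = n$ has empty Taylor sum and follows directly by comparing $\Sym[\nabla^n|_p f]$ with a parallelism-translate of $\Sym[\nabla^n|_q f]$ via Lemma \ref{lem:UPsi}, absorbing a smooth $O(d(p,q))$ correction into $O(d(p,q)^{\gamma-n})$ (valid since $\gamma - n < 1$).

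For the converse, assume $\hat f \in \mathcal{D}^\gamma(M,\mcT)$ with $\hat f_0 = f$. The level-$0$ condition reads
\[
\Bigl|f(p) - \sum_{k=0}^n \tfrac{1}{k!}\bigl\langle \hat f_k(q),[\exp_q^{-1}(p)]^{\otimes k}\bigr\rangle\Bigr| \lesssim d(p,q)^\gamma,
\]
providing $f$ with a pointwise Taylor-type expansion of total order $\gamma > n$ at every base point $q$. Pulling back through $\exp_q$ and applying Lemma \ref{lem.symm} in flat coordinates, the uniqueness of Taylor coefficients forces $f$ to be $n$-times continuously differentiable with $\Sym[\nabla^\ell|_q f] = \hat f_\ell(q)$ for $0 \le \ell \le n$; continuity of these derivatives in $q$ is guaranteed by the intermediate-level bounds on $\hat f$. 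The top-level ($\ell = n$) modelled distribution bound, once $(\Gamma_{p \leftarrow q}\hat f(q))_n$ has been compared with a parallelism-translate of $\hat f_n(q)$ up to an $O(d(p,q))$ smooth correction (using the same smoothness analysis of $\Pi_q \hat f(q)$ as in the forward direction), becomes $|\Sym[\nabla^n|_p f] - U(p,q)\Sym[\nabla^n|_q f]| \lesssim d(p,q)^{\gamma-n}$, which by Lemma \ref{lem:UPsi} is precisely $f \in C^\gamma(M)$.

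The main obstacle is the top-homogeneity bookkeeping: showing that $\Sym[\nabla^n|_p \Pi_q \hat f(q)]$ agrees with a parallelism-translate of $\hat f_n(q)$ modulo $O(d(p,q))$, with constants uniform in $q$ and depending only on the uniformly bounded lower coefficients $\hat f_0(q),\dots,\hat f_{n-1}(q)$. This requires a careful tracking of the curvature and torsion corrections relating $\nabla^n$ to $\Sym\nabla^{\le n}$ on the smooth polynomial $\Pi_q \hat f(q)$, as outlined in Corollary \ref{cor.2.8} and Remark \ref{rem.global}. Once this smoothness statement is quantified uniformly in $q$, the Taylor estimates above make the rest of both implications routine.
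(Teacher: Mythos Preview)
Your forward direction is essentially the paper's argument with the roles of $p$ and $q$ interchanged: treat the top level $\ell=n$ via a parallelism and Lemma \ref{lem:UPsi}, then feed this Hölder bound on $\Sym[\nabla^n g]$ into the Taylor remainder integral to handle $\ell<n$. That part is fine.

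The converse direction, however, has a real gap. You write that the level-$0$ bound gives $f$ a Taylor-type expansion at every base point and that ``applying Lemma \ref{lem.symm} in flat coordinates, the uniqueness of Taylor coefficients forces $f$ to be $n$-times continuously differentiable.'' Lemma \ref{lem.symm} does not do this: it \emph{assumes} $f$ is $n$-times continuously differentiable and concludes that its derivatives vanish when $f=o(d^n)$. Having, for each $q$, a polynomial $P_q$ with $|f(p)-P_q(p)|\lesssim d(p,q)^\gamma$ only says $f$ is Peano-differentiable of order $n$ at each point; promoting this to genuine $C^n$-regularity is exactly the nontrivial step, and it cannot be extracted from the level-$0$ bound alone via Lemma \ref{lem.symm}. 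You also relegate the intermediate-level bounds to ``continuity of these derivatives'', but in fact they are needed to \emph{establish} the existence of the higher derivatives in the first place.

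The paper closes this gap by an inductive bootstrap: assuming $f$ is $\ell$-times differentiable with $\Sym[\nabla^i f]=\hat f_i$ for $i\le\ell$, it applies Taylor's theorem to $f$ at order $\ell$, then uses the level-$\ell$ modelled-distribution bound to replace $\Sym[\nabla^\ell f]$ by $\Sym[\nabla^\ell g]$ (with $g=\Pi_p\hat f(p)$ smooth) up to $O(d^{\gamma-\ell})$ along the geodesic, expands the smooth $g$ one order further, and reads off an $(\ell+1)$-st order expansion of $f\circ\exp_p$ that yields $(\ell+1)$-fold differentiability and $\Sym[\nabla^{\ell+1} f]=\hat f_{\ell+1}$. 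Only after reaching $\ell=n$ does the argument switch to the parallelism and Lemma \ref{lem:UPsi}, exactly as you describe in your Step~2. So your final step is correct, but you are missing the inductive mechanism that makes $f$ $n$-times differentiable to begin with.
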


\begin{proof}
$\left(  \implies\right)  $ Let $f\in C^{\gamma}\left(  M\right)  $ and
define
\[
\hat{f}\left(  p\right)  := \sum_{\ell=0}^{\lfloor\gamma\rfloor}
\Sym[\nabla^{(\ell)}|_{p}f],
\]
i.e. $\hat{f}_{\ell}\left(  p\right)  :=\Sym[\nabla^{(\ell
)}|_{p}f]$ for $0\leq\ell\leq\lfloor\gamma\rfloor=:n.$ We have to check that
$\hat{f}\in\mathcal{D}^{\gamma}(M, \mathcal{T})$, i.e. for all $\ell
\leq\lfloor\gamma\rfloor$ and $d(p,q) < \delta$
\[
\left\vert (\hat{f}\left(  q\right)  -\Gamma_{q\leftarrow p}\hat{f}\left(
p\right)  )_{\ell}\right\vert \lesssim d\left(  p,q\right)  ^{\gamma-\ell}
\]
or equivalently, using the definition of $\Gamma_{q \leftarrow p}$, if
\[
g\left(  q\right)  :=\left(  \Pi_{p}\hat{f}\left(  p\right)  \right)  \left(
q\right)  = \left\langle \hat{f}\left(  p\right)  ,G^n_{p}\left(  q\right)
\right\rangle ,
\]
we must show
\begin{equation}
\label{equ.6}\left\vert \Sym\left[  \nabla^{\ell}|_{q} \left(  f-g\right)
\right]  \right\vert \lesssim d\left(  p,q\right)  ^{\gamma-\ell}.
\end{equation}

\textbf{$\ell=n$:}\newline Recall $\gamma-n\in(0,1]$. Now the term to bound in
\eqref{equ.6} reads as
\begin{align*}
|\Sym\left[  \nabla^{n}|_{q}f-\nabla^{n}|_{q}\left[  \sum_{i=0}^{n}%
\langle\nabla^{i}|_{p}f,\exp_{p}^{-1}(\cdot)^{\otimes i}\right]  \right]  |
&  \leq|\Sym\left[  \nabla^{n}|_{q}f-\nabla^{n}|_{q}\left[  \langle\nabla
^{n}|_{p}f,\exp_{p}^{-1}(\cdot)^{\otimes i}\right]  \right]  |\\
&  \qquad+\sum_{i=0}^{n-1}|\Sym\left[  \nabla^{n}|_{q}\left[  \langle
\nabla^{i}|_{p}f,\exp_{p}^{-1}(\cdot)^{\otimes i}\right]  \right]  |.
\end{align*}
By Lemma \ref{lem:theyRealize}
\[
\Sym\left[  \nabla^{n}|_{q}\left[  \langle\nabla^{i}|_{p}f,\exp_{p}^{-1}%
(\cdot)^{\otimes i}\right]  \right]  =0,\text{ at }q=p,
\]
and since the expression is smooth in $q$ we can focus on
\[
|\Sym\left[  \nabla^{n}|_{q}f-\nabla^{n}|_{q}\left[  \langle\nabla^{n}%
|_{p}f,\exp_{p}^{-1}(\cdot)^{\otimes i}\right]  \right]  |
\]

Define on the vector bundle $\Sigma^{n}T^{\ast}M$ the parallelism
\begin{align}
  \label{eq:U}
  U(q,p)S:=\nabla^{n}|_{q}\langle S,\exp_{p}^{-1}(\cdot)^{\otimes n}\rangle.
\end{align}

Then by Lemma \ref{lem:UPsi}
\begin{align*}
|\Sym[ \nabla^n|_q f ] - U(q,p) \Sym[ \nabla^n|_p f]| \lesssim d(q,p)^{\gamma-
n},
\end{align*}
so for $\ell= n$ we are done.


$\ell= 0, \dots, n - 1$:
We need to show \eqref{equ.6}. It is enough to bound for $w \in T_{p} M$, with
$v := \exp^{-1}_{p}(q)$,
\begin{align*}
|\langle\nabla^{\ell}|_{q} \left(  f-g\right)  , ( //_{t}(\gamma_{v}) w
)^{\otimes\ell} \rangle| \lesssim|w|^{\ell} d\left(  p, q \right)
^{\gamma-\ell}.
\end{align*}
Here $//_{t}( \gamma_{v} ) : T_{\gamma_{v}(0)} M \to T_{\gamma_{v}(t)} M$
denotes the parallel transport along $\gamma_{v}(t) := \exp_{p}(tv)$.

For this purpose, define
\[
W_{t} :=//_{t}\left(  \gamma_{v}\right)  w_{1}\otimes\dots\otimes//_{t}\left(
\gamma_{v}\right)  w_{\ell},
\]
and $F:=f-g.$ Since $W_{t}$ and $\dot{\gamma}_{v}\left(  t\right)  $ are
parallel along $\gamma_{v}\left(  t\right)  $ it follows that
\[
\frac{d^{k}}{dt^{k}}\nabla_{W_{t}}^{\ell}F=\nabla_{\dot{\gamma}_{v}\left(
t\right)  ^{\otimes k}\otimes W_{t}}^{\ell+k}F~\forall~0\leq k\leq n-\ell.
\]
Therefore by Taylor's theorem and the fact that $\nabla^{m}|_{p} F_{p}=0$ for
$0\leq m\leq n$\footnote{This follows by the very construction of $g$ along
with Corollary \ref{cor.2.8} and Lemma \ref{lem:theyRealize}.}, we have%
\begin{align}
\nabla_{W_{1}}^{\ell}F  &  =\sum_{k=0}^{n-\ell-1}\frac{1}{k!}\nabla
_{v^{\otimes k}\otimes W_{0}}^{\ell+k}F+\frac{1}{\left(  n-\ell-1\right)
!}\int_{0}^{1}\left[  \nabla_{\dot{\gamma}_{v}\left(  t\right)  ^{\otimes
n-\ell}\otimes W_{t}}^{n}F\right]  \cdot\left(  1-t\right)  ^{n-\ell
-1}dt\nonumber\\
&  =\frac{1}{\left(  n-\ell-1\right)  !}\int_{0}^{1}\left[  \nabla
_{\dot{\gamma}_{v}\left(  t\right)  ^{\otimes n-\ell}\otimes W_{t}}%
^{n}F\right]  \cdot\left(  1-t\right)  ^{n-\ell-1}dt. \label{equ.7}%
\end{align}
Since $g$ is smooth we apply the fundamental theorem of calculus to find%
\begin{align*}
\nabla_{\dot{\gamma}_{v}\left(  t\right)  ^{\otimes\left(  n-\ell\right)
}\otimes W_{t}}^{n}g  &  =\nabla_{v^{\otimes\left(  n-\ell\right)  }\otimes
W_{0}}^{n}g+\int_{0}^{t}\nabla_{\dot{\gamma}_{v}\left(  \tau\right)
^{\otimes\left(  n-\ell\right)  +1}\otimes W_{\tau}}^{n+1}gd\tau\\
&  =\nabla_{v^{\otimes\left(  n-\ell\right)  }\otimes W_{0}}^{n}f+\int_{0}%
^{t}\nabla_{\dot{\gamma}_{v}\left(  \tau\right)  ^{\otimes\left(
n-\ell\right)  +1}\otimes W_{\tau}}^{n+1}gd\tau\\
&  =\nabla_{v^{\otimes\left(  n-\ell\right)  }\otimes W_{0}}^{n}f+O\left(
\left\vert v\right\vert ^{\left(  n-\ell\right)  +1}\left\vert W_{0}%
\right\vert \right)  .
\end{align*}

Using this estimate, it follows that
\[
\nabla_{\dot{\gamma}_{v}\left(  t\right)  ^{\otimes\left(  n-\ell\right)
}\otimes W_{t}}^{n}F=\nabla_{\dot{\gamma}_{v}\left(  t\right)  ^{\otimes
\left(  n-\ell\right)  }\otimes W_{t}}^{n}f-\nabla_{v^{\otimes\left(
n-\ell\right)  }\otimes W_{0}}^{n}f+O\left(  \left\vert v\right\vert ^{\left(
n-\ell\right)  +1}\left\vert W_{0}\right\vert \right)  .
\]

Since
\begin{align*}
\nabla_{\dot{\gamma}_{v}\left(  t\right)  ^{\otimes\left(  n-\ell\right)
}\otimes W_{t}}^{n} f  &  = \nabla^{n}|_{\gamma_{v}(t)} f \left(
//_{t}\left(  \gamma_{v} \right)  v \right)  ^{\otimes(n-\ell)} \otimes\left(
//_{t}\left(  \gamma_{v}\right)  w \right)  ^{\otimes\ell}\\
&  = \langle U(p, \gamma_{v}(t)) \nabla^{n}|_{\gamma_{v}(t)} f, v^{\otimes
(n-\ell)} w^{\otimes\ell} \rangle.
\end{align*}
As shown in the step $\ell= n$, we then get
\[
\left\vert \nabla_{\dot{\gamma}_{v}\left(  t\right)  ^{\otimes\left(
n-\ell\right)  }\otimes W_{t}}^{n}f-\nabla_{v^{\otimes\left(  n-\ell\right)
}\otimes W_{0}}^{n}f\right\vert \leq Cd\left(  \gamma_{v}\left(  t\right)
,p\right)  ^{\gamma-n}\left\vert v\right\vert ^{n-\ell} |w|^{\ell}=
C\left\vert v\right\vert ^{\gamma-\ell} |w|^{\ell}.
\]
and hence%
\[
\left\vert \nabla_{\dot{\gamma}_{v}\left(  t\right)  ^{\otimes\left(
n-\ell\right)  }\otimes W_{t}}^{n}F\right\vert \leq\left[  C\left\vert
v\right\vert ^{\gamma-\ell}+O\left(  \left\vert v\right\vert ^{n-\ell
+1}\right)  \right]  |w|^{\ell}\leq C^{\prime}\left\vert v\right\vert
^{\gamma-\ell} |w|^{\ell}.
\]
Plugging this estimate back into \eqref{equ.7} shows,%
\[
\left\vert \nabla_{W_{1}}^{\ell}F\right\vert \leq C\left\vert v\right\vert
^{\gamma-\ell} |w|^{\ell},
\]
which completes the proof of \eqref{equ.6}.

$\left(  \Longleftarrow\right)  $

Recall that $\gamma\in(n,n+1]$, for some $n \in\N$.

\textbf{Step 1}:
We will show that $f$ is $n$-times differentiable and $\Sym[\nabla^{\ell} f] =
\hat f_{\ell}$ for $\ell=0,\dots,n$. This will be done by induction.

So assume for some $\ell= 0,\dots,n-1$ we know that

\begin{itemize}
\item $f$ is $\ell$-times differentiable

\item $\Sym[\nabla^{i} f ] = \hat f_{i}, \quad i=0,\dots,\ell$
\end{itemize}

By Taylor's theorem (Theorem \ref{thm:taylor})
\begin{align}
\label{eq:one}f( \exp_{p}(v) ) = \sum_{j=0}^{\ell-1} \frac{1}{j!}
\Sym[ \nabla ^{i}|_{p} ] f \left(  v^{\otimes j} \right)  + \frac{1}%
{(\ell-1)!} \int_{0}^{1} (1-t)^{\ell-1} \Sym[ \nabla^{\ell}|_{\gamma_{v}(t)} ]
f \left(  \dot{\gamma}_{v}(t)^{\otimes\ell} \right)  .
\end{align}

Now by assumption
\begin{align*}
|\Sym[ \nabla^{\ell}(f - g) ]|_{q}| \lesssim d(q,p)^{\gamma-\ell},
\end{align*}
where
\[
g\left(  q\right)  :=\left(  \Pi_{p}\hat{f}\left(  p\right)  \right)  \left(
q\right)  = \left\langle \hat{f}\left(  p\right)  ,G^n_{p}\left(  q\right)
\right\rangle .
\]
Hence
\begin{align*}
\Sym[ \nabla^{\ell}|_{\gamma_{v}(t)} f ] =
\Sym[ \nabla^{\ell}|_{\gamma_{v}(t)} g ] + O(|tv|^{\gamma-\ell}).
\end{align*}

Plugging this into \eqref{eq:one} and using the fact that $|\dot{\gamma}%
_{v}(t)|=|v|$ we get
\begin{align}
  \label{eq:two}
f(\exp_{p}(v))=\sum_{j=0}^{\ell-1}\frac{1}{j!}\Sym[ \nabla^{i}|_{p} f ]\left(
v^{\otimes j}\right)  +\frac{1}{(\ell-1)!}\int_{0}^{1}(1-t)^{\ell
-1}\Sym[ \nabla^{\ell}|_{\gamma_{v}(t)} g ]\left(  \dot{\gamma}_{v}%
(t)^{\otimes\ell}\right)  +O(|v|^{\gamma}).
\end{align}
Now, since $g$ is smooth and $\frac{\nabla}{dt}\dot{\gamma}_{v}\left(
t\right)  =0,$ we have
\begin{align*}
\frac{d}{dt}\left[  \left[  \nabla^{\left(  \ell\right)  }g\right]
_{\gamma_{v}\left(  t\right)  }\dot{\gamma}_{v}\left(  t\right)  ^{\otimes
\ell}\right]   &  =\left[  \nabla^{\left(  \ell+1\right)  }g\right]
_{\gamma_{v}\left(  t\right)  }\dot{\gamma}_{v}\left(  t\right)
^{\otimes(\ell+1)}\text{ and}\\
\frac{d^{2}}{dt^{2}}\left[  \left[  \nabla^{\left(  \ell\right)  }g\right]
_{\gamma_{v}\left(  t\right)  }\dot{\gamma}_{v}\left(  t\right)  ^{\otimes
\ell}\right]   &  =\left[  \nabla^{\left(  \ell+2\right)  }g\right]
_{\gamma_{v}\left(  t\right)  }\dot{\gamma}_{v}\left(  t\right)
^{\otimes(\ell+2)}%
\end{align*}
and therefore by Taylor's theorem (in one variable) together with Lemma
\ref{l.2.4}
\begin{align}
\Sym\left[  \nabla^{\ell}g\right]  _{\gamma_{v}\left(  t\right)  }\dot{\gamma
}_{v}\left(  t\right)  ^{\otimes\ell}  &  =\left[  \nabla^{\ell}g\right]
_{\gamma_{v}\left(  t\right)  }\dot{\gamma}_{v}\left(  t\right)  ^{\otimes
\ell}\nonumber\\
&  =\left[  \nabla^{\ell}g\right]  _{p}v^{\otimes\ell}+t\left[  \nabla
^{\ell+1}g\right]  _{p}v^{\otimes(\ell+1)}+O\left(  \left\vert tv\right\vert
^{\ell+2}\right) \nonumber\\
&  =\left[  \nabla^{\ell}f\right]  _{p}v^{\otimes\ell}+t\hat{f}_{\ell
+1}(p)v^{\otimes(\ell+1)}+O\left(  \left\vert tv\right\vert ^{\ell+2}\right)
\nonumber\\
&  =\Sym\left[  \nabla^{\ell}f\right]  _{p}v^{\otimes\ell}+t\hat{f}_{\ell
+1}(p)v^{\otimes(\ell+1)}+O\left(  \left\vert tv\right\vert ^{\ell+2}\right)
\label{equ.12.new}%
\end{align}

A simple integration by parts argument shows%
\begin{equation}
\frac{1}{k!}\int_{0}^{1}\left(  1-t\right)  ^{k}tdt=\frac{1}{\left(
k+2\right)  !}. \label{eq:three}%
\end{equation}

Combining \eqref{eq:two}, \eqref{equ.12.new} and \eqref{eq:three}, we get
\begin{equation}
f(\exp_{p}(v)) = \sum_{j=0}^{\ell-1}\frac{1}{j!}%
\Sym[ \nabla^{i}|_{p}f ]\left(  v^{\otimes j}\right)  + \frac{1}{\ell!}
\Sym[\nabla^{\ell}|_{p}f] \left(  v^{\otimes\ell}\right)  + \frac{1}%
{(\ell+1)!}\hat{f}_{\ell+1}(p)\left(  v^{\otimes(\ell+1)}\right)
+O(|v|^{\gamma}). \label{eq:four}%
\end{equation}

As $v\mapsto\exp_{p}\left(  v\right)  $ is a local diffeomorphism, it now
follows from \eqref{eq:four} that $f$ is $\ell+1$ times differentiable at $p$
and moreover since,
\begin{align*}
f( \exp_{p}(t v) ) = \sum_{j=0}^{\ell} \frac{t^{j}}{j!} \Sym[\nabla^{i}|_{p}]
f \left(  v^{\otimes j} \right)  + \frac{t^{\ell+1}}{(\ell+1)!} \hat
f_{\ell+1}(p) \left(  v^{\otimes(\ell+ 1)} \right)  + O(|t|^{\gamma}),
\end{align*}
we may conclude, using Lemma \ref{l.2.4} that
\begin{align*}
\nabla^{\ell+1}|_{p} v^{\otimes(\ell+1)} = \frac{d^{\ell+1} }{dt^{\ell+1}%
}|_{t=0} f(\exp_{p}(tv)) = \hat{f}_{\ell+1}(p) \left(  v^{\otimes(\ell+1)}
\right)  .
\end{align*}
Then by Remark \ref{r.2.2} it follows that
\begin{align*}
\Sym[\nabla^{\ell+1} f]_{p} = \hat{f}_{\ell+1}(p).
\end{align*}

\textbf{Step 2}:
So far we have shown that $f$ is $n$-times continuously differentiable
and that $\Sym[ \nabla^\ell|_p f ] = \hat f_\ell(p)$ for $\ell=0,\dots,n$.
%
Then with $U$ defined in \eqref{eq:U}
we have
\begin{align*}
  |\Sym[ \nabla^n|_q f] - U(q,p) \Sym[ \nabla^n|_p f] |
  &\le
  |\hat f(q) - \Gamma_{q \leftarrow p} \hat f(p)|_n
  +
  |\sum_{\ell \le n-1} |\nabla^n|_q \Pi_p \hat f_\ell(p)|.
\end{align*}
The second to last term is of order $d(q,p)^{\gamma-n}$ by assumption.
Moreover, for $\ell \le n-1$, by Corollary \ref{lem:theyRealize},
we have $\nabla^n|_p \Pi_p \hat f_\ell(p) = 0$.
Hence the last term is of order $d(q,p) \lesssim d(q,p)^{\gamma-n}$.
By Lemma \ref{lem:UPsi} we hence get that $f \in C^\gamma(M)$.

%
%
%
\end{proof}

\textbf{Acknowledgement:}
The first author  was supported in part by RTG 1845 and EPSRC grant EP/I03372X/1. 

The second author has received funding by the DAAD P.R.I.M.E. program.
He would like to thank 
Giuseppe Cannizzaro 
and Konstantin Matetski for discussion on the Schauder estimates.

\end{document}